\numberwithin{equation}{section}
\newcommand{\T}{\mathbb{T}}
\newtheorem{theorem}{Theorem}[section]   
\newtheorem{corollary}[theorem]{Corollary}
\newtheorem{lemma}[theorem]{Lemma}
\newtheorem{proposition}[theorem]{Proposition}
\theoremstyle{definition}
\newtheorem{definition}[theorem]{Definition}
\newtheorem{remark}[theorem]{Remark}
\begin{document}
\title{ \textbf{Global regularity for the one-dimensional stochastic Quantum-Navier-Stokes equations}}
\author{D. Donatelli\footnote{Department of Information Engineering, Computer Science and Mathematics, University of L'Aquila, 67100 L'Aquila, Italy, email: donatella.donatelli@univaq.it}, L. Pescatore\footnote{Department of Information Engineering, Computer Science and Mathematics, University of L'Aquila, 67100 L'Aquila, Italy, email: lorenzo.pescatore@graduate.univaq.it}, S. Spirito\footnote{Department of Information Engineering, Computer Science and Mathematics, University of L'Aquila, 67100 L'Aquila, Italy, email: stefano.spirito@univaq.it}}
\date{}


\maketitle
\begin{abstract}
In this paper we prove the global in time well-posedness of strong solutions to the Quantum-Navier-Stokes equation driven by random initial data and stochastic external force. In particular, we first give a general local well-posedness result. Then, by means of the Bresch-Desjardins entropy, higher order energy estimates, and a continuation argument we prove that the density never vanishes, and thus that local strong solutions are indeed global. 
\end{abstract}
\medbreak
{\small\textbf{Key words and phrases:}
Stochastic compressible fluids; Navier-Stokes-Korteweg equations; Global well-posedness. 
\medbreak
\textbf{MSC2020.} 35Q35, 60H15, 76M35.}
\section{Introduction}
Let $\mathbb{T}$ be the one-dimensional flat torus and $T>0$. In this paper we consider the one-dimensional Quantum-Navier-Stokes equations for a compressible fluid with capillarity effects subject to a stochastic force. Precisely, in $(0,T)\times\T$ the equations we study are the following
 \begin{equation} \label{stoc quantum}
\begin{cases}
\text{d}\rho+\partial_x(\rho u)\text{d}t=0,\\
\text{d}(\rho u)+[ \partial_x(\rho u^2+p(\rho))]\text{d}t=[\partial_x(\mu(\rho)\partial_x u)+ \rho \partial_x \bigg( \dfrac{\partial_{xx} \sqrt{\rho}}{\sqrt{\rho}}\bigg) ]\text{d}t +\mathbb{G}(\rho,\rho u)\text{d}W,
\end{cases}
\end{equation}
where the unknowns $\rho>0$ and $u\in\mathbb{R}$ denotes the density and the velocity of the fluid, respectively. Moreover,  
\begin{equation*}
p(\rho)=\rho^{\gamma} , \quad \gamma > 1 , \quad \mu(\rho)= \rho^\alpha, \quad \alpha \geq 0, 
\end{equation*}
denote the pressure of the fluid and the viscosity coefficient. Finally, $\mathbb{G}(\rho, u)\text{d}W$ is a stochastic external force whose properties will be described later. 
For the system \eqref{stoc quantum} we impose the following initial conditions 
\begin{equation} \label{C.I}
(\rho(x,t), u(x,t))_{|_{t=0}}=(\rho_0(x), u_0(x)),
\end{equation} 
and periodic boundary conditions 
\begin{equation}
\rho(0,t)=\rho(1,t), \quad u(0,t)=u(1,t).
\end{equation}
The system \eqref{stoc quantum} belongs to the wider class of fluid dynamical evolution equations called the Navier-Stokes-Korteweg equations, which in the multi-dimensional case read
\begin{equation} \label{3D}
\begin{cases}
\partial_t \rho + \operatorname{div}(\rho u)=0, \\
\partial_t (\rho u) + \operatorname{div} (\rho u \otimes u)+ \nabla p = \operatorname{div} \mathbb{S} + \operatorname{div}\mathbb{K}.
\end{cases}
\end{equation}
In \eqref{3D}, $\mathbb{S}=\mathbb{S}(\nabla u)$ denotes the viscosity stress tensor $$ \mathbb{S}= \mu(\rho) D u+ g(\rho) \operatorname{div}u \mathbb{I},$$
and $\mu(\rho), g(\rho)$ are the first and the second viscosity coefficients, which must satisfy the Lam\'e relation $\mu(\rho) \ge 0, \; \mu(\rho)+d g(\rho) \ge 0$. Moreover, the capillarity tensor $\mathbb{K}=\mathbb{K}(\rho, \nabla \rho)$ is given by 
$$ \mathbb{K}= \bigg ( \rho \operatorname{div}(k(\rho) \nabla \rho)-\dfrac{1}{2}(\rho k'(\rho)-k(\rho)) | \nabla \rho |^2 \bigg ) \mathbb{I}-k(\rho) \nabla \rho \otimes \nabla \rho$$
and $k(\rho)$ is called capillary coefficient. The Navier-Stokes-Korteweg equations are used to model fluids when the effects of capillarity are not negligible and the derivation in the context of continuous mechanics of the Korteweg tensor $\mathbb{K}$ has been performed in \cite{Dunn}. We note that system \eqref{stoc quantum} is the one dimensional stochastic version of the Navier-Stokes-Korteweg equations \eqref{3D} with the choice of the capillary coefficient $k(\rho)=\frac{1}{\rho}$. In particular, the name Quantum-Navier-Stokes is due to the fact that for this choice of capillarity coefficient, the Korteweg tensor reduces to the so-called Bohm potential and in the inviscid case the system \eqref{stoc quantum} is the well known Quantum Hydrodynamics (QHD) model for superfluids, which arises in quantum semiconductors theory,  see \cite{Landau}.\\
\\
The local well-posedness in the inviscid case of system \eqref{3D} has been studied in \cite{Benz}, while global well posedenss was proved only in the case of small irrotational data in \cite{Aud}.  Moreover,  the global existence of finite energy weak solutions for QHD has been proved for the three-dimensional case in \cite{Ant1} and for the two-dimensional case in \cite{Ant2}, see also the recent studies \cite{Ant3} and \cite{Ant4}.  Finally, in \cite{Feir. Don.}  a well/ill posedness analysis has been performed for the Euler-Korteweg-Poisson system in the class of weak solutions. Concerning the viscous problem, global existence of finite energy weak solutions in several space dimensions for the Quantum-Navier-Stokes equations with viscosity coefficients linear in $\rho$ has been established in \cite{Spirito},\cite{Lacroix}, see also \cite{Bresch} for a generalized version of the Quantum-Navier-Stokes equations and \cite{Spirito2} for the case of constant capillarity coefficient.  Also, some singular limits have been studied for the Quantum-Navier-Stokes equations. In particular, the low Mach number limit \cite{Ant5}, the high Mach number analysis for Korteweg fluids with density dependent viscosity has been performed in \cite{Donatelli}. Moreover, a relaxation limit of the Quantum-Navier-Stokes equations toward the quantum drift diffusion has been performed in \cite{Ant0}.\\
\\
\\
In this paper we prove existence and uniqueness of global solutions which are strong both in PDE and in probability sense.  Our result is of interest from both a stochastic and a deterministic point of view.  Indeed, the analysis of stochastic compressible fluids is quite recent. We refer to the recent monograph \cite{Feir} for an overview on stochastically forced compressible Navier-Stokes equations with constant viscosity. In the present paper, we consider  the presence of the Korteweg tensor. Due to the highly non-linear nature of the Korteweg tensor, the extension of the theory of \cite{Feir} requires non-trivial arguments. Moreover, the global well posedness of strong solutions of the stochastic system \eqref{stoc quantum} is very useful in studying several different problems, {\em e.g.}  the existence of invariant measures, as in \cite{Coti}, which seems completely new in the context of Korteweg fluids.  Finally, it is important to mention that we consider stochastic effects also in the initial data $(\rho_0,u_0)$, which are random variables ranging in a suitable regularity space. This is particularly important from a point of view of applications.\\

Concerning the deterministic setting, that is $\mathbb{G}=0$, the study of density dependent viscous fluids in the one-dimensional case has also been subject of interest in the recent years. In particular, in the case of $\mu(\rho)=\rho^{\alpha}$ with $\alpha\in [0,1/2]$ the global regularity and uniqueness of strong solutions for the one-dimensional compressible Navier-Stokes equations has been proved in \cite{Mellet}, see also \cite{Const} and \cite{Const2}. The present paper is therefore an extension of these results to the case of compressible fluids with capillarity and stochastic force. Both in our result and in \cite{Mellet} the main tool is the so-called Bresch-Desjardin inequality, \cite{BD}, which provides additional regularity on the density. In particular, in the range $\alpha\in [0,1/2]$ for the one-dimensional case the BD inequality provided a control on the vacuum region. This is consistent with the analysis performed in \cite{Le Floch}, where a general form of this estimate has been proved, and the restriction $\alpha\in [0,1/2]$ has been also found. Moreover, our setting includes more general assumptions on viscosity and capillarity coefficients with respect to the results in \cite{Charve, Burtea1, Burtea2}. Indeed, in those papers the capillarity coefficients and the viscosity coefficients are related trough a relation which generalize the case of linear viscosity and $\kappa(\rho)=1/\rho$. 
\\
\\
The outline of the paper is the following: in Section 2 we discuss the assumption on the stochastic force,  we introduce the definitions and we state Theorem \ref{Main Theorem local}, Theorem \ref{Main Theorem global} which we aim to prove.  Section 3 is dedicated to the proof of Theorem \ref{Main Theorem global} by assuming the local existence result in Theorem \ref{Main Theorem local}. In Section 4 we prove Theorem \ref{Main Theorem local}, establishing the local well posedness for strong solutions in both PDEs and probability sense. 

\section{Main results}
In this section we fix the notation used in the sequel,  we write the assumption on the stochastic setting and we state some definitions and the main results.
\subsection{Preliminaries}
Beyond the standard notations and conventions which are commonly used in the literature,  we use the following:
\begin{enumerate}
\item
Given two functions $F, G$ and a variable $p,$ we denote by $F \lesssim G$ and $F \lesssim_{p} G$ the existence of a generic constant $c>0$ and $c(p)>0$  such that $F \le cG$ and $F \le c(p)G$ respectively.
\item
We denote by $L^p(\mathbb{T})$ the Lebesgue space on the one dimensional torus $\mathbb{T}$ and with $\| \cdot \|_{L^p}$ its norm. $W^{k,p}(\mathbb{T})$ denotes the Sobolev space of $L^p(\mathbb{T})$ functions with $k$ distributional derivatives in $L^p(\mathbb{T})$ and $H^k(\mathbb{T})$ corresponds to the case $p=2.$ For a given Banach space $X$ we consider the Bochner space for time dependent functions with values in $X$,  namely $C(0,T;X)$, $L^p(0,T;X).$
\item
For a given random variable $f$,  we denote by $\mathfrak{L}[f]$ the Law of $f$ or also by $\mathfrak{L}_X[f]$ the law of $f$ on the space $X$.  We denote by $L^p_{\text{prog}}(\Omega \times [0,T])$ the Lebesgue space of functions that are measurable with respect to the $\sigma$-field of $(\mathfrak{F}_t)$-progressively measurable sets in $\Omega \times [0,T]$ and given a stochastic process $U,$ $(\sigma_t [U])_{t \ge 0}$ represents the canonical filtration of $U.$
\end{enumerate}
Let $(\Omega, \mathfrak{F},(\mathfrak{F}_t)_{t \ge 0},\mathbb{P})$ be a stochastic basis with a complete right-continuous filtration, the driving process 
\begin{equation} \label{Cyl W}
W(t)=\sum_{k=1}^\infty e_kW_k(t), \quad t\in [0,T]
\end{equation}
is a cylindrical $(\mathfrak{F}_t)$-Wiener process,  i.e.
$ (W_k)_{k\in \mathbb{N}}$ is a family of mutually independent real-valued Brownian motions and $(e_k)_{k\in \mathbb{N}}$ are an orthonormal basis of a separable Hilbert space $\mathfrak{U}.$
The stochastic integral is understood in the  It$\hat{\text{o}}$ sense,  we refer to \cite{Feir} and \cite{Da Prato} for its construction  and to \cite{Mik1}-\cite{Mik2} for detailed discussion on physical interpretation of stochastic fluid flows. The diffusion coefficient $\mathbb{G}(\rho,q): \mathfrak{U} \rightarrow L^2(\mathbb{T})$ is defined as a superposition operator by 
\begin{equation*}
\mathbb{G}(\rho,q)e_k=G_k(\cdot,\rho(\cdot),q(\cdot)),
\end{equation*}
where $\rho \in L^2(\mathbb{T}),  \ \rho \ge 0,  \ q \in L^2(\mathbb{T})$ and the coefficients $$G_k: \mathbb{T} \times [0, \infty) \times \mathbb{R} \rightarrow \mathbb{R}$$ are $C^s$-functions, for $s \in \mathbb{N}$ specified below, which satisfy the following growth assumptions uniformly in $x \in \mathbb{T}$ 
\begin{equation}\label{G1}
G_k(\cdot,0,0)=0,
\end{equation}
\begin{equation}\label{G w}
| G_k(x,\rho,q) | \le g_k (\rho+|q|),
\end{equation}
\begin{equation}\label{G2}
| \partial^l_{x,\rho,q} G_k(x, \rho,q) | \le g_k,  \quad \sum_{k=1}^\infty g_k < \infty \quad  \text{for all} \; l\in {1,...,s}.
\end{equation}
If $\rho, \ q=\rho u$ are $(\mathfrak{F}_t)$-progressively measurable $L^2(\mathbb{T})$-valued stochastic processes such that $$ \rho \in L^2(\Omega \times [0,T]; L^2(\mathbb{T})), \quad q \in L^2(\Omega \times [0,T]; L^2(\mathbb{T}))$$ and $\mathbb{G}$ satisfies \eqref{G1}-\eqref{G2},  then the stochastic integral $$ \int_{0}^{t} \mathbb{G}(\rho,\rho u) \text{d}W= \sum_{k=1}^{\infty} \int_{0}^{t} G_k( \cdot, \rho, \rho u) \text{d}W_k$$
is a well defined $(\mathfrak{F}_t)$-martingale ranging in $L^2(\mathbb{T}).$ \\
We observe that in the case of $q=\rho u $,  by virtue of \eqref{G w} we can rewrite the noise coefficients as $G_k(x,\rho,\rho u)= \rho F_k(x,\rho,u)$ for suitable functions $F_k.$ Hence the noise vanishes on the vacuum regions as for the deterministic case where the external force is usually considered in the form $\rho f$. More precisely, $\mathbb{G}(\rho, \rho u)=\rho \mathbb{F}(\rho,u)$ where $\mathbb{F}=(F_k)_{k\in \mathbb{N}}$ is defined by $$F_k(\rho,u)=\dfrac{G_k(\rho,\rho u)}{\rho}$$
so that the coefficients $F_k: \mathbb{T} \times [0,\infty) \times \mathbb{R} \rightarrow \mathbb{R}, \quad F_k \in C^1(\mathbb{T} \times (0,\infty) \times \mathbb{R})$, satisfy
\begin{equation}\label{f1}
F_k(\cdot,0,0)=0,
\end{equation}
\begin{equation}\label{f2}
| \partial^l_{x,\rho,q} F_k(x, \rho,q) | \le \alpha_k,  \quad \sum_{k=1}^\infty \alpha_k < \infty \quad  \text{for all} \; l\in {1,...,s},
\end{equation}
and since we are interested in strong solution for which both $\rho$ and $u$ are bounded and $\rho$ is bounded from below away from zero, then \eqref{G2} holds for $F_k$ restricted for this range, and in addition we have 
\begin{equation}\label{fw}
\sum_{k=1}^{\infty} | F_k(\cdot{},\rho,u) | \le c(1+|u|).
\end{equation}
\\
Since the sum in \eqref{Cyl W} is not a priori expected to be converging in $\mathfrak{U}$,  we construct a larger space $\mathfrak{U}_0 \supset \mathfrak{U}$ as follows
\begin{equation}
\mathfrak{U}_0= \bigg\{ v= \sum_{k \ge 1}^{} c_k e_k; \quad \sum_{k \ge 1}^{} \dfrac{c^2_k}{k^2} < \infty \bigg\}
\end{equation}
endowed with the norm
\begin{equation*}
\| v \|^2_{\mathfrak{U}_0}= \sum_{k \ge 1}^{} \dfrac{c^2_k}{k^2}, \quad v= \sum_{k \ge 1}^{} c_k e_k
\end{equation*}
Furthermore,  $W$ has $\mathbb{P}$-a.s. $C([0,T];\mathfrak{U}_0)$ sample paths and the embedding $\mathfrak{U} \hookrightarrow \mathfrak{U}_0$ is Hilbert-Schmidt, see Section 2.3 in \cite{Feir}.\\
By virtue of the hypothesis \eqref{G1}-\eqref{G2} and the regularity of the solution $(\rho,u),$ our analysis holds also for the case of an additive noise $\sigma(x)\text{d}W$ i.e.  a noise which does not depend on the solution. This is of interest since perturbation of deterministic PDEs by additive noise are frequently considered in order to study existence and uniqueness of invariant measures and related phenomena arising in the ergodic theory,  see \cite{Coti} for the compressible Navier Stokes equations. We stress that random effects are considered in both the stochastic force and the random initial data.  The initial data $(\rho_0,u_0)$ are indeed $\mathfrak{F}_0$-measurable, $H^{s+1}(\mathbb{T})\times H^{s}(\mathbb{T})$, with $s>7/2$, valued random variables such that there exists a deterministic constant $C>0$ such that 
\begin{equation} \label{C.I STRONG}
 C^{-1} \le \rho_0 \le C,  \quad \rho_0 \in H^{s+1}(\mathbb{T}),  \quad u_0 \in H^s(\mathbb{T}) \quad \mathbb{P}\text{-a.s.}
\end{equation}
In order to perform the a priori estimates provided in Section 3,  we also assume the following regularity on the momenta of the initial data
\begin{equation} \label{C.I. Momenta strong}
\rho_0 \in L^p(\Omega; H^{s+1}(\mathbb{T})),  \quad u_0 \in L^p(\Omega; H^s(\mathbb{T})), \quad \text{for all} \;p  \in [1, \infty).
\end{equation}
\subsection{Definitions and main theorems}
Our analysis focuses on the existence of strong pathwise solutions to \eqref{stoc quantum}-\eqref{C.I} i.e.  solutions which are strong both in PDEs and probability sense. This means that the system \eqref{stoc quantum} is satisfied pointwise and the stochastic integral is well defined for any fixed $x$. Furthermore this solutions are defined on a given probability space with a given cylindrical Wiener process which is not to be considered as part of the solution itself as in the case of weak solution in probability.  In order to establish the existence of a strong pathwise solution we construct an approximating system for which we prove the existence of martingale solutions i.e.  a weaker notion of solution in probability sense. Then,  after having derived a pathwise uniqueness estimate we will make use of the Gy\"ongy-Krylov method, see Section $2.10$ in \cite{Feir}, which allows to establish the existence of a strong pathwise solution to system \eqref{stoc quantum}. \\
In particular we prove the local well posedness of the problem up to a maximal stopping time $\tau$ which depends on the $W^{2,\infty}$ norm of the solution $(\rho,u)$ and we derive some uniform estimates in the case of the viscosity exponent $\alpha \in [0, \frac{1}{2}],$ which allow us to extend the local in time strong solution to a global one.  For the local existence results we adapt the techniques developed in the manuscript \cite{Feir} for compressible flows to system \eqref{stoc quantum}.  \\
We point out that,  even in the deterministic setting,  system \eqref{stoc quantum} should not be seen as a perturbation of the compressible Navier-Stokes equations. Indeed the presence of the third-order term leads to a different linear structure, and the unknowns must lie in a different regularity class, namely $$(\rho,u) \in H^{s+1}(\mathbb{T}) \times H^s (\mathbb{T}).$$
In particular, both in the a priori estimates and in the local existence results it is important to keep track of appropriate cancellations of the high order derivatives.\\
\\
We start by introducing the definition of strong pathwise solution
\begin{definition}(Local strong pathwise solution). \label{Def1}
Let $(\Omega, \mathfrak{F},(\mathfrak{F}_t)_{t \ge 0},\mathbb{P}), $ be a stochastic basis with a complete right-continuous filtration and let $W$ be an $(\mathfrak{F}_t)$-cylindrical Wiener process. Let $(\rho_0,u_0)$ be an $\mathfrak{F}_0-$measurable random variable with regularity \eqref{C.I STRONG} for some $s > \frac{5}{2}.$
A triplet $(\rho,u,\tau)$ is called a local strong pathwise solution to system \eqref{stoc quantum}-\eqref{C.I}, provided
\begin{itemize}
\item[(1)]
 $\tau$ is a $\mathbb{P}-a.s.$ strictly positive $(\mathfrak{F}_t)$-stopping time;
 \item[(2)]
 the density $\rho$ is a $H^{s+1}(\mathbb{T})$-valued $(\mathfrak{F}_t)$-progressively measurable stochastic process such that 
 $$ \rho(\cdot \land \tau) >0, \quad \rho(\cdot \land \tau) \in C([0,T];H^{s+1}(\mathbb{T})) \quad \mathbb{P}-a.s.; $$
 \item[(3)]
 the velocity $u$ is a $H^s(\mathbb{T})$-valued $(\mathfrak{F}_t)$-progressively measurable stochastic process such that 
 $$ u(\cdot  \land \tau) >0, \quad u(\cdot \land \tau) \in C([0,T];H^s(\mathbb{T})) \quad \mathbb{P}-a.s.; $$
\item[(4)]
the continuity equation $$ \rho(t \land \tau)= \rho_0- \int_{0}^{t\land \tau} \partial_x(\rho u) ds $$
holds for all $t \in [0,T] \ \mathbb{P}-a,s,;$
\item[(5)] the momentum equation 
\begin{equation*}
\begin{split}
(\rho u)(t \land \tau)&= \rho_0 u_0 -\int_{0}^{t \land \tau} \partial_x(\rho u^2+p(\rho))ds +\int_{0}^{t \land \tau} \partial_x(\mu(\rho) \partial_x u)ds \\ &+\int_{0}^{t \land \tau} \rho \partial_x \bigg( \dfrac{\partial_{xx}\sqrt{\rho}}{\sqrt{\rho}}\bigg)ds + \int_{0}^{t \land \tau} \mathbb{G}(\rho,\rho u)dW
\end{split}
\end{equation*}
holds for all $t \in [0,T] \ \mathbb{P}-a,s,;$
\end{itemize}
\end{definition}
\noindent
Now we introduce the definition of maximal and global strong pathwise solution
\begin{definition}(Maximal strong pathwise solution). \label{Def2}
Let $(\Omega, \mathfrak{F},(\mathfrak{F}_t)_{t \ge 0},\mathbb{P}), $ be a stochastic basis with a complete right-continuous filtration and let $W$ be an $(\mathfrak{F}_t)$-cylindrical Wiener process. Let $(\rho_0,u_0)$ be an $\mathfrak{F}_0-$measurable random variable in the space $H^{s+1}(\mathbb{T})\times H^s(\mathbb{T}).$
A quadruplet $(\rho,u,(\tau_R)_{R \in \mathbb{N}},\tau)$ is called a maximal strong pathwise solution to system \eqref{stoc quantum}-\eqref{C.I}, provided
\begin{itemize}
\item[(1)]
 $\tau$ is a $\mathbb{P}-a.s.$ strictly positive $(\mathfrak{F}_t)$-stopping time;
 \item[(2)] 
 $(\tau_R)_{R \in \mathbb{N}}$ is an increasing sequence of $(\mathfrak{F}_t)$-stopping times such that $\tau_R < \tau$ on the set $[\tau < \infty], \ \lim_{R \rightarrow \infty} \tau_R=\tau$ a.s. and 
\begin{equation}\label{limit cond}
\max \bigg\{ \sup_{t \in [0,\tau_R]} \| \log\rho (t) \|_{W_x^{2,\infty}},\;\sup_{t \in [0,\tau_R]}  \| u(t) \|_{W_x^{2,\infty}} \bigg\} \ge R \ \text{in} \ [\tau < \infty];
\end{equation}
\item[(3)]
each triplet $(\rho,u,\tau_R),  R\in \mathbb{N},$ is a local strong pathwise solution in the sense of Definition \ref{Def1}
\end{itemize}
If $(\rho,u,(\tau_R)_{R \in \mathbb{N}},\tau)$ is a maximal strong pathwise solution  and $\tau=\infty$ a.s.  then we say that the solution is global. 
\end{definition}
\noindent
\newline
The maximal life span of the solution is determined by its $W^{2,\infty}$ norm. Condition \eqref{limit cond} indeed implies that for local maximal strong pathwise solutions
\begin{equation*}
H(\tau)=\max \bigg\{ \sup_{t \in [0, \tau)} \| \log \rho(t) \|_{W^{2,\infty}},  \sup_{t \in [0, \tau)} \| u(t) \|_{W^{2,\infty}} \bigg\} = \infty \quad  \text{in} \ [\tau < \infty],
\end{equation*}
while concerning global maximal strong pathwise solutions,  where $\tau= \infty$,  $H(\infty)$ may be finite or infinite.  We refer the reader to \cite{Deb} for a similar discussion for global solutions of incompressible fluid equations. \\
We point out that the announcing stopping time $\tau_R$ is not unique,  indeed uniqueness has to be understood in the sense that the solution $(\rho,u)$ and the maximal stopping time $\tau$ are unique.
The main results of our paper are the following two theorems.  Theorem \ref{Main Theorem local} establishes existence and uniqueness of a local in time maximal strong pathwise solution.  The second is an extension theorem that allows to extend the local solution derived in Theorem \ref{Main Theorem local} to a global one,  however an additional hypothesis on the viscosity exponent $\alpha$ is required.
\begin{theorem}\label{Main Theorem local} (Local existence)
Let $s \in \mathbb{N}$ satisfy $ s > \frac{7}{2}, \; \gamma > 1.$ Let the coefficients $G_k$ satisfy \eqref{G1}-\eqref{G2} and $(\rho_0,u_0)$ be an $\mathfrak{F}_0$-measurable, $H^{s+1}(\mathbb{T}) \times H^s(\mathbb{T})$- valued random variable satisfying \eqref{C.I STRONG}. Then there exists a unique maximal strong pathwise solution $(\rho,u,(\tau_R)_{R \in \mathbb{N}},\tau)$ to problem \eqref{stoc quantum}-\eqref{C.I} in the sense of Definition \ref{Def2} with initial condition $(\rho_0,u_0).$
\end{theorem}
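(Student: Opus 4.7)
The plan is to follow the general scheme of the monograph of Breit--Feireisl--Hofmanov\'a (\cite{Feir}) for compressible Navier--Stokes, but with the modifications needed to accommodate the third-order Bohm--Korteweg term. The construction proceeds in five stages: an approximating system obtained by cutting off high norms; existence of martingale solutions for the cutoff problem; uniform $H^{s+1}\times H^{s}$ energy estimates; pathwise uniqueness in a lower-regularity norm; and finally a Gy\"ongy--Krylov argument yielding strong pathwise solutions on the originally given stochastic basis, which are then patched together into a maximal solution.

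First I would introduce a smooth cutoff $\chi_R$ depending on a quantity equivalent to $\|\log\rho\|_{W^{2,\infty}}+\|u\|_{W^{2,\infty}}$ and multiply the convective, pressure, viscosity, Bohm, and noise terms by $\chi_R$. With this truncation all nonlinearities become globally Lipschitz on the target regularity class, so a Galerkin/fixed-point scheme combined with stochastic compactness (Prokhorov plus Skorokhod--Jakubowski representation) yields a martingale solution in the correct regularity. The positivity of $\rho$ is propagated by integrating the continuity equation along the flow of the (now Lipschitz) velocity, giving the two-sided exponential bound
\[
\rho_{0,\min}\exp\Bigl(-\int_0^t\|\partial_x u\|_{L^\infty}\,ds\Bigr)\le \rho(t,x)\le \rho_{0,\max}\exp\Bigl(\int_0^t\|\partial_x u\|_{L^\infty}\,ds\Bigr).
\]

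The main analytic step is the uniform estimate in $H^{s+1}\times H^{s}$. I would apply It\^o's formula to the functional
\[
\mathcal{E}(\rho,u)=\tfrac{1}{2}\|\partial_x^{s+1}\rho\|_{L^2}^2+\tfrac{1}{2}\int_{\T}\rho|\partial_x^{s} u|^2\,dx+\text{(lower-order corrections)},
\]
commuting $\partial_x^{s}$ with the momentum equation and $\partial_x^{s+1}$ with the continuity equation. The crucial structural point is that the top-order contribution obtained by differentiating the Bohm term $\rho\partial_x(\partial_{xx}\sqrt\rho/\sqrt\rho)$ and testing against $\partial_x^{s}u$ cancels against the top-order contribution from testing the continuity equation by $\partial_x^{s+1}\rho$ times an appropriate factor; this is the one-dimensional analogue of the symmetry that makes Korteweg systems energetically well-behaved. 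The remaining terms are handled by Moser and Kato--Ponce commutator estimates, and the stochastic integral is controlled via Burkholder--Davis--Gundy together with \eqref{G1}--\eqref{G2}, yielding
\[
\mathbb{E}\sup_{t\le T\wedge\tau_R}\mathcal{E}(\rho,u)^p\lesssim 1+\mathbb{E}\int_0^{T\wedge\tau_R}(1+\mathcal{E})^p\,ds,
\]
which Gr\"onwall closes independently of the cutoff parameter.

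Pathwise uniqueness then follows by an energy estimate for the difference of two solutions in $L^2\times L^2$ (weighted by $\sqrt\rho$), handling the noise through stochastic Gr\"onwall and \eqref{G2}. The Gy\"ongy--Krylov principle (Section 2.10 of \cite{Feir}) upgrades the martingale solutions of the truncated system to strong pathwise solutions on the given stochastic basis. Setting
\[
\tau_R=\inf\Bigl\{t\ge 0:\;\max\bigl(\|\log\rho(t)\|_{W^{2,\infty}},\|u(t)\|_{W^{2,\infty}}\bigr)\ge R\Bigr\}\wedge T,
\]
one verifies that on $[0,\tau_R]$ the cutoff $\chi_R$ is inactive, so the truncated solution actually solves \eqref{stoc quantum}; uniqueness makes the solutions for different $R$ compatible, and $\tau=\sup_R\tau_R$ supplies the maximal stopping time required by Definition \ref{Def2}. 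The principal obstacle is the high-regularity estimate: identifying the top-order cancellation between the Bohm and continuity equations while simultaneously controlling the It\^o correction $\tfrac{1}{2}\sum_k\|\partial_x^{s}G_k(\rho,\rho u)\|_{L^2}^2$ and the commutators arising from $\rho$-dependent coefficients. This is where the assumption $s>7/2$ (so that $H^{s}\hookrightarrow W^{2,\infty}$ and algebra properties apply) together with the strictly positive lower bound on $\rho$ are used in an essential way.
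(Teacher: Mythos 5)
Your overall scheme -- cutoff approximation, Galerkin, stochastic compactness, uniform estimates, pathwise uniqueness, Gy\"ongy--Krylov, patching -- matches the paper's. But there are two concrete gaps.

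\textbf{The high-order estimate and the missing change of variables.} You propose to apply It\^o's formula to $\tfrac{1}{2}\|\partial_x^{s+1}\rho\|_{L^2}^2+\tfrac{1}{2}\int_{\T}\rho|\partial_x^{s}u|^2\,dx$ and claim the top-order Bohm contribution cancels against the top-order continuity contribution ``times an appropriate factor.'' In the original $(\rho,u)$ variables this cancellation is far from clean: the Bohm term $\rho\,\partial_x(\partial_{xx}\sqrt\rho/\sqrt\rho)$ is a fully nonlinear third-order operator and $\partial_x(\rho u)$ is quadratic, so both sides carry $\rho$-dependent weights, and the density appearing inside the kinetic part of your functional produces extra It\^o correction terms when $\rho$ evolves. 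The paper's resolution is the change of variables $\psi=\log\rho$, after which the momentum equation divided by $\rho$ has the \emph{linear} third-order term $\partial_{xxx}\psi$ (plus the lower-order $\partial_x\psi\,\partial_{xx}\psi$), and the continuity equation becomes $\mathrm{d}\psi+[u\partial_x\psi+\partial_x u]\,\mathrm{d}t=0$ with the linear top-order term $\partial_x u$. Then, with the flat functional $\tfrac12\int|\partial_x^{\beta+1}\psi|^2+\tfrac12\int|\partial_x^{\beta}u|^2$, the pair $\int\partial_x^{\beta+2}u\,\partial_x^{\beta+1}\psi$ and $-\int\partial_x^{\beta+3}\psi\,\partial_x^{\beta}u$ cancels identically after two integrations by parts. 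This transformation is stated explicitly in the paper as the key structural device, and without it (or some equivalent weighted version you would need to actually exhibit) your $H^{s+1}\times H^s$ estimate does not close.

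\textbf{The uniqueness norm.} You propose pathwise uniqueness by an estimate for the difference in $L^2\times L^2$ weighted by $\sqrt\rho$. This does not interact correctly with the cutoffs: the approximating system carries factors $\varphi_R(\|u\|_{W^{2,\infty}})$, and comparing two solutions produces the Lipschitz difference $|\varphi_R(\|u_1\|_{W^{2,\infty}})-\varphi_R(\|u_2\|_{W^{2,\infty}})|\lesssim\|u_1-u_2\|_{W^{2,\infty}}$, which must be absorbed by the norm in which you prove uniqueness. The paper therefore proves uniqueness in $H^{s'+1}\times H^{s'}$ with $s'>\tfrac52$ (so that $H^{s'}\hookrightarrow W^{2,\infty}$), and the Gr\"onwall weight $G(t)$ then involves $\|\psi_i\|_{H^{s'+2}}^2$, $\|u_i\|_{H^{s'+2}}^2$, forcing $s\ge s'+1>\tfrac72$. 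This is exactly the origin of the $s>\tfrac72$ hypothesis that you reproduce in the statement but that an $L^2$ uniqueness estimate would neither require nor deliver.

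Beyond these two issues your outline (positivity via the flow of the Lipschitz velocity, Moser/commutator bounds for the remaining terms, BDG for the stochastic integrals, Gy\"ongy--Krylov, and the definition of $\tau_R$ via the $W^{2,\infty}$ blow-up) is essentially the paper's argument.
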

\begin{theorem}\label{Main Theorem global} (Global existence) 
Under the same hypothesis of Theorem \ref{Main Theorem local},  assume in addition that $0 \le \alpha \le \frac{1}{2}$ and \eqref{C.I. Momenta strong},   then there exists a unique maximal global strong pathwise solution $(\rho,u,(\tau_R)_{R \in \mathbb{N}},\tau)$ to problem \eqref{stoc quantum}-\eqref{C.I} in the sense of Definition \ref{Def2} with initial condition $(\rho_0,u_0).$
\end{theorem}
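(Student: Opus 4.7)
The plan is to combine the local existence result of Theorem \ref{Main Theorem local} with a continuation argument driven by a priori estimates that are independent of the blow-up indicator $R$. Starting from the maximal local solution $(\rho,u,(\tau_R)_R,\tau)$, I would argue by contradiction: assume $\mathbb{P}(\tau<\infty)>0$ and show that, on the event $[\tau<\infty]$, the quantity
$H(\tau)=\max\{\sup_{t<\tau}\|\log\rho(t)\|_{W^{2,\infty}},\sup_{t<\tau}\|u(t)\|_{W^{2,\infty}}\}$
remains finite almost surely, contradicting \eqref{limit cond}. The assumption \eqref{C.I. Momenta strong} on the initial moments is what allows the stochastic estimates below to close in $L^p(\Omega)$.

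The first block of estimates is the basic energy balance, obtained by applying It\^o's formula to $\int_{\T}\bigl(\tfrac{1}{2}\rho u^2+\tfrac{1}{\gamma-1}\rho^\gamma+2|\partial_x\sqrt\rho|^2\bigr)\,dx$. Using \eqref{G1}-\eqref{G2} together with the Burkholder-Davis-Gundy inequality and Gr\"onwall produces $L^p(\Omega;L^\infty_t L^2_x)$ bounds on $\sqrt{\rho}u$, $\rho^{\gamma/2}$, $\partial_x\sqrt\rho$, plus the viscous dissipation $\mu(\rho)|\partial_x u|^2$ in $L^1_t$. The decisive step is then the stochastic Bresch-Desjardins entropy, obtained by applying It\^o to $\tfrac{1}{2}\int_{\T}\rho\bigl|u+\partial_x\varphi(\rho)\bigr|^2\,dx$ where $\varphi'(\rho)=\mu(\rho)/\rho^2=\rho^{\alpha-2}$. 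In the deterministic setting this yields additional control on $\partial_x\rho^{\alpha-1/2}$ and on $\partial_{xx}\sqrt\rho$; the stochastic corrections are handled by the structural bound $\mathbb{G}(\rho,\rho u)=\rho\mathbb{F}(\rho,u)$ together with \eqref{f1}-\eqref{fw}, which guarantee that the It\^o trace term and the martingale part are absorbed by BDG and Gr\"onwall. It is precisely here that the restriction $\alpha\in[0,1/2]$ enters, in line with \cite{Mellet} and \cite{Le Floch}: in one space dimension the BD estimate then provides deterministic two-sided bounds $C^{-1}\le\rho(t,x)\le C$ on $[0,\tau_R]\times\T$, with $C$ depending only on the initial data moments and the horizon $T$, not on $R$.

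With $\rho$ uniformly positive and bounded, the system becomes effectively a non-degenerate parabolic stochastic system, and I would propagate higher-order estimates by differentiating the momentum equation $s$ times in $x$ and applying It\^o's formula to $\|\partial_x^k u\|_{L^2}^2$ and $\|\partial_x^{k+1}\rho\|_{L^2}^2$ for $k\le s$. The key is to exploit, at every order, the cancellation between the viscous term $\partial_x(\mu(\rho)\partial_x u)$ and the Bohm-Korteweg contribution $\rho\partial_x(\partial_{xx}\sqrt\rho/\sqrt\rho)$ that is already used at the $L^2$ level, together with commutator estimates of Kato-Ponce type which now produce only lower-order terms thanks to the $L^\infty$ bounds on $\rho^{\pm 1}$ and $u$ coming from the previous steps and from $s>7/2$ (so $H^s\hookrightarrow W^{2,\infty}$). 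The resulting closed Gr\"onwall-BDG inequality for $\mathbb{E}\bigl[\sup_{t\le\tau_R}(\|\rho(t)\|_{H^{s+1}}^{2p}+\|u(t)\|_{H^s}^{2p})\bigr]$ is then uniform in $R$, so by Sobolev embedding $H(\tau_R)$ stays bounded $\mathbb{P}$-a.s.\ on $[\tau<\infty]$, and \eqref{limit cond} forces $\tau=\infty$ a.s.

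The main obstacle, I expect, is this last step: the interaction of the density-dependent viscosity $\rho^\alpha$, the third-order Bohm term, and the multiplicative noise in deriving $H^{s+1}\times H^s$ estimates. Controlling the quadratic variation of $\partial_x^s\bigl(\mathbb{G}(\rho,\rho u)\,dW\bigr)$ while preserving the deterministic cancellations requires a careful choice of test functionals (typically involving the effective velocity $u+\partial_x\varphi(\rho)$ rather than $u$ alone) and a systematic use of the lower bound $\rho\ge C^{-1}$ produced by the BD estimate. Once this bookkeeping is done, the contradiction with \eqref{limit cond} closes the argument and gives $\tau=\infty$ together with the claimed uniqueness, which is inherited from the maximal solution in Theorem \ref{Main Theorem local}.
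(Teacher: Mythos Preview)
Your overall architecture (energy $\Rightarrow$ BD entropy $\Rightarrow$ no vacuum $\Rightarrow$ higher-order estimates $\Rightarrow$ contradiction with \eqref{limit cond}) matches the paper's, but there is a genuine gap at the point where you pass from the BD entropy to the higher-order estimates.

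You assert that the BD estimate produces \emph{deterministic} two-sided bounds $C^{-1}\le\rho\le C$, with $C$ depending only on the initial moments and $T$. In the stochastic setting this is false: Proposition~\ref{prop:vacuum} only yields $\sqrt{\rho},\,1/\rho^\beta\in L^p(\Omega;L^\infty(0,T\wedge\tau;L^\infty(\T)))$ for all finite $p$, i.e.\ moment bounds on a random constant $C(\omega)$. As a consequence, the coefficients that appear in your higher-order energy inequality---terms like $\|1/\rho^{(\alpha-1)/2}\|_{L^\infty}^2$, $\|\partial_{xx}\psi\|_{L^2}^2$, etc.---are random and only controlled in $L^p(\Omega)$. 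A direct Gr\"onwall step then produces a factor $\exp\bigl(\int_0^T a(\omega,s)\,ds\bigr)$ with $a\in L^p(\Omega;L^1_t)$, and there is no reason for this exponential to have finite expectation; in particular you cannot conclude $\mathbb{E}\bigl[\sup_{t\le\tau_R}(\|\rho\|_{H^{s+1}}^{2p}+\|u\|_{H^s}^{2p})\bigr]<\infty$ uniformly in $R$ as you claim.

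The paper resolves this by introducing auxiliary stopping times $\gamma_M^{(j)}=\inf\{t:\int_0^{t\wedge\tau}a^{(j)}(s)\,ds>M\}$ (or $\sup_{[0,t\wedge\tau]}[a^{(j)}]>M$), applying a \emph{stochastic} Gr\"onwall lemma (Proposition~\ref{stoch gronw}) up to $T\wedge\gamma_M\wedge\tau$, and then using the moment bounds from Proposition~\ref{prop:vacuum} together with Markov's inequality to show $\mathbb{P}(\gamma_M\le T)\to 0$. The final step is the splitting $\mathbb{P}(\tau_N\le T)\le\mathbb{P}(\{\tau_N\le T\}\cap\{\gamma_M>T\})+\mathbb{P}(\gamma_M\le T)$, which is a genuinely probabilistic substitute for the deterministic contradiction you propose. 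Note also that the paper carries out the higher-order estimates in the variables $(\psi,u)=(\log\rho,u)$, because summing \eqref{psi nl} differentiated $k{+}1$ times against $\partial_x^{k+1}\psi$ with \eqref{u nl} differentiated $k$ times against $\partial_x^k u$ produces an exact cancellation of the top-order dispersive terms $\int\partial_x^{k+2}u\,\partial_x^{k+1}\psi+\int\partial_x^{k+3}\psi\,\partial_x^k u$; this specific structure, rather than generic Kato--Ponce commutators, is what makes the estimates close.
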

\begin{remark}
Our results hold for more general pressure laws satisfying $$ p \in C^1((0,\infty)) \cap C([0,\infty)), \quad p'(\rho) \ge 0 \;  \text{for} \; \rho >0, \quad p'(\rho) \approx \rho^{\gamma-1} \; \text{for} \; \rho \rightarrow \infty.$$
\end{remark}
\begin{remark}
Theorem \ref{Main Theorem local} provides a local existence result for strong patwhise solution in the case of general viscosity $\mu(\rho)=\rho^\alpha$ with $\alpha \ge 0$. The restriction $\alpha \in [0,\frac{1}{2}]$ is needed only in Theorem \ref{Main Theorem global} in order to extend the local maximal strong patwhise solution to a global one. 
\end{remark}
\begin{remark}
Concerning the regularity exponent $s$ we point out that it is chosen in such a way that the embedding $H^s(\mathbb{T}) \hookrightarrow W^{2,\infty}(\mathbb{T})$ holds,  hence we have $s> \frac{N}{2}+2=\frac{5}{2}.$
The higher integrability condition requested in Theorem \ref{Main Theorem local} and Theorem \ref{Main Theorem global} that is $s> \frac{N}{2}+3= \frac{7}{2}$ is needed in order to derive a pathwise uniqueness estimate which is essential in order to apply our convergence argument.  Moreover since the solution satisfies $$ \rho( \cdot \land \tau_R) \in C([0,T]; H^{s+1}(\mathbb{T})), \quad u( \cdot \land \tau_R) \in C([0,T]; H^s(\mathbb{T})) \quad  \mathbb{P}-a.s.$$
then by continuity in $[0,\tau)$ we have that a blow up of the $H^s$ norm of the solution coincides with a blow up in the $W^{2,\infty}$ norm at time $\tau.$
\end{remark}
\noindent
In the deterministic case,  that is $\mathbb{G}=0,$ our argument for the local well posedness applies also for a differentiability space exponent $s'=s-1.$ This difference is due to the construction of the local strong solution in the stochastic setting. \\
In order to prove Theorem \ref{Main Theorem local} we will use an approximating system introducing suitable smooth cut-off operators $ \varphi_R \; : \: [0,\infty) \rightarrow [0,1]$ satisfying 
\begin{equation*}
\varphi_R(y)=\begin{cases} 1,  \quad 0 \le y \le R,  \\ 0, \quad R+1 \le y
\end{cases}
\end{equation*}
applied to the $W^{2,\infty}$ norm of the solution and we will solve it by means of a stochastic compactness method.
Concerning the extension argument used in order to prove Theorem \ref{Main Theorem global},  we derive a global in time $H^{s+1}(\mathbb{T}) \times H^s(\mathbb{T})$ estimates which allows us to control the blow up of the $W^{2,\infty}$ norm stated in Definition \ref{Def2} and as a consequence we get $\tau= \infty$. The key point to control the  $W^{2,\infty}$ norm of the solution will be given by the fact that the Bresch-Desjardin Entropy, see Proposition \ref{BD prop} below, allows us to prove that almost surely vacuum region cannot be formed if they are not present at the initial time, see Proposition \ref{prop:vacuum}.

\section{Global regularity}

This section is dedicated to the proof of Theorem \ref{Main Theorem global}.  In particular, for the rest the present section, we fix an arbitrary finite time $T>0$, and we assume the existence of a maximal strong pathwise solution  of the system \eqref{stoc quantum} in the regularity class of Theorem \ref{Main Theorem local}. Then, the proof is based on high order energy estimates which hold uniformly in $T$ and a standard continuity argument.\\

\subsection{A priori estimates}
We start by proving the following standard energy estimate
\begin{proposition} (Energy estimate) \\
Let $(\rho,u)$ be a strong solution of \eqref{stoc quantum}-\eqref{C.I}, then the following energy estimate holds
\begin{equation}\label{energy visk}
\begin{split}
& \mathbb{E} \bigg| \sup_{t \in [0,T \land \tau]} \int_{\mathbb{T}} \bigg[\dfrac{1}{2}\rho |u|^2+ \dfrac{\rho^\gamma}{\gamma-1}+ | \partial_x \sqrt{\rho}|^2 \bigg] dx \bigg|^p +\mathbb{E} \bigg| \int_{0}^{T\land \tau} \int_{\mathbb{T}} \mu(\rho)| \partial_x u |^2 dxdt \bigg|^p \\ & \lesssim 1+ \mathbb{E}\bigg| \sup_{t \in [0,T\land \tau]} \int_{\mathbb{T}} \bigg[\dfrac{1}{2}\rho_0 |u_0|^2+ \dfrac{\rho_0^\gamma}{\gamma-1}+ | \partial_x \sqrt{\rho_0}|^2 \bigg] dx \bigg|^p,
\end{split}
\end{equation}
for any $p\ge 1.$
\end{proposition}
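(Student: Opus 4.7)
The plan is to apply Itô's formula to the total energy functional $E(t) = \int_{\mathbb{T}} \bigl(\tfrac{1}{2}\rho u^2 + \tfrac{\rho^\gamma}{\gamma-1} + |\partial_x \sqrt{\rho}|^2 \bigr) dx$, exploit the structural cancellations between the convective, pressure and capillary fluxes, and close via Burkholder--Davis--Gundy and Grönwall. Since the continuity equation carries no noise and we are working up to the stopping time $\tau$ where $\rho$ is bounded below away from zero, all the manipulations below are justified on $[0, T\wedge \tau]$.

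First I would compute $d\bigl(\tfrac{|\rho u|^2}{2\rho}\bigr)$. Because $\rho$ has bounded variation sample paths, Itô's formula gives
\begin{equation*}
d\!\left(\frac{|\rho u|^2}{2\rho}\right) = u\, d(\rho u) - \frac{u^2}{2}\,d\rho + \frac{1}{2\rho}\sum_{k\ge 1} G_k^2(\cdot,\rho,\rho u)\, dt.
\end{equation*}
Integrating in $x$ and substituting the drift of $\rho u$ from \eqref{stoc quantum}, the flux terms $-\int u\partial_x(\rho u^2)\,dx$ and $\int \tfrac{u^2}{2}\partial_x(\rho u)\,dx$ cancel after integration by parts, the pressure term yields $\int p(\rho)\partial_x u\,dx$, the viscous term yields $-\int \mu(\rho)|\partial_x u|^2\,dx$, and the Bohm term yields $-\int u\, \partial_x(\rho)\cdot \partial_{xx}\sqrt{\rho}/\sqrt{\rho}\,dx$ which, after inserting the continuity equation, equals $\int \partial_t\rho\cdot \partial_{xx}\sqrt{\rho}/\sqrt{\rho}\,dx$. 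Separately, the deterministic computation $\partial_t(\rho^\gamma/(\gamma-1))+\partial_x(\rho^\gamma u/(\gamma-1)) = -\rho^\gamma \partial_x u$ shows that the pressure work is absorbed by $\tfrac{d}{dt}\int \rho^\gamma/(\gamma-1)\,dx$, while differentiating $\int |\partial_x\sqrt{\rho}|^2\,dx = \tfrac{1}{4}\int |\partial_x\rho|^2/\rho\,dx$ in time and using the continuity equation gives $\tfrac{d}{dt}\int |\partial_x\sqrt{\rho}|^2\,dx = -\int \partial_t\rho\cdot \partial_{xx}\sqrt{\rho}/\sqrt{\rho}\,dx$, absorbing the Bohm contribution. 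Summing up yields the pathwise energy identity
\begin{equation*}
E(t\wedge\tau)+\int_0^{t\wedge\tau}\!\!\!\int_{\mathbb{T}}\mu(\rho)|\partial_x u|^2\,dx\,ds = E(0)+\frac{1}{2}\int_0^{t\wedge\tau}\!\!\!\int_{\mathbb{T}}\frac{1}{\rho}\sum_{k}G_k^2\,dx\,ds + \mathcal{M}(t\wedge\tau),
\end{equation*}
with $\mathcal{M}(t)=\sum_k\int_0^{t}\!\int_{\mathbb{T}} u\,G_k(\cdot,\rho,\rho u)\,dx\,dW_k$.

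Next I would estimate the stochastic terms using $G_k=\rho F_k$ and the growth bound \eqref{fw}. The Itô correction satisfies
\begin{equation*}
\int_{\mathbb{T}}\frac{1}{\rho}\sum_k G_k^2\,dx = \int_{\mathbb{T}}\rho\sum_k F_k^2\,dx \;\lesssim\; \int_{\mathbb{T}}\rho(1+u^2)\,dx \;\lesssim\; 1+E(s),
\end{equation*}
where I used mass conservation $\int\rho\,dx=\int\rho_0\,dx$ (deterministic, from the continuity equation). For the martingale, $|\int_{\mathbb{T}} u G_k\,dx|\lesssim \alpha_k(1+E(s))$, so BDG gives
\begin{equation*}
\mathbb{E}\sup_{t\le T\wedge\tau}|\mathcal{M}(t)|^p \lesssim \mathbb{E}\!\left(\int_0^{T\wedge\tau}(1+E(s))^2\,ds\right)^{p/2}\!.
\end{equation*}

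Finally I raise the identity to the $p$-th power, take $\sup_{t\le T\wedge\tau}$ and expectation, and set $\phi(t):=\mathbb{E}\sup_{r\le t\wedge\tau} E(r)^p$. Combining the three pieces and applying Jensen/Hölder to the BDG bound produces
\begin{equation*}
\phi(t) \;\lesssim\; 1 + \mathbb{E}\,E(0)^p + C_T\int_0^t \phi(s)\,ds,
\end{equation*}
and Grönwall closes the estimate, with the dissipation term controlled by the same right-hand side. The main subtlety is the correct bookkeeping of the third-order capillary contribution: one has to recognise that the flux produced by $\rho\,\partial_x(\partial_{xx}\sqrt{\rho}/\sqrt{\rho})$ in the momentum equation is exactly compensated by $\tfrac{d}{dt}\int|\partial_x\sqrt{\rho}|^2\,dx$ through the deterministic continuity equation, so that no stochastic correction is generated by the Korteweg term and the standard BD cancellation survives in the stochastic setting.
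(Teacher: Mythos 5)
Your proof is correct and follows essentially the same path as the paper: Itô's formula on $\tfrac12\rho|u|^2$, cancellation of the convective flux against the Itô-type term $\tfrac12|u|^2\partial_x(\rho u)$, absorption of the pressure and Bohm work into $\int\rho^\gamma/(\gamma-1)\,dx$ and $\int|\partial_x\sqrt\rho|^2\,dx$ via the noise-free continuity equation, then BDG, Young, and Gr\"onwall. The only step to make explicit is the Young split needed for general $p\ge1$ in the BDG bound: writing $(1+E(s))^2\le\sup_{r}(1+E(r))\cdot(1+E(s))$ and absorbing an $\epsilon\,\mathbb{E}\sup(1+E)^p$ into the left-hand side before invoking Gr\"onwall, which is exactly the paper's ``applying Young inequality'' step.
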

\begin{proof}
Applying It$\hat{\text{o}}$ lemma, Theorem \ref{Ito Lemma} for $s=\rho, \; r=u, \;  Q(r)=\frac{1}{2}|u|^2$  to the functional 
\begin{equation}
F(\rho,u)(t) =\int_{\mathbb{T}} \frac{1}{2}\rho(t)|u(t)|^2dx,
\end{equation}
we get:
\begin{equation} \label{En F}
\begin{split}
& \int_{\mathbb{T}} \frac{1}{2} \rho |u|^2(t)dx=\int_{\mathbb{T}} \frac{1}{2} \rho_0 |u_0|^2dx-\int_{0}^{t} \int_{\mathbb{T}} [ \rho u^2 \partial_x u +u\partial_x p(\rho)]dxds \\ & +\int_{0}^{t} \int_{\mathbb{T}} u\partial_x(\mu(\rho)\partial_x u)+ \rho u \partial_x \bigg( \dfrac{\partial_{xx} \sqrt{\rho}}{\sqrt{\rho}} \bigg)dxds+\frac{1}{2}\int_{0}^{t} \int_{\mathbb{T}} \sum_{k\in \mathbb{N}} \rho | F_k(\rho,u)|^2dxdt \\ & + \int_{0}^{t} \int_{\mathbb{T}} \frac{1}{2} |u|^2 \partial_x(\rho u )dxds+ \int_{0}^{t} \int_{\mathbb{T}} \sum_{k \in \mathbb{N}} \rho u \mathbb{F}(\rho,u)dxdW
\end{split}
\end{equation}
and we observe that since $-\rho u^2 \partial_x u=-\frac{1}{2} \rho u \partial_x ( |u|^2)$ then the second term in the right hand side of \eqref{En F} $\rho u^2 \partial_x u=\frac{1}{2} \rho u \partial_x |u|^2$ cancels with the last non noise term after integrating by parts.  Moreover, by using the continuity equation
\begin{equation*}
\begin{split}
 & \int_{} \partial_x \rho^\gamma u=\dfrac{d}{dt} \int_{} \dfrac{\rho^{\gamma}}{\gamma-1}.
  \end{split}
\end{equation*}
Concerning the quantum term we have
\begin{equation*}
\begin{split}
& \int_{} \rho u \partial_x (\dfrac{\partial_{xx} \sqrt{\rho}}{\sqrt{\rho}})= -\int_{} \partial_x (\rho u)(\dfrac{\partial_{xx} \sqrt{\rho}}{\sqrt{\rho}})= \int_{} \partial_t \rho(\dfrac{\partial_{xx} \sqrt{\rho}}{\sqrt{\rho}})= \int_{} 2 \sqrt{\rho} \partial_t \sqrt{\rho} (\dfrac{\partial_{xx} \sqrt{\rho}}{\sqrt{\rho}}) \\ & =2 \int_{} \partial_t \sqrt{\rho} \partial_{xx} \sqrt{\rho}= -2 \int_{} \partial_t \partial_x \sqrt{\rho} \partial_x \sqrt{\rho}=-\dfrac{d}{dt} \int_{} |\partial_x \sqrt{\rho}|^2.
\end{split}
\end{equation*}
Hence the energy balance \eqref{En F} can be rewritten as 
\begin{equation}
\begin{split}
& \int_{\mathbb{T}} \dfrac{1}{2} \rho |u|^2+ \dfrac{\rho^\gamma}{\gamma-1}+ | \partial_x \sqrt{\rho}|^2 dx + \int_{0}^{t} \int_{\mathbb{T}} \mu(\rho)| \partial_x u |^2 dxds\\ & =\frac{1}{2} \sum_{k=1}^{\infty}\int_{0}^{t}\int_{\mathbb{T}} \rho |F_k|^2dxds+\int_{0}^{t} \int_{\mathbb{T}} \rho u \mathbb{F}(\rho,u) dxdW.
\end{split}
\end{equation}
By taking the $\sup$ in time,  the $p$-th power and applying the expectation we have
\begin{equation}\label{energy stoch}
\begin{split}
& \mathbb{E} \bigg| \sup_{t \in [0.T\land \tau]} \int_{} \bigg[ \dfrac{1}{2} \rho |u|^2+ \dfrac{\rho^\gamma}{\gamma-1}+ | \partial_x \sqrt{\rho}|^2 \bigg] dx \bigg|^p + \mathbb{E} \bigg| \int_{0}^{T\land \tau} \int_{} \mu(\rho)| \partial_x u |^2 dxdt \bigg|^p \\ &  \lesssim \mathbb{E} \bigg | \frac{1}{2} \sum_{k=1}^{\infty}\int_{0}^{T\land \tau}\int_{} \rho |F_k|^2dxdt \bigg|^p+\mathbb{E} \bigg| \int_{0}^{T\land \tau} \int_{} \rho u \mathbb{F}(\rho,u) dxdW \bigg|^p.
\end{split}
\end{equation}
The two terms in the right hand side of \eqref{energy stoch} can be estimated as follows
\begin{equation}
\begin{split}
\sum_{k \in \mathbb{N}} \int_{0}^{T\land \tau} \int_{\mathbb{T}} \rho |F_k|^2dxdt \le \int_{0}^{T\land \tau} \sum_{k \in \mathbb{N}} \alpha^2_k \int_{\mathbb{T}} ( \rho + \rho |u|^2) dxdt \lesssim \int_{0}^{T\land \tau} \int_{\mathbb{T}} ( \rho+ \rho |u|^2) dxdt,
\end{split}
\end{equation}
while the stochastic integral can be estimated by means of the Burkholder-Davis-Gundy inequality,  Proposition \ref{BDG}, as 
\begin{equation}\label{stoch int enrgy estimate}
\begin{split}
& \mathbb{E} \bigg| \int_{0}^{t} \int_{} \rho u \mathbb{F}(\rho,u) dxdW \bigg|^p \le  \mathbb{E} \bigg[ \sup_{t \in [0.T\land \tau]} \bigg|  \int_{0}^{t} \int_{} \rho u \mathbb{F}(\rho,u) dxdW \bigg|^p \bigg] \\ & \lesssim \mathbb{E} \bigg[ \int_{0}^{T\land \tau} \sum_{k \in \mathbb{N}} \bigg| \int_{\mathbb{T}} \rho u F_k dx \bigg|^2 dt \bigg]^{\frac{p}{2}}\le \mathbb{E}\bigg[ \int_{0}^{T\land \tau}  \bigg| \int_{\mathbb{T}} ( \rho+ \rho |u|^2) dx  \bigg|^2 dt \bigg]^{\frac{p}{2}}.
\end{split}
\end{equation}
Finally applying Young inequality in \eqref{stoch int enrgy estimate} and using the Gronwall Lemma we get 
\begin{equation}
\begin{split}
& \mathbb{E} \bigg| \sup_{t \in [0,T\land \tau]} \int_{} \bigg[\dfrac{1}{2}\rho |u|^2+ \dfrac{\rho^\gamma}{\gamma-1}+ | \partial_x \sqrt{\rho}|^2 \bigg] dx \bigg|^p +\mathbb{E} \bigg| \int_{0}^{T\land \tau} \int_{} \mu(\rho)| \partial_x u |^2 dxdt \bigg|^p \\ & \lesssim 1+ \mathbb{E}\bigg| \sup_{t \in [0,T\land \tau]} \int_{} \bigg[\dfrac{1}{2}\rho_0 |u_0|^2+ \dfrac{\rho_0^\gamma}{\gamma-1}+ | \partial_x \sqrt{\rho_0}|^2 \bigg] dx \bigg|^p 
\end{split}
\end{equation}
which concludes our proof.
\end{proof}
\noindent
Hence we end up with the following regularity estimates for the unknowns $(\rho,u)$
\begin{equation}\label{reg energy}
\begin{split}
& \sqrt{\rho}u \in L^p(\Omega; L^\infty(0,T\land \tau; L^2 (\mathbb{T}))), \quad \rho \in L^p(\Omega; L^\infty(0,T\land \tau; L^\gamma (\mathbb{T}))) \\ & \partial_x \sqrt{\rho} \in L^p(\Omega; L^\infty(0,T\land \tau; L^2 (\mathbb{T}))), \quad \sqrt{\mu(\rho)}\partial_x u \in L^p(\Omega; L^2(0,T\land \tau; L^2 (\mathbb{T}))).
\end{split}
\end{equation}
In the following Lemma we derive a Bresch-Desjardins type entropy inequality.
\begin{proposition} \label{BD prop}(B.D Entropy estimate) \\
Let $(\rho,u)$ be a strong solution of \eqref{stoc quantum}-\eqref{C.I}, then the following entropy estimate holds
\begin{itemize}
\item[1.]
for $\alpha=0$
\begin{equation}\label{BD ENTROPY}
\begin{split}
& \mathbb{E} \bigg| \sup_{t \in [0,T\land \tau]} \int_{\mathbb{T}} \bigg[\dfrac{1}{2}\rho |V|^2+ \dfrac{\rho^\gamma}{\gamma-1}+ | \partial_x \sqrt{\rho}|^2 \bigg] dx \bigg|^p \\ & +\mathbb{E} \bigg|  \dfrac{4 \gamma \mu}{(\gamma-1)^2} \int_{0}^{T\land \tau}\int_{\mathbb{T}} | \partial_x \rho^{\frac{\gamma-1}{2} }|^2 dxdt + \dfrac{1}{2} \int_{0}^{T\land \tau} \int_{\mathbb{T}} (\partial_{xx} \log \rho )^2 dxdt \bigg|^p \\ & \lesssim 1+ \mathbb{E}\bigg| \sup_{t \in [0,T\land \tau]} \int_{\mathbb{T}} \bigg[\dfrac{1}{2}\rho_0 |V_0|^2+ \dfrac{\rho_0^\gamma}{\gamma-1}+ | \partial_x \sqrt{\rho_0}|^2 \bigg] dx \bigg|^p 
\end{split}
\end{equation}
for any $p \ge 1.$
\item[2.]
for $\alpha \neq 0$
\begin{equation}\label{BD ENTROPY visk}
\begin{split}
& \mathbb{E} \bigg| \sup_{t \in [0,T\land \tau]} \int_{\mathbb{T}} \bigg[\dfrac{1}{2}\rho |V|^2+ \dfrac{\rho^\gamma}{\gamma-1}+ | \partial_x \sqrt{\rho}|^2 \bigg] dx \bigg|^p \\ & +\mathbb{E} \bigg|  \dfrac{4 \gamma}{(\gamma+\alpha -1)^2} \int_{0}^{T\land \tau}\int_{\mathbb{T}} | \partial_x \rho^{\frac{\gamma+\alpha-1}{2} } |^2 dxdt \\ & + \dfrac{4}{\alpha^2} \int_{0}^{T\land \tau} \int_{\mathbb{T}} | \partial_{xx} 	\rho^{\frac{\alpha}{2}} |^2 dxdt +\dfrac{4(4-3 \alpha) }{3 \alpha^3}\int_{0}^{T\land \tau} \int_{\mathbb{T}} \rho^{-\alpha} | \partial_x \rho^\frac{\alpha}{2}|^4 dxdt  \bigg|^p \\ & \lesssim 1+ \mathbb{E}\bigg| \sup_{t \in [0,T\land \tau]} \int_{\mathbb{T}} \bigg[\dfrac{1}{2}\rho_0 |V_0|^2+ \dfrac{\rho_0^\gamma}{\gamma-1}+ | \partial_x \sqrt{\rho_0}|^2 \bigg] dx \bigg|^p 
\end{split}
\end{equation}
for any $p\ge 1,$
\end{itemize}
where $V$ is the effective velocity  $$V= u+ Q$$
and $$Q= \mu(\rho) \dfrac{\partial_x \rho}{\rho^2}$$
is the solution of the following transport equation 
$$ \partial_t Q+ u \partial_x Q= - \rho^{-1}\partial_x(\mu (\rho) \partial_{x} u )$$

\end{proposition}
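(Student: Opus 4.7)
The plan is to reproduce the argument used for the basic energy estimate, but replacing the velocity $u$ by the effective velocity $V = u + Q$, so that the coupling of $Q$ with pressure and capillarity generates dissipative control on the density derivatives. I would start by deriving the transport equation for $Q$. Since the continuity equation is noise-free, $Q$ is purely deterministic; differentiating $Q = \mu(\rho)\partial_x\rho/\rho^2$ in time and using $\partial_t\rho = -\partial_x(\rho u)$, a direct manipulation yields the stated identity $\partial_t Q + u\partial_x Q = -\rho^{-1}\partial_x(\mu(\rho)\partial_x u)$. Dividing the momentum equation by $\rho$ (which turns the noise into $\mathbb{F}(\rho,u)\,\mathrm{d}W$, recalling $\mathbb{G} = \rho\mathbb{F}$) and adding the transport equation for $Q$, the viscous terms cancel exactly and $V$ satisfies
\[
\mathrm{d}V + \bigl(u\partial_x V + \rho^{-1}\partial_x p(\rho)\bigr)\,\mathrm{d}t = \partial_x\bigl(\partial_{xx}\sqrt{\rho}/\sqrt{\rho}\bigr)\,\mathrm{d}t + \mathbb{F}(\rho,u)\,\mathrm{d}W,
\]
so that $(\rho, V)$ solves a system of the same shape as $(\rho, u)$ but with no viscous dissipation.

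Next I would apply the It\^o formula of Theorem \ref{Ito Lemma} to the functional $F(\rho, V) = \int_{\mathbb{T}} \tfrac{1}{2}\rho|V|^2\,\mathrm{d}x$ with $Q(r) = \tfrac{1}{2}|r|^2$, mirroring the derivation of \eqref{En F}. The convective contributions cancel identically, the It\^o correction produces $\tfrac{1}{2}\sum_k \int \rho|F_k|^2$ (the same as in the energy identity, since $Q$ carries no It\^o part), and the martingale $\int \rho V\, \mathbb{F}(\rho,u)\,\mathrm{d}W$ is treated by the Burkholder-Davis-Gundy bound exactly as in \eqref{stoch int enrgy estimate} and absorbed via Young's inequality.

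The core step, and the main technical obstacle, is the integration-by-parts identification of the new dissipation coming from the coupling of the pressure and the Bohm potential with the extra piece $Q$. The $u$-components reproduce the familiar energy quantities $\tfrac{\mathrm{d}}{\mathrm{d}t}\int \rho^\gamma/(\gamma-1)$ and $-\tfrac{\mathrm{d}}{\mathrm{d}t}\int|\partial_x\sqrt{\rho}|^2$. For the pressure-$Q$ coupling, writing $Q = \rho^{\alpha-2}\partial_x\rho$ and integrating by parts once gives
\[
-\int_{\mathbb{T}} p(\rho)\,\partial_x Q\,\mathrm{d}x = \gamma\int_{\mathbb{T}} \rho^{\gamma+\alpha-3}|\partial_x\rho|^2\,\mathrm{d}x = \frac{4\gamma}{(\gamma+\alpha-1)^2}\int_{\mathbb{T}}\bigl|\partial_x\rho^{(\gamma+\alpha-1)/2}\bigr|^2\,\mathrm{d}x.
\]
The capillarity-$Q$ coupling is more delicate: after rewriting $\partial_x(\rho Q)$ as $\partial_{xx}\log\rho$ (for $\alpha=0$) or in terms of $\partial_x\rho^{\alpha/2}$ (for $\alpha\ne 0$) and iterating integration by parts, making crucial use of periodicity to discard perfect-derivative integrals such as $\int\partial_x[(\partial_x\log\rho)^3]$, one extracts the term $\tfrac{1}{2}\int(\partial_{xx}\log\rho)^2$ in the case $\alpha=0$, and the pair $\tfrac{4}{\alpha^2}\int|\partial_{xx}\rho^{\alpha/2}|^2$ and $\tfrac{4(4-3\alpha)}{3\alpha^3}\int\rho^{-\alpha}|\partial_x\rho^{\alpha/2}|^4$ in the case $\alpha\ne 0$. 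Tracking all the coefficients and checking that no spurious term survives is the delicate point of the computation.

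Once the dissipative terms have been identified, taking the supremum in $t\in[0,T\wedge\tau]$, the $p$-th power and the expectation, the stochastic contributions are absorbed exactly as in the proof of \eqref{energy visk}. A Gronwall argument, combined with the basic energy bound \eqref{energy visk}, closes the inequality against the initial BD entropy on the right-hand side, whose finite $p$-th moment is guaranteed by \eqref{C.I. Momenta strong}.
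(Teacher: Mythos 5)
Your proposal follows the same route as the paper: introduce $V = u + Q$, observe that adding the (deterministic) transport equation for $Q$ to the momentum equation divided by $\rho$ cancels the viscous term, apply It\^o's formula to $\tfrac12\int\rho|V|^2\,dx$, identify the pressure--$Q$ coupling as $\tfrac{4\gamma}{(\gamma+\alpha-1)^2}\int|\partial_x\rho^{(\gamma+\alpha-1)/2}|^2$ and the capillarity--$Q$ coupling as $\tfrac12\int(\partial_{xx}\log\rho)^2$ (resp.\ the $\partial_{xx}\rho^{\alpha/2}$ and quartic terms for $\alpha\ne 0$), and close with Burkholder--Davis--Gundy and Gronwall. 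You correctly flag the capillarity--$Q$ integration-by-parts bookkeeping (including the vanishing of perfect-derivative integrals like $\int\partial_x[(\partial_x\log\rho)^3]$) as the delicate step, which in the paper is carried out via identities in a free exponent $\theta$ later fixed at $\theta=\alpha/2$; apart from presenting the $V$-equation rather than the $\rho V$-equation, your outline matches the paper's proof.
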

\begin{proof}
We rewrite system \eqref{stoc quantum} in the new variables $(\rho, \rho V)$
\begin{equation}\label{ sist W}
\begin{cases}
\text{d}\rho+\partial_x (\rho u)\text{d}t=0 \\
\text{d}(\rho V)+ \partial_x (\rho uV)\text{d}t+ \partial_x \rho^{\gamma}\text{d}t=\rho \partial_x (\dfrac{\partial_{xx} \sqrt{\rho}}{\sqrt{\rho}})\text{d}t+\rho \mathbb{F}(\rho,u) \text{d}W
\end{cases}
\end{equation}
and, similarly to the energy estimate, we apply It$\hat{\text{o}}$ formula for $s=\rho, \;r=V,\; Q(r)=\frac{1}{2}|V|^2$  to the functional 
\begin{equation}
F(\rho,V)(t) =\int_{\mathbb{T}} \frac{1}{2}\rho(t)|V(t)|^2dx,
\end{equation}
we get:
\begin{equation} \label{Ent F}
\begin{split}
& \int_{\mathbb{T}} \frac{1}{2} \rho |V|^2(t)dx=\int_{\mathbb{T}} \frac{1}{2} \rho_0 |V_0|^2dx-\int_{0}^{t} \int_{\mathbb{T}} [ \rho V u \partial_x V +V\partial_x p(\rho)]dxds \\ & +\int_{0}^{t} \rho V \partial_x \bigg( \dfrac{\partial_{xx} \sqrt{\rho}}{\sqrt{\rho}} \bigg)dxds+\frac{1}{2}\int_{0}^{t} \int_{\mathbb{T}} \sum_{k\in \mathbb{N}} \rho | F_k(\rho,u)|^2dxdt \\ & + \int_{0}^{t} \int_{\mathbb{T}} \frac{1}{2} |V|^2 \partial_x(\rho u )dxds+ \int_{0}^{t} \int_{\mathbb{T}} \sum_{k \in \mathbb{N}} \rho V \mathbb{F}(\rho,u)dxdW.
\end{split}
\end{equation}
We now estimate the new terms appearing in the right hand side of \eqref{Ent F}.
The pressure term can be rewritten as \\ \\
for $\alpha=0$
\begin{equation*}
\begin{split}
& \int_{} \partial_x \rho^\gamma Q= \int_{} \partial_x \rho ^\gamma \mu \dfrac{\partial_x \rho}{\rho^2}= \gamma \mu \int_{} \rho^{\gamma-3} | \partial_x \rho |^2 = \dfrac{4 \gamma \mu}{(\gamma-1)^2} \int_{} | \partial_x \rho^{\frac{\gamma-1}{2}}|^2,
\end{split}
\end{equation*}
for $\alpha \neq 0$
\begin{equation*}
\begin{split}
& \int_{} \partial_x \rho^\gamma Q= \int_{} \partial_x \rho ^\gamma \rho^{\alpha-2}\partial_x \rho= \gamma \int_{}\rho^{\gamma-1} \partial_x \rho \rho^{\alpha-2} \partial_x \rho = \dfrac{4\gamma}{(\gamma+\alpha-1)^2}\int_{} | \partial_x \rho^{\frac{\gamma+\alpha-1}{2}} |^2.
\end{split}
\end{equation*}
Regarding the quantum term we observe that for $\alpha=0$
\begin{equation*}
\begin{split}
& \int_{} \rho Q \partial_x (\dfrac{\partial_{xx} \sqrt{\rho}}{\sqrt{\rho}})= \int_{} \mu \dfrac{\partial_x \rho}{\rho} \partial_x (\dfrac{\partial_{xx} \sqrt{\rho}}{\sqrt{\rho}})= \mu \int_{} \partial_x \log \rho \partial_x (\dfrac{\partial_{xx} \sqrt{\rho}}{\sqrt{\rho}}) \\ & =-\mu \int_{} \partial_{xx} \log \rho \partial_x ( \dfrac{\partial_x  \sqrt{\rho}}{\sqrt{\rho}})- \mu \int_{} \partial_{xx} \log \rho \dfrac{\partial_x \sqrt{\rho}}{\sqrt{\rho}} \dfrac{\partial_x  \sqrt{\rho}}{\sqrt{\rho}} \\ & =-\dfrac{1}{2} \mu  \int_{} \partial_{xx} \log \rho \partial_{xx} \log \rho- \dfrac{1}{4} \mu \int_{} \partial_{xx} \log \rho \partial_x \log \rho \partial_x \log \rho  = - \dfrac{1}{2} \mu \int_{} (\partial_{xx} \log \rho)^2.
\end{split}
\end{equation*}
The estimate of the second quantum term for $\alpha \neq 0$ is quite technical.  First we observe that
\begin{equation*}
\begin{split}
& I= \int_{} \partial_x (\rho Q) \frac{\partial_{xx}\sqrt{\rho}}{\sqrt{\rho}}= \int_{} \partial_x \bigg( \rho \rho^{\alpha-2} \partial_x \rho \bigg) \frac{\partial_{xx}\sqrt{\rho}}{\sqrt{\rho}}= \frac{1}{(\alpha-1)} \int_{} \partial_x \bigg ( \rho \partial_x {\rho^{\alpha-1}} \bigg )\frac{\partial_{xx}\sqrt{\rho}}{\sqrt{\rho}} \\ & =-\frac{1}{\alpha-1} \int_{} \partial_x {\rho^{\alpha-1}} \rho \partial_x \bigg( \frac{\partial_{xx}\sqrt{\rho}}{\sqrt{\rho}} \bigg)= -\frac{1}{(\alpha-1)} \int_{} \partial_x {\rho^{\alpha-1}}\partial_x ( \rho \partial_{xx} \log \rho ) \\ & = \frac{1}{(\alpha-1)} \int_{} \partial_x ( \partial_x {\rho^{\alpha-1}}) \rho \partial_{xx} \log \rho = \frac{1}{(\alpha-1)}\int_{} \partial_x(\partial_x \rho^{\alpha-1})\bigg[ \partial_x(\rho \partial_x \log 	\rho)-\partial_x\rho \partial_x \log \rho\bigg].
\end{split}
\end{equation*}
Now we recall the following elementary identities for a general exponent $\theta$ that we will properly choose in the sequel
$$ \partial_x \rho= \dfrac{\rho^{1-\theta}}{\theta} \partial_x \rho^\theta \text{ for all} \ \theta \ne 0 $$
$$ \partial_x \rho= \rho \partial_x \log \rho \ \text{for} \ \theta=0$$
and we rewrite
\begin{equation*}
\partial_x \rho^{\alpha-1} = (\alpha-1) \rho^{\alpha-2} \partial_x \rho= \frac{(\alpha-1)}{\theta} \rho^{\alpha-2+1-\theta} \partial_x \rho^\theta=\frac{(\alpha-1)}{\theta} \rho^{\alpha-\theta-1} \partial_x \rho^\theta
\end{equation*}
\begin{equation*}
\begin{split}
& \partial_x(\partial_x \rho^{\alpha-1})= \dfrac{(\alpha-1)(\alpha-\theta-1)}{\theta}\rho^{\alpha-\theta-2} \partial_x \rho \partial_x \rho^\theta+ \dfrac{(\alpha-1)}{\theta} \rho^{\alpha-\theta-1} \partial_{xx} \rho^\theta \\ & =\dfrac{(\alpha-1)(\alpha-\theta-1)}{\theta^2}\rho^{\alpha-\theta-2+1-\theta} | \partial_x \rho^\theta |^2+\dfrac{(\alpha-1)}{\theta} \rho^{\alpha-\theta-1} \partial_{xx} \rho^\theta \\ & =\dfrac{(\alpha-1)(\alpha-\theta-1)}{\theta^2} \rho^{\alpha-2\theta-1} | \partial_x \rho^\theta |^2+\dfrac{(\alpha-1)}{\theta} \rho^{\alpha-\theta-1} \partial_{xx} \rho^\theta
\end{split}
\end{equation*}
\begin{equation*}
\rho \partial_x \log \rho= \dfrac{\rho^{1-\theta}}{\theta} \partial_x \rho^\theta
\end{equation*}
\begin{equation*}
\partial_x(\rho \partial_x \log \rho)= \dfrac{1-\theta}{\theta} \rho^{-\theta} \partial_x \rho \partial_x \rho^\theta + \dfrac{\rho^{1-\theta}}{\theta} \partial_{xx} \rho^{\theta}
\end{equation*}
\begin{equation*}
\partial_x \rho \partial_x \log \rho= \dfrac{\rho^{1-\theta}}{\theta} \partial_x \rho^\theta \dfrac{\rho^{-\theta}}{\theta} \partial_x \rho^\theta= \dfrac{\rho^{1-2\theta}}{\theta^2}| \partial_x \rho^\theta |^2
\end{equation*}
substituting these identities in $I$ we get
\begin{equation}
I=\int_{} \bigg[ \dfrac{-\alpha^2+ \alpha(1+2\theta)-3\theta^2}{3\theta^4}\bigg] \rho^{\alpha-4\theta} | \partial_x \rho^\theta |^4+ \int_{} \dfrac{1}{\theta^2} \rho^{\alpha-2\theta} | \partial_{xx} \rho^\theta|^2
\end{equation}
which by choosing $\theta=\dfrac{\alpha}{2}$ becomes
\begin{equation}\label{BD quantum}
\begin{split}
& I=\int_{} \bigg[ \dfrac{-\alpha^2+ \alpha(1+\alpha)-3\alpha^2/4
}{3\alpha^4/16}\bigg] \rho^{-\alpha} | \partial_x \rho^{\frac{\alpha}{2}} |^4+ \int_{} \dfrac{4}{\alpha^2} | \partial_{xx} \rho^{\frac{\alpha}{2}}|^2 \\ & = \dfrac{4(4-3 \alpha) }{3 \alpha^3}\int_{} \rho^{-\alpha} | \partial_x \rho^\frac{\alpha}{2}|^4+\dfrac{4}{\alpha^2} \int_{} | \partial_{xx} 	\rho^{\frac{\alpha}{2}} |^2
\end{split}
\end{equation}
Hence by taking the $\sup$ in time we get the following balance law
\begin{itemize}
\item[1.]
for $\alpha=0$
\begin{equation}\label{alpha 0 entropy stoch}
\begin{split}
& \sup_{t \in [0,T\land \tau]} \int_{\mathbb{T}} \dfrac{1}{2}\rho |V|^2+ \dfrac{\rho^\gamma}{\gamma-1}+ | \partial_x \sqrt{\rho}|^2 dx + \dfrac{4 \gamma \mu}{(\gamma-1)^2} \int_{0}^{T\land \tau}\int_{\mathbb{T}} | \partial_x \rho^{\frac{\gamma-1}{2} }|^2 dxdt \\ &+ \dfrac{1}{2} \int_{0}^{T\land \tau} \int_{\mathbb{T}} (\partial_{xx} \log \rho )^2 dxdt = \frac{1}{2} \sum_{k=1}^{\infty}\int_{0}^{T\land \tau}\int_{\mathbb{T}} \rho |F_k|^2dxdt \\ & + \int_{0}^{T\land \tau}\int_{\mathbb{T}} \rho V \mathbb{F}(\rho,u) dxdW,
\end{split}
\end{equation}
\item[2.]
for $\alpha \neq 0$
\begin{equation}\label{entropy stoch}
\begin{split}
& \sup_{t \in [0,T\land \tau]} \int_{\mathbb{T}} \dfrac{1}{2}\rho |V|^2+ \dfrac{\rho^\gamma}{\gamma-1}+ | \partial_x \sqrt{\rho}|^2 dx + \dfrac{4 \gamma}{(\gamma+\alpha -1)^2} \int_{0}^{T\land \tau}\int_{\mathbb{T}} | \partial_x \rho^{\frac{\gamma+\alpha-1}{2} } |^2 dxdt \\ & + \dfrac{4}{\alpha^2} \int_{0}^{T\land \tau} \int_{\mathbb{T}} | \partial_{xx} 	\rho^{\frac{\alpha}{2}} |^2 dxdt +\dfrac{4(4-3 \alpha) }{3 \alpha^3}\int_{0}^{T\land \tau} \int_{\mathbb{T}} \rho^{-\alpha} | \partial_x \rho^\frac{\alpha}{2}|^4 dxdt \\ & = \frac{1}{2} \sum_{k=1}^{\infty}\int_{0}^{T\land \tau}\int_{\mathbb{T}} \rho |F_k|^2dxdt+ \int_{0}^{T\land \tau}\int_{\mathbb{T}} \rho V \mathbb{F}(\rho,u) dxdW.
\end{split}
\end{equation}
\end{itemize}
Finally we estimate the first integral in the right hand side of \eqref{entropy stoch} in the following way
\begin{equation}
\begin{split}
\sum_{k \in \mathbb{N}} \int_{0}^{T\land \tau} \int_{\mathbb{T}} \rho |F_k|^2dxdt \le \int_{0}^{T\land \tau} \sum_{k \in \mathbb{N}} \alpha^2_k \int_{\mathbb{T}} ( \rho + \rho |u|^2) dxdt \lesssim \int_{0}^{T\land \tau} \int_{\mathbb{T}} ( \rho+ \rho |u|^2) dxdt,
\end{split}
\end{equation}
while the stochastic integral is estimated by using the Burkholder-Davis-Gundy inequality as follows
\begin{equation}\label{stoch int bd entr}
\begin{split}
& \mathbb{E} \bigg| \int_{0}^{t} \int_{} \rho V \mathbb{F}(\rho,u) dxdW \bigg|^p \le  \mathbb{E} \bigg[ \sup_{t \in [0.T\land \tau]} \bigg| \int_{0}^{t} \int_{} \rho V \mathbb{F}(\rho,u) dxdW \bigg|^p \bigg] \\ & \lesssim \mathbb{E} \bigg[ \int_{0}^{T\land \tau} \sum_{k \in \mathbb{N}} \bigg| \int_{\mathbb{T}} \rho V F_k dx \bigg|^2 dt \bigg]^{\frac{p}{2}} \le \mathbb{E}\bigg[ \int_{0}^{T\land \tau}  \bigg| \int_{\mathbb{T}} ( \rho+ \rho |V|^2) dx \bigg|^2 dt \bigg]^{\frac{p}{2}}.
\end{split}
\end{equation}
By using Young inequality in \eqref{stoch int bd entr} and applying the Gronwall Lemma we end up with 
\begin{itemize}
\item[1.]
for $\alpha=0$
\begin{equation}
\begin{split}
& \mathbb{E} \bigg| \sup_{t \in [0,T\land \tau]} \int_{} \bigg[\dfrac{1}{2}\rho |V|^2+ \dfrac{\rho^\gamma}{\gamma-1}+ | \partial_x \sqrt{\rho}|^2 \bigg] dx \bigg|^p \\ & +\mathbb{E} \bigg| \dfrac{4 \gamma \mu}{(\gamma-1)^2} \int_{0}^{T\land \tau}\int_{\mathbb{T}} | \partial_x \rho^{\frac{\gamma-1}{2} }|^2 dxdt + \dfrac{1}{2} \int_{0}^{T\land \tau} \int_{\mathbb{T}} (\partial_{xx} \log \rho )^2 dxdt \bigg|^p \\ & \lesssim 1+ \mathbb{E}\bigg| \sup_{t \in [0,T\land \tau]} \int_{} \bigg[\dfrac{1}{2}\rho_0 |V_0|^2+ \dfrac{\rho_0^\gamma}{\gamma-1}+ | \partial_x \sqrt{\rho_0}|^2 \bigg] dx \bigg|^p,
\end{split}
\end{equation}
\item[2.]
for $\alpha \neq 0$
\begin{equation}
\begin{split}
& \mathbb{E} \bigg| \sup_{t \in [0,T\land \tau]} \int_{} \bigg[\dfrac{1}{2}\rho |V|^2+ \dfrac{\rho^\gamma}{\gamma-1}+ | \partial_x \sqrt{\rho}|^2 \bigg] dx \bigg|^p \\ & +\mathbb{E} \bigg| \dfrac{4 \gamma}{(\gamma+\alpha -1)^2} \int_{0}^{T\land \tau}\int_{\mathbb{T}} | \partial_x \rho^{\frac{\gamma+\alpha-1}{2} } |^2 dxdt \\ & + \dfrac{4}{\alpha^2} \int_{0}^{T\land \tau} \int_{\mathbb{T}} | \partial_{xx} 	\rho^{\frac{\alpha}{2}} |^2 dxdt +\dfrac{4(4-3 \alpha) }{3 \alpha^3}\int_{0}^{T\land \tau} \int_{\mathbb{T}} \rho^{-\alpha} | \partial_x \rho^\frac{\alpha}{2}|^4 dxdt  \bigg|^p \\ & \lesssim 1+ \mathbb{E}\bigg| \sup_{t \in [0,T\land \tau]} \int_{} \bigg[\dfrac{1}{2}\rho_0 |V_0|^2+ \dfrac{\rho_0^\gamma}{\gamma-1}+ | \partial_x \sqrt{\rho_0}|^2 \bigg] dx \bigg|^p,
\end{split}
\end{equation}
\end{itemize}
which concludes our proof.
\end{proof}
\noindent
Now we prove that \eqref{BD quantum} is non negative for an appropriate range of $\alpha.$  \\
Applying Lemma \ref{Functional ineq } to \eqref{BD quantum} with $f=\rho^{\frac{\alpha}{2}} $ we get 
\begin{equation}
\begin{split}
& \dfrac{4(4-3 \alpha) }{3 \alpha^3}\int_{} \rho^{-\alpha} | \partial_x \rho^\frac{\alpha}{2}|^4+\dfrac{4}{\alpha^2} \int_{} | \partial_{xx} 	\rho^{\frac{\alpha}{2}} |^2 \\ & \ge \dfrac{4(4-3 \alpha) }{3 \alpha^3}\int_{} \rho^{-\alpha} | \partial_x \rho^\frac{\alpha}{2}|^4+\dfrac{4}{9\alpha^2} \int_{} \rho^{-\alpha} | \partial_x \rho^{\frac{\alpha}{2} }|^4 \\ & =\dfrac{16(3-2\alpha)}{9\alpha^3} \int_{} \rho^{-\alpha} | \partial_x \rho^{\frac{\alpha}{2} }|^4 \ge 0 \iff \alpha \in (0, \frac{3}{2}].
\end{split}
\end{equation}
We summarize the new regularity estimates obtained by virtue of Proposition \ref{BD prop} and Lemma \ref{Functional ineq }
\begin{itemize}
\item[1.] if $\alpha=0$
\begin{equation}\label{reg entro 0}
\begin{split}
& \partial_x \bigg( \dfrac{1}{\sqrt{\rho}}\bigg)\in L^p(\Omega;L^{\infty}(0,T\land \tau;L^2(\mathbb{T}))), \quad \partial_x (\rho^{\frac{\gamma-1}{2} }) \in L^p(\Omega;L^{2}(0,T\land \tau;L^2(\mathbb{T}))), \\ &  \partial_{xx} \log \rho \in L^p(\Omega;L^{2}(0,T\land \tau;L^2(\mathbb{T}))).
\end{split}
\end{equation}
\item[2.] if $\alpha \in (0,\frac{1}{2})$
\begin{equation}\label{reg entro alpha}
\begin{split}
& \partial_x (\rho^{\alpha-\frac{1}{2}} )\in L^p(\Omega;L^{\infty}(0,T\land \tau;L^2(\mathbb{T}))), \quad \partial_x (\rho^{\frac{\gamma+\alpha-1}{2} }) \in L^p(\Omega;L^{2}(0,T\land \tau;L^2(\mathbb{T}))), \\ &  \partial_{xx} \rho^{\frac{\alpha}{2}} \in L^p(\Omega;L^{2}(0,T\land \tau;L^2(\mathbb{T}))),  \quad  \rho^{-\alpha} | \partial_x \rho^\frac{\alpha}{2}|^4 \in L^p(\Omega;L^{1}(0,T\land \tau;L^1(\mathbb{T}))).
\end{split}
\end{equation}
\item[3.] if $\alpha=\frac{1}{2}$
\begin{equation}\label{reg entro 1/2}
\begin{split}
& \partial_x \log \rho \in L^p(\Omega;L^{\infty}(0,T\land \tau;L^2(\mathbb{T}))), \quad \partial_x (\rho^{\frac{2\gamma-1}{4} }) \in L^p(\Omega;L^{2}(0,T\land \tau;L^2(\mathbb{T}))), \\ &  \partial_{xx} \rho^{\frac{1}{4}} \in L^p(\Omega;L^{2}(0,T\land \tau;L^2(\mathbb{T}))),  \quad  \rho^{-\frac{1}{2}} | \partial_x \rho^\frac{1}{4}|^4 \in L^p(\Omega;L^{1}(0,T\land \tau;L^1(\mathbb{T}))).
\end{split}
\end{equation}
\end{itemize}
\begin{proposition}\label{prop:vacuum}
Let $(\rho,u)$ be a strong solution of \eqref{stoc quantum}-\eqref{C.I}, then
$$ \sqrt{\rho} \in L^p(\Omega;L^{\infty}(0,T\land \tau;L^\infty(\mathbb{T}))).$$
Moreover if $\alpha \in [0,\frac{1}{2}],$ then $$ \dfrac{1}{\rho^\beta} \in L^p(\Omega;L^{\infty}(0,T\land \tau;L^\infty(\mathbb{T}))),$$
for all $\beta > 0,  \, p\in [1, \infty).$
\end{proposition}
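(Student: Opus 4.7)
The proof naturally splits into two parts: an upper bound on $\sqrt{\rho}$, which follows from the energy estimate alone, and a lower bound on $\rho$ (equivalently, upper bounds on $\rho^{-\beta}$), which requires the Bresch--Desjardins entropy and hence the restriction $\alpha\in[0,1/2]$.

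\textbf{Upper bound on $\sqrt{\rho}$.} The plan is to combine the uniform control of $\|\partial_x\sqrt{\rho}\|_{L^2}$ provided by \eqref{reg energy} with mass conservation. By the continuity equation, $\int_{\mathbb{T}}\rho(t)\,dx=\int_{\mathbb{T}}\rho_0\,dx=:M$ is constant in $t$, $\mathbb{P}$-a.s., so $\sqrt{\rho}\in L^p(\Omega;L^{\infty}(0,T\wedge\tau;L^2(\mathbb{T})))$ (since $\mathbb{T}$ has finite measure, $L^2$ is controlled by $L^1$-norm of $\rho$, hence by $M$). Together with $\partial_x\sqrt{\rho}\in L^p(\Omega;L^{\infty}(0,T\wedge\tau;L^2(\mathbb{T})))$ from \eqref{reg energy}, this gives $\sqrt{\rho}\in L^p(\Omega;L^{\infty}(0,T\wedge\tau;H^1(\mathbb{T})))$, and the 1D Sobolev embedding $H^1(\mathbb{T})\hookrightarrow L^\infty(\mathbb{T})$ yields the claim.

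\textbf{Lower bound on $\rho$.} The idea is to use the BD entropy to control the spatial gradient of a function $\varphi(\rho)$ that blows up on the vacuum set, and to combine this with a mean-value argument. More precisely, by Proposition \ref{BD prop} and the regularity summary \eqref{reg entro 0}--\eqref{reg entro 1/2}, set
\begin{equation*}
\varphi_\alpha(\rho):=\begin{cases} \rho^{-1/2}, & \alpha=0,\\ \rho^{\alpha-1/2}, & \alpha\in(0,1/2),\\ \log\rho, & \alpha=1/2,\end{cases}
\end{equation*}
so that in each case $\partial_x\varphi_\alpha(\rho)\in L^p(\Omega;L^{\infty}(0,T\wedge\tau;L^2(\mathbb{T})))$. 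By mass conservation and the mean value theorem applied to the continuous density $\rho(t,\cdot)$, for every $t$ there exists $x_0=x_0(t,\omega)\in\mathbb{T}$ with $\rho(t,x_0)=M$. Writing, for arbitrary $x\in\mathbb{T}$,
\begin{equation*}
\varphi_\alpha(\rho(t,x))=\varphi_\alpha(M)+\int_{x_0}^{x}\partial_y\varphi_\alpha(\rho(t,y))\,dy,
\end{equation*}
the Cauchy--Schwarz inequality gives
\begin{equation*}
\|\varphi_\alpha(\rho(t,\cdot))\|_{L^\infty(\mathbb{T})}\lesssim |\varphi_\alpha(M)|+\|\partial_x\varphi_\alpha(\rho(t,\cdot))\|_{L^2(\mathbb{T})}.
\end{equation*}
Since $\varphi_\alpha$ is decreasing in $\rho$ (and goes to $+\infty$ as $\rho\to 0^+$) in all three cases, inverting this pointwise bound produces a random constant $c_0>0$ such that $\rho(t,x)\geq c_0$ for all $(t,x)\in[0,T\wedge\tau]\times\mathbb{T}$, $\mathbb{P}$-a.s. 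The $L^p(\Omega)$ control of $\|\partial_x\varphi_\alpha(\rho)\|_{L^\infty_t L^2_x}$ and of $M=\int\rho_0\,dx$ (using \eqref{C.I. Momenta strong}) yields $c_0^{-1}\in L^p(\Omega)$ for every $p\in[1,\infty)$, and therefore $\rho^{-\beta}\leq c_0^{-\beta}\in L^p(\Omega;L^\infty(0,T\wedge\tau;L^\infty(\mathbb{T})))$ for every $\beta>0$.

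\textbf{Main obstacle.} The non-trivial point is obtaining a pointwise rather than merely integral lower bound on $\rho$; this is achieved by upgrading the BD-entropy control $\partial_x\varphi_\alpha(\rho)\in L^2_x$ to an $L^\infty_x$ bound on $\varphi_\alpha(\rho)$ via mass conservation and the $H^1\hookrightarrow L^\infty$ embedding, in a manner tailored to each of the three regimes $\alpha=0$, $\alpha\in(0,1/2)$, $\alpha=1/2$. The endpoint cases $\alpha=0,1/2$ are borderline since $\varphi_\alpha$ there has a logarithmic or $\rho^{-1/2}$ behaviour, which however is exactly what \eqref{reg entro 0} and \eqref{reg entro 1/2} provide. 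The restriction $\alpha\leq 1/2$ is therefore sharp at the level of this argument, consistently with the deterministic result of \cite{Mellet}.
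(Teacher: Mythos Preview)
Your argument is essentially the same as the paper's: both use mass conservation and the mean value theorem to find, for each $t$, a point where $\rho$ equals its (deterministically bounded) spatial average, then apply the fundamental theorem of calculus together with the BD-entropy control on $\partial_x\sqrt\rho$, $\partial_x\rho^{\alpha-1/2}$, or $\partial_x\log\rho$ to obtain the $L^\infty_x$ bounds. Two small remarks: for $\alpha=\tfrac12$ the function $\varphi_{1/2}=\log\rho$ is \emph{increasing} and tends to $-\infty$ as $\rho\to0^+$, so your monotonicity comment is inaccurate there (though the argument survives since you actually bound $|\log\rho|$); and the control on $M=\int_{\mathbb{T}}\rho_0\,dx$ is already deterministic by \eqref{C.I STRONG}, so invoking \eqref{C.I. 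Momenta strong} for that purpose is unnecessary.
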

\begin{proof}
By mean value theorem,  for any $t \in (0,T\land \tau)$ and $\omega \in \Omega,$ there exists a point $\bar x$ such that $$\rho(\bar x,t,  \omega)= \int_{\mathbb{T}} \rho(y,t,\omega) dy$$
Therefore,  by the conservation of the mass and the hypothesis \eqref{C.I STRONG},  there exists a deterministic constant $C>0 $ such that 
\begin{equation}\label{eq:a}
\frac{1}{C} \le \rho(\bar x,t,\omega) \le C, \quad \mathbb{P}\text{-a.s.}
\end{equation}
Next,  by using the fundamental theorem of calculus, for any fixed $t \in (0,T\land \tau), \; \omega \in \Omega$ and $x \in \mathbb{T},$ the following identities hold
\begin{equation}\label{FTC 0}
 \sqrt{\rho(x,t,\omega)}-\sqrt{\rho(\bar x,t,\omega)}= \int_{\bar x}^{ x} \partial_x \sqrt{\rho(y,t,\omega)}dy,
 \end{equation}
\begin{equation}\label{FTC alpha}
\frac{1}{{\rho}^{\frac{1}{2}-\alpha}(x,t,\omega)}-\frac{1}{{\rho}^{\frac{1}{2}-\alpha}(\bar x,t,\omega)}= \int_{\bar x}^{x} \partial_x \bigg( \frac{1}{{\rho}^{\frac{1}{2}-\alpha}(y,t,\omega)} \bigg) dy
\end{equation}
and similarly 
\begin{equation}\label{FTC 1/2}
\log \rho(x,t,\omega)-\log \rho (\bar x,t,\omega)= \int_{\bar x}^{x} \partial_x \log \rho(y,t,\omega)dy.
\end{equation}
Hence by taking the the $\sup$ in both space and time, the $p-$th power in \eqref{FTC 0}-\eqref{FTC 1/2},  and then the expectation, by \eqref{reg entro alpha} and \eqref{eq:a}, we get
\begin{equation}
\begin{split}
& \sqrt{\rho} \in L^p(\Omega;L^{\infty}(0,T\land \tau;L^\infty(\mathbb{T}))),  \quad \dfrac{1}{\rho^{\frac{1}{2}-\alpha}} \in L^p(\Omega;L^{\infty}(0,T\land \tau;L^\infty(\mathbb{T}))),  \\ &  \log \rho \in L^p(\Omega;L^{\infty}(0,T\land \tau;L^\infty(\mathbb{T}))).
\end{split}
\end{equation}
In order to conclude the proof of our second statement we observe that in the range $\alpha \in [0,\frac{1}{2}]$ we control a negative exponent of the density and by using H\"{o}lder inequality combined with the first statement we get our claim.
\end{proof}

A fundamental consequence of Proposition \ref{prop:vacuum} is the following corollary. 
\begin{corollary}\label{cor:szero}
Let $(\rho,u)$ be a strong solution of \eqref{stoc quantum}-\eqref{C.I}, let $\alpha \in [0, \frac{1}{2}],$ then the following holds
\begin{equation}
\begin{split}
& \partial_x \log\rho \in L^p(\Omega; L^\infty (0,T\land \tau;L^2(\mathbb{T}))),  \quad  u \in L^p(\Omega; L^\infty (0,T\land \tau;L^2(\mathbb{T}))), \\ & \partial_x u \in L^p(\Omega; L^2(0,T\land \tau;L^2(\mathbb{T}))).
\end{split}
\end{equation}
\end{corollary}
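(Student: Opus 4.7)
The plan is to deduce each of the three bounds by combining regularity already built into the energy and Bresch--Desjardins estimates with the pointwise lower bound on $\rho$ furnished by Proposition \ref{prop:vacuum}. Since $\alpha\in[0,1/2]$, that proposition gives $\rho^{-\beta}\in L^p(\Omega;L^\infty(0,T\wedge\tau;L^\infty(\mathbb{T})))$ for every $\beta>0$ and every $p\in[1,\infty)$, and this is precisely the ingredient that allows us to divide by powers of $\rho$ without losing integrability. In each case the argument is a Hölder splitting of a quantity into a "good" factor already controlled and a "bad" factor that becomes controllable once $\rho$ cannot vanish.

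For the first bound, I would use the pointwise identity $\partial_x\log\rho=2\,\partial_x\sqrt{\rho}/\sqrt{\rho}$, so that
\begin{equation*}
\|\partial_x\log\rho\|_{L^2(\mathbb{T})}^2\le 4\,\|\rho^{-1}\|_{L^\infty(\mathbb{T})}\,\|\partial_x\sqrt{\rho}\|_{L^2(\mathbb{T})}^2.
\end{equation*}
Taking the supremum in $t\in[0,T\wedge\tau]$, raising to the $p/2$-th power, applying the expectation and Hölder's inequality on $\Omega$, the right-hand side is controlled by the energy estimate \eqref{reg energy} (for the factor $\partial_x\sqrt{\rho}$) and by Proposition \ref{prop:vacuum} with $\beta=1$ (for the factor $\rho^{-1}$), each of which belongs to $L^q(\Omega)$ for every $q$. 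The bound on $u$ is obtained by the same device, writing $u=(\sqrt{\rho}\,u)/\sqrt{\rho}$ and noting $\|u\|_{L^2}^2\le\|\rho^{-1}\|_{L^\infty}\|\sqrt{\rho}\,u\|_{L^2}^2$, where $\sqrt{\rho}\,u\in L^p(\Omega;L^\infty_t L^2_x)$ by \eqref{reg energy}.

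For the third bound, I would split $\partial_x u=\rho^{-\alpha/2}\bigl(\sqrt{\mu(\rho)}\,\partial_x u\bigr)$, so that
\begin{equation*}
\|\partial_x u\|_{L^2(\mathbb{T})}^2\le \|\rho^{-\alpha}\|_{L^\infty(\mathbb{T})}\,\|\sqrt{\mu(\rho)}\,\partial_x u\|_{L^2(\mathbb{T})}^2.
\end{equation*}
Integrating in time over $[0,T\wedge\tau]$ produces exactly the norm to be estimated. The factor $\rho^{-\alpha}$ is controlled in $L^\infty_t L^\infty_x$ with arbitrary $L^q(\Omega)$ integrability by Proposition \ref{prop:vacuum} (and trivially for $\alpha=0$), while $\sqrt{\mu(\rho)}\,\partial_x u$ belongs to $L^p(\Omega;L^2_t L^2_x)$ by the energy estimate. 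A final Hölder inequality in $\Omega$ then gives the desired $L^p(\Omega;L^2(0,T\wedge\tau;L^2(\mathbb{T})))$ bound.

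There is no genuine obstacle here beyond careful bookkeeping of Hölder exponents on $\Omega$, which is trivial because every factor lies in $L^q(\Omega)$ for all $q\in[1,\infty)$. The conceptual content is concentrated in Proposition \ref{prop:vacuum}: without the no-vacuum property for $\alpha\in[0,1/2]$, none of these three quantities is directly accessible from the stochastic energy and BD balances, which only control the degenerate versions $\sqrt{\rho}\,u$, $\partial_x\sqrt{\rho}$, and $\sqrt{\mu(\rho)}\,\partial_x u$.
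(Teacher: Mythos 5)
Your proposal is correct and fills in precisely the details that the paper's one-line proof leaves implicit: you control $\partial_x\log\rho$, $u$, and $\partial_x u$ by writing each as a quotient involving a quantity bounded by the energy/BD estimates ($\partial_x\sqrt{\rho}$, $\sqrt{\rho}\,u$, $\sqrt{\mu(\rho)}\,\partial_x u$) times a power of $\rho^{-1}$, which is controlled by Proposition \ref{prop:vacuum} for $\alpha\in[0,1/2]$, and then close with Hölder on $\Omega$. This is exactly the argument the paper intends.
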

\begin{proof}
The proof follows by using the regularity inferred by the energy estimate and Bresch-Desjardins entropy inequality combined with Proposition \ref{prop:vacuum}. 
\end{proof}

For the next proposition it is convenient to formulate the system \eqref{stoc quantum} in the variables $(\psi,u)$ where $\psi= \log \rho$. This change of variable allows to get an important cancellation of the high order derivatives terms. We start by recalling also the following elementary identity concerning the quantum term
\begin{equation}\label{eq:idquantum}
2\rho \partial_x \bigg( \dfrac{\partial_{xx} \sqrt{\rho}}{\rho} \bigg)= \partial_x (\rho \partial_{xx} \log \rho ). 
\end{equation}

\noindent
Dividing by $\rho$ both the continuity and the momentum equations in \eqref{stoc quantum} we obtain
\begin{equation*}
\frac{\text{d}\rho}{\rho}+[u \frac{\partial_x \rho}{\rho}+ \partial_x u]\text{d}t=0 
\end{equation*}
\begin{equation*}
\text{d} u+[u \partial_x u + \frac{\partial_xp(\rho)}{\rho}]\text{d}t=[\frac{\partial_x(\mu(\rho)\partial_x u)}{\rho}+ \partial_x \bigg( \dfrac{\partial_{xx} \sqrt{\rho}}{\sqrt{\rho}}\bigg) ]\text{d}t +\mathbb{F}(\rho,\rho u)\text{d}W.
\end{equation*}
Then, in the new variables $(\psi,u)$ we obtain the system
\begin{equation}\label{psi nl}
\text{d}\psi+[u \partial_x \psi+ \partial_x u]\text{d}t=0 
\end{equation}
\begin{equation}\label{u nl}
\begin{split}
& \text{d}u+[u\partial_x u + \gamma e^{(\gamma-1)\psi}\partial_x \psi]dt=[e^{\alpha-1)\psi}\partial_{xx}u+\alpha e^{(\alpha-1)\psi}\partial_x \psi \partial_x u]dt \\ & + [\partial_{xxx}\psi+ \partial_x\psi \partial_{xx} \psi]dt+ \mathbb{F}(\rho, u)\text{d}W.
\end{split}
\end{equation}
\noindent
The following proposition contains the high order estimates needed to prove Theorem \ref{Main Theorem global}.  We refer the reader to \cite{Deb},  Theorem 3.2, for a similar construction to prove global well posedness for incompressible flows.
\begin{proposition}\label{Prop global s+1 s}
Let $(\rho,u)$ be a strong solution of \eqref{stoc quantum}-\eqref{C.I},  let $\alpha \in [0,\frac{1}{2}].$ Then the following holds
\begin{equation}\label{p=1 global hs}
\begin{split}
& \mathbb{E} \sup_{ s \in [0, T \land \gamma_M \land \tau] } \dfrac{1}{2}\bigg( \| \psi \|^2_{H^4}+ \| u \|^2_{H^3} \bigg)  \\ & + \mathbb{E} \int_{0}^{T \land \gamma_M \land \tau} \int_{\mathbb{T}} e^{(\alpha-1)\psi} ( |u|^2+|\partial_x u |^2|+\partial_{xx} u |^2+|\partial^3_x u |^2+|\partial^4_x u |^2) ds  < \infty
\end{split}
\end{equation}
where $\gamma_M$ is an appropriate stopping time defined in \eqref{min gamma M} satisfying $\lim_{ \rightarrow \infty} \gamma_M = \infty.$
\end{proposition}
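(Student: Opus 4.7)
The plan is to apply It\^o's formula on the reformulated system \eqref{psi nl}--\eqref{u nl} in the variables $(\psi,u)=(\log\rho,u)$ and derive an evolution law for the functional $\mathcal{E}(\psi,u)=\tfrac12\big(\|\psi\|^2_{H^4}+\|u\|^2_{H^3}\big)$. The reason for working precisely at the orders $(H^4,H^3)$ is the mismatch in differentiability forced by the dispersive structure: the Bohm term $\partial_{xxx}\psi$ in \eqref{u nl} is three orders above $u$, while the transport term $\partial_x u$ in \eqref{psi nl} is only one order above $\psi$. Differentiating \eqref{psi nl} four times and pairing with $\partial_x^4\psi$ produces the dangerous top-order contribution $-\int\partial_x^5 u\,\partial_x^4\psi\,dx$, which after two integrations by parts equals $-\int\partial_x^6\psi\,\partial_x^3 u\,dx$. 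Differentiating \eqref{u nl} three times and pairing with $\partial_x^3 u$ produces, from $\partial_{xxx}\psi$, the contribution $+\int\partial_x^6\psi\,\partial_x^3 u\,dx$. The two contributions cancel exactly, which is the algebraic reason why the pair $(H^4,H^3)$ closes. The identity \eqref{eq:idquantum} is the starting point to carry this cancellation in a coordinate-independent form.

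Next, one extracts the dissipation. Applying $\partial_x^k$ to the quasilinear viscous term $e^{(\alpha-1)\psi}\partial_{xx}u$ in \eqref{u nl} and pairing with $\partial_x^k u$, one integration by parts produces the positive definite piece $\int e^{(\alpha-1)\psi}|\partial_x^{k+1}u|^2\,dx$. Summing over $k=0,1,2,3$ yields the four derivative dissipation terms on the right-hand side of \eqref{p=1 global hs}, while the $L^2$-type contribution $\int e^{(\alpha-1)\psi}|u|^2$ is produced by a lower-order Gr\"onwall summand together with the conservation of mass. All remaining contributions from the Leibniz expansion are subprincipal, meaning that each factor carries strictly fewer derivatives than the critical order; they are estimated by Hölder's inequality, the embedding $H^s(\mathbb{T})\hookrightarrow W^{2,\infty}(\mathbb{T})$ for $s>5/2$, the basic energy estimate \eqref{reg energy}, the Bresch--Desjardins entropy of Proposition \ref{BD prop}, and crucially the pointwise two-sided bounds on $\rho$ provided by Proposition \ref{prop:vacuum} and Corollary \ref{cor:szero}. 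These bounds on $\rho^{\pm\beta}$ are precisely where the restriction $\alpha\in[0,1/2]$ is used.

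The stochastic part is handled in the standard way: the It\^o correction $\tfrac12\sum_k\|\partial_x^k(\rho F_k)\|^2$ and the martingale $\int\partial_x^k(\rho\mathbb{F})\,dW$ are estimated using assumptions \eqref{f2}--\eqref{fw} together with the Burkholder--Davis--Gundy inequality, yielding contributions linear in $\mathcal{E}$ plus a martingale supremum absorbable into the left-hand side by Young's inequality. The stopping time $\gamma_M$ defined in \eqref{min gamma M} serves its usual truncation role: on $[0,T\wedge\gamma_M\wedge\tau]$ the relevant $W^{2,\infty}$-type norms of $(\psi,u)$ are bounded by $M$, which linearizes all nonlinear terms by constants depending only on $M$ and on the a priori estimates of the preceding subsection. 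The resulting integral inequality
\[
\mathbb{E}\sup_{s\le t\wedge\gamma_M\wedge\tau}\mathcal{E}(\psi,u)(s)+\mathbb{E}\int_0^{t\wedge\gamma_M\wedge\tau}\mathcal{D}(\psi,u)\,ds\ \lesssim_M\ 1+\int_0^t\mathbb{E}\sup_{s\le r\wedge\gamma_M\wedge\tau}\mathcal{E}(\psi,u)(s)\,dr
\]
is closed by Gr\"onwall's lemma and yields \eqref{p=1 global hs}.

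The main obstacle is the precise bookkeeping at top order: after $\partial_x^4$ and $\partial_x^3$ are applied to \eqref{psi nl} and \eqref{u nl} respectively, the Leibniz rule generates a large number of terms, and one must check that, after every available integration by parts, the only contributions at the critical differential order are exactly the two mutually-cancelling ones identified above. A closely related subtlety is the treatment of the quasilinear commutator $[\partial_x^k,e^{(\alpha-1)\psi}]\partial_{xx}u$: it produces $\partial_x^{k+1}u$ multiplied by derivatives of $\psi$ up to order $k+2$, which is borderline and can be absorbed into the dissipation $\int e^{(\alpha-1)\psi}|\partial_x^{k+1}u|^2$ by Young's inequality \emph{only} when the $\psi$-derivatives can be bounded by $\|\psi\|_{H^4}$; this matching between the two sides is the structural reason why no lower regularity pair than $(H^4,H^3)$ closes the argument.
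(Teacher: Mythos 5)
Your high-level structural analysis is accurate in several respects: the exact top-order cancellation between $\int\partial_x^{k+2}u\,\partial_x^{k+1}\psi\,dx$ (from the transport part of $\eqref{psi nl}$) and $\int\partial_x^{k+3}\psi\,\partial_x^k u\,dx$ (from the Bohm term of $\eqref{u nl}$) is indeed the algebraic reason the pair $(H^{s+1},H^s)$ closes; the dissipation $\int e^{(\alpha-1)\psi}|\partial_x^{k+1}u|^2$ is produced as you describe; and the commutator $[\partial_x^k,e^{(\alpha-1)\psi}]\partial_{xx}u$ is exactly the borderline term whose absorption into the dissipation governs the argument. You also correctly identify Propositions \ref{BD prop} and \ref{prop:vacuum} and the restriction $\alpha\in[0,1/2]$ as the sources of the density bounds.

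However, there is a genuine gap: you propose to carry out the estimate in a \emph{single pass} at the level $(H^4,H^3)$, claiming that the stopping time $\gamma_M$ caps ``the relevant $W^{2,\infty}$-type norms of $(\psi,u)$'' by $M$, which ``linearizes all nonlinear terms.'' Neither half of this is quite right, and the gap cannot be patched without restoring the bootstrap the paper actually performs. First, on $[0,T\wedge\gamma_M\wedge\tau]$ it is the announcing stopping times $\tau_R$ of the maximal solution, not $\gamma_M$, that control $\|\psi\|_{W^{2,\infty}}$ and $\|u\|_{W^{2,\infty}}$. Second, and decisively, a $W^{2,\infty}$ bound is \emph{not enough} to close the Gr\"onwall inequality at the top level. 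When you expand $\partial_x^4(u\,\partial_x\psi)$ against $\partial_x^4\psi$, you produce terms of the type $\|\partial^3_x\psi\|^2_{L^2}\,\|\partial^4_x\psi\|^2_{L^2}$, where only the second factor is the quantity being propagated; the first factor is a coefficient that must be controlled in $L^\infty$-in-time. Neither the energy estimate \eqref{reg energy}, nor the Bresch--Desjardins entropy (which controls at most two spatial derivatives of $\log\rho$, and only in $L^2$-in-time), nor the $W^{2,\infty}$ truncation provides an a priori bound on $\sup_t\|\partial^3_x\psi\|_{L^2}$ or on $\sup_t\|\partial_{xx}u\|_{L^2}$. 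You would therefore define a stopping time $\gamma_M$ cutting these quantities at level $M$, derive a finite estimate on $[0,T\wedge\gamma_M\wedge\tau]$ depending on $M$, and then be unable to show $\gamma_M\to\infty$ a.s., because the Markov-inequality step requires $\mathbb{E}\sup_{[0,T\wedge\tau]}(\|\partial^3_x\psi\|^2_{L^2}+\|\partial_{xx}u\|^2_{L^2})<\infty$, which is exactly what your one-shot argument has not yet established. This is why the paper bootstraps through three levels: the $(\partial_{xx}\psi,\partial_x u)$ estimate uses only the BD entropy and the vacuum bound in its Gr\"onwall coefficient $a^{(1)}$; the $(\partial^3_x\psi,\partial_{xx}u)$ estimate uses the output of the first level in $a^{(2)}$; and only the third level $(\partial^4_x\psi,\partial^3_x u)$ uses an $L^\infty_t$-in-time truncation $\gamma^{(3)}_M$ of $[a^{(3)}]=\|\partial^3_x\psi\|^2_{L^2}+\|\partial_{xx}u\|^2_{L^2}+\|\rho^{-(\alpha-1)/2}\|^2_{L^\infty}$, whose finiteness in expectation is supplied by step two and Proposition \ref{prop:vacuum}. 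The final $\gamma_M$ in \eqref{min gamma M} is the minimum of the three, and $\gamma_M\to\infty$ follows by chaining the three Markov arguments together. Your proposal needs to be rewritten as a three-step bootstrap, or equivalently as an induction on the differentiation order, in order to justify both the finiteness of the Gr\"onwall coefficients and the convergence $\gamma_M\to\infty$.
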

\begin{proof}
We start by proving that 
\begin{equation}\label{eq:s1}
\begin{split}
& \partial_{xx} \psi \in L^1(\Omega;L^\infty(0,T \land \gamma_M \land \tau;L^2(\mathbb{T}))),\quad \partial_x u \in L^1(\Omega;L^\infty(0,T \land \gamma_M \land \tau;L^2(\mathbb{T}))), \\ & 
\rho^{\frac{\alpha-1}{2}} \partial_{xx} u \in L^1(\Omega;L^2(0,T \land \gamma_M \land \tau;L^2(\mathbb{T}))).
\end{split}
\end{equation}
We differentiate \eqref{psi nl} two times in the space variable,  we multiply it by $\partial_{xx} \psi$ and integrating in space we get 
\begin{equation}\label{ito rho s=1}
\text{d} \int_{} \frac{ | \partial_{xx} \psi|^2}{2} dx+ \int_{} \partial_{xx}(u \partial_x \psi)\partial_{xx} \psi dx\text{d}t+ \int_{} \partial^3_{x} u \partial_{xx} \psi dx \text{d}t=0, 
\end{equation}
then we apply $\partial_x $ in the momentum equation \eqref{u nl}.  We use It$\hat{\text{o}}$ formula to the functional $F(u)=\frac{1}{2}\int_{} |\partial_x u|^2$ in order to get 
\begin{equation}\label{ito u s=1}
\begin{split}
& \text{d}\int_{}\frac{| \partial_xu|^2}{2}dx+\int_{}\partial_x(u\partial_x u)\partial_x u dx\text{d}t+ \int_{} \partial_x(\gamma e^{(\gamma-1)\psi }\partial_x \psi) \partial_xudxdt= \\ &\int_{} \partial_x (e^{(\alpha-1)\psi}\partial_{xx}u) \partial_x u dxdt + \int_{} \partial_x(\alpha e^{(\alpha-1)\psi}\partial_x \psi \partial_x u)\partial_x udxdt + \int_{}\partial^4_{x}\psi \partial_x udxdt \\  &+\int_{} \partial_x(\partial_x\psi \partial_{xx} \psi) \partial_x udt + \int_{}\partial_x \mathbb{F}(\rho, u)\partial_x u\text{d}W+ \frac{1}{2} \sum_{k=1}^{\infty} \int_{} |\partial_x F_k|^2 dx
\end{split}
\end{equation}
and summing up \eqref{ito rho s=1}-\eqref{ito u s=1} we end up with the following equality 
\begin{equation}
\begin{split}
& \text{d}\bigg( \int_{}\frac{ | \partial_{xx}\psi|^2}{2}+ \frac{| \partial_xu|^2}{2}dx \bigg)+ \int_{} \partial_{xx}(u \partial_x \psi)\partial_{xx} \psi dx\text{d}t+ \int_{} \partial^3_{x} u \partial_{xx} \psi dx \text{d}t \\ & +\int_{}\partial_x(u\partial_x u)\partial_x u dx\text{d}t+ \int_{} \partial_x(\gamma e^{(\gamma-1)\psi }\partial_x \psi) \partial_xudxdt=\int_{} \partial_x (e^{(\alpha-1)\psi}\partial_{xx}u) \partial_x u dxdt \\ & + \int_{} \partial_x(\alpha e^{(\alpha-1)\psi}\partial_x \psi \partial_x u)\partial_x udxdt + \int_{}\partial^4_{x}\psi \partial_x udxdt+\int_{} \partial_x(\partial_x\psi \partial_{xx} \psi) \partial_x udt \\ & + \int_{}\partial_x \mathbb{F}(\rho, u)\partial_x udx\text{d}W+ \frac{1}{2} \sum_{k=1}^{\infty} \int_{} |\partial_x F_k|^2 dxdt,
\end{split}
\end{equation}
which can be rewritten as
\begin{equation}\label{s=1}
\begin{split}
& \text{d}\bigg( \int_{}\frac{ | \partial_{xx} \psi|^2}{2}+ \frac{| \partial_x u|^2}{2}dx \bigg)+ \int_{} e^{(\alpha-1) \psi} | \partial_{xx} u |^2 dxdt=- \int_{} \partial_{xx}(u \partial_x \psi)\partial_{xx} \psi dx\text{d}t \\ &-\int_{}\partial_x(u\partial_x u)\partial_x u dx\text{d}t- \int_{} \partial_x(\gamma e^{(\gamma-1)\psi }\partial_x \psi) \partial_xudxdt+\int_{} \partial_x(\alpha e^{(\alpha-1)\psi}\partial_x \psi \partial_x u)\partial_x udxdt \\ & + \int_{} \partial_x(\partial_x\psi \partial_{xx} \psi) \partial_x udxdt + \int_{}\partial_x \mathbb{F}(\rho, u)\partial_x udx\text{d}W+ \frac{1}{2} \sum_{k=1}^{\infty} \int_{} |\partial_x F_k|^2 dxdt= \sum_{i=1}^{7} I_i.
\end{split}
\end{equation}
In order to estimate the integrals in the right hand side of \eqref{s=1} we make and extensive use of H\"{o}lder,  Sobolev and Young inequalities
\begin{equation*}
\begin{split}
|I_1|&=| -\int{} \partial_{xx}(u\partial_x \psi)\partial_{xx}\psi dx|  \le \int_{} | \partial_{xx}u| | \partial_x \psi| |  \partial_{xx}\psi| +\frac{3}{2}| \partial_x u|  |\partial_{xx} \psi |^2 dx \\ & \le  \| \rho^{\frac{\alpha-1}{2}}\partial_{xx}u \|_{L^2} \| \partial_x \psi \|_{L^\infty} \| \partial_{xx} \psi \|_{L^2} \| \frac{1}{\rho^{\frac{\alpha-1}{2}}} \|_{L^\infty} + c \| \partial_x u \|_{L^\infty} \| \partial_{xx} \psi \|^2_{L^2}.
\end{split}
\end{equation*}
\begin{equation}\label{I_1 s=1}
\begin{split}
|I_1|& \le \delta \| \rho^{\frac{\alpha-1}{2}}\partial_{xx} u \|^2_{L^2} + C(\delta) \| \partial_{xx} \psi \|^2_{L^2} \| \partial_{xx} \psi \|^2_{L^2}\| \frac{1}{\rho^{\frac{\alpha-1}{2}}} \|^2_{L^\infty} \\ & + C(\delta) \| \frac{1}{\rho^{\frac{\alpha-1}{2}}} \|^2_{L^\infty} \| \partial_{xx} \psi \|^2_{L^2},
\end{split}
\end{equation}
\begin{equation}
\begin{split}
|I_2| & = | -\int_{} \partial_x (u \partial_x u ) \partial_x u dx | \le  \int_{} |u| | \partial_x u| | \partial_{xx} u| dx \\ & \le \| u \|_{L^\infty} \| \partial_x u \|_{L^2} \| \rho^{\frac{\alpha-1}{2}}\partial_{xx} u \|_{L^2} \| \frac{1}{\rho^{\frac{\alpha-1}{2}}}\|_{L^\infty}  \\ & \le C(\delta) \| u \|^2_{L^\infty} \| \partial_x u \|^2_{L^2}\| \frac{1}{\rho^{\frac{\alpha-1}{2}}}\|^2_{L^\infty}+ \delta \| \rho^{\frac{\alpha-1}{2}}\partial_{xx} u \|^2_{L^2} \\ & \le C(\delta) \| \partial_x u \|^2_{L^2} \| \frac{1}{\rho^{\frac{\alpha-1}{2}}} \|^2_{L^\infty}( \| u \|^2_{L^2}+ \| \partial_x u \|^2_{L^2})+ \delta \| \rho^{\frac{\alpha-1}{2}}\partial_{xx} u \|^2_{L^2},
\end{split}
\end{equation}
\begin{equation}
\begin{split}
|I_3| & = | -\int_{} \partial_x (\gamma e^{(\gamma-1) \psi } \partial_x \psi ) \partial_x u dx | \le  \int_{}\gamma e^{(\gamma-1) \psi } | \partial_x \psi| | \partial_{xx} u| dx  \\ & \le c \| \rho^{\frac{2\gamma-\alpha-1}{2}} \|_{L^\infty} \| \partial_x \psi \|_{L^2} \| \rho^{\frac{\alpha-1}{2}}\partial_{xx} u \|_{L^2} \\ & \le C(\delta) \| \rho^{\frac{2\gamma-\alpha-1}{2}} \|^2_{L^\infty}  \| \partial_x \psi \|^2_{L^2}+\delta \| \rho^{\frac{\alpha-1}{2}}\partial_{xx} u \|^2_{L^2},
\end{split}
\end{equation}
\begin{equation}
\begin{split}
| I_4| & =| \int_{} \partial_x(\alpha e^{(\alpha-1)\psi}\partial_x \psi \partial_x u)\partial_x udx | \le  \int_{} \frac{| \partial_x \rho^{\alpha}|}{\rho} | \partial_x u| | \partial_{xx} u| dx  \\ & \le C \| \frac{1}{\rho^{\frac{\alpha+1}{2}}} \|_{L^\infty} \| \partial_x \rho^{\alpha} \|_{L^\infty} \| \partial_x u \|_{L^2} \| \rho^{\frac{\alpha-1}{2}}\partial_{xx} u \|_{L^2} \\ & \le C \| \frac{1}{\rho^{\frac{\alpha+1}{2}}} \|_{L^\infty} \| \partial_{xx} \rho^{\alpha} \|_{L^2} \| \partial_x u \|_{L^2} \| \rho^{\frac{\alpha-1}{2}}\partial_{xx} u \|_{L^2}\\ & \le C(\delta) \| \frac{1}{\rho^{\frac{\alpha+1}{2}}} \|^2_{L^\infty} \| \partial_{xx} \rho^{\alpha} \|^2_{L^2} \| \partial_x u \|^2_{L^2}  +\delta \| \rho^{\frac{\alpha-1}{2}}\partial_{xx} u \|^2_{L^2},
\end{split}
\end{equation}
\begin{equation}
\begin{split}
| I_5 | & = | \int_{} \partial_x ( \partial_x \psi \partial_{xx} \psi) \partial_x u dx | \le \int_{} | \partial_x \psi | | \partial_{xx} \psi | | \partial_{xx} u| dx  \\ & \le \| \partial_x \psi \|_{L^\infty} \| \partial_{xx} \psi \|_{L^2} \| \rho^{\frac{\alpha-1}{2}}\partial_{xx} u \|_{L^2} \| \frac{1}{\rho^{\frac{\alpha-1}{2}}} \|_{L^\infty} \\ & \le C \| \partial_{xx} \psi \|_{L^2}\| \partial_{xx}\psi \|_{L^2} \| \rho^{\frac{\alpha-1}{2}}\partial_{xx} u \|_{L^2} \| \frac{1}{\rho^{\frac{\alpha-1}{2}}} \|_{L^\infty} \\ & \le \delta \| \rho^{\frac{\alpha-1}{2}}\partial_{xx} u \|^2_{L^2} +C(\delta) \| \partial_{xx} \psi \|^2_{L^2} \| \frac{1}{\rho^{\frac{\alpha-1}{2}}} \|^2_{L^\infty} \| \partial_{xx} \psi \|^2_{L^2}.
\end{split}
\end{equation}
Finally we can estimate the It$\hat{\text{o}}$ reminder term $I_7$ as follows
\begin{equation}\label{I_7 s=1}
\begin{split}
|I_7| &=\frac{1}{2} \sum_{k=1}^{\infty} \int_{\mathbb{T}} |\partial_x F_k|^2 dx=\frac{1}{2} \sum_{k=1}^{\infty} \| \partial_x F_k \|^2_{L^2} \le c \sum_{k=1}^{\infty} \| \partial_x F_k \|^2_{L^\infty} \le c \sum_{k=1}^{\infty} {\alpha_k}^2 \le C.
\end{split}
\end{equation}
Hence we sum up the estimates \eqref{I_1 s=1}-\eqref{I_7 s=1} and by choosing properly the constant $\delta$ we end up with the following inequality
\begin{equation}\label{ineq with a(t)}
\begin{split}
& \dfrac{1}{2} \text{d} ( \| \partial_{xx} \psi \|^2_{L^2}+ \| \partial_x u \|^2_{L^2}) + c\int_{\mathbb{T}} (e^{(\alpha-1) \psi }) | \partial_{xx} u|^2 dx \\ & \le  \bigg(\| \partial_{xx} \psi \|^2_{L^2}+ \| \partial_x u \|^2_{L^2}\bigg)\bigg[1+ \| \partial_{xx}\psi \|^2_{L^2} \| \frac{1}{\rho^{\frac{\alpha-1}{2}}}\|^2_{L^\infty} + \| \frac{1}{\rho^{\frac{\alpha-1}{2}}}\|^2_{L^\infty} \\ & + \| u \|^2_{L^2} \| \frac{1}{\rho^{\frac{\alpha-1}{2}}}\|^2_{L^\infty}+ \| \partial_x u \|^2_{L^2}\| \frac{1}{\rho^{\frac{\alpha-1}{2}}}\|^2_{L^\infty}+ \| \partial_{xx} \rho^\alpha \|^2_{L^2}\| \frac{1}{\rho^{\frac{\alpha+1}{2}}} \|^2_{L^\infty} \bigg] \\ & + \| \rho^{\frac{2\gamma-\alpha-1}{2} \|^2_{L^\infty}} \| \partial_x \psi \|^2_{L^2} +I_6 = \bigg(\| \partial_{xx} \psi \|^2_{L^2}+ \| \partial_x u \|^2_{L^2}\bigg)a^{(1)}(t)+b^{(1)}(t)+I_6.
\end{split}
\end{equation}
We define the following stopping time 
\begin{equation}
\gamma^{(1)}_M= \inf \big\{ t\ge 0 \; :  \int_{0}^{t \land \tau} a^{(1)}(s)ds > M \big\},
\end{equation}
with the convention $\inf \emptyset= +\infty.$ \\
Clearly by definition of $\gamma^{(1)}_M$ we have 
\begin{equation}
\int_{0}^{\gamma^{(1)}_M} a^{(1)}(t)dt \le M \quad a.s.
\end{equation}
We perform a localization argument.  Let $\tau_a, \; \tau_b$ be a pair of stopping times s.t $0 \le \tau_a < \tau_b \le \gamma^{(1)}_M \land \tau \land T,$ we integrate in time and we take the $\sup$ over the interval $[\tau_a, \tau_b].$ After applying expectation we get
\begin{equation}
\begin{split}
& \mathbb{E} \big(\sup_{ \tau_a \le s \le \tau_b}\dfrac{1}{2}( \| \partial_{xx} \psi \|^2_{L^2}+ \| \partial_x u \|^2_{L^2}) + \int_{\tau_a}^{\tau_b}\int_{\mathbb{T}} (e^{(\alpha-1) \psi }) | \partial_{xx} u|^2 dx \\ & \le \mathbb{E} \big(\| \partial_{xx} \psi (\tau_a)\|^2_{L^2}+ \| \partial_x u (\tau_a)\|^2_{L^2}\big)+ c \mathbb{E} \int_{\tau_a}^{\tau_b} \big(\| \partial_{xx} \psi \|^2_{L^2}+ \| \partial_x u \|^2_{L^2}\big)a^{(1)}(t)dt \\ & +c\mathbb{E} \int_{\tau_a}^{\tau_b}b^{(1)}(t)dt+ c\mathbb{E} \sup_{\tau_a \le s \le \tau_b} | \int_{\tau_a}^{s}I_6 |.
\end{split}
\end{equation}
Concerning the stochastic integral $I_6,$ we observe that by using the Burkholder-Davis-Gundy inequality we get
\begin{equation*}
\begin{split}
\mathbb{E} \bigg[ \sup_{ \tau_a \le s \le \tau_b} \bigg| {\int_{\tau_a}^{s} \int_{\mathbb{T}} \partial_x \mathbb{F}(\rho,u)\partial_x udxdW \bigg| } \bigg] \lesssim \mathbb{E} \bigg[ \int_{\tau_a}^{\tau_b} \sum_{k=1}^{\infty} \bigg| \int_{\mathbb{T}} \partial_x F_k \partial_x u dx \bigg|^2 dt \bigg]^{\frac{1}{2}}
\end{split}
\end{equation*}
and 
\begin{equation*}
\begin{split}
 \bigg| \int_{\mathbb{T}} \partial_x F_k \partial_x u dx \bigg|^2 \le \| \partial_x F_k \|^2_{L^2} \| \partial_x u \|^2_{L^2} \le C \| \partial_x F_k \|^2_{L^\infty} \| \partial_x u \|^2_{L^2} \le C {\alpha_k}^2 \| \partial_x u \|^2_{L^2},
\end{split}
\end{equation*}
hence we have 
\begin{equation*}
\begin{split}
& \mathbb{E} \bigg[ \int_{\tau_a}^{\tau_b} \sum_{k=1}^{\infty} \bigg| \int_{\mathbb{T}} \partial_x F_k \partial_x u dx \bigg|^2 dt \bigg]^{\frac{1}{2}} \le  \mathbb{E} \bigg[ \int_{\tau_a}^{\tau_b} \sum_{k=1}^{\infty}C {\alpha_k}^2 \| \partial_x u \|^2_{L^2} \bigg]^{\frac{1}{2}} \\ & \le \mathbb{E} \bigg[ \int_{\tau_a}^{\tau_b} \tilde{c} \| \partial_x u \|^2_{L^2} \bigg]^{\frac{1}{2}} \le \mathbb{E} \bigg( 1+ \int_{\tau_a}^{\tau_b} c \| \partial_x u \|^2_{L^2} \bigg)
\end{split}
\end{equation*}
and we end up with the following inequality 
\begin{equation}
\begin{split}
& \mathbb{E} \big(\sup_{ \tau_a \le s \le \tau_b}\dfrac{1}{2}( \| \partial_{xx} \psi \|^2_{L^2}+ \| \partial_x u \|^2_{L^2}) + \int_{\tau_a}^{\tau_b}\int_{\mathbb{T}} (e^{(\alpha-1) \psi }) | \partial_{xx} u|^2 dx \\ & \le \mathbb{E} \big(\| \partial_{xx} \psi (\tau_a)\|^2_{L^2}+ \| \partial_x u (\tau_a)\|^2_{L^2}\big)+ c \mathbb{E} \int_{\tau_a}^{\tau_b} \big(\| \partial_{xx} \psi \|^2_{L^2}+ \| \partial_x u \|^2_{L^2}\big)a^{(1)}(t)dt \\ & +c\mathbb{E} \int_{\tau_a}^{\tau_b}b^{(1)}(t)dt.
\end{split}
\end{equation}
Now,  applying the stochastic version of the Gronwall Lemma,  see Proposition  \ref{stoch gronw}, we infer the following result
\begin{equation}
\mathbb{E} \bigg(\sup_{  s \in [0, T \land \gamma^{(1)}_M \land \tau] }\dfrac{1}{2}( \| \partial_{xx} \psi \|^2_{L^2}+ \| \partial_x u \|^2_{L^2}) + \int_{0}^{T \land \gamma^{(1)}_M \land \tau}\int_{\mathbb{T}} e^{(\alpha-1) \psi } | \partial_{xx} u|^2 dx \bigg) < +\infty.
\end{equation}
Now we want to prove that $\lim_{M \rightarrow \infty} \gamma^{(1)}_M= \infty$ a.s.  so that such an estimate is indeed global.  To this purpose we observe that by using the Markov inequality we have
\begin{equation}\label{markov M s=1}
 \mathbb{P} \big( \gamma^{(1)}_M \le T \big) \le \mathbb{P} \big( \int_{0}^{T \land \tau} a^{(1)}(t)dt \ge M \big) \le \frac{1}{M} \mathbb{E} \big( \int_{0}^{T \land \tau} a^{(1)}(t)dt \big).
\end{equation}
We focus our attention on the estimate of the term $ \| \partial_{xx} \psi \|^2_{L^2} \| \frac{1}{\rho^{\frac{\alpha-1}{2}}} \|^2_{L^\infty}$ which appears in the right hand side of \eqref{markov M s=1}. This term can be handled by using the regularity of the $p-$th momenta in the energy and BD entropy estimates.  Indeed we have
\begin{equation}\label{interp a(t) s=1}
\begin{split}
& \mathbb{E} \bigg( \int_{0}^{T \land \tau} \| \partial_{xx} \psi \|^2_{L^2} \| \frac{1}{\rho^{\frac{\alpha-1}{2}}} \|^2_{L^\infty}dt \bigg) \le \mathbb{E} \sup_{[0,T \land \tau]}\| \frac{1}{\rho^{\frac{\alpha-1}{2}}} \|^2_{L^\infty} \bigg( \int_{0}^{T \land \tau} \| \partial_{xx} \psi \|^2_{L^2} dt \bigg) \\ & \le \bigg( \mathbb{E} \bigg( \sup_{[0,T \land \tau]}\| \frac{1}{\rho^{\frac{\alpha-1}{2}}} \|^2_{L^\infty} \bigg)^2 \bigg)^{\frac{1}{2}} \bigg( \mathbb{E} \bigg( \int_{0}^{T \land \tau} \| \partial_{xx} \psi \|^2_{L^2} dt \bigg)^2 \bigg)^{\frac{1}{2}} < \infty
\end{split}
\end{equation}
hence sending $M \rightarrow \infty$ in \eqref{markov M s=1} we get our claim.
The estimates of the other terms in the definition of $a^{(1)}(t)$ follow the same lines of argument of the previous case.
\\
\\
Next, we prove that 
\begin{equation}\label{eq:s2}
\begin{split}
& \partial^3_{x} \psi \in L^1(\Omega;L^\infty(0,T \land \gamma_M \land \tau;L^2(\mathbb{T}))),\quad \partial_{xx} u \in L^1(\Omega;L^\infty(0,T \land \gamma_M \land \tau;L^2(\mathbb{T}))), \\ & 
\rho^{\frac{\alpha-1}{2}}\partial^3_{x} u \in L^1(\Omega;L^2(0,T \land \gamma_M \land \tau;L^2(\mathbb{T}))).
\end{split}
\end{equation}
We apply $\partial^3_{x}$ to \eqref{psi nl}, we multiply it by $\partial^3_{x} \psi$ and integrating in space we get 
\begin{equation}\label{ito rho s=2}
\text{d} \int_{} \frac{ | \partial^3_{x} \psi|^2}{2} dx+ \int_{} \partial^3_{x}(u \partial_x \psi)\partial^3_{x} \psi dx\text{d}t+ \int_{} \partial^4_{x} u \partial^3_{x} \psi dx \text{d}t=0.
\end{equation}
Similarly we apply $\partial_{xx}$ to  \eqref{u nl} and we use It$\hat{\text{o}}$ formula to the functional $F(u)=\frac{1}{2}\int_{} |\partial_{xx} u|^2$
\begin{equation}\label{ito u s=2}
\begin{split}
& \text{d}\int_{}\frac{ | \partial_{xx}u|^2}{2}dx+\int_{}\partial_{xx}(u\partial_x u)\partial_{xx} u dx\text{d}t+ \int_{} \partial_{xx}(\gamma e^{(\gamma-1)\psi }\partial_x \psi) \partial_{xx}udxdt= \\ &\int_{} \partial_{xx} (e^{(\alpha-1)\psi}\partial_{xx}u) \partial_{xx} u dxdt + \int_{} \partial_{xx}(\alpha e^{(\alpha-1)\psi}\partial_x \psi \partial_x u)\partial_{xx} udxdt + \int_{}\partial^5_{x}\psi \partial_{xx} udxdt \\  &+\int_{} \partial_{xx}(\partial_x\psi \partial_{xx} \psi) \partial_{xx} udt + \int_{}\partial_{xx} \mathbb{F}(\rho, u)\partial_{xx} u\text{d}W+ \frac{1}{2} \sum_{k=1}^{\infty} \int_{} |\partial_{xx} F_k|^2 dx
\end{split}
\end{equation}
and summing up the \eqref{ito rho s=2}-\eqref{ito u s=2} we end up with the following equality 
\begin{equation}
\begin{split}
& \text{d}\bigg( \int_{}\frac{ | \partial^3_{x}\psi|^2}{2}+ \frac{| \partial_{xx}u|^2}{2}dx \bigg)+ \int_{} \partial^3_{x}(u \partial_x \psi)\partial^3_{x} \psi dx\text{d}t+ \int_{} \partial^4_{x} u \partial^3_{x} \psi dx \text{d}t \\ & +\int_{}\partial_{xx}(u\partial_x u)\partial_{xx} u dx\text{d}t+ \int_{} \partial_{xx}(\gamma e^{(\gamma-1)\psi }\partial_x \psi) \partial_{xx}udxdt=\int_{} \partial_{xx} (e^{(\alpha-1)\psi}\partial_{xx}u) \partial_{xx} u dxdt \\ & + \int_{} \partial_{xx}(\alpha e^{(\alpha-1)\psi}\partial_x \psi \partial_x u)\partial_{xx} udxdt + \int_{}\partial^5_{x}\psi \partial_{xx} udxdt+\int_{} \partial_{xx}(\partial_x\psi \partial_{xx} \psi) \partial_{xx} udt \\ & + \int_{}\partial_{xx} \mathbb{F}(\rho, u)\partial_{xx} u\text{d}W+ \frac{1}{2} \sum_{k=1}^{\infty} \int_{} |\partial_{xx} F_k|^2 dx,
\end{split}
\end{equation}
which after integrating by parts can be rewritten as
\begin{equation}\label{s=2}
\begin{split}
& \text{d}\bigg( \int_{}\frac{ | \partial^3_{x} \psi|^2}{2}+ \frac{| \partial_{xx} u|^2}{2}dx \bigg)+ \int_{} e^{(\alpha-1) \psi} | \partial^3_{x} u |^2 dxdt=- \int_{} \partial^3_{x}(u \partial_x \psi)\partial^3_{x} \psi dx\text{d}t \\ &-\int_{}\partial_{xx}(u\partial_x u)\partial_{xx} u dx\text{d}t- \int_{} \partial_{xx}(\gamma e^{(\gamma-1)\psi }\partial_x \psi) \partial_{xx}udxdt-\int_{} \partial_x( e^{(\alpha-1)\psi} )\partial_{xx}u \partial^3_x u dxdt \\ &+\int_{} \partial_{xx}(\alpha e^{(\alpha-1)\psi}\partial_x \psi \partial_{xx} u)\partial_{xx} udxdt + \int_{} \partial_{xx}(\partial_x\psi \partial_{xx} \psi) \partial_{xx} udt + \int_{}\partial_{xx} \mathbb{F}(\rho, u)\partial_{xx} u\text{d}W \\ &+ \frac{1}{2} \sum_{k=1}^{\infty} \int_{} |\partial_{xx} F_k|^2 dx= \sum_{i=1}^{8} I_i.
\end{split}
\end{equation}
The integrals in the right hand side of \eqref{s=2} are estimated by using the same lines of argument of the previous case
\begin{equation*}
\begin{split}
|I_1| & =| -\int_{} \partial^3_x(u \partial_x \psi) \partial^3_x \psi dx | \le \int_{} | \partial^3_x u | | \partial_x \psi | | \partial^3_x \psi | + 3 | \partial_{xx} u | | \partial_{xx} \psi | | \partial^3_x \psi | + \frac{5}{2} | \partial_x u | | \partial^3_{x} \psi| ^2 dx  \\ & \le \| \rho^{\frac{\alpha-1}{2}}\partial^3_x u \|_{L^2} \| \partial_x \psi \|_{L^\infty} \| \partial^3_x \psi \|_{L^2} \| \frac{1}{\rho^{\frac{\alpha-1}{2}}}\|_{L^\infty} + c \| \partial_{xx} u \|_{L^\infty} \| \partial_{xx} \psi \|_{L^2} \| \partial^3_x \psi \|_{L^2} \\ & +c \| \partial_x u \|_{L^\infty} \| \partial^3_x \psi \|^2_{L^2}.
\end{split}
\end{equation*}
By Sobolev embeddings and Young inequality we have
\begin{equation}\label{I_1 s=2}
\begin{split}
|I_{1}| & \le  \delta \| \rho^{\frac{\alpha-1}{2}}\partial^3_x u \|^2_{L^2} + C(\delta) \| \partial^3_x \psi \|^2_{L^2} \| \frac{1}{\rho^{\frac{\alpha-1}{2}}} \|^2_{L^\infty} \| \partial_{xx} \psi \|^2_{L^2} \\ & + C_1\| \partial^3_x \psi \|^2_{L^2}  \| \frac{1}{\rho^{\frac{\alpha-1}{2}}} \|^2_{L^\infty} +C_2 \| \partial^3_x \psi \|^2_{L^2} \| \rho^{\frac{\alpha-1}{2}}\partial_{xx} u \|^2_{L^2},
\end{split}
\end{equation}
\begin{equation*}
\begin{split}
|I_2| &=| -\int_{} \partial_{xx}( u \partial_x u ) \partial_{xx} u dx | \le  \int_{} 3 |\partial_{xx} u|^2 | \partial_x u| + |u | | \partial^3_x u| |  \partial_{xx} u|  dx \\ & \le c  \| \partial_{xx} u \|^2_{L^2} \| \partial_x u \|_{L^\infty}+ \| u \|_{L^\infty} \| \partial_{xx} u \|_{L^2} \| \rho^{\frac{\alpha-1}{2}}\partial^3_x u \|^2_{L^2} \| \frac{1}{\rho^{\frac{\alpha-1}{2}}} \|_{L^\infty},
\end{split}
\end{equation*}
hence we end up with
\begin{equation}
\begin{split}
| I_2 | &  \le \| \partial_{xx} u \|^2_{L^2} ( \| \rho^{\frac{\alpha-1}{2}} \partial_{xx} u \|^2_{L^2}+\| \frac{1}{\rho^{\frac{\alpha-1}{2}}} \|^2_{L^\infty} +  \| u \|^2_{L^\infty} \| \frac{1}{\rho^{\frac{\alpha-1}{2}}} \|^2_{L^\infty}) \\ & + \delta \| \rho^{\frac{\alpha-1}{2}}\partial^3_x u \|^2_{L^2}.
\end{split}
\end{equation}
Similarly,  the other integrals $I_i$, $i=1,..,8$ can be handled as follows
\begin{equation*}
\begin{split}
|I_3| & =| -\int_{} \partial_{xx} (\gamma e^{(\gamma-1) \psi } \partial_x \psi) \partial_{xx} u dx |=| -\int_{} \partial_{xx} \bigg( \frac{\partial_x \rho^\gamma}{\rho} \bigg) \partial_{xx} u dx | \\ & \le \int_{} |\frac{\partial_{xx} \rho^\gamma}{\rho}| | \partial^3_x u | +| \partial_x \rho^\gamma | | \partial_x \bigg(\frac{1}{\rho} \bigg) | |  \partial^3_x u | dx \\ & \le \|\frac{1}{\rho^{\frac{\alpha+1}{2}}} \|_{L^\infty} \| \partial_{xx} \rho^\gamma \|_{L^2} \| \rho^{\frac{\alpha-1}{2}}\partial^3_x u \|_{L^2}+ \| \partial_x \rho^\gamma \|_{L^\infty} \| \partial_x \bigg( \frac{1}{\rho} \bigg) \|_{L^2} \|\frac{1}{\rho^{\frac{\alpha-1}{2}}}\|_{L^\infty} \| \rho^{\frac{\alpha-1}{2}}\partial^3_x u \|_{L^2}.
\end{split}
\end{equation*}
\begin{equation}
\begin{split}
|I_3| & \le C(\delta) \|\frac{1}{\rho^{\frac{\alpha+1}{2}}} \|^2_{L^\infty} \| \partial_{xx} \rho^\gamma \|^2_{L^2} + \delta \| \rho^{\frac{\alpha-1}{2}}\partial^3_x u \|^2_{L^2} \\ & + C(\delta) \| \partial_x \rho^\gamma \|^2_{L^\infty} \| \partial_x \bigg( \frac{1}{\rho} \bigg) \|^2_{L^2}\| \frac{1}{\rho^{\frac{\alpha-1}{2}}}\|^2_{L^\infty}.
\end{split}
\end{equation}
\begin{equation*}
\begin{split}
|I_4| & =| -\int_{} \partial_x (e^{(\alpha-1)\psi})\partial_{xx} u \partial^3_x u dx | \\ & \le \int_{}  | \bigg(\frac{\partial_x \rho^\alpha}{\rho}\bigg) | | \partial_{xx} u | |  \partial^3_x u | + \rho^\alpha  | \partial_x \bigg( \frac{1}{\rho}\bigg)| | \partial_{xx} u| | \partial^3_x u | dx \\ & \le \| \partial_x \rho^{\alpha} \|_{L^\infty} \| \frac{1}{\rho^{\frac{\alpha+1}{2}}}\|_{L^{\infty}} \| \partial_{xx} u \|_{L^2} \|\rho^{\frac{\alpha-1}{2}} \partial^3_x u \|_{L^2} \\ & + \| \rho^\alpha \|_{L^\infty} \| \partial_x \bigg( \frac{1}{\rho} \bigg ) \|_{L^\infty} \| \partial_{xx} u \|_{L^2} \| \rho^{\frac{\alpha-1}{2}} \partial^3_x u \|_{L^2}\| \frac{1}{\rho^{\frac{\alpha-1}{2}}}\|_{L^\infty},
\end{split}
\end{equation*}
hence by using similar lines of argument we have 
\begin{equation}
\begin{split}
| I_4| & \le C(\delta)\| \partial_x \rho^{\gamma} \|^2_{L^\infty} \| \frac{1}{\rho^{\frac{\alpha+1}{2}}}\|^2_{L^{\infty}} \| \partial_{xx} u \|^2_{L^2} + \delta \| \rho^{\frac{\alpha-1}{2}}\partial^3_x u \|^2_{L^2} \\ & + C(\delta)  \| \rho^\alpha \|^2_{L^\infty} \| \partial_x \bigg( \frac{1}{\rho} \bigg ) \|^2_{L^\infty} \| \partial_{xx} u \|^2_{L^2}\| \frac{1}{\rho^{\frac{\alpha-1}{2}}}\|^2_{L^\infty}.
\end{split}
\end{equation}
\begin{equation*}
\begin{split}
|I_5| & = | \int_{} \partial_{xx} (\alpha e^{(\alpha-1)\psi} \partial_x \psi \partial_x u ) \partial_{xx} u dx |= | \int_{} \partial_{xx} (\frac{\partial_x \rho^{\alpha}}{\rho} \partial_x u) \partial_{xx} u dx | \\ & \le  \int_{} | \frac{\partial_{xx} \rho^{\alpha}}{\rho}| |  \partial_x u | | \partial^3_x u | + | \partial_x \rho^{\alpha} | | \partial_x \bigg(\frac{1}{\rho} \bigg)| | \partial_x u| |  \partial^3_x u | + | \frac{\partial_x \rho^\alpha}{\rho} | | \partial_{xx} u | | \partial^3_x u|  dx \\ & \le \| \frac{1}{\rho^{\frac{\alpha+1}{2}}} \|_{L^\infty} \| \partial_{xx} \rho^{\alpha} \|_{L^2} \| \partial_x u \|_{L^\infty} \| \rho^{\frac{\alpha-1}{2}}\partial^3_x u \|_{L^2} \\ & + \| \partial_x \rho^\alpha \|_{L^\infty} \| \partial_x \bigg( \frac{1}{\rho} \bigg) \|_{L^2} \| \partial_x u \|_{L^\infty} \| \rho^{\frac{\alpha-1}{2}}\partial^3_x u \|_{L^2}\| \frac{1}{\rho^{\frac{\alpha-1}{2}}} \|_{L^\infty} \\ & +  \| \frac{1}{\rho^{\frac{\alpha+1}{2}}} \|_{L^\infty} \| \partial_x \rho^\alpha \|_{L^\infty} \| \partial_{xx} u \|_{L^2} \| \rho^{\frac{\alpha-1}{2}}\partial^3_x u \|_{L^2},
\end{split}
\end{equation*}
from which we deduce
\begin{equation}
\begin{split}
| I_5| & \le C(\delta) \| \frac{1}{\rho^{\frac{\alpha+1}{2}}} \|^2_{L^\infty} \| \partial_{xx} \rho^{\alpha} \|^2_{L^2} \| \partial_{xx} u \|^2_{L^2}+ \delta \| \rho^{\frac{\alpha-1}{2}}\partial^3_x u \|^2_{L^2} \\ & +C(\delta) \| \partial_{xx} \rho^\alpha \|^2_{L^2} \| \partial_x \bigg( \frac{1}{\rho} \bigg) \|^2_{L^2} \| \partial_{xx} u \|^2_{L^2} \| \frac{1}{\rho^{\frac{\alpha-1}{2}}} \|^2_{L^\infty} \\ & + C(\delta) \| \frac{1}{\rho^{\frac{\alpha+1}{2}}} \|^2_{L^\infty} \| \partial_x \rho^\alpha \|^2_{L^2} \| \partial_{xx} u \|^2_{L^2}.
\end{split}
\end{equation}
\begin{equation*}
\begin{split}
|I_6| & =| \int_{} \partial_{xx} ( \partial_x \psi \partial_{xx} \psi ) \partial_{xx} u dx | \le \int_{} 2 | \partial_{xx} \psi | | \partial^3_x \psi | | \partial_{xx} u | + |  \partial_x \psi | | \partial^3_x \psi | |  \partial^3_x u | dx \\ & \le c \| \partial_{xx} \psi \|_{L^2} \| \partial^3_x \psi \|_{L^2} \| \partial_{xx} u \|_{L^\infty} +\| \partial_x \psi \|_{L^\infty} \| \partial^3_x \psi \|_{L^2} \| \rho^{\frac{\alpha-1}{2}}\partial^3_x u \|_{L^2} \| \frac{1}{\rho^{\frac{\alpha-1}{2}}} \|_{L^\infty},
\end{split}
\end{equation*}
\begin{equation}
\begin{split}
| I_6| & \le \delta\| \rho^{\frac{\alpha-1}{2}} \partial^3_x u \|^2_{L^2}+ C(\delta) \| \partial^3_x \psi \|^2_{L^2} \| \partial_{xx} \psi \|^2_{L^2} \| \frac{1}{ \rho^{\frac{\alpha-1}{2}}} \|^2_{L^\infty}.
\end{split}
\end{equation}
The It$\hat{\text{o}}$ reminder term $I_8$ can be estimated as follows
\begin{equation}\label{I_8 s=2}
\begin{split}
|I_8| &=\frac{1}{2} \sum_{k=1}^{\infty} \int_{\mathbb{T}} |\partial_{xx} F_k|^2 dx=\frac{1}{2} \sum_{k=1}^{\infty} \| \partial_{xx} F_k \|^2_{L^2} \le c \sum_{k=1}^{\infty} \| \partial_{xx} F_k \|^2_{L^\infty} \le c \sum_{k=1}^{\infty} {\alpha_k}^2 \le C.
\end{split}
\end{equation}
Summing up the estimates \eqref{I_1 s=2}-\eqref{I_8 s=2} and by choosing properly the Young constant $\delta$ we have 
\begin{equation}
\begin{split}
& \dfrac{1}{2}\text{d} ( \| \partial^3_x \psi \|^2_{L^2} + \| \partial_{xx} u \|^2_{L^2} ) + c\int_{\mathbb{T}} e^{(\alpha-1) \psi } | \partial^3_x u |^2 dx \\ &  \le \bigg(\| \partial^3_x \psi \|^2_{L^2} + \| \partial_{xx} u \|^2_{L^2}\bigg) \bigg[ 1+ \| \frac{1}{\rho^{\frac{\alpha-1}{2}}} \|^2_{L^\infty}\| \partial_{xx} \psi \|^2_{L^2}  \| \frac{1}{\rho^{\frac{\alpha-1}{2}}} \|^2_{L^\infty} + \| \rho^{\frac{\alpha-1}{2}}\partial_{xx} u \|^2_{L^2} \\ & + \| u \|^2_{L^\infty}\| \frac{1}{\rho^{\frac{\alpha-1}{2}}} \|^2_{L^\infty} + \| \partial_x \rho^\alpha \|^2_{L^\infty}  \| \frac{1}{\rho^{\frac{\alpha+1}{2}}} \|^2_{L^\infty} + \| \rho^\alpha \|^2_{L^\infty} \| \partial_x (\frac{1}{\rho})\|^2_{L^\infty}\| \frac{1}{\rho^{\frac{\alpha-1}{2}}} \|^2_{L^\infty} \\ & +  \| \frac{1}{\rho^{\frac{\alpha+1}{2}}} \|^2_{L^\infty} \| \partial_{xx} \rho^\alpha \|^2_{L^2} + \| \partial_{xx} \rho^\alpha \|^2_{L^2} \| \partial_x (\frac{1}{\rho}) \|^2_{L^2}\| \frac{1}{\rho^{\frac{\alpha-1}{2}}} \|^2_{L^\infty} + \| \frac{1}{\rho^{\frac{\alpha+1}{2}}} \|^2_{L^\infty} \| \partial_x \rho^\alpha \|^2_{L^2} \bigg] \\ & + \| \partial_{xx} \rho^\gamma \|^2_{L^2}\| \frac{1}{\rho^{\frac{\alpha+1}{2}}} \|^2_{L^\infty} + \| \partial_x \rho^\gamma \|^2_{L^\infty} \| \partial_x (\frac{1}{\rho}) \|^2_{L^2} \| \frac{1}{\rho^{\frac{\alpha-1}{2}}} \|^2_{L^\infty} \\ & = \bigg(\| \partial^3_x \psi \|^2_{L^2} + \| \partial_{xx} u \|^2_{L^2}\bigg)a^{(2)}(t)+b^{(2)}(t)+I_7.
\end{split}
\end{equation}
We define the stopping time 
\begin{equation}
\gamma^{(2)}_M= \inf \big\{ t\ge 0 \; :  \int_{0}^{t \land \tau} a^{(2)}(s)ds > M \big\},
\end{equation}
and by using the same lines of argument used in order to prove \eqref{eq:s1} we get
\begin{equation}
\begin{split}
& \mathbb{E} \sup_{\tau_a \le s \le \tau_b} \frac{1}{2}( \| \partial^3_x \psi \|^2_{L^2} + \| \partial_{xx} u \|^2_{L^2} ) + c\mathbb{E} \int_{\tau_a}^{\tau_b}\int_{\mathbb{T}} e^{(\alpha-1) \psi } | \partial^3_x u |^2 dxdt \\ &  \le \mathbb{E} ( \| \partial^3_x \psi (\tau_a) \|^2_{L^2} + \| \partial_{xx} u (\tau_a) \|^2_{L^2} )+ \mathbb{E} \int_{\tau_a}^{\tau_b}(\| \partial^3_x \psi \|^2_{L^2} + \| \partial_{xx} u \|^2_{L^2})a^{(2)}(t) \\ & +\mathbb{E} \int_{\tau_a}^{\tau_b}b^{(2)}(t) + \mathbb{E} \sup_{ \tau_a \le s \le \tau_b} | \int_{\tau_a}^{s}I_7|.
\end{split}
\end{equation}
In order to handle the stochastic integral $I_7$ we observe that 
\begin{equation*}
\begin{split}
\mathbb{E} \bigg[\sup_{ \tau_a \le s \le \tau_b}  \bigg| {\int_{0}^{s} \int_{\mathbb{T}} \partial_{xx} \mathbb{F}(\rho,u)\partial_{xx} udxdW \bigg| } \bigg] \lesssim \mathbb{E} \bigg[ \int_{\tau_a}^{\tau_b} \sum_{k=1}^{\infty} \bigg| \int_{\mathbb{T}} \partial_{xx} F_k \partial_{xx} u dx \bigg|^2 dt \bigg]^{\frac{1}{2}}
\end{split}
\end{equation*}
and 
\begin{equation*}
\begin{split}
 \bigg| \int_{\mathbb{T}} \partial_{xx} F_k \partial_{xx} u dx \bigg|^2 \le \| \partial_{xx} F_k \|^2_{L^2} \| \partial_{xx} u \|^2_{L^2} \le C \| \partial_{xx} F_k \|^2_{L^\infty} \| \partial_{xx} u \|^2_{L^2} \le C {\alpha_k}^2 \| \partial_{xx} u \|^2_{L^2},
\end{split}
\end{equation*}
from which we deduce
\begin{equation*}
\begin{split}
& \mathbb{E} \bigg[ \int_{\tau_a}^{\tau_b} \sum_{k=1}^{\infty} \bigg| \int_{\mathbb{T}} \partial_{xx} F_k \partial_{xx} u dx \bigg|^2 dt \bigg]^{\frac{1}{2}} \le \mathbb{E} \bigg[ \int_{\tau_a}^{\tau_b} \sum_{k=1}^{\infty}C {\alpha_k}^2 \| \partial_{xx} u \|^2_{L^2} \bigg]^{\frac{1}{2}} \\ & \le \mathbb{E} \bigg[ \int_{\tau_a}^{\tau_b} \tilde{c} \| \partial_{xx} u \|^2_{L^2} \bigg]^{\frac{1}{2}} \le \mathbb{E} \bigg[ 1+ \int_{\tau_a}^{\tau_b} c \| \partial_{xx} u \|^2_{L^2} \bigg],
\end{split}
\end{equation*}
so we end up with the following inequality
\begin{equation}
\begin{split}
& \mathbb{E} \sup_{ \tau_a \le s \le \tau_b} \frac{1}{2}( \| \partial^3_x \psi \|^2_{L^2} + \| \partial_{xx} u \|^2_{L^2} ) + c\mathbb{E} \int_{\tau_a}^{\tau_b}\int_{\mathbb{T}} e^{(\alpha-1) \psi } | \partial^3_x u |^2 dxdt \\ & \le \mathbb{E} ( \| \partial^3_x \psi (\tau_a) \|^2_{L^2} + \| \partial_{xx} u (\tau_a) \|^2_{L^2} )+ \mathbb{E} \int_{\tau_a}^{\tau_b}(\| \partial^3_x \psi \|^2_{L^2} + \| \partial_{xx} u \|^2_{L^2})a^{(2)}(t)dt \\ & +\mathbb{E} \int_{\tau_a}^{\tau_b}b^{(2)}(t)dt.
\end{split}
\end{equation}
Once more we make use of the stochastic Gronwall Lemma,  Proposition \ref{stoch gronw},  in order to obtain
\begin{equation}
\mathbb{E} \bigg(\sup_{  s \in [0, T \land \gamma^{(2)}_M \land \tau] }\dfrac{1}{2}( \| \partial^3_x \psi \|^2_{L^2}+ \| \partial_{xx} u \|^2_{L^2}) + \int_{0}^{T \land \gamma^{(2)}_M \land \tau}\int_{\mathbb{T}} e^{(\alpha-1) \psi } | \partial^3_x u|^2 dxds \bigg) < +\infty.
\end{equation}
\\
In this case in order to prove that $\lim_{M \to \infty} \gamma^{(2)}_M= \infty$ a.s. we observe that $$\mathbb{E} \int_{0}^{T \land \tau} \| \rho^{\frac{\alpha-1}{2}}\partial_{xx} u \|^2_{L^2} < \infty $$
and the other terms can be estimates as in the case $s=1,$
hence we conclude
\begin{equation}\label{markov M s=2}
\mathbb{P} \big( \gamma^{(2)}_M \le T \big) \le\mathbb{P} \big( \int_{0}^{T \land \tau} a^{(2)}(t)dt \ge M \big) \le \frac{1}{M} \mathbb{E} \big( \int_{0}^{T \land \tau} a^{(2)}(t)dt \big) 
\end{equation}
which goes to zero as $M \rightarrow \infty.$ \\
Finally, we prove that 
\begin{equation}\label{eq:s3}
\begin{split}
& \partial^4_{x} \psi \in L^1(\Omega;L^\infty(0,T \land \gamma_M \land \tau;L^2(\mathbb{T}))),\quad \partial^3_{x} u \in L^1(\Omega;L^\infty(0,T \land \gamma_M \land \tau;L^2(\mathbb{T}))), \\ & 
\rho^{\frac{\alpha-1}{2}}\partial^4_{x} u \in L^1(\Omega;L^2(0,T \land \gamma_M \land \tau;L^2(\mathbb{T}))).
\end{split}
\end{equation}
As in the previous cases we apply $\partial^4_{x}$ to \eqref{psi nl}, we multiply it by $\partial^4_{x} \psi$ and integrating in space we get 
\begin{equation}\label{ito rho s=3}
\text{d} \int_{} \frac{ | \partial^4_{x} \psi|^2}{2} dx+ \int_{} \partial^4_{x}(u \partial_x \psi)\partial^4_{x} \psi dx\text{d}t+ \int_{} \partial^5_{x} u \partial^4_{x} \psi dx, \text{d}t=0,
\end{equation}
we apply $\partial^3_{x}$ to  \eqref{u nl} and we apply It$\hat{\text{o}}$ formula to the functional $F(u)=\frac{1}{2}\int_{} |\partial^3_{x} u|^2$
\begin{equation}\label{ito u s=3}
\begin{split}
& \text{d}\int_{}\frac{| | \partial^3_{x}u|^2|}{2}dx+\int_{}\partial^3_{x}(u\partial_x u)\partial^3_{x} u dx\text{d}t+ \int_{} \partial^3_{x}(\gamma e^{(\gamma-1)\psi }\partial_x \psi) \partial^3_{x}udxdt= \\ &\int_{} \partial^3_{x} (e^{(\alpha-1)\psi}\partial_{xx}u) \partial^3_{x} u dxdt + \int_{} \partial^3_{x}(\alpha e^{(\alpha-1)\psi}\partial_x \psi \partial_x u)\partial^3_{x} udxdt + \int_{}\partial^6_{x}\psi \partial^3_{x} udxdt \\  &+\int_{} \partial^3_x(\partial_x\psi \partial_{xx} \psi) \partial^3_x udt + \int_{}\partial^3_{x} \mathbb{F}(\rho, u)\partial^3_{x} u\text{d}W+ \frac{1}{2} \sum_{k=1}^{\infty} \int_{} |\partial^3_{x} F_k|^2 dx
\end{split}
\end{equation}
and summing up \eqref{ito rho s=3}-\eqref{ito u s=3} we end up with the following equality 
\begin{equation}
\begin{split}
& \text{d}\bigg( \int_{}\frac{ | \partial^4_{x}\psi|^2}{2}+ \frac{| \partial^3_{x}u|^2}{2}dx \bigg)+ \int_{} \partial^4_{x}(u \partial_x \psi)\partial^4_{x} \psi dx\text{d}t+ \int_{} \partial^5_{x} u \partial^4_{x} \psi dx \text{d}t \\ & +\int_{}\partial^3_{x}(u\partial_x u)\partial^3_{x} u dx\text{d}t+ \int_{} \partial^3_{x}(\gamma e^{(\gamma-1)\psi }\partial_x \psi) \partial^3_{x}udxdt=\int_{} \partial^3_{x} (e^{(\alpha-1)\psi}\partial_{xx}u) \partial^3_{x} u dxdt \\ & + \int_{} \partial^3_{x}(\alpha e^{(\alpha-1)\psi}\partial_x \psi \partial_x u)\partial^3_{x} udxdt + \int_{}\partial^6_{x}\psi \partial^3_{x} udxdt+\int_{} \partial^3_{x}(\partial_x\psi \partial_{xx} \psi) \partial^3_{x} udt \\ & + \int_{}\partial^3_{x} \mathbb{F}(\rho, u)\partial^3_{x} u\text{d}W+ \frac{1}{2} \sum_{k=1}^{\infty} \int_{} |\partial^3_{x} F_k|^2 dx,
\end{split}
\end{equation}
which after integrating by parts we rewrite as
\begin{equation}\label{s=3}
\begin{split}
& \text{d}\bigg( \int_{}\frac{ | \partial^4_{x} \psi|^2}{2}+ \frac{| \partial^3_{x} u|^2}{2}dx \bigg)+ \int_{} e^{(\alpha-1) \psi} | \partial^4_{x} u |^2 dxdt=- \int_{} \partial^4_{x}(u \partial_x \psi)\partial^4_{x} \psi dx\text{d}t \\ &-\int_{}\partial^3_{x}(u\partial_x u)\partial^3_{x} u dx\text{d}t- \int_{} \partial^3_{x}(\gamma e^{(\gamma-1)\psi }\partial_x \psi) \partial^3_{x}udxdt-\int_{} \partial_{xx}( e^{(\alpha-1)\psi} )\partial_{xx}u \partial^4_x u dxdt \\ &-2\int_{} \partial_x(e^{(\alpha-1)\psi})\partial^3_x u \partial^4_x u+\int_{} \partial^3_{x}(\alpha e^{(\alpha-1)\psi}\partial_x \psi \partial_{xx} u)\partial^3_x udxdt + \int_{} \partial^3_{x}(\partial_x\psi \partial_{xx} \psi) \partial^3_{x} udt \\ & + \int_{}\partial^3_{x} \mathbb{F}(\rho, u)\partial^3_{x} u\text{d}W + \frac{1}{2} \sum_{k=1}^{\infty} \int_{} |\partial^3_{x} F_k|^2 dx= \sum_{i=1}^{9} I_i.
\end{split}
\end{equation}
The following computations use the same lines of argument of the previous cases
\begin{equation*}
\begin{split}
|I_1| & = | -\int{} \partial^4_x (u \partial_x \psi ) \partial^4_x \psi dx | \\ & \le \int_{} | \partial^4_x u| |  \partial_x \psi | | \partial^4_x \psi| + 4 | \partial^3_x u| |  \partial_{xx} \psi | | \partial^4_x \psi | +5 |  \partial_{xx} u | |  \partial^3_x \psi | | \partial^4_x \psi | + \frac{3}{2} | \partial_x u | | \partial^4_x \psi |^2 dx \\ & \le \| \rho^{\frac{\alpha-1}{2}}\partial^4_x u \|_{L^2} \| \partial_x \psi \|_{L^\infty} \| \partial^4_x \psi \|_{L^2} \| \frac{1}{\rho^{\frac{\alpha-1}{2}}} \|_{L^\infty} + c \| \partial^3_x u \|_{L^2} \| \partial_{xx} \psi \|_{L^\infty} \| \partial^4_x \psi \|_{L^2} \\ & + c \| \partial_{xx} u \|_{L^\infty} \| \partial^3_x \psi \|_{L^2} \| \partial^4_x \psi \|_{L^2} + c \| \partial_x u \|_{L^\infty} \| \partial^4_x \psi \|^2_{L^2},
\end{split}
\end{equation*}
hence 
\begin{equation}\label{I_1 s=3}
\begin{split}
|I_1| & \le C(\delta) \| \partial_x \psi \|^2_{L^\infty} \| \partial^4_x \psi \|^2_{L^2} \| \frac{1}{\rho^{\frac{\alpha-1}{2}}}\|^2_{L^\infty} + \delta \| \rho^{\frac{\alpha-1}{2}}\partial^4_x u \|_{L^2} + C_1 \| \partial^3_x u \|^2_{L^2} \\ & + C_2 \| \partial_{xx} \psi \|^2_{L^\infty} \| \partial^4_x \psi \|^2_{L^2} + C_3 \| \partial^3_x \psi \|^2_{L^2} \| \partial^4_x \psi \|^2_{L^2} + C_4 \| \partial_x u \|_{L^\infty} \| \partial^4_x \psi \|^2_{L^2}.
\end{split}
\end{equation}
\begin{equation*}
\begin{split}
|I_2| & = | -\int_{} \partial^3_x (u \partial_x u ) \partial^3_x u dx |  \le \int_{} ( 2 | \partial_x u| |  \partial_{xx} u| + | \partial_x u| |  \partial_{xx} u | +|u | | \partial^3_x u |)| \partial^4_x u| dx \\ & \le c \| \partial_x u \|_{L^\infty} \| \partial_{xx} u \|_{L^2} \| \rho^{\frac{\alpha-1}{2}}\partial^4_x u \|_{L^2}\| \frac{1}{\rho^{\frac{\alpha-1}{2}}}\|_{L^\infty} + \| \partial_x u \|_{L^\infty} \| \partial_{xx} u \|_{L^2} \| \partial^4_x u \|_{L^2} \\ & + \| u \|_{L^\infty} \| \rho^{\frac{\alpha-1}{2}}\partial^4_x u \|_{L^2} \| \partial^3_x u \|_{L^2}\| \frac{1}{\rho^{\frac{\alpha-1}{2}}}\|_{L^\infty},
\end{split}
\end{equation*}
by Sobolev and Young inequalities we infer
\begin{equation}
\begin{split}
| I_2| & \le \delta \| \rho^{\frac{\alpha-1}{2}}\partial^4_x u \|^2_{L^2}+ C(\delta) \| \partial_x u \|^2_{L^\infty} \| \partial_{xx} u \|^2_{L^2} \| \frac{1}{\rho^{\frac{\alpha-1}{2}}}\|^2_{L^\infty} \\ & + C(\delta) \| u \|^2_{L^\infty} \| \partial^3_x u \|^2_{L^2} \| \frac{1}{\rho^{\frac{\alpha-1}{2}}}\|^2_{L^\infty}.
\end{split}
\end{equation}
\begin{equation*}
\begin{split}
|I_3| & = | -\int_{} \partial_{xx} \bigg( \frac{\partial_x \rho^\gamma}{\rho}\bigg) \partial^4_x u dx | \le  \int_{} \bigg( | \frac{\partial^3_x \rho^ \gamma}{\rho}|+2 | \partial_{xx} \rho^\gamma | | \partial_x ( \frac{1}{\rho})|+ | \partial_x \rho^\gamma | | \partial_{xx}(\frac{1}{\rho})|  \bigg)|  \partial^4_x u|  dx \\ & \le  \| \frac{1}{\rho^{\frac{\alpha+1}{2}}} \|_{L^\infty} \| \partial^3_x \rho^{\gamma}\|_{L^2} \| \rho^{\frac{\alpha-1}{2}}\partial^4_x u \|_{L^2} + c \| \partial_{xx} \rho^\gamma \|_{L^\infty} \| \partial_x ( \frac{1}{\rho} ) \|_{L^2} \| \rho^{\frac{\alpha-1}{2}}\partial^4_x u \|_{L^2} \| \frac{1}{\rho^{\frac{\alpha-1}{2}}}\|_{L^\infty} \\ & + \| \partial_x \rho^\gamma \|_{L^\infty} \| \partial_{xx} (\frac{1}{\rho} )\|_{L^2} \| \rho^{\frac{\alpha-1}{2}}\partial^4_x u \|_{L^2}\| \frac{1}{\rho^{\frac{\alpha-1}{2}}}\|_{L^\infty},
\end{split}
\end{equation*}
from which we deduce
\begin{equation}
\begin{split}
| I_3| & \le C(\delta) \| \frac{1}{\rho^{\frac{\alpha+1}{2}}} \|^2_{L^\infty} \| \partial^3_x \rho^\gamma \|^2_{L^2} + \delta \| \rho^{\frac{\alpha-1}{2}}\partial^4_x u \|^2_{L^2} \\ & + C(\delta) \| \partial_{xx} \rho^{\gamma}\|^2_{L^\infty} \| \partial_x ( \frac{1}{\rho}) \|^2_{L^2} \| \frac{1}{\rho^{\frac{\alpha-1}{2}}}\|^2_{L^\infty}+ \delta \| \rho^{\frac{\alpha-1}{2}}\partial^4_x u \|^2_{L^2} \\ & + C(\delta) \| \partial_x \rho^\gamma \|^2_{L^\infty} \| \partial_{xx} (\frac{1}{\rho})\|^2_{L^2}\| \frac{1}{\rho^{\frac{\alpha-1}{2}}}\|^2_{L^\infty} .
\end{split}
\end{equation}
\begin{equation*}
\begin{split}
|I_4| & = | -\int_{} \partial_{xx} \bigg( \frac{\rho^\alpha}{\rho} \bigg) \partial_{xx} u \partial^4_x u dx | \\ & \le  \int_{} | \frac{\partial_{xx} \rho^\alpha}{\rho}| | \partial_{xx} u| | \partial^4_x u|+2| \partial_x \rho^\alpha| | \partial_x (\frac{1}{\rho})| |  \partial_{xx} u| | \partial^4_x u| +  \rho^\alpha|  \partial_{xx} (\frac{1}{\rho})| |  \partial_{xx} u| | \partial^4_x u | dx \\ & \le  \| \frac{1}{\rho^{\frac{\alpha+1}{2}}}\|_{L^\infty} \| \partial_{xx} \rho^\alpha \|_{L^\infty} \| \partial_{xx} u \|_{L^2} \| \rho^{\frac{\alpha-1}{2}}\partial^4_x u \|_{L^2} \\ & +c \| \partial_x \rho^\alpha \|_{L^\infty} \| \partial_x ( \frac{1}{\rho}) \|_{L^\infty} \| \partial_{xx} u \|_{L^2} \| \rho^{\frac{\alpha-1}{2}}\partial^4_x u \|_{L^2} \| \frac{1}{\rho^{\frac{\alpha-1}{2}}} \|_{L^\infty} \\ & + \| \rho^\alpha \|_{L^{\infty}} \| \partial_{xx} (\frac{1}{\rho})\|_{L^2} \| \partial_{xx} u \|_{L^\infty} \| \rho^{\frac{\alpha-1}{2}}\partial^4_x u \|_{L^2}\| \frac{1}{\rho^{\frac{\alpha-1}{2}}} \|_{L^\infty},
\end{split}
\end{equation*}
hence 
\begin{equation}
\begin{split}
| I_4| & \le C(\delta) \| \frac{1}{\rho^{\frac{\alpha-1}{2}}}\|^2_{L^\infty} \| \partial_{xx} \rho^\alpha \|^2_{L^\infty} \| \partial_{xx} u \|^2_{L^2} +\delta \| \rho^{\frac{\alpha-1}{2}}\partial^4_x u \|^2_{L^2} \\ & + C(\delta) \| \partial_x \rho^\alpha \|^2_{L^\infty} \| \partial_x ( \frac{1}{\rho}) \|^2_{L^\infty} \| \partial_{xx} u \|^2_{L^2}\| \frac{1}{\rho^{\frac{\alpha-1}{2}}}\|^2_{L^\infty} \\ & + C(\delta) \| \rho^\alpha \|^2_{L^{\infty}} \| \partial_{xx} (\frac{1}{\rho})\|^2_{L^2} \| \frac{1}{\rho^{\frac{\alpha-1}{2}}}\|^2_{L^\infty} \| \partial^3_x u \|^2_{L^2}.
\end{split}
\end{equation}
\begin{equation*}
\begin{split}
|I_5| & = |-2\int_{} \partial_x \bigg( \frac{\rho^\alpha}{\rho} \bigg) \partial^3_x u \partial^4_x u dx | \le 2 \int_{} \bigg( | \frac{\partial_x \rho^\alpha}{\rho}|+ \rho^\alpha | \partial_x (\frac{1}{\rho})| \bigg)| \partial^3_x u| | \partial^4_x u | dx \\ & \le c \| \partial_x \rho^\alpha \|_{L^\infty} \| \frac{1}{\rho^{\frac{\alpha+1}{2}}}\|_{L^\infty} \| \partial^3_x u \|_{L^2} \| \rho^{\frac{\alpha-1}{2}}\partial^4_x u \|_{L^2} \\ & + \| \rho^\alpha \|_{L^\infty} \| \partial_x (\frac{1}{\rho}) \|_{L^\infty} \| \partial^3_x u \|_{L^2} \| \rho^{\frac{\alpha-1}{2}}\partial^4_x u \|_{L^2} \| \frac{1}{\rho^{\frac{\alpha-1}{2}}}\|_{L^\infty},
\end{split}
\end{equation*}
that can be estimated as
\begin{equation}
\begin{split}
| I_5| & \le C(\delta)\| \partial_x \rho^\alpha \|^2_{L^\infty} \| \frac{1}{\rho^{\frac{\alpha+1}{2}}}\|^2_{L^\infty} \| \partial^3_x u \|^2_{L^2} + \delta \| \rho^{\frac{\alpha-1}{2}}\partial^4_x u \|^2_{L^2} \\ & + C(\delta) \| \rho^\alpha \|^2_{L^\infty} \| \partial_x (\frac{1}{\rho}) \|^2_{L^\infty} \| \partial^3_x u \|^2_{L^2}\| \frac{1}{\rho^{\frac{\alpha-1}{2}}}\|^2_{L^\infty}.
\end{split}
\end{equation}
\begin{equation*}
\begin{split}
|I_6| & =  | \int_{} \partial^3_x ( \frac{\partial_x \rho^\alpha }{\rho} \partial_x u ) \partial^3_x u dx |= | -\int_{}\partial_{xx} ( \frac{\partial_x \rho^\alpha }{\rho} \partial_x u ) \partial^4_x u dx | \\ & \le  \int_{} \bigg( | \frac{\partial^3_x \rho^\alpha}{\rho}| | \partial_x u| +2 |\partial_{xx} \rho^\alpha| |  \partial_x (\frac{1}{\rho})| | \partial_x u| + 2 | \frac{\partial_{xx} \rho^\alpha}{\rho}| |  \partial_{xx} u| \\ & +| \partial_x \rho^\alpha| | \partial_{xx} (\frac{1}{\rho})| |  \partial_x u| + 2| \partial_x \rho^\alpha| | \partial_x (\frac{1}{\rho})| |  \partial_{xx} u| + | \frac{\partial_x \rho^\alpha}{\rho}| |  \partial^3_x u| \bigg) | \partial^4_x u| dx \\ & \le c \| \frac{1}{\rho^{\frac{\alpha+1}{2}}}\|_{L^\infty} \| \partial^3_x \rho^\alpha \|_{L^2} \| \partial_x u \|_{L^\infty} \|\rho^{\frac{\alpha-1}{2}} \partial^4_x u \|_{L^2} \\ & + c \| \partial_{xx} \rho^\alpha \|_{L^2} \| \partial_x ( \frac{1}{\rho}) \|_{L^\infty}  \| \partial_x u \|_{L^\infty} \| \rho^{\frac{\alpha-1}{2}}\partial^4_x u \|_{L^2}\| \frac{1}{\rho^{\frac{\alpha-1}{2}}}\|_{L^\infty} \\ & + c \| \frac{1}{\rho^{\frac{\alpha+1}{2}}} \|_{L^\infty} \| \partial_{xx} \rho^\alpha \|_{L^\infty} \| \partial_{xx} u \|_{L^2} \|\rho^{\frac{\alpha-1}{2}} \partial^4_x u \|_{L^2} \\ & + \| \partial_x \rho^\alpha \|_{L^\infty} \| \partial_{xx} (\frac{1}{\rho}) \|_{L^2} \| \partial_x u \|_{L^\infty} \| \rho^{\frac{\alpha-1}{2}}\partial^4_x u \|_{L^2} \| \frac{1}{\rho^{\frac{\alpha-1}{2}}}\|_{L^\infty} \\ & + c \| \partial_x \rho^\alpha \|_{L^\infty} \| \partial_x ( \frac{1}{\rho}) \|_{L^\infty} \| \partial_{xx} u \|_{L^2} \| \rho^{\frac{\alpha-1}{2}}\partial^4_x u \|_{L^2} \| \frac{1}{\rho^{\frac{\alpha-1}{2}}}\|_{L^\infty} \\ & + \| \frac{1}{\rho^{\frac{\alpha+1}{2}}} \|_{L^\infty} \| \partial_x \rho^\alpha \|_{L^\infty} \| \partial^3_x u \|_{L^2} \|\rho^{\frac{\alpha-1}{2}} \partial^4_x u \|_{L^2},
\end{split}
\end{equation*}
which after a long but simple computation can be estimated by
\begin{equation}
\begin{split}
| I_6| & \le C(\delta) \| \frac{1}{\rho^{\frac{\alpha+1}{2}}}\|^2_{L^\infty} \| \partial^3_x \rho^\alpha \|^2_{L^2} \| \partial_x u \|^2_{L^\infty} +\delta \| \rho^{\frac{\alpha-1}{2}}\partial^4_x u \|^2_{L^2} \\ & + C(\delta)  \| \partial_{xx} \rho^\alpha \|^2_{L^2} \| \partial_x ( \frac{1}{\rho}) \|^2_{L^\infty}  \| \partial_x u \|^2_{L^\infty} \| \frac{1}{\rho^{\frac{\alpha-1}{2}}}\|^2_{L^\infty} \\ & + C(\delta)\| \frac{1}{\rho^{\frac{\alpha+1}{2}}} \|^2_{L^\infty} \| \partial_{xx} \rho^\alpha \|^2_{L^\infty} \| \partial_{xx} u \|^2_{L^2} \\ & + C(\delta)\| \partial_x \rho^\alpha \|^2_{L^\infty} \| \partial_{xx} (\frac{1}{\rho}) \|^2_{L^2} \| \partial_x u \|^2_{L^\infty} \| \frac{1}{\rho^{\frac{\alpha-1}{2}}}\|^2_{L^\infty} \\ & + C(\delta) \| \partial_x \rho^\alpha \|^2_{L^\infty} \| \partial_x ( \frac{1}{\rho}) \|^2_{L^\infty} \| \partial_{xx} u \|^2_{L^2} \| \frac{1}{\rho^{\frac{\alpha-1}{2}}}\|^2_{L^\infty} \\ & + C(\delta) \| \frac{1}{\rho^{\frac{\alpha+1}{2}}} \|^2_{L^\infty} \| \partial_x \rho^\alpha \|^2_{L^\infty} \| \partial^3_x u \|^2_{L^2}.
\end{split}
\end{equation}
\begin{equation*}
\begin{split}
|I_7| & = | \int_{} \partial^3_x ( \partial_x \psi \partial_{xx} \psi ) \partial^3_x u dx |  \\ & \le  | \int_{} (2 | \partial_{xx} \psi |^2 + 2 | \partial_x \psi | | \partial^3_x \psi| + | \partial^3_x \psi |^2+ | \partial_{xx} \psi| |  \partial^4_x \psi | ) | \partial^3_x u| +| \partial_x \psi | | \partial^4_x \psi | | \partial^4_x u| dx \\ & \le c \| \partial_{xx} \psi \|^2_{L^2} \| \partial^3_x u \|_{L^\infty} + c \| \partial_x \psi \|_{L^\infty} \| \partial^3_x \psi \|_{L^2} \| \partial^3_x u \|_{L^2} + \| \partial^3_x \psi \|^2_{L^2} \| \partial^3_x u \|_{L^\infty} \\ & + \| \partial_{xx} \psi \|_{L^\infty} \| \partial^4_x \psi \|_{L^2} \| \partial^3_x u \|_{L^2} + \| \partial_x \psi \|_{L^\infty} \| \partial^4_x \psi \|_{L^2} \| \rho^{\frac{\alpha-1}{2}}\partial^4_x u \|_{L^2} \| \frac{1}{\rho^{\frac{\alpha-1}{2}}}\|_{L^\infty},
\end{split}
\end{equation*}
hence 
\begin{equation}
\begin{split}
| I_7| & \le C(\delta) \| \partial_{xx} \psi \|^2_{L^2} \| \partial_{xx} \psi \|^2_{L^2}\| \frac{1}{\rho^{\frac{\alpha-1}{2}}}\|^2_{L^\infty} +\delta \| \rho^{\frac{\alpha-1}{2}}\partial^4_x u \|^2_{L^2} \\ & + C_1 \| \partial_x \psi \|^2_{L^\infty} \| \partial^3_x \psi \|^2_{L^2}+C_2 \| \partial^3_x u \|^2_{L^2} + C(\delta)\| \partial^3_x \psi \|^2_{L^2}\| \partial^3_x \psi \|^2_{L^2} \| \frac{1}{\rho^{\frac{\alpha-1}{2}}}\|^2_{L^\infty} \\ & + C_1 \| \partial_{xx} \psi \|^2_{L^\infty} \| \partial^4_x \psi \|^2_{L^2} + C(\delta) \| \partial_x \psi \|^2_{L^\infty} \| \partial^4_x \psi \|^2_{L^2} \| \frac{1}{\rho^{\frac{\alpha-1}{2}}}\|^2_{L^\infty}.
\end{split}
\end{equation}
The It$\hat{\text{o}}$ reminder term $I_9$ is estimated as follows
\begin{equation}\label{I_9 s=3}
\begin{split}
|I_9| &=\frac{1}{2} \sum_{k=1}^{\infty} \int_{\mathbb{T}} |\partial^3_{x} F_k|^2 dx=\frac{1}{2} \sum_{k=1}^{\infty} \| \partial^3_{x} F_k \|^2_{L^2} \le c \sum_{k=1}^{\infty} \| \partial^3_{x} F_k \|^2_{L^\infty} \le c \sum_{k=1}^{\infty} {\alpha_k}^2 \le C.
\end{split}
\end{equation}
Summing up the estimates \eqref{I_1 s=3}-\eqref{I_9 s=3} and by choosing properly the Young constant $\delta$ we get
\begin{equation}
\begin{split}
& \dfrac{1}{2} \text{d} ( \| \partial^4_x \psi \|^2_{L^2} + \| \partial^3_x u \|^2_{L^2}) +c \int_{\mathbb{T}} e^{(\alpha-1) \psi } | \partial^4_x u |^2 dx \\ & \le \bigg( \| \partial^4_x \psi \|^2_{L^2}+ \| \partial^3_x u \|^2_{L^2} \bigg) \bigg[ 1+ \| \partial_x \psi \|^2_{L^\infty} \| \frac{1}{\rho^{\frac{\alpha-1}{2}}}\|^2_{L^\infty} + \| \partial^3_x \psi \|^2_{L^2}+ \| \partial_x u \|_{L^\infty} \\ & + \| u \|^2_{L^\infty} \| \frac{1}{\rho^{\frac{\alpha-1}{2}}}\|^2_{L^\infty}+ \| \rho^\alpha \|^2_{L^\infty} \| \partial_{xx} ( \frac{1}{\rho}) \|^2_{L^2} \| \frac{1}{\rho^{\frac{\alpha-1}{2}}}\|^2_{L^\infty} + \| \partial_x \rho^\alpha \|^2_{L^\infty} \| \frac{1}{\rho^{\frac{\alpha+1}{2}}}\|^2_{L^\infty} \\ & + \| \rho^\alpha \|^2_{L^\infty} \| \partial_x (\frac{1}{\rho}) \|^2_{L^\infty} \| \frac{1}{\rho^{\frac{\alpha-1}{2}}}\|^2_{L^\infty} + \| \frac{1}{\rho^{\frac{\alpha+1}{2}}}\|^2_{L^\infty} \| \partial_x \rho^\alpha \|^2_{L^\infty} \bigg] \\ & 
+ \| \partial_x u \|^2_{L^\infty} \| \partial_{xx} u \|^2_{L^2}\| \frac{1}{\rho^{\frac{\alpha-1}{2}}}\|^2_{L^\infty} + \| \frac{1}{\rho^{\frac{\alpha+1}{2}}}\|^2_{L^\infty} \| \partial^3_x \rho^\gamma \|^2_{L^2} \\ & + \| \partial_{xx} \rho^\gamma \|^2_{L^\infty} \| \partial_x ( \frac{1}{\rho} ) \|^2_{L^2} \| \frac{1}{\rho^{\frac{\alpha-1}{2}}}\|^2_{L^\infty} + \| \partial_x \rho^\gamma \|^2_{L^\infty} \| \partial_{xx} (\frac{1}{\rho}) \|^2_{L^2} \| \frac{1}{\rho^{\frac{\alpha-1}{2}}}\|^2_{L^\infty} \\ & + \| \frac{1}{\rho^{\frac{\alpha-1}{2}}}\|^2_{L^\infty} \| \partial_{xx} \rho^\alpha \|^2_{L^\infty} \| \partial_{xx} u \|^2_{L^2}+ \| \partial_x \rho^\alpha \|^2_{L^\infty}\| \partial_{xx} u \|^2_{L^2}  \| \partial_x (\frac{1}{\rho}) \|^2_{L^2} \| \frac{1}{\rho^{\frac{\alpha-1}{2}}}\|^2_{L^\infty} \\ & + \| \frac{1}{\rho^{\frac{\alpha+1}{2}}}\|^2_{L^\infty} \| \partial^3_x \rho^\alpha \|^2_{L^2} \| \partial_x u \|^2_{L^\infty} +\| \partial_{xx} \rho^\alpha \|^2_{L^2} \| \partial_x ( \frac{1}{\rho}) \|^2_{L^\infty} \| \partial_x u \|^2_{L^\infty} \| \frac{1}{\rho^{\frac{\alpha-1}{2}}}\|^2_{L^\infty} \\ & +\| \frac{1}{\rho^{\frac{\alpha+1}{2}}}\|^2_{L^\infty} \| \partial_{xx} \rho^\alpha \|^2_{L^\infty} \| \partial_{xx} u \|^2_{L^2}  + \| \partial_x \rho^\alpha \|^2_{L^\infty} \| \partial_{xx} ( \frac{1}{\rho}) \|^2_{L^2} \| \partial_x u \|^2_{L^\infty} \| \frac{1}{\rho^{\frac{\alpha-1}{2}}}\|^2_{L^\infty} \\ & + \| \partial_x \rho^\alpha \|^2_{L^\infty} \| \partial_x (\frac{1}{\rho}) \|^2_{L^\infty} \| \partial_{xx} u \|^2_{L^2}\| \frac{1}{\rho^{\frac{\alpha-1}{2}}}\|^2_{L^\infty}  + \| \partial_{xx} \psi \|^2_{L^2} \| \partial_{xx} \psi \|^2_{L^2}\| \frac{1}{\rho^{\frac{\alpha-1}{2}}}\|^2_{L^\infty} \\ & + \| \partial^3_x \psi \|^2_{L^2} \| \partial^3_x \psi \|^2_{L^2} \| \frac{1}{\rho^{\frac{\alpha-1}{2}}}\|^2_{L^\infty}+ I_8  = \bigg( \| \partial^4_x \psi \|^2_{L^2} + \| \partial^3_x u \|^2_{L^2} \bigg) a^{(3)}(t) + b^{(3)}(t) + I_8.
\end{split}
\end{equation}
We define $$[a^{(3)}]= \| \partial^3_x \psi \|^2_{L^2}+ \| \partial_{xx} u \|^2_{L^2}+ \| \frac{1}{\rho^{\frac{\alpha-1}{2}}} \|^2_{L^\infty}$$
and consequently 
\begin{equation}
\gamma^{(3)}_M= \sup\big\{ t\ge 0 \, : \sup_{s \in [0, t \land \tau]} [a^{(3)}] \le M \big\}=\inf\big\{ t\ge 0 \, :\sup_{s \in [0, t \land \tau]} [a^{(3)}] > M \big\}.
\end{equation}
$ [a^{(3)}] $ contains the terms with high order derivatives in $a^{(3)}$ and clearly by definition of $\gamma_M$ 
\begin{equation}\label{M to q }
 \mathbb{E}  \int_{0}^{\gamma^{(3)}_M} ( \| \partial^4_x \psi \|^2_{L^2}+ \| \partial^3_x u \|^2_{L^2})a^{(3)}(s)ds \le M^q \mathbb{E} \int_{0}^{\gamma^{(3)}_M} ( \| \partial^4_x \psi \|^2_{L^2}+ \| \partial^3_x u \|^2_{L^2}) ds 
 \end{equation}
for some $q \ge 1. $ \\
Hence integrating in time and taking the $\sup$ in $[0,  T \land \tau \land \gamma^{(3)}_M],$ after having applied the expectation we get 
\begin{equation}
\begin{split}
& \mathbb{E} \big(\sup_{ 0 \le s \le T \land \tau \land \gamma^{(3)}_M}\dfrac{1}{2}( \| \partial^4_{x} \psi \|^2_{L^2}+ \| \partial^3_x u \|^2_{L^2}) + \int_{0}^{T \land \tau \land \gamma^{(3)}_M}\int_{\mathbb{T}} (e^{(\alpha-1) \psi }) | \partial^4_{x} u|^2 dx \\ & \le \mathbb{E} \big(\| \partial^4_{x} \psi_0)\|^2_{L^2}+ \| \partial^3_x u_0\|^2_{L^2}\big)+ c \mathbb{E} \int_{0}^{T \land \tau \land \gamma^{(3)}_M} \big(\| \partial^4_x \psi \|^2_{L^2}+ \| \partial^3_x u \|^2_{L^2}\big)a^{(3)}(t)dt \\ & +c\mathbb{E} \int_{0}^{T \land \tau \land \gamma^{(3)}_M}b^{(3)}(t)dt+ c\mathbb{E} \sup_{0 \le s \le T \land \tau \land \gamma^{(3)}_M} | \int_{0}^{s}I_8 |.
\end{split}
\end{equation}
The stochastic integral $I_8$ can be estimated as follows
\begin{equation*}
\begin{split}
\mathbb{E} \bigg[ \sup_ {0 \le s \le T \land \tau \land \gamma^{(3)}_M} \bigg| {\int_{0}^{s} \int_{\mathbb{T}} \partial^3_{x} \mathbb{F}(\rho,u)\partial^3_{x} udxdW \bigg| } \bigg] \lesssim \mathbb{E} \bigg[ \int_{0}^{T \land \tau \land \gamma^{(3)}_M} \sum_{k=1}^{\infty} \bigg| \int_{\mathbb{T}} \partial^3_{x} F_k \partial^3_{x} u dx \bigg|^2 dt \bigg]^{\frac{1}{2}}
\end{split}
\end{equation*}
and 
\begin{equation*}
\begin{split}
 & \bigg| \int_{\mathbb{T}} \partial^3_{x} F_k \partial^3_{x} u dx \bigg|^2 \le \| \partial^3_{x} F_k \|^2_{L^2} \| \partial^3_{x} u \|^2_{L^2} \le C \| \partial^3_{x} F_k \|^2_{L^\infty} \| \partial^3_{x} u \|^2_{L^2} \le C {\alpha_k}^2 \| \partial^3_{x} u \|^2_{L^2},
\end{split}
\end{equation*}
hence we have 
\begin{equation*}
\begin{split}
& \mathbb{E} \bigg[ \int_{0}^{T \land \tau \land \gamma^{(3)}_M} \sum_{k=1}^{\infty} \bigg| \int_{\mathbb{T}} \partial^3_{x} F_k \partial^3_{x} u dx \bigg|^2 dt \bigg]^{\frac{1}{2}} \le \mathbb{E} \bigg[ \int_{0}^{T \land \tau \land \gamma^{(3)}_M} \sum_{k=1}^{\infty}C {\alpha_k}^2 \| \partial^3_{x} u \|^2_{L^2} \bigg]^{\frac{1}{2}} \\ &  \le \mathbb{E} \bigg[ \int_{0}^{T \land \tau \land \gamma^{(3)}_M} \tilde{c} \| \partial^3_{x} u \|^2_{L^2} \bigg]^{\frac{1}{2}} \le \mathbb{E} \bigg[ 1+ \int_{0}^{T \land \tau \land \gamma^{(3)}_M} c \| \partial^3_x u \|^2_{L^2} \bigg],
\end{split}
\end{equation*}
which leads to the following inequality 
\begin{equation}
\begin{split}
& \mathbb{E} \big(\sup_{ 0 \le s \le T \land \tau \land \gamma^{(3)}_M}\dfrac{1}{2}( \| \partial^4_{x} \psi \|^2_{L^2}+ \| \partial^3_x u \|^2_{L^2}) + \int_{0}^{T \land \tau \land \gamma^{(3)}_M}\int_{\mathbb{T}} (e^{(\alpha-1) \psi }) | \partial^4_{x} u|^2 dx \\ & \le \mathbb{E} \big(\| \partial^4_{x} \psi_0\|^2_{L^2}+ \| \partial^3_x u_0\|^2_{L^2}\big)+ c \mathbb{E} \int_{0}^{T \land \tau \land \gamma^{(3)}_M} \big(\| \partial^4_x \psi \|^2_{L^2}+ \| \partial^3_x u \|^2_{L^2}\big)a^{(3)}(t)dt \\ & +c\mathbb{E} \int_{0}^{T \land \tau \land \gamma^{(3)}_M}b^{(3)}(t)dt
\end{split}
\end{equation}
and by recalling \eqref{M to q } we can make use of the standard Gronwall Lemma to infer \eqref{eq:s3}.  
With the same lines of arguments of the previous cases we infer 
\begin{equation}\label{markov M s=3}
\mathbb{P} \bigg( \gamma^{(3)}_M \le T \bigg) \le  \mathbb{P} \bigg(  \sup_{ t \in [0,{T \land \tau]}} [a^{(3)}] \ge M \bigg) \le \frac{1}{M} \mathbb{E} \bigg( \sup_{ t \in [0,{T \land \tau]}} [a^{(3)}]\bigg) \longrightarrow 0,
\end{equation}
hence $\lim_{M \to \infty} \gamma^{(3)}_M= \infty$ a.s. \\
To conclude,  we define 
\begin{equation}\label{min gamma M}
\gamma_M= \min_{s=1,2,3} \gamma^{(s)}_M
\end{equation}
and clearly $\gamma_M \rightarrow \infty$ a.s as $M \rightarrow \infty.$ This concludes the proof of Proposition \ref{Prop global s+1 s}.
\end{proof}
\subsection{Proof of Theorem \ref{Main Theorem global}}
In order to prove the existence of a global maximal strong pathwise solution to system \eqref{stoc quantum} we perform a similar analysis as discussed in \cite{Glatt-Holtz} for the incompressible Navier-Stokes equations in the 2D case with multiplicative noise.  \\
Let $\alpha \in [0,\frac{1}{2}]$,  assume there exists a unique maximal strong pathwise solution $(\rho,u,(\tau_n)_{n \in \mathbb{N}},\tau)$ as stated in Theorem \ref{Main Theorem local}. Our goal is to show that $\tau= \infty$ a.s. \\
Let $\tau_n$ be an increasing sequence of stopping times announcing $\tau,$ we recall that $\tau_n$ has the following form
$$ \tau_n= \tau^{\psi}_n \vee \tau^u_n$$
for
\begin{equation*}
\tau^{\psi}_n= \inf \{ t \in [0,\infty) | \; \| \psi(t) \|_{W^{2,\infty}} \ge n \},
\end{equation*}
and 
\begin{equation*}
\tau^{u}_n= \inf \{ t \in [0,\infty) | \; \| u(t) \|_{W^{2,\infty}} \ge n \}.
\end{equation*}
with the convention $\inf \emptyset= \infty.$ \\
We write
\begin{equation}
\{ \tau < \infty \}= \bigcup_{T=1}^{\infty}\{ \tau \le T \}= \bigcup_{T=1}^{\infty} \bigcap_{n=1}^{\infty} \{ \tau_n \le T \}
\end{equation}
By monotonicity of $\tau_n$ we have
\begin{equation}
\mathbb{P} \big( \bigcap_{n=1}^{\infty} \{ \tau_n \le T \} \big)= \lim_{N \rightarrow \infty}\mathbb{P} \big( \bigcap_{n=1}^{N} \{ \tau_n \le T \} \big)= \lim_{N \rightarrow \infty} \mathbb{P} \big( \tau_N \le T) \big).
\end{equation}
Hence in order to prove our claim it is enough to show that for any fixed $T< \infty$
\begin{equation}
\lim_{N \rightarrow \infty} \mathbb{P}( \tau_N \le T)= 0
\end{equation}
Following the analogy of the previous subsection, for $M > 0$ we define 
\begin{equation}
\gamma_M = \gamma^{(1)}_M \land \gamma^{(2)}_M \land \gamma^{(3)}_M \land 2T
\end{equation}
Then we have
\begin{equation}\label{prob int}
\mathbb{P}( \tau_N \le T) \le \mathbb{P}( \{ \tau_N \le T \} \cap \{ \gamma_M > T \})+ \mathbb{P}( \gamma_M \le T).
\end{equation}
We can estimate the first term in the right hand side of \eqref{prob int} as follows
\begin{equation}
\begin{split}
& \mathbb{P}( \{ \tau_N \le T \} \cap \{ \gamma_M > T \}) \\ &  \le \mathbb{P} \big( \big\{ \sup_{t \in [0, \tau_N \land T]} ( \| \psi \|^2_{H^4}+ \| u \|^2_{H^3} ) + \\ & \int_{0}^{\tau_N \land T} \int_{\mathbb{T}}e^{(\alpha-1) \psi}( | \partial_x u |^2 + | \partial_{xx} u |^2 + | \partial^3_x u |^2+ | \partial^4_x u|^2)ds \ge N \big\} \cap \{ \gamma_M > T \} \big) \\ & \le \mathbb{P} \big( \big\{ \sup_{t \in [0, \tau_N \land \gamma_M]} ( \| \psi \|^2_{H^4}+ \| u \|^2_{H^3} ) \\ & + \int_{0}^{\tau_N \land \gamma_M} \int_{\mathbb{T}}e^{(\alpha-1) \psi}( | \partial_x u |^2 + | \partial_{xx} u |^2 + | \partial^3_x u |^2+ | \partial^4_x u|^2)ds \ge N \big\} \big)
\end{split}
\end{equation}
and by using Markov inequality we infer 
\begin{equation}
\begin{split}
\mathbb{P}( \tau_N < T) &\le \frac{C}{N} \mathbb{E} \bigg( \sup_{t \in [0, \tau_N \land \gamma_M]} ( \| \psi \|^2_{H^4}+ \| u \|^2_{H^3} ) \\&+ \int_{0}^{\tau_N \land \gamma_M} \int_{\mathbb{T}}e^{(\alpha-1) \psi}( | \partial_x u |^2 + | \partial_{xx} u |^2 + | \partial^3_x u |^2+ | \partial^4_x u|^2)dxds \bigg) \\ & + \mathbb{P}(\gamma_M \le T)
\end{split}
\end{equation}
Hence from Proposition \ref{Prop global s+1 s} we get that for any fixed $M >0$ 
\begin{equation}\label{stima gamma M}
\lim_{N \rightarrow \infty} \mathbb{P}( \tau_N < T) \le \mathbb{P}(\gamma_M \le T).
\end{equation}
Finally we send $M \rightarrow \infty$ and we use the estimates \eqref{markov M s=1}, \eqref{markov M s=2} and \eqref{markov M s=3} to complete the proof.
\section{Proof of Theorem \ref{Main Theorem local}}
This section is dedicated to the proof of Theorem \ref{Main Theorem local}.
We mimic the analysis developed in the monograph \cite{Feir} to prove the local well posedness for the compressible Navier-Stokes equations.\\
The outline of the proof is the following: we consider an approximating system of \eqref{stoc quantum} by introducing suitable cut-off operator applied to the $W^{2,\infty}$ norm of the solution.  We prove the existence of a strong martingale solution for the approximating system, see Definition \ref{def strong mart},  and a pathwise uniqueness estimate. In order to prove the existence of an approximating solution we make use of a Galerkin approximation technique and a stochastic compactness argument based on an appropriate use of Skorokhod representation theorem and Prokhorov theorem,  see Theorem \ref{Skor} and Theorem \ref{Prok} in the Appendix. Finally we make use of the Gyongy-Krylov method in order to deduce the existence of a strong pathwise solution as stated in Theorem \ref{Main Theorem local},  see Theorem \ref{Gyong} and Section 2.10 in \cite{Feir}.  Concerning the initial conditions,  during the proof,  we assume an additional integrability condition with respect to the $\omega$ variable and then we remove it by considering an appropriate decomposition of the probability space $\Omega.$ \\ \\
In the deterministic setting, local existence results are usually proved by means of high order a priori estimates, however this method does not apply to our problem due to the loss of regularity with respect to the time variable in the stochastic setting.  We refer the reader to \cite{Cho 2004} for the optimal class of strong solutions and related blow up criteria in the deterministic setting of the compressible Navier-Stokes equations with vacuum and to \cite{Don-Pes} for similar results considering also the dependence of the external forces on the solution.
\subsection{The approximating system}
As in the previous section,  we rewrite system \eqref{stoc quantum} in the variables $(\psi,u)$ where $\psi=\log \rho.$
\\
\begin{equation}\label{stoch quantum rho u}
\text{d}\psi+[u \partial_x \psi+ \partial_x u]\text{d}t=0 
\end{equation}
\begin{equation}\label{stoch momentum u}
\begin{split}
& \text{d}u+[u\partial_x u + \gamma e^{(\gamma-1)\psi}\partial_x \psi]dt=[e^{\alpha-1)\psi}\partial_{xx}u+\alpha e^{(\alpha-1)\psi}\partial_x \psi \partial_x u]dt \\ & + [\partial_{xxx}\psi+ \partial_x\psi \partial_{xx} \psi]dt+ \mathbb{F}(\rho, u)\text{d}W
\end{split}
\end{equation}
\\
and for any $x \in \mathbb{T}, \; t \in [0,T]$ we define the following approximate system of \eqref{stoch quantum rho u}-\eqref{stoch momentum u}
\begin{equation}\label{approx R stoch quantum rho u}
\text{d}\psi+\varphi_R( \| u \|_{W_x^{2,\infty}})[ u\partial_x \psi]dt+ \partial_x u dt=0 
\end{equation}
\begin{equation}\label{approx R stoch quantum u}
\begin{split}
& \text{d}u-\partial_{xxx}\psi dt+\varphi_R( \| u\|_{W_x^{2,\infty}})[u\partial_x u ]dt+ \varphi_R( \| \psi\|_{W_x^{2,\infty}}) [\gamma e^{(\gamma-1)\psi} \partial_x \psi]dt = \\ & \varphi_R( \| \psi\|_{W_x^{2,\infty}}) [e^{(\alpha-1)\psi} \partial_{xx}u ]dt +\varphi_R( \| \psi\|_{W_x^{2,\infty}})[\alpha e^{(\alpha-1)\psi}\partial_x \psi \partial_x u ]dt \\ & + \varphi_R( \| \psi\|_{W_x^{2,\infty}})[\partial_x \psi \partial_{xx}\psi]dt+\varphi_R( \| u\|_{W_x^{2,\infty}}) \mathbb{F}(\rho,u)\text{d}W
\end{split}
\end{equation}
\\
with initial conditions 
\begin{equation} \label{Approx C.I}
\rho(0)=\rho_0, \quad u(0)=u_0,
\end{equation}
where $\varphi_R: [0,\infty) \rightarrow [0,1]$ are smooth functions satisfying 
\begin{equation*}
\varphi_R(y)=\begin{cases} 1, \quad 0 \le y \le R \\
0, \quad y \ge R+1.
\end{cases}
\end{equation*}
The cut-off operators $\varphi_R$ are introduced in order to deal with the nonlinearities of system \eqref{stoch quantum rho u}-\eqref{stoch momentum u}.  Indeed the truncated terms turn out to be globally Lipschitz continuous and the approximating system \eqref{approx R stoch quantum rho u}-\eqref{approx R stoch quantum u} admits global in time solutions.  Moreover $\varphi_R$ is applied to the $W^{2,\infty}$ norm of the solution, hence condition \eqref{limit cond} in Definition \ref{Def2} follows by construction.  \\
\\
We start introducing the definition of strong martingale solutions,  i.e.  solutions which are strong in PDEs and weak in probability sense.  For such solutions,  the Wiener process $W$ and the stochastic basis with right continuous filtration $(\Omega, \mathfrak{F},(\mathfrak{F}_t)_{t \ge 0},\mathbb{P})$ are not a priori given,  they are indeed part of the solution itself.
\begin{definition} (Strong martingale solution).  \label{def strong mart}
Let $\Lambda$ be a Borel probability measure on $H^{s+1}(\mathbb{T}) \times H^s(\mathbb{T}), \; s \in \mathbb{N}.$ A multiplet $$((\Omega, \mathfrak{F}, (\mathfrak{F}_t)_{t \ge 0}, \mathbb{P}),\psi,u, W)$$ is called a strong martingale solution to the approximated system \eqref{approx R stoch quantum rho u}-\eqref{Approx C.I} with the initial law $\Lambda$,  provided;
\begin{itemize}
\item[(1)]
$(\Omega, \mathfrak{F},(\mathfrak{F}_t)_{t \ge 0},\mathbb{P}), $ is a stochastic basis with a complete right-continuous filtration;
\item[(2)]
$W$ is an $(\mathfrak{F}_t)$-cylindrical Wiener process;
\item[(3)]
$\psi$ is a $H^{s+1}(\mathbb{T})$-valued $(\mathfrak{F}_t)$-progressively measurable stochastic process satisfying
 $$  \psi \in L^2(\Omega; C([0,T];H^{s+1}(\mathbb{T})))\quad \rho >0 \  \mathbb{P}-\text{a.s.}; $$
 \item[(4)]
 the velocity $u$ is a $H^s(\mathbb{T})$-valued $(\mathfrak{F}_t)$-progressively measurable stochastic process satisfying
 $$ u \in L^2(\Omega; C([0,T];H^s(\mathbb{T})) \cap L^2(\Omega; L^2(0,T;H^{s+1}(\mathbb{T})));$$
 \item[(5)]
 there exists an $\mathfrak{F}_0$-measurable random variable $[\rho_0,u_0]$ such that $\Lambda= \mathfrak{L}[\rho_0,u_0];$
\item[(6)]
the equations
$$
\psi(t)=\psi_0- \int_{0}^{t}\varphi_R( \| u\|_{W_x^{2,\infty }})[ u\partial_x \psi] ds-\int_{0}^{t}\partial_x u\text{d}s$$
\begin{equation*}
\begin{split}
u(t)&=u_0+\int_{0}^{t} \partial_{xxx}\psi ds-\int_{0}^{t}\varphi_R( \| u\|_{W_x^{2,\infty}})[u\partial_x u]ds \\ & - \int_{0}^{t}\varphi_R( \| \psi\|_{W_x^{2,\infty}})[\gamma e^{(\gamma-1)\psi} \partial_x \psi]ds +\int_{0}^{t}\varphi_R( \| \psi\|_{W_x^{2,\infty}})[e^{(\alpha-1)\psi} \partial_{xx}u ]ds \\ & +\int_{0}^{t}\varphi_R( \| \psi\|_{W_x^{2,\infty}})[\alpha e^{(\alpha-1)\psi}\partial_x \psi \partial_x u ]ds +\int_{0}^{t}\varphi_R( \| \psi \|_{W_x^{2,\infty}})[ \partial_x \psi \partial_{xx}\psi]ds \\ & +\int_{0}^{t} \varphi_R( \| u\|_{W_x^{2,\infty}}) \mathbb{F}(\rho,u)\text{d}W
\end{split}
\end{equation*}
holds for all $t \in [0,T] \ \mathbb{P}-a,s,;$
\end{itemize}
\end{definition}
\noindent
Now we define the strong pathwise solutions. These solutions are strong not only in PDEs but also in probability.  This means that the underlying probability space with right continuous filtration and the Brownian motions are a priori assigned.  This class of solutions is the one for which we aim to prove our results.
\begin{definition}(Strong pathwise solution). \label{strong path sol}
Let $(\Omega, \mathfrak{F},(\mathfrak{F}_t)_{t \ge 0},\mathbb{P}), $ be a stochastic basis with a complete right-continuous filtration and let $W$ be an $(\mathfrak{F}_t)$-cylindrical Wiener process.
Then $(\psi,u)$ is called a strong pathwise solution to the approximate system \eqref{approx R stoch quantum rho u}-\eqref{Approx C.I} with initial conditions $(\psi_0,u_0)$,  provided
\begin{itemize}
\item[(1)]
 $\psi$ is a $H^{s+1}(\mathbb{T})$-valued $(\mathfrak{F}_t)$-progressively measurable stochastic process satisfying
 $$  \psi \in L^2(\Omega; C([0,T];H^{s+1}(\mathbb{T}))), \quad \rho >0 \  \mathbb{P}-\text{a.s.}; $$
 \item[(2)]
 the velocity $u$ is a $H^s(\mathbb{T})$-valued $(\mathfrak{F}_t)$-progressively measurable stochastic process satisfying
 $$ u \in L^2(\Omega; C([0,T];H^s(\mathbb{T})) \cap L^2(\Omega; L^2(0,T;H^{s+1}(\mathbb{T})));$$
\item[(3)]
the equations
$$
\psi(t)=\psi_0- \int_{0}^{t}\varphi_R( \| u\|_{W_x^{2,\infty }})[ u\partial_x \psi] ds-\int_{0}^{t}\partial_x u\text{d}s$$
\begin{equation*}
\begin{split}
u(t)&=u_0+\int_{0}^{t} \partial_{xxx}\psi ds-\int_{0}^{t}\varphi_R( \| u\|_{W_x^{2,\infty}})[u\partial_x u]ds \\ & -\int_{0}^{t}\varphi_R( \| \psi\|_{W_x^{2,\infty}})[\gamma e^{(\gamma-1)\psi} \partial_x \psi]ds +\int_{0}^{t}\varphi_R( \| \psi\|_{W_x^{2,\infty}})[e^{(\alpha-1)\psi} \partial_{xx}u ]ds \\ & +\int_{0}^{t}\varphi_R( \| \psi\|_{W_x^{2,\infty}})[\alpha e^{(\alpha-1)\psi}\partial_x \psi \partial_x u ]ds +\int_{0}^{t}\varphi_R( \| \psi\|_{W_x^{2,\infty}})[ \partial_x \psi \partial_{xx}\psi]ds +\\ & \int_{0}^{t} \varphi_R( \| u\|_{W_x^{2,\infty}}) \mathbb{F}(\rho,u)\text{d}W
\end{split}
\end{equation*}
holds for all $t \in [0,T] \ \mathbb{P}$-a.s. ;
\end{itemize}
\end{definition}
\noindent
Our main result for the approximating system \eqref{approx R stoch quantum rho u}-\eqref{Approx C.I} is the following
\begin{theorem}\label{second thm}
Let the coefficients $G_k$ satisfy the hypothesis \eqref{G1}-\eqref{G2} and let the initial datum $$(\psi_0,u_0) \in L^p(\Omega, \mathfrak{F}_0, \mathbb{P}; H^{s+1}(\mathbb{T}) \times H^s(\mathbb{T})), $$
for all $1 \le p < \infty$ and for some $s > \frac{5}{2} .$ In addition,  suppose that \eqref{C.I STRONG} holds $\mathbb{P}$-a.s. for $s > \frac{5}{2} .$ 
Then:
\begin{itemize}
\item[(1)]
There exists a strong martingale solution to the problem \eqref{approx R stoch quantum rho u}-\eqref{Approx C.I} in the sense of Definition \ref{def strong mart} with initial law $\Lambda= \mathfrak{L}[(\psi_0,u_0)].$ Moreover,  there exists a deterministic constant $\bar \psi_R$ such that $$\psi(\cdot, t) \ge \overline{\psi}_R \quad \mathbb{P}-\text{a.s.} \ \text{for all} \ t \in [0,T]$$
and 
\begin{equation}
\mathbb{E} \bigg[ \sup_{t \in [0,T]} \|( \psi(t),u(t) \|^2_{H^{s+1}(\mathbb{T}) \times H^s(\mathbb{T})} + \int_{0}^{T} \| u \|^2_{H^{s+1}(\mathbb{T})} dt \bigg]^p \le c(R, \psi_0,u_0, p) < \infty
\end{equation}
for all $1 \le p < \infty$.
\item[(2)]
If $s > \frac{7}{2},$ then pathwise uniqueness holds.  Specifically,  if $(\psi^1,u^1),  (\psi^2,u^2)$ are two strong solutions to \eqref{approx R stoch quantum rho u} defined on the same stochastic basis with the same Wiener process $W$ and $$\mathbb{P}(\psi_0^1=\psi_0^2,  u_0^1=u_0^2)=1,  $$
then $$\mathbb{P}(\psi^1(t)=\psi^2(t),  u^1(t)=u^2(t), \  \text{for all} \ t \in [0,T])=1. $$
\end{itemize}
Consequently,  there exists a unique strong pathwise solution to \eqref{approx R stoch quantum rho u}-\eqref{Approx C.I} in the sense of Definition \ref{strong path sol}.
\end{theorem}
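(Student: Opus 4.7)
The plan is to prove part~(1) by a Galerkin scheme combined with stochastic compactness, part~(2) by an energy estimate on the difference of two solutions, and then conclude the existence of a strong pathwise solution via the Gy\"ongy--Krylov method (Theorem~\ref{Gyong}). First I would fix a smooth orthonormal basis $\{\omega_j\}_{j\ge 1}$ of $L^2(\mathbb{T})$, e.g.\ the Fourier basis, so that the projector $\Pi_N$ onto its first $N$ elements commutes with $\partial_x$, and consider the projected version of \eqref{approx R stoch quantum rho u}--\eqref{approx R stoch quantum u} for $(\psi^N,u^N)$. Since the cutoffs $\varphi_R(\|\psi^N\|_{W^{2,\infty}})$ and $\varphi_R(\|u^N\|_{W^{2,\infty}})$ render all truncated coefficients globally bounded, and \eqref{G2} makes the noise coefficients Lipschitz on $\Pi_N L^2$, a unique global strong solution of the finite-dimensional SDE exists. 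The spectral basis is chosen so that the crucial cancellation between $\partial_x u$ in the continuity equation and $\partial_{xxx}\psi$ in the momentum equation is preserved under projection.

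Next I would apply It\^o's formula to $F(\psi^N,u^N)=\tfrac12\|\psi^N\|_{H^{s+1}}^2+\tfrac12\|u^N\|_{H^s}^2$. The top-order contributions
\[
\int_{\mathbb{T}} \partial_x^{s+1}(\partial_x u^N)\,\partial_x^{s+1}\psi^N\,dx \;+\; \int_{\mathbb{T}} \partial_x^{s}(\partial_{xxx}\psi^N)\,\partial_x^{s} u^N\,dx
\]
cancel after integration by parts, which is the only reason the unbounded linear dispersive term is tolerable. Every remaining nonlinear contribution carries a $\varphi_R$ factor and is controlled by $C(R)$ times lower-order Sobolev norms, using $H^s\hookrightarrow W^{2,\infty}$. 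Burkholder--Davis--Gundy applied to the stochastic integral together with the stochastic Gronwall lemma (Proposition~\ref{stoch gronw}) then yield the announced $p$-th moment bound uniformly in $N$. Combined with standard time-increment estimates in a weaker norm, Aubin--Lions compactness gives tightness of the laws of $(\psi^N,u^N,W)$ on a path space with slightly weaker spatial regularity; Prokhorov's theorem (Theorem~\ref{Prok}) and a Jakubowski--Skorokhod representation (Theorem~\ref{Skor}) produce a.s.\ convergent copies on a new probability space, and identification of the stochastic integral in the limit follows \cite{Feir}, Section~2.6. The deterministic lower bound $\psi\ge\bar\psi_R$ follows from the noise-free transport structure of \eqref{approx R stoch quantum rho u}: along characteristics of the Lipschitz velocity $\varphi_R u$, $\psi$ satisfies an ODE with right-hand side $-\partial_x u$ bounded by $R+1$, so $\rho^N=e^{\psi^N}$ cannot collapse in finite time.

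For part~(2) I would take two solutions $(\psi^i,u^i)$, $i=1,2$, on the same stochastic basis with identical data, set $\delta\psi=\psi^1-\psi^2$ and $\delta u=u^1-u^2$, and apply It\^o to $\tfrac12\|\delta\psi\|_{H^2}^2+\tfrac12\|\delta u\|_{H^1}^2$. The same transport/Korteweg cancellation eliminates the top-order terms at this reduced level. Differences of truncated nonlinear terms are controlled via the Lipschitz property of $\varphi_R$ together with the embedding $H^{s-1}\hookrightarrow W^{2,\infty}$, which requires exactly $s>\tfrac72$; the noise increment is Lipschitz by \eqref{f2}. BDG and stochastic Gronwall then force $\delta\psi\equiv 0$ and $\delta u\equiv 0$ almost surely. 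Pathwise uniqueness combined with tightness of the Galerkin sequence and the Gy\"ongy--Krylov theorem (Theorem~\ref{Gyong}) promotes the martingale solution to a strong pathwise solution on the given stochastic basis. The extra integrability hypothesis on $(\psi_0,u_0)$ implicitly used in the a priori estimates is removed by decomposing $\Omega$ along the sets $\{k\le\|(\psi_0,u_0)\|_{H^{s+1}\times H^s}<k+1\}$ and pasting the solutions, as in \cite{Feir}.

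The main obstacle is the interplay between the truncation and the untruncated third-order term $\partial_{xxx}\psi$. Every estimate must rely on the exact transport/Korteweg cancellation, and this identity has to be preserved under the Galerkin projection (forcing a basis compatible with spatial differentiation), to remain uniform in $N$ so as to survive the compactness step, and to still work at the lower regularity level $H^2\times H^1$ used in the uniqueness argument. Managing this simultaneously with the cutoff-induced modulus of continuity of the nonlinearities is the central technical point of the proof.
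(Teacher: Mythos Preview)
Your overall strategy matches the paper's proof: Galerkin approximation with the Fourier basis (so the projector commutes with $\partial_x$ and the transport/Korteweg cancellation survives), uniform $H^{s+1}\times H^s$ bounds, tightness plus Skorokhod, then Gy\"ongy--Krylov. Identifying the cancellation between $\partial_x^{s+2}u$ and $\partial_x^{s+3}\psi$ as the crux is exactly right.

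The gap is in part~(2). Running the uniqueness estimate in $\tfrac12\|\delta\psi\|_{H^2}^2+\tfrac12\|\delta u\|_{H^1}^2$ does not close, because the cut-offs are evaluated at the $W^{2,\infty}$ norm. Splitting
\[
\varphi_R(\|u_1\|_{W^{2,\infty}})A_1-\varphi_R(\|u_2\|_{W^{2,\infty}})A_2
=\big[\varphi_R(\|u_1\|_{W^{2,\infty}})-\varphi_R(\|u_2\|_{W^{2,\infty}})\big]A_1+\varphi_R(\|u_2\|_{W^{2,\infty}})(A_1-A_2),
\]
the first bracket is controlled only by $\|\delta u\|_{W^{2,\infty}}$, which is \emph{not} dominated by $\|\delta u\|_{H^1}$. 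The embedding $H^{s-1}\hookrightarrow W^{2,\infty}$ you invoke helps only if the \emph{difference} itself is measured in $H^{s-1}$; it says nothing about an $H^1$ estimate. The paper therefore runs the uniqueness argument at level $H^{s'+1}\times H^{s'}$ with $s'>\tfrac52$ (so that $\|\delta u\|_{W^{2,\infty}}\lesssim\|\delta u\|_{H^{s'}}$) and then needs $s'+1\le s$ so that the Gronwall weight $G(t)\sim 1+\sum_i\big(\|\psi_i\|_{H^{s'+2}}^2+\|u_i\|_{H^{s'+2}}^2\big)$ lies in $L^1(0,T)$ via $\psi_i\in L^\infty_tH^{s+1}$ and $u_i\in L^2_tH^{s+1}$. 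The pair of constraints $s'>\tfrac52$ and $s'\le s-1$ is the true origin of the threshold $s>\tfrac72$. A secondary point: in your uniform estimate the viscous term is not merely ``controlled by $C(R)$ times lower-order norms''; after integration by parts it yields the dissipation $-\varphi_R\int e^{(\alpha-1)\psi^N}|\partial_x^{\beta+1}u^N|^2\,dx$, which is essential to absorb the $\|u^N\|_{H^{s+1}}$ contributions from the other commutators and to produce the $L^2_tH^{s+1}$ bound on $u$ that both compactness and the integrability of $G$ rely on.
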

\subsubsection{Galerkin approximation}
In order to determine the existence of a strong martingale solution to system 	\eqref{approx R stoch quantum rho u}-\eqref{Approx C.I} we use a Galerkin approximation technique.  Let $H_m$ be the space of the trigonometric polynomials of order $m$,  namely $$ H_m= \bigg\{ v= \sum_{m,\max_{j=1,2,3} |m_j| \le m}^{}[ a_m \cos(\pi m \cdot x)+b_m \sin (\pi m\cdot x)] \; | \; a_m, b_m \in \mathbb{R}\bigg\}$$ and let $(w_m)_{m\in \mathbb{N}}$ be an orthonormal basis of $H_m,$ we consider ${\Pi}_m$ the associated $L^2$- orthogonal projection $${\Pi}_m : L^2 (\mathbb{T}) \longrightarrow H_m.$$
\\
For a given $f \in L^1(\mathbb{T}),$ the projection ${\Pi}_m[f]$ represents the $m$th cubic partial sum of the Fourier series of $f$ and we recall the following properties:
\begin{equation}
\| {\Pi}_m [f] \|_{W^{k,p}} \le c(k,p) \| f \|_{W^{k,p}},
\end{equation}
\begin{equation}
{\Pi}_m [f] \longrightarrow f \ \text{in} \ W^{k,p}(\mathbb{T}) \ \text{as} \ n\rightarrow \infty \  \text{for all} \ 1< p < \infty,
\end{equation}
whenever $ f\in W^{k,p}.$
\\
We set $\psi_m={\Pi}_m [\psi], \, u_m={\Pi}_m [u]$ and we consider the projected system of \eqref{approx R stoch quantum rho u}-\eqref{Approx C.I} on $H_m$
\begin{equation} \label{Gal psi}
\text{d}\langle \psi_m, w_i \rangle+ \varphi_R( \| u_m\|_{W_x^{2,\infty}})\langle u_m\partial_x \psi_m,w_i \rangle dt+ \langle \partial_x u_m,w_i \rangle dt=0 
\end{equation}
\begin{equation} \label{Gal u}
\begin{split}
&\text{d}\langle u_m,  w_i \rangle-\langle \partial_{xxx}\psi_m,w_i \rangle dt  +\varphi_R( \| u_m\|_{W_x^{2,\infty}}) \langle u_m \partial_x u_m,w_i \rangle dt \\ & + \varphi_R( \| \psi_m\|_{W_x^{2,\infty}})\langle \gamma e^{(\gamma-1)\psi_m} \partial_x \psi_m ,w_i \rangle dt=  \varphi_R( \| \psi_m\|_{W_x^{2,\infty}})\langle e^{(\alpha-1)\psi_m} \partial_{xx}u_m,w_i  \rangle dt \\ & + \varphi_R( \| \psi_m\|_{W_x^{2,\infty}})\langle \alpha e^{(\alpha-1)\psi_m}\partial_x \psi_m \partial_x u_m ,w_i \rangle dt + \varphi_R( \| \psi_m\|_{W_x^{2,\infty}})\langle \partial_x \psi_m \partial_{xx}\psi_m,w_i \rangle dt \\ & +\varphi_R( \| u_m\|_{W_x^{2,\infty}}) \langle \mathbb{F}(\rho_m,u_m),w_i \rangle \text{d}W.
\end{split}
\end{equation}
Since all norms defined on the finite dimensional space $H_m$ are equivalent,  then solutions of \eqref{Gal psi}-\eqref{Gal u} can be obtained by means of a Banach fixed point argument.  To this purpose we define $\mathfrak{B}= L^2(\Omega; C([0,T^*];H_m))$ and the map 
\begin{equation*}
\mathcal{F}: \mathfrak{B}\times\mathfrak{B} \longrightarrow \mathfrak{B}\times \mathfrak{B}
\end{equation*}
with $\mathcal{F}(\psi_m,u_m)=(\mathcal{F}_1(\psi_m,u_m), \mathcal{F}_2(\psi_m,u_m))$ satisfying 
\begin{equation}\label{F1}
\begin{split}
\langle \mathcal{F}_1(\psi_m,u_m)(\tau), w_i \rangle &= \langle \psi(0),w_i \rangle-\int_{0}^{\tau} \varphi_R( \| u_m\|_{W_x^{2,\infty}})\langle[ u_m\partial_x \psi_m,w_i] \rangle dt \\ & -\int_{0}^{\tau} \langle \partial_x u_m,w_i \rangle dt,
\end{split}
\end{equation}
\begin{equation}\label{F2}
\begin{split}
&\langle \mathcal{F}_2(\psi_m,u_m)(\tau),  w_i \rangle= \langle u(0),w_i \rangle+\int_{0}^{\tau} \langle \partial_{xxx}\psi_m,w_i \rangle dt \\ & -\varphi_R( \| u_m\|_{W_x^{2,\infty}}) \int_{0}^{\tau} \langle u_m \partial_x u_m,w_i \rangle dt -\varphi_R( \| \psi_m\|_{W_x^{2,\infty}})\int_{0}^{\tau} \langle \gamma e^{(\gamma-1)\psi_m} \partial_x \psi_m ,w_i \rangle dt \\ & + \varphi_R( \| \psi_m\|_{W_x^{2,\infty}})\int_{0}^{\tau} \langle [e^{(\alpha-1)\psi_m} \partial_{xx}u_m,w_i ] \rangle +\langle [\alpha e^{(\alpha-1)\psi_m}\partial_x \psi_m \partial_x u_m ,w_i \rangle]dt \\ & + \varphi_R( \| \psi_m\|_{W_x^{2,\infty}}) \int_{0}^{\tau} \langle [\partial_x \psi_m \partial_{xx}\psi_m,w_i]\rangle dt +\varphi_R( \| u_m\|_{W_x^{2,\infty}})\int_{0}^{\tau} \langle \mathbb{F}(\rho_m,u_m),w_i \rangle \text{d}W.
\end{split}
\end{equation}
For $(\psi_1,u_1)$ and $(\psi_2,u_2)$ solutions of \eqref{Gal psi}-\eqref{Gal u} and by denoting by $\mathcal{F}_{det}$ and $\mathcal{F}_{sto}$ the deterministic and stochastic integrals respectively,  we have $$\| \mathcal{F}(\psi_1,u_1)-\mathcal{F}(\psi_2,u_2) \|^2_B \le \|\mathcal{F}_{det}(\psi_1,u_1)-\mathcal{F}_{det}(\psi_2,u_2)\|^2_B+ \|\mathcal{F}_{sto}(\psi_1,u_1)-\mathcal{F}_{sto}(\psi_2,u_2)\|^2_B.$$
By the equivalence of the norms in $H_m$ we have that
\begin{equation}
\| \mathcal{F}_{det}(\psi_1,u_1)-\mathcal{F}_{det}(\psi_2,u_2) \|^2_B \le T^*C(m,R,T) \| (\psi_1,u_1)-(\psi_2,u_2)\|^2_B,
\end{equation}
while concerning the stochastic part we set similarly to \cite{Feir}, see Chapter 5,  Section 5.2.1 $J_R(w)=\varphi_{R+1}(\| w \|_{W^{2,\infty}})w$ and by using Burkholder-Davis-Gundy's inequality we have
\begin{equation}
\begin{split}
& \| \mathcal{F}_{sto}(\psi_1,u_1)-\mathcal{F}_{sto}(\psi_2,u_2)\|^2_B \\ & = \mathbb{E} \ \sup_{[0,T^*]} \bigg{ \| } \int_{0}^{t} (\varphi_R( \| u_1\|_{W_x^{2,\infty}}) \mathbb{F}(\rho_1,u_1)-(\varphi_R( \| u_2\|_{W_x^{2,\infty}}) \mathbb{F}(\rho_2,u_2) \text{d}W \bigg{ \| }^2_{H_m} \\ & \le \mathbb{E} \int_{0}^{T^*} \sum_{k=1}^{\infty} \| \varphi_R( \| u_1\|_{W_x^{2,\infty}}) F_k(\rho_1,J_R(u_1))-\varphi_R( \| u_2\|_{W_x^{2,\infty}}) F_k(\rho_2,J_R(u_2)) \text{d}W \| ^2_{H_m}dt \\ & \le \mathbb{E} \int_{0}^{T^*} | \varphi_R( \| u_1\|_{W_x^{2,\infty}})-\varphi_R( \| u_2\|_{W_x^{2,\infty}})|^2\sum_{k=1}^{\infty} \| F_k(\rho_1,J_R(u_1)) \|^2_{H_m} dt \\ & + \mathbb{E} \int_{0}^{T^*} \varphi_R( \| u_2\|_{W_x^{2,\infty}})^2\sum_{k=1}^{\infty} \| F_k(\rho_1,J_R(u_1))-F_k(\rho_2,J_R(u_2)) \|^2_{H_m} dt
\end{split}
\end{equation}
and so we get
\begin{equation}
\begin{split}
& \| \mathcal{F}_{sto}(\psi_1,u_1)-\mathcal{F}_{sto}(\psi_2,u_2)\|^2_B \le \mathbb{E} \| u_1-u_2 \|^2_{W^{2,\infty}}+ \mathbb{E} \int_{0}^{T^*} \| e^{\psi_1}-e^{\psi_2} \|^2_{L^2}ds \\ & +\mathbb{E} \int_{0}^{T^*} \| J_R(u_1)-J_R(u_2)\|^2_{L^2}ds \le T^*C(m,R,T) \| (\psi_1,u_1)-(\psi_2,u_2)\|^2_B.
\end{split}
\end{equation}
Hence by choosing the constant $C(m,R,T)$ in such a way $T^*C(m,R,T)< 1$ then $\mathcal{F}$ is a contraction for a deterministic small time $T^*>0.$  By means of Banach Fixed Point Theorem we get the existence of a unique solution in the interval $[0,T^*].$ In order to get the existence of a solution in the whole interval we decompose $[0,T]$ into small subintervals and we glue up the corresponding solutions together.
\subsubsection{Uniform estimates}
In this subsection we derive some uniform in $m$ high order estimates needed to apply compactness argument and to send $m\rightarrow \infty$.  Let $\beta$ be a multi-index such that $|\beta| \le s,$ we apply the operator $\partial^{\beta+1}_x$ to \eqref{Gal psi} and by recalling the invariance of $H_m$ with respect to the spatial derivative we get 
\begin{equation}
\text{d}\langle \partial_x^{\beta+1}\psi_m, w_i \rangle+ \varphi_R( \| u_m\|_{W_x^{2,\infty}})\langle[ \partial_x^{\beta+1}(u_m\partial_x \psi_m),w_i] \rangle dt+ \langle \partial_x^{\beta+2} u_m,w_i \rangle dt=0.
\end{equation}
Multiplying by $\partial_x^{\beta+1}\psi_m$ after integrating in space we have
\begin{equation}\label{alpha+1 psi}
\begin{split}
& \dfrac{1}{2}\text{d} \int_{} | \partial_x^{\beta+1} \psi_m|^2dx+\varphi_R( \| u_m\|_{W_x^{2,\infty}})\int_{}[ \partial_x^{\beta+1}(u_m\partial_x \psi_m)\partial_x^{\beta+1}\psi_m] dxdt \\ & + \int_{} \partial_x^{\beta+2} u_m \partial_x^{\beta+1}\psi_m dxdt=0.
\end{split}
\end{equation}
Similarly we differentiate \eqref{Gal u} with respect to $\partial_x^\beta$
\begin{equation}
\begin{split}
&\text{d}\langle \partial_x^\beta u_m,  w_i \rangle- \langle \partial_x^{\beta+3}\psi_m,w_i \rangle dt +\varphi_R( \| u_m\|_{W_x^{2,\infty}}) \langle \partial_x^\beta(u_m \partial_x u_m)\rangle dt \\ & +\varphi_R( \| \psi_m\|_{W_x^{2,\infty}}) \langle \partial_x^\beta(\gamma e^{(\gamma-1)\psi_m} \partial_x \psi_m) ,w_i \rangle dt=\varphi_R( \| \psi_m\|_{W_x^{2,\infty}}) \langle [\partial_x^\beta(e^{(\alpha-1)\psi_m} \partial_{xx}u_m),w_i ] \rangle dt \\ &+\varphi_R( \| \psi_m\|_{W_x^{2,\infty}})\langle [\partial_x^\beta(\alpha e^{(\alpha-1)\psi_m}\partial_x \psi_m \partial_x u_m) ,w_i \rangle]dt \\ & + \varphi_R( \| \psi_m\|_{W_x^{2,\infty}})\langle [\partial_x^\beta(\partial_x \psi_m \partial_{xx}\psi_m),w_i]\rangle dt +\varphi_R( \| u_m\|_{W_x^{2,\infty}}) \langle \partial_x^\beta\mathbb{F}(\rho_m,u_m),w_i \rangle \text{d}W.
\end{split}
\end{equation}
We apply It$\hat{\text{o}}$ formula to the function $F(D^m)=\int_{} | \partial_x^\beta u_m|^2dx$,  where $D^m=(d_1^m,....., d_n^m)$ are the coefficients in the expansion $u_m=\sum_{i=1}^{m} d_i^m w_i$
\begin{equation}\label{alpha u}
\begin{split}
& \dfrac{1}{2}\text{d} \int_{} | \partial_x^\beta u_m |^2 dx -\int_{} \partial_x^{\beta+3}\psi_m \partial_x^\beta u_m dxdt +\varphi_R( \| u_m\|_{W_x^{2,\infty}}) \int_{} \partial_x^\beta(u_m \partial_x u_m)\partial_x^\beta u_m dxdt \\ & + \varphi_R( \| \psi_m\|_{W_x^{2,\infty}}) \int_{} \partial_x^\beta(\gamma e^{(\gamma-1)\psi_m} \partial_x \psi_m)\partial_x^\beta u_m dxdt= \\ & +  \varphi_R( \| \psi_m\|_{W_x^{2,\infty}})\int_{} [\partial_x^\beta(e^{(\alpha-1)\psi_m} \partial_{xx}u_m)\partial_x^\beta u_m dxdt \\ & +\varphi_R( \| \psi_m\|_{W_x^{2,\infty}}) \int_{} [\partial_x^\beta(\alpha e^{(\alpha-1)\psi_m}\partial_x \psi_m \partial_x u_m)\partial_x^\beta u_m] dxdt \\ & + \varphi_R( \| \psi_m\|_{W_x^{2,\infty}}) \int_{} [\partial_x^\beta(\partial_x \psi_m \partial_{xx}\psi_m)\partial_x^\beta u_m dxdt \\ & + \varphi_R( \| u_m\|_{W_x^{2,\infty}}) \int_{} \partial_x^\beta\mathbb{F}(\rho_m,u_m)\partial_x^\beta u_m dx\text{d}W \\ & +\dfrac{1}{2} \sum_{k=1}^{\infty} \int_{} \varphi_R( \| u_m\|_{W_x^{2,\infty}}) | \partial_x^\beta F_k(\rho_m,u_m) |^2 dxdt.
\end{split}
\end{equation}
Summing up \eqref{alpha+1 psi} and \eqref{alpha u} and integrating by parts we end up with the following equality
\begin{equation}\label{Uniform est m}
\begin{split}
& \dfrac{1}{2}\text{d} \int_{} \bigg( | \partial_x^{\beta+1} \psi_m|^2+| \partial_x^\beta u_m |^2 \bigg) dx= -\varphi_R( \| u_m\|_{W_x^{2,\infty}})\int_{}[ \partial_x^{\beta+1}(u_m\partial_x \psi_m)\partial_x^{\beta+1}\psi_m] dxdt \\ & -\varphi_R( \| u_m\|_{W_x^{2,\infty}}) \int_{} \partial_x^\beta(u_m \partial_x u_m)\partial_x^\beta u_m dxdt - \varphi_R( \| \psi_m\|_{W_x^{2,\infty}}) \int_{}\partial_x^\beta(\gamma e^{(\gamma-1)\psi_m} \partial_x \psi_m)\partial_x^\beta u_m dxdt \\ & + \varphi_R( \| \psi_m\|_{W_x^{2,\infty}}) \int_{} [\partial_x^\beta(e^{(\alpha-1)\psi_m} \partial_{xx}u_m)\partial_x^\beta u_m dxdt \\ & +\varphi_R( \| \psi_m\|_{W_x^{2,\infty}}) \int_{} [\partial_x^\beta(\alpha e^{(\alpha-1)\psi_m}\partial_x \psi_m \partial_x u_m)\partial_x^\beta u_m]dxdt \\ & + \varphi_R( \| \psi_m\|_{W_x^{2,\infty}}) \int_{} [\partial_x^\beta(\partial_x \psi_m \partial_{xx}\psi_m)\partial_x^\beta u_m dxdt +\varphi_R( \| u_m\|_{W_x^{2,\infty}}) \int_{} \partial_x^\beta \mathbb{F}(\rho_m,u_m)\partial_x^\beta u_m dx\text{d}W \\ & +\dfrac{1}{2} \sum_{k=1}^{\infty} \int_{} \varphi_R( \| u_m\|_{W_x^{2,\infty}}) | \partial_x^\beta F_k(\rho_m,u_m) |^2 dxdt=\sum_{j=1}^{8} I_j.
\end{split}
\end{equation}
Now we rewrite and estimate the integrals in \eqref{Uniform est m}.  In the following we make an extensive use of H\"{o}lder,  Sobolev and Young inequalities,  Moser type commutator estimates, (see Proposition \ref{Moser}), in order to deal with the high order derivative terms.  \\
\begin{equation}\label{1}
\begin{split}
|I_1|= | \;& \varphi_R( \| u_m\|_{W_x^{2,\infty}})\int_{}[ \partial_x^{\beta+1}(u_m\partial_x \psi_m)\partial_x^{\beta+1}\psi_m] dxdt|= \\ & |\varphi_R( \| u_m\|_{W_x^{2,\infty}})\int_{}[ \partial_x^{\beta+1}(u_m\partial_x \psi_m)-u_m\partial_x^{\beta+1} \partial_x \psi_m]\partial_x^{\beta+1}\psi_mdxdt \\ & +\varphi_R( \| u_m\|_{W_x^{2,\infty}})\int_{}u_m\partial_x^{\beta+1} \partial_x \psi_m\partial_x^{\beta+1}\psi_mdxdt|=I_{1,1}+I_{1,2} .
\end{split}
\end{equation}
The terms in \eqref{1} can be estimated as follows
\begin{equation}
\begin{split}
I_{1,1} &  \le C \varphi_R( \| u_m\|_{W_x^{2,\infty}}) ( \| u_m \|_{H^{s+1}} \| \psi_m \|_{W^{1,\infty}}+ \| u_m \|_{W^{1,\infty}} \| \psi_m \|_{H^{s+1}}) \| \psi_m \|_{H^{s+1}} \\ & \le C(R)( \| u_m\|_{H^{s+1}}^2 + \| \psi_m \|_{H^{s+1}}^2),
\end{split}
\end{equation}
and
\begin{equation}
\begin{split}
I_{1,2} & \le | \varphi_R( \| u_m\|_{W_x^{2,\infty}})\int_{}u_m \partial_x \partial_x^{\beta+1} \psi_m\partial_x^{\beta+1}\psi_mdx | \\ & =| \varphi_R( \| u_m\|_{W_x^{2,\infty}}) \int_{} u_m \partial_x \bigg( \dfrac{|\partial_x^{\beta+1}\psi_m|^2}{2} \bigg ) | \\ & \le C \varphi_R( \| u_m\|_{W_x^{2,\infty}}) \| \partial_x u_m \|_{L^\infty} \| \partial_x^{\beta+1} \psi_m \|_{L^2}^2  \le C(R) \| \psi_m \|_{H^{s+1}}^2.
\end{split}
\end{equation}
Concerning the second integral we have 
\begin{equation}
\begin{split}
|I_2|= | \; & \varphi_R( \| u_m\|_{W_x^{2,\infty}}) \int_{} [\partial_x^\beta(u_m \partial_x u_m)-u_m \partial_x^{\beta} \partial_x u_m]\partial_x^\beta u_m dx \\ & +\varphi_R( \| u_m\|_{W_x^{2,\infty}}) \int_{} u_m \partial_x \partial_x^{\beta} u_m \partial_x^{\beta}u_m dx|= I_{2,1}+I_{2,2}
\end{split}
\end{equation}
and we get 
\begin{equation}
\begin{split}
I_{2,1} & \le  C\varphi_R( \| u_m\|_{W_x^{2,\infty}}) ( \| u_m\|_{H^s} \| \partial_x u_m \|_{L^\infty}+ \| \partial_x u_m \|_{L^\infty} \| \partial_x u_m \|_{H^{s-1}}) \| \partial_x^\beta u_m \|_{L^2} \\ & \le C(R) \| u_m \|_{H^s}^2,
\end{split}
\end{equation}
\begin{equation}
\begin{split}
I_{2,2} &= | \varphi_R( \| u_m\|_{W_x^{2,\infty}}) \int_{} u_m \partial_x \bigg( \dfrac{| \partial_x^\beta u_m |^2}{2} \bigg ) dx | \\ & = | -\frac{1}{2}\varphi_R( \| u_m\|_{W_x^{2,\infty}}) \int_{} \partial_x u_m | \partial_x^\beta u_m |^2 dx | \le C(R) \| u_m \|_{H^s}^2.
\end{split}
\end{equation}
The pressure term can be estimated as follows
\begin{equation}
\begin{split}
|I_3| & \le \varphi_R( \| \psi_m\|_{W_x^{2,\infty}}) \| \partial_x^\beta(\gamma e^{(\gamma-1)\psi_m} \partial_x \psi_m) \|_{L^2} \| \partial_x^\beta u_m \|_{L^2} \\ & \le C \varphi_R( \| u_m\|_{W_x^{2,\infty}}) ( \| e^{(\gamma-1)\psi_m} \|_{L^\infty} \| \psi_m \|_{H^{s+1}} + \| \psi_m \|_{W^{1,\infty}} \| e^{(\gamma-1)\psi_m} \|_{H^s}) \| u_m \|_{H^s} \\ & \le C(R)( \| \psi_m \|_{H^{s+1}}^2+ \| u_m\|_{H^s}^2).
\end{split}
\end{equation}
The first viscous term can be rewritten as follows 
\begin{equation}
\begin{split}
|I_4| & =| \varphi_R( \| \psi_m\|_{W_x^{2,\infty}})\int_{} [\partial_x^\beta(e^{(\alpha-1)\psi_m} \partial_{xx}u_m)-e^{(\alpha-1)\psi_m} \partial_x^\beta \partial_{xx}u_m]\partial_x^\beta u_m \\ & + \varphi_R( \| \psi_m\|_{W_x^{2,\infty}})\int_{} e^{(\alpha-1)\psi_m} \partial_{xx}\partial_x^\beta u_m\partial_x^\beta u_m dx|= I_{4,1}+I_{4,2}
\end{split}
\end{equation}
and we have 
\begin{equation}
\begin{split}
I_{4,1} & \le \varphi_R( \| \psi_m\|_{W_x^{2,\infty}})\| \partial_x^\beta(e^{(\alpha-1)\psi_m} \partial_{xx}u_m)-e^{(\alpha-1)\psi_m} \partial_x^\beta \partial_{xx}u_m \|_{L^2} \| \partial_x^\beta u_m \|_{L^2} \\ & \le C \varphi_R( \| \psi_m\|_{W_x^{2,\infty}})\| e^{(\alpha-1)\psi_m} \|_{H^s} \| \partial_{xx}u_m \|_{L^\infty} \| \partial_x^\beta u_m \|_{L^2} \\ & +C\varphi_R( \| \psi_m\|_{W_x^{2,\infty}}) \| \partial_x(e^{(\alpha-1)\psi_m} ) \|_{L^\infty} \| \partial_{xx}u_m \|_{H^{s-1}})\| \partial_x^\beta u_m \|_{L^2}  \\ & \le C\varphi_R( \| \psi_m\|_{W_x^{2,\infty}}) \| e^{(\alpha-1)\psi_m} \|_{H^s} \| u_m \|_{W^{2,\infty}} \| u_m \|_{H^s} \\ & + C\varphi_R( \| \psi_m\|_{W_x^{2,\infty}})\|  e^{(\alpha-1)\psi_m} \|_{W^{1,\infty}} \| u_m \|_{H^{s+1}} \| u_m \|_{H^s},
\end{split}
\end{equation}
moreover
\begin{equation}\label{dissip m}
\begin{split}
I_{4,2} = & -\varphi_R( \| \psi_m\|_{W_x^{2,\infty}})\int_{} \partial_x(e^{(\alpha-1)\psi_m}) \partial_x \partial_x^\beta u_m \partial_x^\beta u_m dx  \\ & -\varphi_R( \| \psi_m\|_{W_x^{2,\infty}})\int_{}e^{(\alpha-1)\psi_m} \partial_x \partial_x^\beta u_m \partial_x \partial_x^\beta u_m dx  \\ & = \varphi_R( \| \psi_m\|_{W_x^{2,\infty}}) \int_{}\partial_{xx} (e^{(\alpha-1)\psi_m}) \dfrac{| \partial_x^\beta u_m |^2}{2} \\ & - \varphi_R( \| \psi_m\|_{W_x^{2,\infty}}) \int_{} e^{(\alpha-1)\psi_m} | \partial_x^{\beta+1}u_m |^2 dx,
\end{split}
\end{equation}
where we point out that the last term of \eqref{dissip m} has a sign and it is the dissipation term in the energy inequality.
Similarly we estimate the remaining integrals
\begin{equation}
\begin{split}
|I_5|= & |\varphi_R( \| \psi_m\|_{W_x^{2,\infty}}) \int_{} \partial_x^\beta \bigg( \dfrac{\partial_x (e^{\alpha \psi})}{e^\psi} \partial_x u_m \bigg)\partial_x^\beta u_m dx | \\ & \le \delta \| u_m \|^2_{H^{s+1}}+ C(R,\delta) \| u_m \|^2_{H^s}.
\end{split}
\end{equation}
The nonlinear quantum term can be estimated as follows
\begin{equation}
\begin{split}
|I_6|=& |-\varphi_R( \| \psi_m\|_{W_x^{2,\infty}}) \int_{} \partial_x^{\beta-1}(\partial_x \psi_m \partial_{xx}\psi_m)\partial_x^{\beta +1}u_m| \\ & \le C \varphi_R( \| \psi_m\|_{W_x^{2,\infty}})  \| \partial_x \psi_m \|_{L^\infty} \| \partial_x^{\beta-1}( \partial_{xx}\psi_m) \|_{L^2}\| \partial_x^{\beta+1} u_m \|_{L^2}  \\ & + C \varphi_R( \| \psi_m\|_{W_x^{2,\infty}})\| \partial_{xx}\psi_m \|_{L^\infty} \| \partial_x^{\beta-1}(\partial_x \psi_m) \|_{L^2} \| \partial_x^{\beta+1} u_m \|_{L^2} \\ & \le C \varphi_R( \| \psi_m\|_{W_x^{2,\infty}})( \| \partial_x \psi_m \|_{L^\infty} \| \psi_m \|_{H^{s+1}}+ \| \psi_m \|_{W^{2,\infty}} \| \psi_m \|_{H^s}) \| u_m \|_{H^{s+1}} \\ & \le C(R) \| \psi_m \|_{H^{s+1}}^2+ \| u_m \|_{H^{s+1}}^2.
\end{split}
\end{equation}
Concerning the stochastic integral $I_7$ we have
\begin{equation}\label{last m}
\begin{split}
& \mathbb{E} \bigg[ \sup_{t \in [0,T]} \bigg| \int_{0}^{t} \varphi_R( \| u_m\|_{W_x^{2,\infty}}) \int_{\mathbb{T}} \partial_x^\beta \mathbb{F}(\rho_m,u_m) \partial_x^\beta u_m dxdW \bigg| \bigg]^p \\ & \lesssim \mathbb{E} \bigg[ \sum_{k=1}^{\infty} \int_{0}^{T}  \varphi_R( \| u_m\|_{W_x^{2,\infty}})^2 \bigg(\int_{\mathbb{T}} \partial_x^\beta F_k(\rho_m,u_m) \partial_x^\beta u_m dx \bigg)^2 dxdt \bigg]^{\frac{p}{2}} \\ & \lesssim \mathbb{E} \bigg[ \int_{0}^{T} \varphi_R( \| u_m\|_{W_x^{2,\infty}})^2 \bigg( \sum_{k=1}^{\infty} \| \partial_x^\beta F_k(\rho_m,u_m) \|^2_{L^\infty} \bigg) \| \partial_x^\beta u_m \|^2_{L^2} dt \bigg]^{\frac{p}{2}} \\ & \lesssim \mathbb{E} \bigg[ \int_{0}^{T} 1+\| \psi_m \|^2_{H^{s+1}} + \| u_m \|^2_{H^s} dt \bigg]^p.
\end{split}
\end{equation}
The  It$\hat{\text{o}}$ correction term is estimated as follows
\begin{equation}\label{I_8}
\begin{split}
& I_8= \int_{0}^{t} \varphi_R( \| u_m\|_{W_x^{2,\infty}}) \sum_{k=1}^{\infty} \| F_k \|^2_{C^{s-1}} \|(\psi_m,u_m) \|^{2(s-1)}_{L_x^\infty} \| (\psi_m,u_m)\|^2_{H^s}ds\\ &  \le c(R,T) \int_{0}^{t} \| (\psi_m,u_m) \|^2_{H^s}ds,
\end{split}
\end{equation}
where we used Proposition \ref{Moser} and we consider the $C^{s-1}$ norm of $F_k$ restricted to range$[(\psi_m,u_m)]$ which is a bounded set.
Summing up \eqref{1}-\eqref{I_8} we get the following inequality
\begin{equation}
\begin{split}
& \mathbb{E} \bigg[ \sup_{t \in [0,T] } (( \| \psi_m \|^2_{H^{s+1}}+ \| u_m\|^2_{H^{s}} )+ \int_{0}^{T} \varphi_R( \| \psi_m\|_{W_x^{2,\infty}}) \int_{\mathbb{T}} | \partial_x^{\beta+1}u_m|^2 dxdt \bigg]^p \\ & \lesssim c(R,T,s) \mathbb{E} \bigg[ \| \psi_0 \|^{2}_{H^{s+1}} + \| u_0 \|^{2}_{H^s} + \int_{0}^{T} \| \psi_m \|^2_{H^{s+1}}+ \| u_m\|^2_{H^{s}} dt +1 \bigg]^p,
\end{split}
\end{equation}
from which after applying Gronwall Lemma we get 
\begin{equation}\label{reg}
\begin{split}
& \mathbb{E} \bigg[ \sup_{[0,T]} ( \| \psi_m \|^2_{H^{s+1}}+ \| u_m\|^2_{H^{s}} )+ \int_{0}^{T} \varphi_R( \| \psi_m\|_{W_x^{2,\infty}}) \int_{\mathbb{T}} | \partial_x^{\beta+1}u_m|^2 dxdt \bigg]^p  \\ & \le c(R,T,s) \mathbb{E} \bigg[ \| \psi_0 \|_{H^{s+1}}^{2p}+ \| u_0 \|_{H^s}^{2p}+1 \bigg].
\end{split}
\end{equation}
\subsubsection{Compactness}
We define the path space 
$\mathcal{X}=\mathcal{X}_\psi\times\mathcal{X}_u\times\mathcal{X}_W$
where $$\mathcal{X}_\psi=C([0,T];H^\zeta(\mathbb{T}), \ \mathcal{X}_u=C([0,T];H^l(\mathbb{T}),\ \mathcal{X}_W=C([0,T];\mathfrak{U}_0) $$
with $\zeta < s+1$ such that $\zeta > \frac{7}{2}$ and $l< s$ such that $l> \frac{5}{2}.$ 
\\
We denote by $\mathcal{L}[\psi_m],\ \mathcal{L}[u_m],\ \mathcal{L}[W]$ the law of $\psi_m, u_m,  W$ respectively.  Their joint law on $\mathcal{X}$ is denoted by $\mathcal{L}[\psi_m,u_m,W].$ 
In the following Lemma we prove the tightness of the laws.
\begin{lemma}\label{Tightness}
The following results hold
\begin{itemize}
\item[1)]
The set $\{ \mathcal{L}[u_m]; \ m\in \mathbb{N} \}$ is tight on $\mathcal{X}_u$
\item[2)]
The set $\{ \mathcal{L}[\psi_m]; \ m\in \mathbb{N} \}$ is tight on $\mathcal{X}_\psi$
\item[3)]
The set $\{ \mathcal{L}[W] \}$ is tight on $\mathcal{X}_W$
\end{itemize}
\end{lemma}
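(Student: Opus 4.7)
The plan is to verify tightness for each of the three families separately, exploiting the uniform bound \eqref{reg} for the spatial regularity and supplementing it with appropriate time-regularity.

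Tightness of $\{\mathcal{L}[W]\}$ is immediate: this is a single Borel probability measure on the Polish space $\mathcal{X}_W=C([0,T];\mathfrak{U}_0)$ and hence automatically tight (Ulam's theorem), or alternatively follows from the fact that $W$ has $\mathbb{P}$-a.s. continuous sample paths with Gaussian tails in $\mathfrak{U}_0$.

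For $\{\mathcal{L}[\psi_m]\}$, the argument is essentially deterministic. From the Galerkin continuity equation \eqref{Gal psi} I would write
\begin{equation*}
\partial_t \psi_m = -\varphi_R(\|u_m\|_{W^{2,\infty}})\Pi_m[u_m\partial_x\psi_m]-\Pi_m[\partial_x u_m].
\end{equation*}
The bound \eqref{reg} together with the continuity of $\Pi_m$ on Sobolev spaces and the Sobolev algebra property in $H^{s-1}$ (valid since $s-1>5/2$) yields $\partial_t\psi_m$ bounded uniformly in $m$ in $L^p(\Omega;L^\infty(0,T;H^{s-1}))$. Coupled with the uniform bound of $\psi_m$ in $L^p(\Omega;L^\infty(0,T;H^{s+1}))$, the Aubin--Lions--Simon lemma gives a compact set $K_\varepsilon\subset\mathcal{X}_\psi=C([0,T];H^\zeta)$ (any $\zeta<s+1$) to which a.s. the trajectory $\psi_m$ is pushed with probability at least $1-\varepsilon$ by Markov's inequality, which is exactly tightness on $\mathcal{X}_\psi$.

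For $\{\mathcal{L}[u_m]\}$ the plan is the same in spirit, but the stochastic integral prevents classical time-differentiability. I would split $u_m=u_m^{\mathrm{det}}+u_m^{\mathrm{sto}}$ into the Bochner-integral and Itô-integral parts. The deterministic part is handled as for $\psi_m$: each nonlinear term in \eqref{Gal u} is controlled in $H^{s-2}$ thanks to \eqref{reg} and the cut-off factors $\varphi_R$, yielding $\partial_t u_m^{\mathrm{det}}$ bounded uniformly in $L^p(\Omega;L^\infty(0,T;H^{s-2}))$. For $u_m^{\mathrm{sto}}(t)=\int_0^t \varphi_R(\|u_m\|_{W^{2,\infty}})\,\Pi_m\mathbb{F}(\rho_m,u_m)\,dW$, I would apply the Burkholder--Davis--Gundy inequality together with the assumption \eqref{G2} to obtain, for $p$ large and $\beta<1/2$,
\begin{equation*}
\mathbb{E}\,\|u_m^{\mathrm{sto}}\|_{W^{\beta,p}(0,T;H^s)}^p\lesssim C(R),
\end{equation*}
using the boundedness of $\varphi_R$ to trivialize the nonlinear dependence. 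Choosing $\beta p>1$ and invoking the compact embedding $W^{\beta,p}(0,T;H^s)\cap L^\infty(0,T;H^s)\hookrightarrow\hookrightarrow C([0,T];H^l)$ for any $l<s$ (see e.g. Section 2.6 in \cite{Feir}), Markov's inequality again supplies, for every $\varepsilon>0$, a compact subset of $\mathcal{X}_u$ carrying mass $\geq 1-\varepsilon$.

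The main obstacle is the control of fractional time-regularity of the stochastic integral: the Itô integral is only Hölder continuous with any exponent $<1/2$, so one cannot hope for a $C^1$-in-time bound as in the deterministic case. The crucial accounting is that \eqref{reg} furnishes one derivative of spatial regularity above the target space $H^l$ (or $H^\zeta$), which is precisely what is required by the Simon-type compact embedding to compensate for the reduced time-regularity from BDG. The cut-off operators $\varphi_R$ keep all nonlinear constants finite and independent of $m$, so apart from a careful bookkeeping of Sobolev exponents no genuinely new difficulty appears.
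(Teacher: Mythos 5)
Your proposal is correct and follows essentially the same route as the paper: decompose $u_m$ into a Bochner-integral part and an It\^o-integral part, establish extra time regularity for each piece from the truncated equation and from a BDG-type bound (the paper's Lemma \ref{Integrability}), then combine with the uniform $H^{s+1}\times H^s$ bound \eqref{reg} and a compactness criterion (the paper uses $C^k_t L^2_x$ H\"older bounds plus Arzel\`a--Ascoli where you invoke Aubin--Lions--Simon / fractional-Sobolev embeddings, which are interchangeable here). The only point to phrase carefully is that the Simon-type lemma requires the function to be bounded in a space strictly above the target, so one should note that $u_m^{\mathrm{sto}}$ (and hence $u_m^{\mathrm{det}}=u_m-u_m^{\mathrm{sto}}$) is itself uniformly bounded in $L^p_\omega L^\infty_t H^s_x$ by BDG before applying the embedding; once that is observed, the argument closes exactly as in the paper.
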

\begin{proof} 
$1)$ 
By \eqref{Gal u} we have 
\begin{equation}
\begin{split}
& u_m(\tau)= {\Pi}_mu_0+\int_{0}^{\tau} {\Pi}_m[ \partial_{xxx}\psi_m]dt \\ & -\varphi_R( \| u_m\|_{W_x^{2,\infty}}) \int_{0}^{\tau} {\Pi}_m[ u_m \partial_x u_m] +\varphi_R( \| \psi_m\|_{W_x^{2,\infty}})\int_{0}^{\tau}  {\Pi}_m[\gamma e^{(\gamma-1)\psi_m} \partial_x \psi_m] dt \\ &+ \varphi_R( \| \psi_m\|_{W_x^{2,\infty}})\int_{0}^{\tau} {\Pi}_m[e^{(\alpha-1)\psi_m} \partial_{xx}u_m] dt \\ & +\varphi_R( \| \psi_m\|_{W_x^{2,\infty}}) \int_{0}^{\tau}{\Pi}_m [\alpha e^{(\alpha-1)\psi_m}\partial_x \psi_m \partial_x u_m]dt \\ & + \varphi_R( \| \psi_m\|_{W_x^{2,\infty}}) \int_{0}^{\tau} {\Pi}_m[\partial_x \psi_m \partial_{xx}\psi_m] dt +\varphi_R( \| u_m\|_{W_x^{2,\infty}})\int_{0}^{\tau} {\Pi}_m \mathbb{F}(\rho_m,u_m) \text{d}W.
\end{split}
\end{equation}
Now we decompose $u_m$ into two parts $u_m=Y_m+Z_m$ where 
\begin{equation}
\begin{split}
&Y_m(\tau)={\Pi}_mu_0+\int_{0}^{\tau} {\Pi}_m[ \partial_{xxx}\psi_m]dt -\varphi_R( \| u_m\|_{W_x^{2,\infty}}) \int_{0}^{\tau} {\Pi}_m[ u_m \partial_x u_m] dt \\ & + \varphi_R( \| \psi_m\|_{W_x^{2,\infty}})\int_{0}^{\tau}{\Pi}_m[\gamma e^{(\gamma-1)\psi_m} \partial_x \psi_m] dt  \\ & +\varphi_R( \| \psi_m\|_{W_x^{2,\infty}})\int_{0}^{\tau} {\Pi}_m[e^{(\alpha-1)\psi_m} \partial_{xx}u_m] dt \\ & +\varphi_R( \| \psi_m\|_{W_x^{2,\infty}}) \int_{0}^{\tau}{\Pi}_m [\alpha e^{(\alpha-1)\psi_m}\partial_x \psi_m \partial_x u_m]dt \\ & + \varphi_R( \| \psi_m\|_{W_x^{2,\infty}}) \int_{0}^{\tau} {\Pi}_m[\partial_x \psi_m \partial_{xx}\psi_m] dt
\end{split}
\end{equation}
and 
\begin{equation}
Z_m(\tau)=\varphi_R( \| u_m\|_{W_x^{2,\infty}})\int_{0}^{\tau} {\Pi}_m \mathbb{F}(\rho_m,u_m) \text{d}W.
\end{equation}
By using \eqref{reg} the following Holder estimate holds for any $k \in (0,1)$
\begin{equation}
\mathbb{E} [ \| Y_n \|_{C^k_t L^2_x}] \le c(R),
\end{equation}
while concerning the stochastic term the same results holds for $k \in (0,\frac{1}{2}),$ see Lemma \ref{Integrability} in the Appendix.  Now by recalling the following embedding relation 
\begin{equation}\label{emb}
C([0,T];H^s(\mathbb{T}) \cap C^{k}([0,T];L^2(\mathbb{T})) \xhookrightarrow{C}C([0,T];H^l(\mathbb{T})), \quad k>0,\; l<s,
\end{equation}
which follows directly from Ascoli Arzelà theorem,  we get the tightness of the laws.
\\
$2)$ Similarly,  we prove the Holder continuity estimate for $\psi_m$ and the tightness follows from the embedding \eqref{emb} with $\zeta$ instead of $l.$
\\
$3)$ $\mathcal{L}[W]$ is tight since it is a Radon measure on a Polish space $\mathcal{X}_W.$
\end{proof}
\noindent
As a consequence we get the tightness of the joint law
\begin{corollary}
The set $\{\mathcal{L}[\psi_m,u_m,W]; \ m\in \mathbb{N} \}$ is tight on $\mathcal{X}.$
\end{corollary}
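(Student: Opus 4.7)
The plan is to upgrade the marginal tightness just established in Lemma \ref{Tightness} to joint tightness on the product path space $\mathcal{X} = \mathcal{X}_\psi \times \mathcal{X}_u \times \mathcal{X}_W$ by an elementary product-space argument. This is a purely abstract step at this point: all the work has already been done in proving the three separate tightness statements, so the role of the corollary is simply to package them.

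Concretely, fix $\varepsilon > 0$. By Lemma \ref{Tightness}, each of the three families of marginal laws is tight, so there exist compact sets
\begin{equation*}
K^{\varepsilon}_{\psi} \subset \mathcal{X}_\psi, \quad K^{\varepsilon}_{u} \subset \mathcal{X}_u, \quad K^{\varepsilon}_{W} \subset \mathcal{X}_W
\end{equation*}
such that, uniformly in $m \in \mathbb{N}$,
\begin{equation*}
\mathcal{L}[\psi_m](\mathcal{X}_\psi \setminus K^{\varepsilon}_{\psi}) \le \tfrac{\varepsilon}{3}, \quad
\mathcal{L}[u_m](\mathcal{X}_u \setminus K^{\varepsilon}_{u}) \le \tfrac{\varepsilon}{3}, \quad
\mathcal{L}[W](\mathcal{X}_W \setminus K^{\varepsilon}_{W}) \le \tfrac{\varepsilon}{3}.
\end{equation*}
The set $K^{\varepsilon} = K^{\varepsilon}_{\psi} \times K^{\varepsilon}_{u} \times K^{\varepsilon}_{W}$ is then compact in $\mathcal{X}$ as a finite product of compact sets (Tychonoff), and a union bound applied to the complements yields
\begin{equation*}
\mathcal{L}[\psi_m, u_m, W](\mathcal{X} \setminus K^{\varepsilon}) \le \mathbb{P}(\psi_m \notin K^{\varepsilon}_{\psi}) + \mathbb{P}(u_m \notin K^{\varepsilon}_{u}) + \mathbb{P}(W \notin K^{\varepsilon}_{W}) \le \varepsilon,
\end{equation*}
uniformly in $m$. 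Since $\varepsilon > 0$ was arbitrary, this gives tightness of $\{\mathcal{L}[\psi_m, u_m, W]\}_{m \in \mathbb{N}}$ on $\mathcal{X}$.

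There is essentially no obstacle here: the only point that could need a brief comment is that the product topology on $\mathcal{X}$ coincides with the natural topology used in the sequel (namely the one in which Skorokhod/Jakubowski representation and the passage $m \to \infty$ will be performed in the next subsection), which is immediate because $\mathcal{X}_\psi$, $\mathcal{X}_u$, $\mathcal{X}_W$ are Polish. One could equivalently invoke the general fact that the family of laws of a random vector is tight on a product of Polish spaces if and only if each marginal family is tight, which is exactly what we have just verified.
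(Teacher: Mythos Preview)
Your argument is correct and is precisely the standard product-space argument the paper has in mind: the paper itself does not write out a proof of this corollary at all, simply introducing it with ``As a consequence we get the tightness of the joint law'' immediately after Lemma~\ref{Tightness}. Your explicit union-bound/Tychonoff justification is exactly the routine step being suppressed.
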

\noindent
Since the path space $\mathcal{X}$ is a Polish space,  we make use of Skorokhod representation theorem combined with  Prokhorov's theorem, see Theorem \ref{Skor} and Theorem \ref{Prok} in the Appendix.  Hence we infer the following convergence result.
\begin{lemma}\label{conv stoch}
There exists a complete probability space $(\tilde{\Omega}, \tilde{\mathfrak{F}},\tilde{\mathbb{P})}, $ with $\mathcal{X}$-valued Borel-measurable random variables $(\tilde{\psi}_m,\tilde{u}_m,\tilde{W}_m),  m\in \mathbb{N},$ and $(\tilde{\psi},\tilde{u},\tilde{W}), $ such that up to a subsequence:
\begin{itemize}
\item[(1)]
the law of $(\tilde{\psi}_m,\tilde{u}_m,\tilde{W}_m), $ is given by $\mathcal{L}[\psi_m,u_m,W]; \ m\in \mathbb{N};$
\item[(2)] 
the law of $(\tilde{\psi},\tilde{u},\tilde{W})$ is a Radon measure;
\item[(3)]
$(\tilde{\psi}_m,\tilde{u}_m,\tilde{W}_m), $ converges $\tilde{\mathbb{P}}-$a.s. to $(\tilde{\psi},\tilde{u},\tilde{W})$ int the topology of $\mathcal{X}$ i.e. 
\begin{equation*}
\tilde{\psi}_m \rightarrow \tilde{\psi} \ in \ C[0,T]; H^\zeta(\mathbb{T})),
\end{equation*}
\begin{equation*}
\tilde{u}_m \rightarrow \tilde{u} \ in \ C[0,T]; H^l(\mathbb{T})),
\end{equation*}
\begin{equation*}
\tilde{W}_m \rightarrow \tilde{W} \ in \ C[0,T]; \mathfrak{U}_0),
\end{equation*}
\end{itemize}
as $m \rightarrow \infty \ \tilde{\mathbb{P}}-$a.s.
\end{lemma}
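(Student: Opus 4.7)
The plan is to obtain the lemma as a direct consequence of the tightness established in the preceding Corollary combined with the two classical tools already referenced in the appendix, namely Prokhorov's theorem (Theorem~\ref{Prok}) and the Skorokhod representation theorem (Theorem~\ref{Skor}). Since the path space $\mathcal{X}=\mathcal{X}_\psi\times\mathcal{X}_u\times\mathcal{X}_W$ is a product of separable Banach spaces, hence a Polish space, both theorems apply in their standard formulations, and the proof reduces to an orderly bookkeeping.

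First, I would invoke the Corollary: the family $\{\mathcal{L}[\psi_m,u_m,W]\}_{m\in\mathbb{N}}$ is tight on $\mathcal{X}$. By Prokhorov's theorem, tightness on a Polish space is equivalent to relative compactness in the weak topology of probability measures, so there exist a subsequence (still indexed by $m$ for notational convenience) and a Borel probability measure $\Lambda$ on $\mathcal{X}$ such that $\mathcal{L}[\psi_m,u_m,W]\rightharpoonup \Lambda$ narrowly as $m\to\infty$. Since $\mathcal{X}$ is Polish, $\Lambda$ is automatically Radon, giving assertion~(2).

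Next I would apply the Skorokhod representation theorem (Theorem~\ref{Skor}) on $\mathcal{X}$: there exist a new complete probability space $(\tilde{\Omega},\tilde{\mathfrak{F}},\tilde{\mathbb{P}})$ and $\mathcal{X}$-valued Borel random variables $(\tilde{\psi}_m,\tilde{u}_m,\tilde{W}_m)$ and $(\tilde{\psi},\tilde{u},\tilde{W})$ defined on it such that the law of $(\tilde{\psi}_m,\tilde{u}_m,\tilde{W}_m)$ coincides with $\mathcal{L}[\psi_m,u_m,W]$ for every $m$ (yielding~(1)), the law of $(\tilde{\psi},\tilde{u},\tilde{W})$ equals $\Lambda$, and
\begin{equation*}
(\tilde{\psi}_m,\tilde{u}_m,\tilde{W}_m)\longrightarrow (\tilde{\psi},\tilde{u},\tilde{W})\quad \tilde{\mathbb{P}}\text{-a.s. in the topology of }\mathcal{X},
\end{equation*}
which, by the very definition of the topologies on $\mathcal{X}_\psi,\mathcal{X}_u,\mathcal{X}_W$, unpacks into the three componentwise convergences claimed in~(3).

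The only genuine point to be careful about is the standing assumption that $\mathcal{X}$ be Polish: each of $C([0,T];H^\zeta(\mathbb{T}))$, $C([0,T];H^l(\mathbb{T}))$ and $C([0,T];\mathfrak{U}_0)$ is a separable Banach space with the uniform topology, so their product is Polish and the classical Skorokhod theorem applies without needing Jakubowski's generalization. I do not expect a substantial obstacle: all the analytic work has already been done in establishing the uniform estimates \eqref{reg} and the Hölder regularity of the stochastic increment in Lemma~\ref{Tightness}, which fed into tightness. The present lemma is essentially a soft rephrasing of that compactness on a convenient probability space, and will in turn allow the passage to the limit $m\to\infty$ in the Galerkin equations in the subsequent step of the proof of Theorem~\ref{second thm}.
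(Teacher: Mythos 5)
Your proposal is correct and follows exactly the route the paper takes: the preceding corollary gives tightness of the joint laws on the Polish space $\mathcal{X}$, and the lemma is then a direct application of Prokhorov's theorem followed by the Skorokhod representation theorem. You have simply spelled out the bookkeeping that the paper leaves implicit.
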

\subsubsection{Identification of the limit}
In this section we identify the limit obtained in the previous Lemma with a strong martingale solution of \eqref{approx R stoch quantum rho u}-\eqref{approx R stoch quantum u}.  \\
Since $\tilde{\psi}$ and $\tilde{u}$ are stochastic processes with continuous trajectories,  then they are progressively measurable stochastic processes with respect to their canonical filtration respectively,  so they are progressively measurable with respect to the canonical filtration generated by $[ \tilde{\psi}, \tilde{u}, \tilde{W}]$ i.e. 
\begin{equation}
\tilde{\mathfrak{F}}_t:= \sigma \bigg( \sigma_t [\tilde{\psi}] \cup \sigma_t [\tilde{u}] \cup \bigcup_{k=1}^{\infty} \sigma_t [\tilde{W}_k] \bigg), \quad t \in [0,T].
\end{equation}
\\
Then the process $\tilde{W}$ is a cylindrical Wiener process with respect to its canonical filtration and moreover it is a cylindrical Wiener process with respect to $(\tilde{\mathfrak{F}}_t)_{t \ge 0}.$ \\
Furthermore $(\tilde{\mathfrak{F}}_t)_{t \ge 0}$ is non-anticipative with respect to $\tilde{W}$, indeed for every $m \in \mathbb{N}, \; \tilde{W}_m= \sum_{k=1}^{\infty} e_k  \tilde{W}_{m,k} $ is a cylindrical Wiener process with respect to 
\begin{equation}
\sigma \bigg( \sigma_t [\tilde{\psi}_m] \cup \sigma_t [\tilde{u}_m] \cup \bigcup_{k=1}^{\infty} \sigma_t [\tilde{W}_{m,k}] \bigg), \quad t \in [0,T].
\end{equation}
and so this filtration is non-anticipative with respect to $\tilde{W}_m$,  our claim then follows. \\
Next we claim that $[\tilde{\psi},\tilde{u},\tilde{W}]$ is a strong martingale solution to \eqref{approx R stoch quantum rho u}-\eqref{Approx C.I}. \\
First we observe that by virtue of Skorokhod Theorem, the stochastic process $[\tilde{\psi}_m,\tilde{u}_m]$ solves \eqref{Gal psi}- \eqref{Gal u} on the new probability space and passing to the limit we deduce $[\tilde{\psi}, \tilde{u}]$ solves \eqref{approx R stoch quantum rho u}. Similarly,  by using the same lines of argument, the equations \eqref{approx R stoch quantum u} is solved by $[\tilde{ \psi}_m, \tilde{u}_m, \tilde{W}_m]$ on the new probability space.\\
Finally by Lemma \ref{conv stoch} and the uniform estimates \eqref{reg}, we pass to the limit completing the existence part of Theorem \ref{second thm}.
Concerning the strong continuity in time of $(\tilde{\psi}, \tilde{u}),$ it follows by using Gelfand variational approach. Indeed since the stochastic integral has continuous trajectories in $H^{s}$ and the deterministic counterpart in the momentum equation belongs to $L^2(0,T;H^{s-1})$ a.s., the momentum equation \eqref{approx R stoch quantum u} is solved in the Gelfand triplet $$H^{s+1}(\mathbb{T}) \hookrightarrow H^{s}(\mathbb{T})\hookrightarrow H^{s-1}(\mathbb{T}).$$ We refer the reader to Theorem 2.4.3 in \cite{Feir} for a detailed discussion.
\subsubsection{Pathwise uniqueness}
This subsection is dedicated to the proof of a pathwise uniqueness estimate.  This is crucial in order to apply our convergence argument,  we will see that an additional space integrability condition $s> \frac{7}{2}$ will be needed.
Let $(\psi_1,u_1), \ (\psi_2,u_2)$ be two different solutions of \eqref{approx R stoch quantum rho u}-\eqref{Approx C.I},  their difference satisfies
\begin{equation}\label{diff1}
\begin{split}
& \text{d}(\psi_1-\psi_2)+\varphi_R( \| u_1\|_{W_x^{2,\infty}})[ u_1\partial_x \psi_1]dt \\ & -\varphi_R( \| u_2\|_{W_x^{2,\infty}})[ u_2\partial_x \psi_2]dt+ \partial_x (u_1-u_2) dt=0,
\end{split}
\end{equation}
\begin{equation}\label{diff2}
\begin{split}
& \text{d}(u_1-u_2)=\partial_{xxx}(\psi_1-\psi_2)dt -\varphi_R( \| u_1\|_{W_x^{2,\infty}})[u_1\partial_x u_1]dt -\varphi_R( \| \psi_1\|_{W_x^{2,\infty}})[ \gamma e^{(\gamma-1)\psi_1} \partial_x \psi_1]dt \\ & +\varphi_R( \| u_2\|_{W_x^{2,\infty}})[u_2\partial_x u_2]dt + \varphi_R( \| \psi_2\|_{W_x^{2,\infty}})[\gamma e^{(\gamma-1)\psi_2} \partial_x \psi_2]dt \\ & +\varphi_R( \| \psi_1\|_{W_x^{2,\infty}})[e^{(\alpha-1)\psi_1} \partial_{xx}u_1]dt -\varphi_R( \| \psi_2\|_{W_x^{2,\infty}})[e^{(\alpha-1)\psi_2} \partial_{xx}u_2]dt \\ & +\varphi_R( \| \psi_1\|_{W_x^{2,\infty}})[\alpha e^{(\alpha-1)\psi_1}\partial_x \psi_1 \partial_x u_1 ] dt -\varphi_R( \| \psi_2\|_{W_x^{2,\infty}})[\alpha e^{(\alpha-1)\psi_2}\partial_x \psi_2 \partial_x u_2 ]dt \\ &+ \varphi_R( \| \psi_1\|_{W_x^{2,\infty}})[\partial_x \psi_1 \partial_{xx}\psi_1]dt -\varphi_R( \| \psi_2\|_{W_x^{2,\infty}})[\partial_x \psi_2 \partial_{xx}\psi_2]dt \\ & +\varphi_R( \| u_1\|_{W_x^{2,\infty}}) \mathbb{F}(\rho_1,u_1)\text{d}W-\varphi_R( \| u_2\|_{W_x^{2,\infty}}) \mathbb{F}(\rho_2,u_2)\text{d}W.
\end{split}
\end{equation}
We apply $\partial_x^{\beta+1}$ to \eqref{diff1} and $\partial_x^{\beta}$ to \eqref{diff2} with $| \beta | \le s'$
\begin{equation}
\begin{split}
& \text{d}\partial_x^{\beta+1}(\psi_1-\psi_2)+\varphi_R( \| u_1\|_{W_x^{2,\infty}})[ \partial_x^{\beta+1}(u_1\partial_x \psi_1)]dt \\ & -\varphi_R( \| u_2\|_{W_x^{2,\infty}})[ \partial_x^{\beta+1}(u_2\partial_x \psi_2)]dt+ \partial_x^{\beta+2} (u_1-u_2) dt=0,
\end{split}
\end{equation}
\begin{equation}
\begin{split}
& \text{d}\partial_x^{\beta}(u_1-u_2)=\partial_x^{\beta+3}(u_1-u_2)dt-\varphi_R( \| u_1\|_{W_x^{2,\infty}})[\partial_x^{\beta}(u_1\partial_x u_1)]dt \\ & -\varphi_R( \| \psi_1\|_{W_x^{2,\infty}}) [\partial_x^{\beta}(\gamma e^{(\gamma-1)\psi_1} \partial_x \psi_1)]dt \\ & +\varphi_R( \| u_2\|_{W_x^{2,\infty}})[\partial_x^{\beta}(u_2\partial_x u_2)]dt + \varphi_R( \| \psi_2\|_{W_x^{2,\infty}})[\partial_x^{\beta}(\gamma e^{(\gamma-1)\psi_2} \partial_x \psi_2)]dt \\ &+\varphi_R( \| \psi_1\|_{W_x^{2,\infty}})[\partial_x^{\beta}(e^{(\alpha-1)\psi_1} \partial_{xx}u_1)]dt-\varphi_R( \| \psi_2\|_{W_x^{2,\infty}})[\partial_x^{\beta}(e^{(\alpha-1)\psi_2} \partial_{xx}u_2)]dt \\ & +\varphi_R( \| \psi_1\|_{W_x^{2,\infty}})[\partial_x^{\beta}(\alpha e^{(\alpha-1)\psi_1}\partial_x \psi_1 \partial_x u_1 )]-\varphi_R( \| \psi_2\|_{W_x^{2,\infty}})[\partial_x^{\beta}(\alpha e^{(\alpha-1)\psi_2}\partial_x \psi_2 \partial_x u_2) ] \\ & + \varphi_R( \| \psi_1\|_{W_x^{2,\infty}})[\partial_x^{\beta}(\partial_x \psi_1 \partial_{xx}\psi_1)]dt-\varphi_R( \| \psi_2\|_{W_x^{2,\infty}})[\partial_x^{\beta}(\partial_x \psi_2 \partial_{xx}\psi_2)]dt \\ & +\varphi_R( \| u_1\|_{W_x^{2,\infty}}) \partial_x^{\beta}\mathbb{F}(\rho_1,u_1)\text{d}W-\varphi_R( \| u_2\|_{W_x^{2,\infty}}) \partial_x^{\beta}\mathbb{F}(\rho_2,u_2)\text{d}W.
\end{split}
\end{equation}
We multiply the first equation by $\partial_x^{\beta+1}(\psi_1-\psi_2)$ and integrate in space and we apply It$\hat{\text{o}}$ formula with $F(\partial_x^{\beta}(u_1-u_2))=\dfrac{1}{2} \int_{} | \partial_x^{\beta}(u_1-u_2) |^2 dx$ to the second equation.  Summing up the two resulting equations and after integrating by parts we get
\begin{equation}\label{eq path uniq}
\begin{split}
&\dfrac{1}{2}\text{d}\int_{}| \partial _{x}^{\beta +1}( \psi _{1}-\psi _{2})| ^{2}+ | \partial_x^{\beta}(u_1-u_2)|^2 dx=-\varphi_R ( \| u_1 \|) \int_{} \partial_x^{\beta+1} (u_1\partial_x \psi_1) \partial_x^{\beta+1}(\psi_1-\psi_2) dxdt \\ & +\varphi_R ( \| u_2 \|) \int_{} \partial_x^{\beta+1} (u_2\partial_x \psi_2) \partial_x^{\beta+1}(\psi_1-\psi_2) dxdt- \varphi_R( \| u_1 \|) \int_{} \partial_x^{\beta}(u_1 \partial_xu_1)\partial_x^\beta(u_1-u_2)dxdt \\ & -\varphi_R( \| \psi_1 \|) \int_{} \partial_x^{\beta}(\gamma e^{(\gamma-1) \psi_1} \partial_x \psi_1)\partial_x^\beta(u_1-u_2)dxdt+\varphi_R( \| u_2 \|) \int_{} \partial_x^{\beta}(u_2 \partial_xu_2)\partial_x^\beta(u_1-u_2)dxdt \\ & +\varphi_R( \| \psi_2 \|) \int_{} \partial_x^{\beta}(\gamma e^{(\gamma-1) \psi_2} \partial_x \psi_2)\partial_x^\beta(u_1-u_2)dxdt\\ & + \varphi_R( \| \psi_1\|_{W_x^{2,\infty}})\int_{} \partial_x^\beta(e^{(\alpha-1)\psi_1} \partial_{xx}u_1)\partial_x^\beta(u_1-u_2)dxdt \\ & -\varphi_R( \| \psi_2\|_{W_x^{2,\infty}})\int_{} \partial_x^\beta(e^{(\alpha-1)\psi_2} \partial_{xx}u_2)\partial_x^\beta(u_1-u_2)dxdt \\ & + \varphi_R( \| \psi_1 \|) \int_{} \partial_x^\beta(\alpha e^{(\alpha-1)\psi_1} \partial_x \psi_1 \partial_xu_1) \partial_x^\beta(u_1-u_2) dxdt \\ & -\varphi_R( \| \psi_2 \|) \int_{} \partial_x^\beta(\alpha e^{(\alpha-1)\psi_2} \partial_x \psi_2 \partial_xu_2) \partial_x^\beta(u_1-u_2) dxdt \\ & + \varphi_R( \|\psi_1 \|) \int_{} \partial_x^\beta(\partial_x \psi_1 \partial_{xx}\psi_1) \partial_x^\beta(u_1-u_2)dxdt  \\ & -\varphi_R( \|\psi_2 \|) \int_{} \partial_x^\beta(\partial_x \psi_2 \partial_{xx}\psi_2) \partial_x^\beta(u_1-u_2)dxdt+ \varphi_R( \|u_1 \|) \int_{} \partial_x^\beta \mathbb{F}(\rho_1,u_1)\partial_x^\beta(u_1-u_2)\text{d}W \\ & -\varphi_R( \|u_2 \|) \int_{} \partial_x^\beta \mathbb{F}(\rho_2,u_2)\partial_x^\beta(u_1-u_2)\text{d}W \\ & +\dfrac{1}{2} \sum_{k=1}^{\infty} \bigg[ \varphi_R( \|u_1 \|) \partial_x^\beta F_k(\rho_1,u_1)-\varphi_R( \|u_2 \|) \partial_x^\beta F_k(\rho_2,u_2) \bigg]^2dt.
\end{split}
\end{equation}
In the following we mimic the computations performed in Section 4.1.2, we rewrite and estimate the integrals in \eqref{eq path uniq}. We first observe that 
\begin{equation}
| \varphi_R( \| u_1\|_{W_x^{2,\infty}})-\varphi_R( \| u_2\|_{W_x^{2,\infty}}) | \le C_1(R) \| u_1-u_2 \|_{W_x^{2,\infty}} \le C_2(R) \| u_1-u_2 \|_{H^{s'}}
\end{equation}
for $s'> \frac{5}{2}.$
Concerning the highest order terms in \eqref{eq path uniq} we have
\begin{equation}
\begin{split}
& |- \varphi_R( \| u_1\|_{W_x^{2,\infty}}) \int_{} (u_1\partial_x \partial_x^{\beta+1} \psi_1-u_2 \partial_x \partial_x^{\beta+1} \psi_2) \partial_x^{\beta+1}(\psi_1-\psi_2)dx|= \\ & | \varphi_R( \| u_1\|_{W_x^{2,\infty}}) \int_{} (u_1-u_2) \partial_x \partial_x^{\beta+1} \psi_1 \partial_x^{\beta+1}(\psi_1-\psi_2)+ \varphi_R( \| u_1\|_{W_x^{2,\infty}}) \int_{} \partial_x u_2 \dfrac{| \partial_x^{\beta+1}(\psi_1-\psi_2)|}{2}| \\ & \le \varphi_R( \| u_1\|_{W_x^{2,\infty}}) (\| u_1-u_2\|_{L^\infty} \| \psi_1 \|_{H^{s'+2}} \| \psi_1-\psi_2 \|_{H^{s'+1}} + \| u_2 \|_{W^{1,\infty}} \| \psi_1-\psi_2 \|^2_{H^{s'+1}}) \\ & \le C_1(R)+ \| \psi_1 \|_{H^{s'+2}}^2 \| \psi_1-\psi_2 \|^2_{H^{s'+1}}+ C_2 \| \psi_1-\psi_2 \|_{H^{s'+1}}^2,
\end{split}
\end{equation}
\begin{equation}
\begin{split}
& |- \varphi_R( \| u_1 \|_{W_x^{2,\infty}}) \int_{} (u_1\partial_x \partial_x^{\beta}u_1-u_2\partial_x \partial_x^{\beta} u_2)\partial_x^\beta(u_1-u_2)dx |= \\ & | \varphi_R( \| u_1 \|_{W_x^{2,\infty}}) \int_{} (u_1-u_2) \partial_x \partial_x^{\beta}u_1\partial_x^{\beta}(u_1-u_2)+u_2 \partial_x \partial_x^\beta(u_1-u_2)dx | \\ & \le \varphi_R( \| u_1 \|_{W_x^{2,\infty}}) \| u_1-u_2\|_{L^\infty} \|u_1 \|_{H^{s'+1}} \|u_1-u_2\|_{H^{s'}}+ \| u_2\|_{W^{1,\infty}} \|u_1-u_2 \|_{H^{s'}}^2 \\ & \le C_1(R) \| u_1-u_2 \|^2_{H^{s'}}+C_2\| u_1-u_2 \|^2_{H^{s'}},
\end{split}
\end{equation}
\\
\begin{equation}
\begin{split}
& | \varphi_R( \| u_1 \|_{W_x^{2,\infty}}) \int_{} \gamma e^{(\gamma-1) \psi_1}\partial_x \partial_x^{\beta} \psi_1 \partial_x^{\beta}(u_1-u_2) dx | \\ & \le  C \varphi_R( \| u_1 \|_{W_x^{2,\infty}}) \| \psi_1\|_{L^\infty} \| \psi_1\|_{H^{s'+1}} \| u_1-u_2 \|_{H^{s'}} \\ & \le C(R) \| \psi_1 \|_{H^{s'+1}}^2+ \| u_1-u_2\|_{H^{s'}}^2,
\end{split}
\end{equation}
\\
\begin{equation}
\begin{split}
& | \varphi_R( \| \psi_2 \|_{W_x^{2,\infty}}) \int_{} \gamma e^{(\gamma-1) \psi_2}\partial_x \partial_x^{\beta} \psi_2 \partial_x^{\beta}(u_1-u_2) dx | \\ & \le  C \varphi_R( \| \psi_2 \|_{W_x^{2,\infty}}) \| \psi_2\|_{L^\infty} \| \psi_2\|_{H^{s'+1}} \| u_1-u_2 \|_{H^{s'}} \\ & \le C(R) \| \psi_2 \|_{H^{s'+1}}^2+ \| u_1-u_2\|_{H^{s'}}^2,
\end{split}
\end{equation}
\\
\begin{equation}
\begin{split}
& \varphi_R( \| \psi_1\|_{W_x^{2,\infty}})| \int_{} (e^{(\alpha-1)\psi_1} \partial_{xx} \partial_x^{\beta}u_1-e^{(\alpha-1)\psi_2} \partial_{xx} \partial_x^{\beta}u_2) \partial_x^{\beta}(u_1-u_2) dx |= \\ & \varphi_R( \| \psi_1\|_{W_x^{2,\infty}})| \int_{}(e^{(\alpha-1)\psi_1}-e^{(\alpha-1)\psi_2}) \partial_{xx} \partial_x^{\beta} u_1 \partial_x^{\beta}(u_1-u_2)\\ & + e^{(\alpha-1)\psi_2} \partial_{xx} \partial_x^{\beta}(u_1-u_2) \partial_x^{\beta}(u_1-u_2) dx |,
\end{split}
\end{equation}
that can be estimate as
\begin{equation}
\begin{split}
& \varphi_R( \| \psi_1\|_{W_x^{2,\infty}})| \int_{}(e^{(\alpha-1)\psi_1}-e^{(\alpha-1)\psi_2}) \partial_{xx} \partial_x^{\beta} u_1 \partial_x^{\beta}(u_1-u_2) |  \\ & \le C \| \psi_1 \|_{L^\infty} \| \psi_2 \|_{L^\infty} \| u_1 \|_{H^{s'+2}} \| u_1-u_2 \|_{H^{s'}},
\end{split}
\end{equation}
and 
\begin{equation}
\begin{split}
& \varphi_R( \| \psi_1\|_{W_x^{2,\infty}})|\int_{} e^{(\alpha-1)\psi_2} \partial_{xx} \partial_x^{\beta}(u_1-u_2) \partial_x^{\beta}(u_1-u_2) dx |= \\ & \varphi_R( \| \psi_1\|_{W_x^{2,\infty}})| \int_{} \partial_{xx} (e^{(\alpha-1) \psi_2}) \dfrac{| \partial_x^{\beta}(u_1-u_2)|^2}{2}- \int_{} e^{(\alpha-1)\psi_2} | \partial_x^{\beta+1}(u_1-u_2) |^2 dx \\ & \le C\varphi_R( \| \psi_1\|_{W_x^{2,\infty}}) (\| \psi_2\|_{W^{2,\infty}} \| u_1-u_2 \|_{H^{s'}}^2+ \| \psi_2\|_{L^\infty} \| u_1-u_2 \|_{H^{s'+1}}^2),
\end{split}
\end{equation}
\\
\begin{equation}
\begin{split}
& | \varphi_R( \| \psi_1 \| ) \int_{} \alpha e^{(\alpha-1) \psi_1} \partial_x \psi_1 \partial_x^\beta \partial_x u_1 \partial_x^\beta(u_1-u_2) dx |  \\ & + |\varphi_R( \| \psi_2 \| ) \int_{} \alpha e^{(\alpha-1) \psi_2} \partial_x \psi_2 \partial_x^\beta \partial_x u_2 \partial_x^\beta(u_1-u_2) dx | \\ &  \le C (\| \psi_1 \|_{W^{1,\infty}}^2 \| u_1 \|_{H^{s'+1}}+\| \psi_2 \|_{W^{1,\infty}}^2 \| u_2 \|_{H^{s'+1}} ) \| u_1-u_2 \|_{H^{s'}}. 
\end{split}
\end{equation}
Concerning the quantum term 
\begin{equation}
\begin{split} 
\varphi_R( \| \psi_1 \|_{W_x^{2,\infty}}) \int_{} \partial_x^{\beta}( \partial_x \psi_1 \partial_{xx}\psi_1 ) \partial_x^{\beta}(u_1-u_2)-\varphi_R( \| \psi_2 \|_{W_x^{2,\infty}}) \int_{} \partial_x^{\beta}( \partial_x \psi_2 \partial_{xx}\psi_2 ) \partial_x^{\beta}(u_1-u_2),
\end{split}
\end{equation}
the high order part is given by
\begin{equation}
\begin{split}
& | \varphi_R( \| \psi_1 \|_{W_x^{2,\infty}}) \int_{} \partial_x \psi_1 \partial_{xx} \partial_x^\beta \psi_1 \partial_x^{\beta}(u_1-u_2) dx | \\ & + | \varphi_R( \| \psi_2 \|_{W_x^{2,\infty}}) \int_{} \partial_x \psi_2 \partial_{xx} \partial_x^\beta \psi_2 \partial_x^{\beta}(u_1-u_2) dx | \\ & \le ( \varphi_R( \| \psi_1 \|_{W_x^{2,\infty}}) \| \psi_1\|_{W^{1,\infty}} \| \psi_1 \|_{H^{s'+2}}+\varphi_R( \| \psi_2 \|_{W_x^{2,\infty}}) \| \psi_2\|_{W^{1,\infty}} \| \psi_2 \|_{H^{s'+2}} ) \| u_1-u_2 \|_{H^{s'}}.
\end{split}
\end{equation}
For $s' > \frac{5}{2}$ summing up all these estimates we deduce that 
\begin{equation}
\begin{split}
& \text{d}( \| \psi_1-\psi_2\|_{H^{s'+1}}^2+ \| u_1-u_2 \|_{H^{s'}}^2 )  \le G(t) ( \| \psi_1-\psi_2\|_{H^{s'+1}}^2+ \| u_1-u_2 \|_{H^{s'}}^2 ) \\ & + \sum_{ | \beta | \le s'} [ \varphi( \| u_1 \|_{W^{2,\infty}} ) \partial_x^{\beta} \mathbb{F}( \rho_1,u_1)- \varphi( \| u_2 \|_{W^{2,\infty}} ) \partial_x^{\beta} \mathbb{F}( \rho_2,u_2)] \partial_x^{\beta} (u_1-u_2) \text{d}W,
\end{split}
\end{equation}
with $G(t)$ defined by $$G(t)= C(R) [ 1+ \| \psi_1 \|_{H^{s'+2}}^2 + \| \psi_2 \|_{H^{s'+2}}^2 + \| u_1 \|_{H^{s'+2}}^2+ \| u_2 \|_{H^{s'+2}}^2].$$
Now,  for $s \ge s'+1$, by using the a priori estimates recovered in Section 4.1.2 we get that $G \in L^1(0,T)$ and by It$\hat{\text{o}}$ product rule
\begin{equation}\label{exp path uniq}
\begin{split}
& \text{d} [ e^{-\int_{0}^{t} G(\sigma) d\sigma} ( \| \psi_1-\psi_2 \|_{H^{s'+1}}^2 + \| u_1-u_2 \|_{H^{s'}}^2)]= \\ & -G(t) e^{-\int_{0}^{t} G(\sigma) d\sigma}( \| \psi_1-\psi_2\|_{H^{s'+1}}^2+ \| u_1-u_2 \|_{H^{s'}}^2 ) dt  \\ & + e^{-\int_{0}^{t} G(\sigma) d\sigma}  \text{d}( \| \psi_1-\psi_2\|_{H^{s'+1}}^2+ \| u_1-u_2 \|_{H^{s'}}^2 ) \\ & \le \sum_{ | \beta | \le s'} e^{-\int_{0}^{t} G(\sigma) d\sigma} \varphi( \| u_1 \|_{W^{2,\infty}} ) \partial_x^{\beta} \mathbb{F}( \rho_1,u_1)\partial_x^{\beta}(u_1-u_2) \text{d}W \\ & -\sum_{ | \beta | \le s'}e^{-\int_{0}^{t} G(\sigma) d\sigma} \varphi( \| u_2 \|_{W^{2,\infty}} ) \partial_x^{\beta} \mathbb{F}( \rho_2,u_2)\partial_x^{\beta}(u_1-u_2) \text{d}W.
\end{split}
\end{equation}
Next, integrating in $[0,t]$ and taking the expectation we get 
\begin{equation}
\mathbb{E} [e^{-\int_{0}^{t} G(\sigma) d\sigma}( \| \psi_1-\psi_2\|_{H^{s'+1}}^2+ \| u_1-u_2 \|_{H^{s'}}^2 ) ]=0,
\end{equation}
whenever 
\begin{equation}
\mathbb{E} [( \| \psi^0_1-\psi^0_2\|_{H^{s'+1}}^2+ \| u^0_1-u^0_2 \|_{H^{s'}}^2 ) ]=0,
\end{equation} 
indeed the stochastic integrals in the left hand side of \eqref{exp path uniq} vanish due to their zero mean.
\\
Now since $e^{.-\int_{0}^{t} G(\sigma) d\sigma} > 0 \quad \mathbb{P}-a.s$
and $\psi_i,u_i$ have continuous trajectories in $H^{s'}(\mathbb{T})\; a.s. $ then pathwise uniqueness holds.
\subsubsection{Existence of a strong pathwise approximate solution}
Once pathwise uniqueness and existence of a strong martingale solution have been established,  we make use of the Gyongy-Krylov characterization of convergence in probability in order to prove the existence of a pathwise solution, see Lemma \ref{Gyong} in the Appendix and section 2.10 in \cite{Feir} for further details.
We start considering a regular initial data satisfying \eqref{C.I STRONG} for $s> \frac{7}{2}$ so that pathwise uniqueness holds.  \\ Let $(\psi_m, \, u_m),\, (\psi_n, \, u_n)$ be solutions of \eqref{Gal psi}-\eqref{Gal u},  $\mathfrak{L}[\psi_m, \, u_m,\, \psi_n, \, u_n,W]$ the joint law on $\mathcal{X}^{J}=\mathcal{X}_\psi \times \mathcal{X}_u \times \mathcal{X}_\psi \times \mathcal{X}_u \times \mathcal{X}_W$ then the following result follows by the same lines of argument of Lemma \ref{Tightness}
\begin{lemma}
The set $ \{ \mathfrak{L}[\psi_m, \, u_m,\, \psi_n, \, u_n,W]; m, \,  n \in \mathbb{N} \} $ is tight on $\mathcal{X}^{J}.$
\end{lemma}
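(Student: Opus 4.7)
The plan is to reduce this joint tightness statement to the marginal tightness already proved in Lemma \ref{Tightness}. Since the approximating pairs $(\psi_m,u_m)$ and $(\psi_n,u_n)$ are two copies of the same Galerkin construction, they both satisfy the uniform bounds \eqref{reg} in their respective indices. Consequently the exact argument used for Lemma \ref{Tightness}, based on the H\"older continuity in time of the stochastic and deterministic components of $u_m$ and $\psi_m$ combined with the compact embedding \eqref{emb}, applies separately to each index and yields tightness of $\{\mathfrak{L}[\psi_m]\}_m$, $\{\mathfrak{L}[u_m]\}_m$, $\{\mathfrak{L}[\psi_n]\}_n$, $\{\mathfrak{L}[u_n]\}_n$ on the corresponding path spaces $\mathcal{X}_\psi$ and $\mathcal{X}_u$. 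The law $\mathfrak{L}[W]$ is trivially tight as a Radon measure on the Polish space $\mathcal{X}_W$.

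To pass from marginal to joint tightness on $\mathcal{X}^{J}=\mathcal{X}_\psi\times\mathcal{X}_u\times\mathcal{X}_\psi\times\mathcal{X}_u\times\mathcal{X}_W$, I would invoke the standard product-space argument. Fix $\varepsilon>0$. For each of the five marginals, choose a compact subset of the corresponding factor space on which that marginal measure concentrates mass at least $1-\varepsilon/5$, uniformly in $m,n\in\mathbb{N}$; this is possible precisely by the marginal tightness just recalled. The product $K$ of these five compacta is compact in $\mathcal{X}^{J}$ by Tychonoff, and a union bound over complements yields
\begin{equation*}
\mathfrak{L}[\psi_m,u_m,\psi_n,u_n,W]\bigl(\mathcal{X}^{J}\setminus K\bigr)<\varepsilon \qquad \text{for all } m,n\in\mathbb{N},
\end{equation*}
which is precisely the tightness of the joint family.

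There is no substantive obstacle here: all the analytic content is already packaged inside Lemma \ref{Tightness} and the uniform estimates of Section 4.1.2, and the extension to the joint law reduces to a purely measure-theoretic product argument. The reason for stating this joint tightness is to enable a simultaneous application of the Skorokhod representation theorem to the \emph{pair} of Galerkin approximations, so that the Gy\"ongy--Krylov criterion can be used to upgrade convergence in law to convergence in probability, thereby producing the strong pathwise solution required in Theorem \ref{second thm}.
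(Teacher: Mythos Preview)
Your proposal is correct and matches the paper's approach: the paper states only that the lemma ``follows by the same lines of argument of Lemma \ref{Tightness}'' without further detail, and your reduction to marginal tightness together with the standard product-compact-set/union-bound argument is exactly the intended content.
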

\noindent
Let us take a subsequence $[\psi_{m_k,} \, u_{m_k},\, \psi_{n_k}, \, u_{n_k},W]_{k\in \mathbb{N}}$, then by using Skorokhod representation theorem we get the existence of a complete probability space $(\overline{\Omega}, \mathfrak{\overline{F}},\overline{\mathbb{P}})$ with a sequence of random variables $$[\hat{\psi}_{m_k,} \, \hat{u}_{m_k},\, \bar \psi_{n_k}, \, \bar u_{n_k},\tilde{W}_k], \quad k \in \mathbb{N}$$
and a random variable $[\hat{\psi},\, \hat{u},\, \bar \psi
, \, \bar u_,\tilde{W}]$ such that $$[\hat{\psi}_{m_k,} \, \hat{u}_{m_k},\, \bar \psi_{n_k}, \, \bar u_{n_k},\tilde{W}_k] \rightarrow [\hat{\psi},\, \hat{u},\, \bar \psi, \, \bar u_,\tilde{W}] \quad  \text{in} \; \mathcal{X}^{J}, \; \mathbb{\overline{P}} -a.s. $$
and $$\mathfrak{L}[\hat{\psi}_{m_k,} \, \hat{u}_{m_k},\, \bar \psi_{n_k}, \, \bar u_{n_k},\tilde{W}_k]=\mathfrak{L}[{\psi}_{m_k},\, {u}_{m_k},\, \psi_{n_k} \, u_{n_k},{W}] \quad  \text{in} \; \mathcal{X}^{J}$$
Moreover $\mathfrak{L}[{\psi}_{m_k},\, {u}_{m_k},\, \psi_{n_k} \, u_{n_k},{W}]$ converges weakly to the measure $\mathfrak{L}[\hat \psi,\hat u, \bar \psi, \bar u, \tilde{W}].$ \\
Next,  similarly to the previous section we can prove that both $[\hat \psi,\hat u, \tilde{W}] $ and $[ \bar \psi,\bar u, \tilde{W}]$  are strong martingale solution to the approximate system \eqref{approx R stoch quantum rho u}-\eqref{approx R stoch quantum u}. \\
Finally,  since $\psi_{n_k}(0)= \Pi_{n_k}\psi_0,$  $ \psi_{m_k}(0)= \Pi_{m_k}\psi_0$ we get for every $l \le n_k \land m_k,$
$$\mathbb{ \bar {P}}( \Pi_l \hat \psi_{n_k}(0)=\Pi_l \bar \psi_{m_k}(0))= \mathbb{P}( \Pi_l  \psi_{n_k}(0)=\Pi_l  \psi_{m_k}(0))=1 $$
which implies $\mathbb{\bar P}(\hat \psi(0)=\bar \psi(0))=1.$ \\  Similarly for $u$ we have that since $u_{n_k}(0)= \Pi_{n_k}u_0,$  $ u_{m_k}(0)= \Pi_{m_k}u_0,$ then for every $l \le n_k \land m_k,$
$$\mathbb{ \bar {P}}( \Pi_l \hat u_{n_k}(0)=\Pi_l \hat u_{m_k}(0))= \mathbb{P}( \Pi_l  u_{n_k}(0)=\Pi_l  u_{m_k}(0))=1 $$
 which implies
$\mathbb{\bar P}(\hat u(0)=\bar u(0))=1$ \\
Finally by using pathwise uniqueness,  we get the following result
$$\mathfrak{L}[ \hat \psi,  \hat u,  \bar \psi, \bar u]([ \psi_1,u_1, \psi_2,u_2]; [\psi_1,u_1]=[\psi_2,u_2])= \mathbb{ \bar P}([ \hat \psi, \hat u]=[ \bar \psi,  \bar u ])=1$$
from which we deduce that the original sequence $[\psi_m,u_m]$ defined on the initial probability space $(\Omega, \mathfrak{F}, \mathbb{P}),$ converges in probability in the topology of $\mathcal{X}_\psi \times \mathcal{X}_u$ to a random variable $[\psi,u].$ Passing to a subsequence we assume the convergence is almost surely and by using the same line of argument of the previous section we identify the limit as the unique strong pathwise solution of the approximate problem \eqref{approx R stoch quantum rho u}-\eqref{approx R stoch quantum u}.  We denote such a solution $[\psi_R,u_R].$
\subsection{Existence and uniqueness for the original system}
After having established the existence and uniqueness of a strong pathwise solution of the approximating system \eqref{approx R stoch quantum rho u}-\eqref{approx R stoch quantum u},  we introduce suitable stopping times in order to show the convergence as $R\rightarrow \infty$ of the approximate solution to the solution of system \eqref{stoc quantum} and complete the proof of Theorem \ref{Main Theorem local}. We point out that the initial conditions \eqref{C.I} are not assumed to be integrable in the $\omega$ variable,  hence we first consider an additional integrability assumption and then we remove it.
\subsubsection{Uniqueness}
We consider the following additional assumption on the initial data 
\begin{equation}\label{add initial}
\rho_0 \in L^{\infty}(\Omega; \mathfrak{F}_0, \mathbb{P}, H^{s+1}(\mathbb{T})),  \quad 
u_0\in L^{\infty}(\Omega; \mathfrak{F}_0, \mathbb{P}, H^{s}(\mathbb{T}))
\end{equation}
With this assumption,  pathwise uniqueness for the original system \eqref{stoc quantum}-\eqref{C.I} follows from the pathwise uniqueness proved in Theorem \ref{second thm}. Indeed let $[\rho^i,u^i, (\tau^i_R),\tau^i]. \;i=1,2,$ be two maximal strong pathwise solution of \eqref{stoc quantum}-\eqref{C.I} satisfying \eqref{add initial},  then $$[\psi^i:= \log \rho^i,u^i], \quad i=1,2$$
are both solution of \eqref{approx R stoch quantum rho u}-\eqref{approx R stoch quantum u} up to the stopping time $\tau^1_R \land \tau^2_R$ with the same initial condition.  We have $$ \mathbb{P}([\rho^1,u^1](t \land \tau^1_R \land \tau^2_R)=[\rho^2,u^2](t \land \tau^1_R \land \tau^2_R), \; \text{for all} \,  t \in [0,T])=1.$$
By sending $R \rightarrow \infty, $ by dominated convergence theorem we get $$ \mathbb{P}([\rho^1,u^1](t \land \tau^1 \land \tau^2)=[\rho^2,u^2](t \land \tau^1 \land \tau^2), \; \text{for all} \,  t \in [0,T])=1,$$
which implies that the two solutions coincides up to the stopping time $\tau^1 \land \tau^2$ and by maximality of $\tau^1$ and $ \tau^2$ we get $\tau^1=\tau^2$ a.s.
\\
Now in order to remove the additional hypothesis \eqref{add initial} we define for $K > 0$  $$\Omega_K= \bigg\{ \omega \in \Omega \; \text{s.t.} \; \| \psi_0(\omega) \|_{H^{s+1}} < K, \, \| u_0(\omega) \|_{H^s} < K \bigg\}.$$
We have that $\Omega= \bigcup_{K \in \mathbb{R}} \Omega_K$ and since $\Omega_K$ is $\mathfrak{F}_0$-measurable for any $K \in \mathbb{R}$ we can rewrite the a priori estimates derived in Section 4.1.2 by using $\mathbb{E}[ 1_{\Omega_K} \cdot]$ instead of $\mathbb{E}[\cdot ]$ obtaining 
\begin{equation}
\begin{split}
\mathbb{E} \bigg[ 1_{\Omega_K} \bigg( \sup_{ t \in [0, T \land \tau^i_R]} \| ( \psi^i(t), u^i(t))\|^2_{H^{s+1} \times H^{s}} + \int_{0}^{T \land \tau^i_R} \| u^i(t) \|^2_{H^{s+1}} dt \bigg)^p \bigg] \lesssim c(R,T,s,K),
\end{split}
\end{equation}
for $i=1,2.$
By applying the method used in Section 4.1.5 we have the following pathwise uniqueness result in $\Omega_K$
$$ \mathbb{P}(1_{\Omega_K}[\rho^1,u^1](t \land \tau^1_R \land \tau^2_R)=1_{\Omega_K}[\rho^2,u^2](t \land \tau^1_R \land \tau^2_R), \; \text{for all} \,  t \in [0,T])=1.$$
Finally,  we observe that sending $R,K \rightarrow \infty$ then $1_{\Omega_K} \rightarrow 1_{\Omega}, \; \tau^i_R \rightarrow \tau^i$, for $i=1,2.$ Hence by applying the dominated convergence theorem we get the following result $$ \mathbb{P}([\rho^1,u^1](t \land \tau^1 \land \tau^2)=[\rho^2,u^2](t \land \tau^1 \land \tau^2), \; \text{for all} \,  t \in [0,T])=1,$$ hence uniqueness.
\subsubsection{Existence for bounded initial data}
Similarly to the uniqueness proof we first require the additional assumption \eqref{add initial} and then we will remove it.  \\
Let $[\psi_R,u_R]$ be the strong solution for the approximate problem \eqref{approx R stoch quantum rho u}-\eqref{approx R stoch quantum u} we define $$ \tau_R= \tau^{\psi}_R \vee \tau^u_R$$ where 
\begin{equation*}
\tau^{\psi}_R= \inf \{ t \in [0,T] | \; \| \psi_R(t) \|_{W^{2,\infty}} \ge R \},
\end{equation*}
and 
\begin{equation*}
\tau^{u}_R= \inf \{ t \in [0,T] | \; \| u_R(t) \|_{W^{2,\infty}} \ge R \}.
\end{equation*}
with the convention $\inf \emptyset=T$.
Since $\psi_R$ and $u_R$ have continuous trajectories in $H^{s+1}$ and $H^{s}$ respectively,  they are embedded into $W^{2,\infty},$ then $\tau^{\psi}_R,  \; \tau^u_R$ are both well-defined stopping time and due to the additional assumption \eqref{add initial} they are positive a.s., as a consequence also $\tau_R$ is positive a.s. Therefore the unique solution $[\psi_R,u_R]$ of the approximate problem \eqref{approx R stoch quantum rho u}-\eqref{approx R stoch quantum u} with initial condition $(\psi_0:= \log \rho_0,u_0)$ generates the local strong pathwise solution $$( \rho_R:= e^{\psi_R}, u_R,\tau_R) $$ to the original problem \eqref{stoc quantum} with initial condition $(\rho_0,u_0).$
\subsubsection{Existence for general initial data}
As already mentioned,  we remove the additional hypothesis \eqref{add initial} on the initial data.  Let $[\psi_R,u_R]$ be the solution of the approximate problem \eqref{approx R stoch quantum rho u}-\eqref{approx R stoch quantum u},  we define 
\begin{equation*}
\begin{split}
& \tau_K=\tau_K^1 \land \tau_K^2 \\ &\tau_K^1= \inf \{ t \in [0,T] \;| \; \| u_R(t) \|_{H^s} \ge K \},  \\ & \tau_K^2= \inf \{ t \in [0,T] \; | \;  \| \psi_R(t) \|_{H^{s+1}} \ge K \}
\end{split}
\end{equation*}
with $K=K(R)$ such that $K(R) \rightarrow \infty$ as $R \rightarrow \infty$ and $K(R) < R \min \bigg\{ 1, \dfrac{1}{c_1}, \dfrac{1}{c_2} \bigg\}$
where $c_1,c_2$ are the constant of the embedding inequalities
$$ \| \psi \|_{W^{2,\infty}} \le c_1 \| \psi \|_{H^{s+1}}, \quad \| u \|_{W^{2,\infty}} \le c_2 \| u \|_{H^s}.$$
Since $s> \frac{5}{2}$ then by definition of $\tau_K$ we have that on the interval $[0,\tau_K),$ $$ \sup_{ t \in [0,\tau_K]} \| \psi_R(t) \|_{W^{2,\infty}}< R, \quad \sup_{ t \in [0, \tau_K]} \| u_R(t) \|_{W^{2,\infty}} < R, \quad \mathbb{P} \text{-a.s}. \quad.$$
Now we can use the same construction used in Theorem \ref{second thm} to construct solutions with stopping time $\tau_K$ having general initial data.  Indeed,  given $[\psi_0,u_0]$ an $\mathfrak{F}_0$-measurable random variable satisfying \eqref{C.I STRONG}, we define the set 
$$ U_{K(R)}= \bigg\{ [\psi,u] \in H^{s+1}(\mathbb{T}) \times H^s(\mathbb{T}) \; \bigg| \; \| \psi \|_{H^{s+1}} < K, \; \| u \|_{H^s} < K \bigg\}.$$
By using Theorem \ref{second thm} we infer the existence of a unique solution $[\psi_M,u_M]$ to \eqref{approx R stoch quantum rho u}-\eqref{approx R stoch quantum u} with $M=R$ and with initial data $$[\psi_0.u_0] 1_{[\psi_0,u_0] \in \{ U_{K(M)}  \bigcup_{J=1}^{M-1} U_{K(J)} \} }$$ and we observe that it is also a solution of \eqref{stoc quantum} up to the stopping time $\tau_{K(M)}.$ Finally we define $$[\psi,u]= \sum_{k=1}^{\infty} [\psi_M,u_M] 1_{[\psi_0,u_0] \in \{ u_{K(M)}  \bigcup_{J=1}^{M-1} U_{K(J)} \} },$$ which turns out to be a solution of \eqref{stoc quantum} with initial data $[\psi_0,u_0],$ up to the a.s. strictly positive stopping time $$ \tau= \sum_{k=1}^{\infty} \tau_{K(M)} 1_{[\psi_0,u_0] \in \{ u_{K(M)}  \bigcup_{J=1}^{M-1} U_{K(J)} \} }.$$
We observe that $[\psi,u]$ has a.s. continuous trajectories in $H^{s+1}(\mathbb{T}) \times H^s(\mathbb{T})$ and $u \in L^2(0,T; H^{s+1} (\mathbb{T}))$ $\mathbb{P}$-a.s.  indeed we take $\Omega_M \subset \Omega, \; M \in \mathbb{N},$ a collection of disjoint sets satisfying $\bigcup_{M} \Omega_M= \Omega$ such that $[\psi,u](\omega)=[\psi_M,u_M](\omega)$ for a.e. $\omega \in \Omega_M.$
By using Theorem \ref{second thm} we have that the trajectories of $[\psi_M,u_M]$ are a.s.  continuous in $H^{s+1}(\mathbb{T}) \times H^s(\mathbb{T})$ and integrability on $\omega$ is not assumed.  Finally by defining $( \rho_R:= e^{\psi_R}, u_R,\tau_R) $ we get the existence of a strong pathwise solution to system \eqref{stoc quantum} up to the strictly positive stopping time $\tau.$
\subsubsection{Existence of a maximal strong solution}
Let $\tau$ be the maximal time of existence of the solution solution $(\rho,u)$ and denote by $\mathcal{S}$ the set of all possible a.s. strictly positive stopping times corresponding to the solution starting from the initial datum $[\rho_0,u_0].$ The set $\mathcal{S}$ is non empty and it is closed with respect to finite maximum and finite minimum operations. \\
Let $\tau= \text{ess} \sup_{\sigma \in \mathcal{S}} \sigma,$ then we have that there exists an increasing sequence $\{ \sigma_M \}_{M \in \mathbb{N}} \subset \mathcal{S}$ such that $\sigma_M \rightarrow \tau $ a.s.  as $M \rightarrow \infty.$ Let us denote $(\rho_M,u_M)$ be the corresponding sequence of solution on the interval $[0, \sigma_M].$ Since uniqueness holds,  we set $(\rho,u):=(\rho_M,u_M)$ on $[0, \sigma_M]$ which is a well defined solution also on $\bigcup_M [0,\sigma_M].$
For each $R \in \mathbb{N},$ define $$\tau_R= \tau \land [ \tau^{\psi}_R \vee \tau^u_R).$$
Then $(\rho,u)$ is a solution on $[0, \sigma_M \land \tau_R]$ and sending $M \rightarrow \infty$ we get that $(\rho,u)$ is a solution on $[0, \tau_R].$
We observe that $\tau_R$ is not a.s.  strictly positive unless the initial datum $(\psi_0,u_0)$ satisfy $\| \psi_0 \|_{W^{2,\infty}} < R, \;  \text{and} \; \| u_0 \|_{W^{2,\infty}}< R. $ 
Since $(\psi_0,u_0) \in H^{s+1}(\mathbb{T}) \times H^{s}(\mathbb{T})$ then we deduce that for almost every $\omega$,  there exists $R=R(\omega)$ such that $\tau_{R(\omega)}(\omega) >0.$ \\
Finally, in order to have strict positivity,  we define $\tilde{\tau}_R= \sigma_R \land \tau_R.p$ Therefore each triplet $(\rho,u,\tilde{\tau}_R), \; R \in \mathbb{N}$ is a local strong pathwise solution with an a.s. positive stopping time.  \\
Now we observe that the construction of the local strong pathwise solution on $[0, \tilde{\tau}_R]$ can be extended to a solution on $[0, \tilde{\tau}_R+ \sigma]$ for $\mathbb{P}$-a.s positive stopping time $\sigma.$ Indeed we can construct a new solution having $(\rho(\tilde{\tau}_R),u(\tilde{\tau}_R))$ as initial condition as follows: \\
we define a stochastic basis $$(\tilde{\Omega},\mathfrak{\tilde{F}},(\mathfrak{\tilde{F}}_t)_{t \ge 0}, \tilde{\mathbb{P}}):=(\Omega,\mathfrak{F},(\mathfrak{F}_{{\tilde{\tau}_R+t)}_{t \ge 0}}, \mathbb{P})$$ and the cylindrical $(\mathfrak{\tilde{F}_t})$-Wiener process $$ \tilde{W}= (\tilde{W}_k)_{k\in \mathbb{N}}, \quad \tilde{W}_k(t):=W_k(\tilde{\tau}_R+t)-W_k(\tilde{\tau}_R), \; k \in \mathbb{N}.$$
The initial datum $(\rho(\tilde{\tau}_R),u(\tilde{\tau}_R))$ satisfy the assumption on initial condition and so we obtain that $(\tilde{\rho},\tilde{u}, \sigma),$ is a local strong pathwise solution to \eqref{stoc quantum}-\eqref{C.I} relative to the cylindrical Wiener process $\tilde{W}$ on the stochastic basis $(\tilde{\Omega},\mathfrak{\tilde{F}},(\mathfrak{\tilde{F}}_t)_{t \ge 0}, \tilde{\mathbb{P}})$ with a $\mathbb{P}$-a.s.strictly positive stopping time $\sigma.$
Since uniqueness holds,  we set
\begin{equation}
(\tilde{\rho},\tilde{u})(t):= \begin{cases} (\rho,u)(t),  \; t \le \tilde{\tau}_R \\ (\tilde{\rho},\tilde{u})(t), \; \tilde{\tau}_R < t \le \tilde{\tau}_R+\sigma,
\end{cases}
\end{equation}
which is a local strong pathwise solution to \eqref{stoc quantum}-\eqref{C.I} up to the stopping time $\tilde{\tau}_R+\sigma > \tilde{\tau}_R$
In order to show that $\tilde{\tau}_R< \tau$ on $[\tau < T]$ we assume by contradiction that $\mathbb{P}(\tilde{\tau}_R=\tau < T) >0.$ Then $\tilde{\tau}_R + \sigma \in \mathcal{S}$ and $\mathbb{P}(\tau <\tilde{\tau}_R+\sigma ) >0.$ which is a contradiction with respect to the maximality of $\tau.$ Hence we conclude that $\{ \tilde{\tau}_R \}$ is an increasing sequence of stopping times converging to $\tau$.  Moreover on the set  $[\tau < T]$ we have 
\begin{equation*}
\max \{ \sup_{t \in [0,\tilde{\tau}_R]} \| \psi (t) \|_{W_x^{2,\infty}},\sup_{t \in [0,\tilde{\tau}_R]}  \| u(t) \|_{W_x^{2,\infty}} \} \ge R,
\end{equation*}
which completes the existence part.
\subsection*{Acknowledgments}
The authors gratefully acknowledge the partial support by the Gruppo
Na\-zio\-na\-le per l’Analisi Matematica, la Probabilit\`a e le loro
Applicazioni (GNAMPA) of the Istituto Nazionale di Alta Matematica
(INdAM), and by the PRIN 2020 ``Nonlinear evolution PDEs, fluid
dynamics and transport equations: theoretical foundations and
applications'' and by the PRIN2022
``Classical equations of compressible fluids mechanics: existence and
properties of non-classical solutions''. The first and the third authors gratefully acknowledge the partial support by PRIN2022-PNRR ``Some
mathematical approaches to climate change and its impacts.''

\appendix
\section{Appendix}
In this appendix we state some useful theorems and propositions we frequently refer to in our work. We start by recalling the so-called Burkholder-Davis-Gundy inequality and an high order integrability lemma for the stochastic integral.
\begin{proposition}(Burkholder-Davis-Gundy's inequality) \label{BDG}\\
Let $H$ be a separable Hilbert space.  Let $p \in (0, \infty).$ There exists a constant $C_p>0$ such that,  for every $(\mathfrak{F}_t)$-progressively measurable stochastic process $\mathbb{G}$ satisfying 
\begin{equation}
\mathbb{E} \int_{0}^{T} \| \mathbb{G}(t) \|^2_{L_{2}(\mathfrak{U},H)}dt \ < \infty,
\end{equation}
the following holds:
\begin{equation}
\mathbb{E} \sup_{ t \in [0,T]} \bigg{ \| } \int_{0}^{t} \mathbb{G}(s)dW(s) \bigg{ \| }^p_H \le C_p \mathbb{E} \bigg( \int_{0}^{T} \| \mathbb{G}(s) \|^2_{L_{2}(\mathfrak{U},H)} ds \bigg)^{\frac{p}{2}}.
\end{equation}
\end{proposition}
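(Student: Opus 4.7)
The plan is to establish the inequality in three stages, proceeding from the easy $L^2$-case to the moment case $p\ge 2$ via an It\^o expansion, and finally extending to $0<p<2$ by a domination argument. Throughout, set $M_t=\int_0^t\mathbb{G}(s)\,dW(s)$, which is a continuous $H$-valued local martingale with quadratic variation process $[M]_t=\int_0^t\|\mathbb{G}(s)\|^2_{L_2(\mathfrak{U},H)}\,ds$. I would first reduce to the case of elementary (piecewise constant) progressively measurable integrands, for which $M_t$ is a finite sum of independent Gaussian increments; the general statement then follows from the integrability hypothesis by a standard approximation in $L^2_{\mathrm{prog}}(\Omega\times[0,T];L_2(\mathfrak{U},H))$ together with Fatou's lemma, so I can freely argue under the a priori assumption that all quantities below are finite.

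For the case $p=2$, I would combine Doob's $L^2$ maximal inequality with the It\^o isometry:
\begin{equation*}
\mathbb{E}\sup_{t\in[0,T]}\|M_t\|_H^2\le 4\,\mathbb{E}\|M_T\|_H^2=4\,\mathbb{E}\int_0^T\|\mathbb{G}(s)\|^2_{L_2(\mathfrak{U},H)}\,ds,
\end{equation*}
which is exactly the claim with $C_2=4$. For $p\ge 2$, I would apply the It\^o formula to the $C^2$ function $x\mapsto \|x\|_H^p$ on $H\setminus\{0\}$ (handled at the origin by a standard regularization $(\|x\|_H^2+\varepsilon)^{p/2}$ followed by $\varepsilon\downarrow 0$). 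The resulting expansion yields
\begin{equation*}
\|M_t\|_H^p=p\int_0^t\|M_s\|_H^{p-2}\langle M_s,dM_s\rangle_H+\tfrac{p}{2}\int_0^t\|M_s\|_H^{p-2}\|\mathbb{G}(s)\|^2_{L_2(\mathfrak{U},H)}\,ds+R_t,
\end{equation*}
where $R_t$ is a controlled correction term arising from the Hilbert-space Hessian. Taking expectation kills the first stochastic integral, and after applying Doob's $L^{p/(p-1)}$ maximal inequality (valid since $p\ge 2$), H\"older's inequality with exponents $2/p$ and $2/(2-p+2)$ suitably chosen, and Young's inequality to absorb the $\mathbb{E}\sup\|M\|_H^p$ term arising on the right, I would obtain the desired estimate with an explicit constant $C_p$ depending only on $p$.

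The remaining case $0<p<2$ is the main obstacle, since the power $\|M\|_H^{p-2}$ in It\^o's formula is singular at the origin and the direct expansion strategy fails. Here I would invoke Lenglart's domination principle: the process $\|M_t\|_H^2$ is dominated, in the sense of Lenglart, by the increasing process $[M]_T$, because for every bounded stopping time $\sigma\le T$ one has $\mathbb{E}\|M_\sigma\|_H^2=\mathbb{E}[M]_\sigma$. Lenglart's inequality then immediately yields
\begin{equation*}
\mathbb{E}\sup_{t\in[0,T]}\|M_t\|_H^{p}\le C_p\,\mathbb{E}\,[M]_T^{p/2}
\end{equation*}
for every $p\in(0,2)$, with $C_p=(2-p)^{-1}2^{p/2}$. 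Alternatively, one can derive the same bound through a good-$\lambda$ inequality comparing the maximal process $M^*_T=\sup_{t\le T}\|M_t\|_H$ with $[M]_T^{1/2}$. Finally, I would unify the constants across the three regimes and remove the restriction to elementary integrands by the density approximation prepared at the outset, obtaining the claimed inequality for every $p\in(0,\infty)$ and every admissible $\mathbb{G}$.
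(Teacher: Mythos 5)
The paper offers no proof of this proposition: it is recalled in the Appendix as a standard tool (the Burkholder--Davis--Gundy inequality for Hilbert-space-valued stochastic integrals), with the reader implicitly referred to the stochastic-analysis literature such as \cite{Feir} and \cite{Da Prato}. So there is nothing to compare against; judged on its own, your outline is the standard textbook proof of the stated upper bound and is essentially sound: Doob plus the It\^o isometry for $p=2$; It\^o's formula applied to the regularized norm $(\|x\|_H^2+\varepsilon)^{p/2}$, followed by Doob, H\"older and Young, for $p\ge 2$; and Lenglart's domination with $X_t=\|M_t\|_H^2$, $A_t=\int_0^t\|\mathbb{G}(s)\|^2_{L_2(\mathfrak{U},H)}\,ds$ and exponent $q=p/2<1$ for $0<p<2$. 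Three points should be tidied up. First, for elementary integrands the increments $\mathbb{G}(t_i)\bigl(W(t_{i+1})-W(t_i)\bigr)$ are in general neither Gaussian nor independent, since $\mathbb{G}(t_i)$ is a random $\mathfrak{F}_{t_i}$-measurable operator; what the reduction (together with an additional localization to bounded integrands or stopping times) actually buys you is the a priori finiteness of $\mathbb{E}\sup_t\|M_t\|_H^p$, needed for the Young absorption, and the fact that the stochastic integral in the It\^o expansion is a true martingale so that its expectation vanishes. Second, the H\"older exponents you quote, $2/p$ and $2/(4-p)$, are not conjugate; the correct application is $\mathbb{E}\int_0^T\|M_s\|_H^{p-2}\|\mathbb{G}(s)\|^2_{L_2(\mathfrak{U},H)}\,ds\le\mathbb{E}\bigl[(\sup_t\|M_t\|_H)^{p-2}[M]_T\bigr]$ followed by H\"older on $\Omega$ with the conjugate pair $p/(p-2)$ and $p/2$. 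Third, Lenglart's inequality in the form you invoke requires the dominating increasing process to be predictable, which holds here because $[M]_t$ is continuous and adapted. With these repairs the argument is complete and gives exactly the inequality stated in the proposition.
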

\noindent
\begin{lemma} \label{Integrability}
Let $G_k=G_k(\rho,q)$ satisfy \eqref{G1}-\eqref{G2} for a non-negative integer $s.$ Let $p \ge 2, \; r \in [0,\frac{1}{2}).$ Suppose that $$ \rho, q \in L^{\beta p}(\Omega \times (0,T); H^s(\mathbb{T}^3)), \quad \beta= \max\{s,1\}$$
Then the following holds:
\begin{itemize}
\item[(1)]
If $s=0,$ then $$ t \longmapsto \int_{0}^{t} \mathbb{G}(\rho,q) dW \in L^P(\Omega; W^{r,p} (0,T;L^2(\mathbb{T}^3)))$$
and 
$$\mathbb{E} \bigg[ \bigg\| \int_{0}^{t} \mathbb{G}(\rho,q)dW \bigg\|^p_{W^{r,p}_t L^2_x} \bigg] \le c(r,p) \mathbb{E} \bigg[ \int_{0}^{T} \| [\rho,q] \|^p_{L^2_x} dt \bigg].$$
\item[(2)]
If $s> \frac{3}{2},$ then $$ t \longmapsto \int_{0}^{t} \mathbb{G}(\rho,q) dW \in L^P(\Omega; W^{r,p} (0,T;H^s(\mathbb{T}^3)))$$
and 
$$\mathbb{E} \bigg[ \bigg\| \int_{0}^{t} \mathbb{G}(\rho,q)dW \bigg\|^p_{W^{r,p}_t H^s_x} \bigg] \le c(r,p) \mathbb{E} \bigg[ \int_{0}^{T} \| [\rho,q] \|^{sp}_{H^s_x} dt \bigg].$$
\end{itemize}
\end{lemma}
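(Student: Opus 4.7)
The plan is to combine the Burkholder--Davis--Gundy inequality (Proposition \ref{BDG}) with the Slobodeckij characterization of the fractional Sobolev norm
\begin{equation*}
\|f\|_{W^{r,p}(0,T;X)}^p = \|f\|_{L^p(0,T;X)}^p + \int_0^T\!\!\int_0^T \frac{\|f(t)-f(\sigma)\|_X^p}{|t-\sigma|^{1+rp}}\,dt\,d\sigma,
\end{equation*}
applied with $X=L^2(\mathbb{T}^3)$ in case (1) and $X=H^s(\mathbb{T}^3)$ in case (2). Denoting $I(t):=\int_0^t\mathbb{G}(\rho,q)\,dW$, the increment $I(t)-I(\sigma)=\int_\sigma^t\mathbb{G}\,dW$ is again a stochastic integral, so it suffices to control the numerator in the double integral above.

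The first step is a pointwise estimate for $\|\mathbb{G}(\rho,q)\|_{L_2(\mathfrak{U},X)}^p$. For $s=0$, the growth condition \eqref{G w} gives $\|G_k(\cdot,\rho,q)\|_{L^2}\le g_k\|[\rho,q]\|_{L^2}$, and the summability in \eqref{G2} yields $\|\mathbb{G}\|_{L_2(\mathfrak{U},L^2)}^p\lesssim\|[\rho,q]\|_{L^2}^p$. For $s>3/2$, $H^s(\mathbb{T}^3)$ is a Banach algebra continuously embedded in $L^\infty$, and combining this with a Moser-type composition estimate based on \eqref{G2} gives
\begin{equation*}
\|G_k(\cdot,\rho,q)\|_{H^s}\lesssim g_k\bigl(1+\|[\rho,q]\|_{L^\infty}\bigr)^{s-1}\|[\rho,q]\|_{H^s}\lesssim g_k\,\|[\rho,q]\|_{H^s}^{\,s},
\end{equation*}
so that after summation in $k$ one obtains $\|\mathbb{G}\|_{L_2(\mathfrak{U},H^s)}^p\lesssim\|[\rho,q]\|_{H^s}^{sp}$. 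This is precisely the exponent $\beta p=sp$ appearing in the statement.

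Next, I would apply BDG together with H\"older's inequality in time, which is legitimate because $p\ge 2$:
\begin{equation*}
\mathbb{E}\|I(t)-I(\sigma)\|_X^p\le C_p\,\mathbb{E}\Bigl(\int_\sigma^t\|\mathbb{G}\|_{L_2(\mathfrak{U},X)}^2\,d\tau\Bigr)^{p/2}\le C_p\,|t-\sigma|^{p/2-1}\,\mathbb{E}\int_\sigma^t\|\mathbb{G}\|_{L_2(\mathfrak{U},X)}^p\,d\tau.
\end{equation*}
Inserting this into the Slobodeckij seminorm and swapping the order of integration via Fubini, the problem reduces to checking, for fixed $\tau$, that
\begin{equation*}
\iint_{\{(t,\sigma):\,\tau\in(t\wedge\sigma,\,t\vee\sigma)\}} |t-\sigma|^{p/2-2-rp}\,dt\,d\sigma\le C(T,p,r);
\end{equation*}
after the change of variables $u=t-\tau$, $v=\tau-\sigma$ this becomes the two-dimensional integral of $(u+v)^{p/2-2-rp}$ on a bounded rectangle, which converges near the origin precisely when $p(\tfrac12-r)>0$, i.e.\ $r<\tfrac12$. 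Combining with the pointwise bound on $\mathbb{G}$ produces the claimed inequalities.

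The main obstacle is the composition estimate in case (2): one must carefully track how the $C^s$-bounds \eqref{G2} on $G_k(x,\cdot,\cdot)$ interact with the Sobolev embedding $H^s\hookrightarrow L^\infty$ to yield the correct exponent $\beta=s$ on $\|[\rho,q]\|_{H^s}$ while keeping the constants summable in $k$. The $L^2$ case (1) is straightforward once the Slobodeckij decomposition is set up, since the linear growth \eqref{G w} passes directly through BDG without invoking any algebra property.
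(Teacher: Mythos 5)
The paper never proves Lemma \ref{Integrability}: it is stated in the Appendix as an auxiliary result imported from the literature on stochastically forced compressible flows (cf.\ \cite{Feir}), so there is no in-paper argument to compare yours against; I can only assess your proposal on its own. Its core is the standard proof and is sound: the Slobodeckij characterization of $W^{r,p}(0,T;X)$, the Burkholder--Davis--Gundy inequality (Proposition \ref{BDG}) applied to the increment $\int_\sigma^t\mathbb{G}\,dW$, H\"older in time (legitimate since $p\ge 2$), and Fubini reduce the seminorm estimate to the finiteness of $\iint (u+v)^{p/2-2-rp}\,du\,dv$ near the origin, which holds precisely when $r<\tfrac12$; this exponent count is the heart of the lemma and you have it right, as is the $s=0$ bound $\|\mathbb{G}\|^2_{L_2(\mathfrak{U},L^2)}\lesssim\|[\rho,q]\|^2_{L^2}$ coming from \eqref{G w} and the summability in \eqref{G2}.

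Two points need attention. First, the $W^{r,p}$ norm also contains the $L^p(0,T;X)$ piece; it is covered by the same BDG--H\"older bound (the case $\sigma=0$ without the singular weight), but you should state this rather than reduce everything ``to the numerator of the double integral''. Second, and more substantively, in case (2) your pointwise bound $\|G_k(\cdot,\rho,q)\|_{H^s}\lesssim g_k\,\|[\rho,q]\|^{s}_{H^s}$ does not follow as written: the Moser-type composition estimate (Proposition \ref{Moser}(3)) applied to derivatives of order $1\le|\beta|\le s$, together with \eqref{G w} for the zeroth-order term, yields $\|G_k(\cdot,\rho,q)\|_{H^s}\lesssim g_k\sum_{j=1}^{s}\|[\rho,q]\|^{j}_{H^s}$, and your intermediate step $(1+\|[\rho,q]\|_{L^\infty})^{s-1}\|[\rho,q]\|_{H^s}\lesssim\|[\rho,q]\|^{s}_{H^s}$ fails in the regime of small norms. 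This does not endanger the membership claim, since on the finite measure space $\Omega\times(0,T)$ the hypothesis $\rho,q\in L^{sp}$ controls all lower powers; but to arrive at the displayed inequality one must either absorb the lower-order powers via Young's inequality (which adds a constant $c(T)$ to the right-hand side) or carry the full sum $\sum_{j\le s}\|[\rho,q]\|^{jp}_{H^s}$ through the estimate. With that adjustment, and keeping the summability in $k$ through $\sum_k g_k^2<\infty$ as you do, the argument is complete.
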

\noindent
The next is an H\"{o}lder continuity result for the stochastic integral due to Kolmogorov
\begin{theorem}(Kolmogorov continuity theorem) \label{Kolmogorov} \\
Let $U$ be a stochastic process taking values in a separable Banach space $X.$ Assume that there exists a constant $K>0, \,a \ge 1, \, b>0$ such that, for all $s,t \in [0,T],$ $$ \mathbb{E} \| U(t)-U(s) \|^a_{X} \le K |t-s|^{1+b}.$$
Then there exists $V,$ a modification of $U,$ which has $\mathbb{P}$-a.s.  H\"{o}lder continuous trajectories with exponent $\gamma$ for every $\gamma \in (0, \frac{b}{a}).$
In addition,  we have $$\mathbb{E} \| V \|^a_{C^\gamma_t X} \lesssim K,$$
where the proportional constant does not depend on $V.$
\end{theorem}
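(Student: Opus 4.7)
The plan is to run the classical dyadic chaining argument of Kolmogorov. Fix $\gamma \in (0, b/a)$ and, after rescaling, assume $T=1$. Let $D_n = \{k 2^{-n} : 0 \le k \le 2^n\}$ denote the dyadic grid and $D = \bigcup_n D_n$, which is countable and dense in $[0,1]$. The first step is to control the increments of $U$ along consecutive dyadic points. By the moment hypothesis and Markov's inequality, for any $n$ and any $0\le k < 2^n$,
\begin{equation*}
\mathbb{P}\bigl(\|U((k+1)2^{-n})-U(k 2^{-n})\|_X \ge 2^{-\gamma n}\bigr) \le 2^{\gamma n a}\, K\, 2^{-n(1+b)} = K\, 2^{-n(1+b-\gamma a)}.
\end{equation*}
Summing over $k$ and then over $n$, and using that $\gamma a - b < 0$, the set
\begin{equation*}
A_n = \bigl\{\exists\, k : \|U((k+1)2^{-n})-U(k 2^{-n})\|_X \ge 2^{-\gamma n}\bigr\}
\end{equation*}
satisfies $\sum_n \mathbb{P}(A_n) \le K \sum_n 2^{-n(b-\gamma a)} < \infty$. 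By Borel--Cantelli there exists a set $\Omega^\star$ of full probability and a (random) integer $n^\star(\omega)$ such that for $n \ge n^\star$ no increment in $A_n$ occurs.

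Next I would perform the standard chaining estimate: for $s, t \in D$ with $2^{-(n+1)} < |t-s| \le 2^{-n}$ and $n \ge n^\star$, any such pair can be connected by a dyadic path whose increments telescope into a geometric series controlled by $\sum_{m\ge n} 2^{-\gamma m} \lesssim 2^{-\gamma n} \lesssim |t-s|^\gamma$. This shows that, on $\Omega^\star$, the restriction $U|_{D}$ is $\gamma$-Hölder continuous with a (random) constant depending only on $n^\star(\omega)$. Since $X$ is complete and $D$ is dense, $U|_{D}$ admits a unique continuous extension $V : [0,1] \to X$ on $\Omega^\star$; off $\Omega^\star$ set $V \equiv 0$. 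To see $V$ is a modification, fix $t \in [0,1]$ and pick dyadics $t_n \to t$: by the moment assumption $\mathbb{E}\|U(t_n)-U(t)\|_X^a \to 0$, while $V(t_n) \to V(t)$ almost surely by construction and $V(t_n) = U(t_n)$ on $\Omega^\star$; passing to a further subsequence converging a.s.\ yields $V(t) = U(t)$ $\mathbb{P}$-a.s.

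For the quantitative bound $\mathbb{E}\|V\|_{C^\gamma_t X}^a \lesssim K$, I would track the chaining constants explicitly: define $M_n = \max_{k<2^n}\|U((k+1)2^{-n})-U(k 2^{-n})\|_X$, so that the chaining estimate gives
\begin{equation*}
\|V\|_{C^\gamma_t X} \lesssim \|V(0)\|_X + \sum_{n\ge 0} 2^{\gamma n} M_n,
\end{equation*}
and then estimate each $\mathbb{E} M_n^a \le 2^n \cdot K 2^{-n(1+b)} = K 2^{-nb}$ by a union bound over $k$, concluding via Minkowski in $\ell^a$ using $\gamma < b/a$ to force the resulting series to converge to a constant times $K$. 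The main obstacle I anticipate is the last step: getting the explicit $L^a$ bound on $\|V\|_{C^\gamma}$ (as opposed to merely a.s.\ Hölder continuity) requires Minkowski's inequality in the series $\sum_n 2^{\gamma n} M_n$ and a careful use of the hypothesis $\gamma a < b$ to close the geometric sum; the a.s.\ construction and the modification property are relatively routine once the Borel--Cantelli step is in place.
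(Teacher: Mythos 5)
The paper states Theorem~\ref{Kolmogorov} as a classical fact in the appendix without giving a proof (it is quoted there, essentially verbatim, from the standard literature on SPDEs, e.g.\ Breit--Feireisl--Hofmanov\'a or Da Prato--Zabczyk). Your dyadic chaining argument is the standard Kolmogorov--Chentsov proof and is essentially correct: Markov plus the moment hypothesis gives a summable bound on the probability of a large increment at scale $2^{-n}$, Borel--Cantelli yields a.s.\ $\gamma$-H\"older continuity of $U$ on the dyadics, completeness of $X$ and density of the dyadics provide the continuous extension $V$, and the modification property follows from $L^a$-convergence of $U$ along dyadic sequences together with a.s.\ convergence of a subsequence. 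The quantitative bound is also handled the right way, by dominating the H\"older seminorm by $\sum_n 2^{\gamma n} M_n$ and applying Minkowski's inequality in $L^a$ (valid since $a\ge 1$), with the geometric series closing because $\gamma a < b$.

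One point worth being careful about: your chaining bound $\|V\|_{C^\gamma_t X}\lesssim \|V(0)\|_X+\sum_{n\ge 0}2^{\gamma n}M_n$ includes the term $\|V(0)\|_X$, and the hypothesis bounds only increments, so $\mathbb{E}\|V(0)\|_X^a$ is not controlled by $K$. The stated conclusion $\mathbb{E}\|V\|_{C^\gamma_t X}^a\lesssim K$ therefore only holds if $\|\cdot\|_{C^\gamma_t X}$ is read as the $\gamma$-H\"older \emph{seminorm} (the usual convention in this context), which is what Minkowski on the series $\sum_n 2^{\gamma n}M_n$ actually delivers; if the full $C^\gamma$ norm including $\sup_t\|V(t)\|_X$ were meant one would need an extra assumption such as $\mathbb{E}\|U(0)\|_X^a<\infty$. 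With that reading your proposal is a complete and correct proof.
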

\noindent
We recall the following It$\hat{\text{o}}$ Lemma,  see \cite{Feir},  Theorem A.4.1 for the stronger version of this result.
\begin{theorem}\label{Ito Lemma}
Let $W$ be an $\mathfrak{F}_t$-cylindrical Wiener process on the stochastic basis \newline $(\Omega, \mathfrak{F},(\mathfrak{F}_t
)_{t \ge 0}, \mathbb{P}).$
Let $(r,s)$ be a pair of stochastic processes on $(\Omega, \mathfrak{F},(\mathfrak{F}_t
)_{t \ge 0}, \mathbb{P})$ satisfying
\begin{equation}
\text{d}r=[Dr]\text{d}t+[\mathbb{D}r] \text{d}W, \quad \quad \text{d}s=[Ds]\text{d}t+[\mathbb{D}s] \text{d}W
\end{equation}
on the cylinder $(0,T) \times \mathbb{T}^d.$ Now suppose that the following 
\begin{equation}
r \in C^{\infty} ([0,T] \times \mathbb{T}^d), \quad \quad s \in C^{\infty} ([0,T] \times \mathbb{T}^d)
\end{equation}
holds $\mathbb{P}$-a.s. and that for all $1 \le q < \infty$
\begin{equation}
\mathbb{E} \bigg[ \sup_{t \in [0,T]} \| r \|^2_{W^{1,q}_x} \bigg]^q+\mathbb{E} \bigg[ \sup_{t \in [0,T]} \| s \|^2_{W^{1,q}_x} \bigg]^q \lesssim_q 1
\end{equation}
Furthermore,  assume that $[Dr], \; [Ds], \; [\mathbb{D}r], \; [\mathbb{D}s]$ are progressively measurable and that
\begin{equation}
\begin{split}
& [Dr], \; [Ds], \;  \in L^q(\Omega; L^q(0,T;W^{1,q}_x) \\ &
[\mathbb{D}r], \; [\mathbb{D}s] \in L^2( \Omega; L^2(0,T;L_2(\mathfrak{U};L^2_x) )
\end{split}
\end{equation}
and 
\begin{equation}
\bigg( \sum_{k \in \mathbb{N}} |[Dr](e_k)|^q\bigg)^{\frac{1}{q}}, \;\bigg( \sum_{k \in \mathbb{N}} |[Ds](e_k)|^q\bigg)^{\frac{1}{q}} \in L^q(\Omega \times (0,T) \times \mathbb{T}^d) 
\end{equation}
holds. Finally,  for some $\lambda \ge 0.$ let $Q$ be $(\lambda +2)$-continuously differentiable function such that 
\begin{equation}
\mathbb{E} \sup_{t \in [0,T]} \| Q^j(r) \|^2_{W^{\lambda,q'} \cap C_x } < \infty, \quad j=0,1,2.
\end{equation}
Then 
\begin{equation}
\begin{split}
& \int_{\mathbb{T}^d} (sQ(r))(t) dx= \int_{\mathbb{T}^d} (s_0Q(r_0))(t) dx+\int_{0}^{t} \int_{\mathbb{T}^d} \bigg[ sQ'(r)[Dr]+\frac{1}{2} \sum_{k \in \mathbb{N}} sQ''(r) | [\mathbb{D}r](e_k)|^2 \bigg] dxdt' \\ & +\int_{0}^{t} \int_{\mathbb{T}^d} Q(r)[Ds] dxdt'+\sum_{k\in\mathbb{N}} \int_{0}^{t} \int_{\mathbb{T}^d} [\mathbb{D}s](e_k)[\mathbb{D}r](e_k) dxdt' \\ & + \sum_{k\in \mathbb{N}} \int_{0}^{t} \int_{\mathbb{T}^d} \bigg[ sQ'(r)[\mathbb{D}r](e_k)+Q(r)[\mathbb{D}s](e_k)\bigg] dx dW_k(t').
\end{split}
\end{equation}
\end{theorem}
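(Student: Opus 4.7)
The plan is to obtain the formula by first applying the classical real-valued Itô formula pointwise in $x\in\mathbb{T}^d$, and then integrating in space via a stochastic Fubini argument. For each fixed $x\in\mathbb{T}^d$, the assumptions ensure that $t\mapsto r(t,x)$ and $t\mapsto s(t,x)$ are real-valued continuous semimartingales with differentials
\[
\mathrm{d}r(t,x)=[Dr](t,x)\,\mathrm{d}t+\sum_{k\in\mathbb{N}}[\mathbb{D}r](e_k)(t,x)\,\mathrm{d}W_k(t),
\]
and analogously for $s(t,x)$. The finite-dimensional Itô product rule combined with the chain rule applied to the real function $(r,s)\mapsto sQ(r)$ yields, for every $x\in\mathbb{T}^d$,
\[
\mathrm{d}\bigl(sQ(r)\bigr)(t,x)=\bigl(sQ'(r)[Dr]+Q(r)[Ds]\bigr)\mathrm{d}t+\tfrac{1}{2}\sum_{k}sQ''(r)|[\mathbb{D}r](e_k)|^2\mathrm{d}t+\sum_{k}Q'(r)[\mathbb{D}r](e_k)[\mathbb{D}s](e_k)\,\mathrm{d}t+\sum_{k}\bigl(sQ'(r)[\mathbb{D}r](e_k)+Q(r)[\mathbb{D}s](e_k)\bigr)\mathrm{d}W_k(t).
\]

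The second step is to integrate this pointwise identity over $\mathbb{T}^d$. For the Lebesgue-type terms this is justified by Fubini, using the regularity hypotheses on $[Dr]$, $[Ds]$, $[\mathbb{D}r]$, $[\mathbb{D}s]$ together with the Sobolev bounds on $r$, $s$ and the assumption $Q\in C^{\lambda+2}$ with $\sup_t\|Q^{(j)}(r)\|_{W^{\lambda,q'}\cap C_x}\in L^2(\Omega)$ for $j=0,1,2$; one checks term by term (using the $L^q$-$L^{q'}$ duality and $\ell^q$-$\ell^{q'}$ summation in $k$) that each integrand belongs to $L^1(\Omega\times(0,T)\times\mathbb{T}^d)$. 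The key non-trivial step is exchanging the spatial integral with the stochastic integral in
\[
\sum_{k}\int_{\mathbb{T}^d}\!\!\int_0^t\bigl(sQ'(r)+Q(r)\bigr)[\mathbb{D}r](e_k)\,\mathrm{d}W_k(t')\,\mathrm{d}x,
\]
which is the content of the stochastic Fubini theorem; the applicability is guaranteed once one verifies
\[
\mathbb{E}\int_0^T\!\!\int_{\mathbb{T}^d}\sum_{k}\bigl|sQ'(r)[\mathbb{D}r](e_k)+Q(r)[\mathbb{D}s](e_k)\bigr|^2\,\mathrm{d}x\,\mathrm{d}t<\infty,
\]
which again follows from the integrability assumptions via H\"older and Burkholder--Davis--Gundy estimates.

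Third, if direct application of the real-valued Itô formula is problematic because $r$ and $s$ lack sufficient temporal regularity as real processes with values in an infinite-dimensional space, one may replace $r$ and $s$ by their spatial mollifications $r_\varepsilon=\kappa_\varepsilon\ast r$, $s_\varepsilon=\kappa_\varepsilon\ast s$. The mollified processes satisfy SPDEs with smoothed coefficients $[Dr]_\varepsilon$, $[\mathbb{D}r]_\varepsilon$ etc., and the identity above holds by the classical Itô formula in the Gelfand triple framework (see e.g.\ Krylov--Rozovskii). The commutation $\kappa_\varepsilon\ast (\cdot)$ is harmless because convolution commutes with the stochastic integral. Then one passes $\varepsilon\to 0$: by standard commutator estimates and dominated convergence, each deterministic term converges in $L^1(\Omega\times(0,T))$ and the stochastic integral converges in probability (by BDG and the integrability hypotheses). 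The main obstacle is therefore exclusively the careful verification that all terms — in particular those involving $Q'(r),Q''(r)$ together with the $\ell^{q}$ and $\ell^{q'}$ summability of $[Dr](e_k),[Ds](e_k)$ — are dominated uniformly in $\varepsilon$ so that limits may be taken, and the stochastic Fubini exchange is legitimate; once these integrability checks are in place the identity follows.
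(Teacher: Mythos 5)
The paper does not actually prove this statement: it is quoted (in slightly weakened form) from Theorem A.4.1 of \cite{Feir}, and the text only points to that reference, so there is no internal proof to compare against. Your strategy --- apply the finite-dimensional It\^o formula to $F(r,s)=sQ(r)$ pointwise in $x$ (or after spatial mollification, in the variational framework), then integrate over $\mathbb{T}^d$ using the deterministic and stochastic Fubini theorems, with the stated integrability hypotheses justifying the exchanges and the passage $\varepsilon\to 0$ --- is precisely the standard route by which this kind of generalized It\^o formula is established in the cited monograph, and the verifications you single out (square-integrability of $\sum_k\big|sQ'(r)[\mathbb{D}r](e_k)+Q(r)[\mathbb{D}s](e_k)\big|^2$ over $\Omega\times(0,T)\times\mathbb{T}^d$ for the stochastic Fubini step, uniform-in-$\varepsilon$ domination for the mollification limit) are indeed the only delicate points; as written your argument is a sketch, since these checks are asserted rather than carried out, but no step of the plan would fail under the stated hypotheses. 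One substantive remark: your pointwise computation yields the cross-variation contribution $\sum_k\int_0^t\int_{\mathbb{T}^d}Q'(r)\,[\mathbb{D}r](e_k)[\mathbb{D}s](e_k)\,dx\,dt'$, carrying the factor $Q'(r)$, whereas the formula displayed in the statement omits this factor; your version is the correct one and agrees with \cite{Feir}, so your derivation in fact exposes a typo in the quoted statement rather than an error in your proof.
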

\noindent
\begin{theorem}(It$\hat{\text{o}}$'s formula) \\
Let $W$ be a cylindrical Wiener process in a separable Hilbert space $\mathfrak{U}.$ Let $\mathbb{G}$ be an $L_{2}(\mathfrak{U},H)$-valued stochastically integrable process, let $g$ be an $H$-valued progressively measurable Bochner integrable process, and let $U(0)$ be a $\mathfrak{F}_0$-measurable $H$-valued random variable so that the process $$ U(t)= U(0)+ \int_{0}^{t} g(s)ds +\int_{0}^{t} \mathbb{G}(s) dW(s)$$ is well defined.
Let $F: \, H \longrightarrow \mathbb{R}$ be a function such that $F, \, F',\, F''$ are uniformly continuous on bounded subset of $H.$ Then the following holds $\mathbb{P}$-a.s. for all $t \in [0,T]$
\begin{equation}
\begin{split}
& F(U(t))=F(U(0))+ \int_{0}^{t} \langle F'(U(s)),g(s) \rangle ds+ \int_{0}^{t} \langle F'(U(s)),G(s) dW(s) \rangle \\ & 
+\dfrac{1}{2} \int_{0}^{t} \text{Tr}(G(s)^*F{''}(U(s))G(s))ds
\end{split}
\end{equation}
where $\text{Tr} A= \sum_{k=1}^{\infty} \langle A e_k,e_k \rangle$ for $A$ being a bounded linear operator on H.
\end{theorem}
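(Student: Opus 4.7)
The strategy is a standard two-level approximation argument: first reduce to a finite-dimensional It\^{o} process by projecting onto a Hilbert basis of $H$, apply the classical finite-dimensional It\^{o} formula, and then pass to the limit in the projection parameter. A preliminary localization in the state of $U$ upgrades the hypothesis that $F, F', F''$ are uniformly continuous on bounded sets to a global uniform bound during each step. Specifically, for each $n \in \mathbb{N}$ introduce the stopping time $\tau_n := \inf\{t \in [0,T] \, : \, \|U(t)\|_H \geq n\}$ (with the convention $\inf \emptyset = T$). On $[0,\tau_n]$ the process $U$ takes values in the closed ball $\overline{B}_n \subset H$, on which $F, F', F''$ are uniformly continuous and hence uniformly bounded. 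It suffices to prove the formula for the stopped process, and then pass $n \to \infty$ via dominated convergence using that the integrands are $\mathbb{P}$-a.s. controlled on $[0, \tau_n]$ and that $\tau_n \uparrow T$ $\mathbb{P}$-a.s.

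Next, fix an orthonormal basis $\{h_j\}_{j \geq 1}$ of $H$ and let $P_N$ denote the orthogonal projection onto $H_N := \mathrm{span}(h_1, \dots, h_N)$. Define the finite-dimensional approximation
\begin{equation*}
U_N(t) = P_N U(0) + \int_0^t P_N g(s)\, ds + \int_0^t P_N \mathbb{G}(s)\, dW(s),
\end{equation*}
which is an It\^{o} process taking values in $H_N$. Since the restriction $F|_{H_N}$ is a $C^2$ real-valued function on a finite-dimensional Euclidean space and $P_N \mathbb{G} \in L_2(\mathfrak{U}, H_N)$, the classical finite-dimensional It\^{o} formula (applied in the basis $\{h_j\}_{j \leq N}$) yields
\begin{equation*}
\begin{split}
F(U_N(t)) &= F(U_N(0)) + \int_0^t \langle F'(U_N), P_N g \rangle \, ds + \int_0^t \langle F'(U_N), P_N \mathbb{G}\, dW \rangle\\
&\quad + \frac{1}{2}\int_0^t \mathrm{Tr}\bigl((P_N \mathbb{G})^* F''(U_N) (P_N \mathbb{G})\bigr) ds.
\end{split}
\end{equation*}
If $F$ is only $C^2$, one first mollifies $F$ to $F_\varepsilon \in C^\infty$, applies the formula to $F_\varepsilon$, and then sends $\varepsilon \to 0$ using uniform convergence $F_\varepsilon^{(k)} \to F^{(k)}$ on bounded sets for $k=0,1,2$.

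The final step is the passage to the limit $N \to \infty$. Convergence $U_N \to U$ in $C([0,T]; H)$ $\mathbb{P}$-a.s.\ along a subsequence follows from the continuity of the projections $P_N$ and dominated convergence in the Bochner and It\^{o} integrals. Continuity of $F, F', F''$ on bounded sets then yields pointwise convergence $F^{(k)}(U_N) \to F^{(k)}(U)$ on $[0, \tau_n]$, with a uniform majorant coming from the localization. The deterministic integrals pass to the limit by dominated convergence; the stochastic integral passes by the It\^{o} isometry combined with $P_N \mathbb{G} \to \mathbb{G}$ in $L^2(\Omega \times [0,T]; L_2(\mathfrak{U}, H))$ and the Burkholder--Davis--Gundy inequality of Proposition \ref{BDG}. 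For the trace term we expand
\begin{equation*}
\mathrm{Tr}\bigl((P_N \mathbb{G})^* F''(U_N)(P_N \mathbb{G})\bigr) = \sum_{k \in \mathbb{N}} \langle F''(U_N)(P_N \mathbb{G} e_k), P_N \mathbb{G} e_k \rangle_H,
\end{equation*}
and apply dominated convergence in the series using the uniform bound $\|F''(U_N)\|_{L(H;H)} \leq C_n$ on $[0, \tau_n]$ together with $\sum_k \|\mathbb{G}(s) e_k\|_H^2 = \|\mathbb{G}(s)\|_{L_2(\mathfrak{U}, H)}^2 < \infty$ almost everywhere.

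The main technical obstacle is precisely the passage to the limit in the trace term: one must justify \emph{simultaneously} exchanging the limit in $N$ with the summation over the basis $\{e_k\}$ of $\mathfrak{U}$ and with the substitution of $U_N$ into $F''$. This is achieved by combining the Hilbert--Schmidt norm convergence $P_N \mathbb{G} \to \mathbb{G}$ with the uniform operator bound on $F''$ on $\overline{B}_n$ and dominated convergence at the level of both the integral and the series. A secondary, more routine, difficulty is that $F$ is only assumed $C^2$ rather than $C^3$, so the finite-dimensional It\^{o} formula must be first established for smooth $F_\varepsilon$ and then extended by the uniform continuity assumption.
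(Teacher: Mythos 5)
This theorem is stated in the paper's appendix as a background result and is not proved there at all: the authors simply quote the classical It\^{o} formula in Hilbert spaces from the literature (it is the standard result of Da Prato--Zabczyk, also recalled in the monograph \cite{Feir} which the paper cites for its stochastic toolbox). So there is no paper proof to compare against; what you have written is essentially the standard textbook argument (localization, finite-dimensional projection, classical It\^{o} formula, passage to the limit), and in outline it is sound. Two remarks: since $P_N$ is bounded and commutes with both the Bochner and the stochastic integral, your $U_N$ is simply $P_N U$, so the a.s.\ convergence $U_N \to U$ in $C([0,T];H)$ follows directly from uniform convergence of $P_N$ on the compact set $\{U(t):t\in[0,T]\}$, with no need to argue inside the integrals; and the mollification step is superfluous, because the finite-dimensional It\^{o} formula already holds for $C^2$ functions, which is exactly the regularity guaranteed by the hypothesis on $F,F',F''$.

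There is, however, one genuine gap as written. The hypotheses only assume that $\mathbb{G}$ is stochastically integrable and $g$ is Bochner integrable, i.e.\ $\int_0^T \|\mathbb{G}(s)\|^2_{L_2(\mathfrak{U},H)}\,ds<\infty$ and $\int_0^T\|g(s)\|_H\,ds<\infty$ only $\mathbb{P}$-a.s.; there is no moment bound in $\omega$. Consequently your claimed convergence $P_N\mathbb{G}\to\mathbb{G}$ in $L^2(\Omega\times[0,T];L_2(\mathfrak{U},H))$, and the ensuing use of the It\^{o} isometry and of dominated convergence in $\omega$ for the drift and trace terms, are not justified under the stated assumptions: these objects need not lie in the relevant $L^2(\Omega)$ or $L^1(\Omega)$ spaces. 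The fix is routine but must be made explicit: enlarge the localizing stopping times to
\begin{equation*}
\tau_n:=\inf\Big\{t\in[0,T]\;:\;\|U(t)\|_H\ge n \ \text{ or } \ \int_0^t\|\mathbb{G}(s)\|^2_{L_2(\mathfrak{U},H)}ds\ge n \ \text{ or } \ \int_0^t\|g(s)\|_H\,ds\ge n\Big\},
\end{equation*}
so that on $[0,\tau_n]$ all quantities are uniformly bounded; then the isometry/BDG argument for the stochastic term and dominated convergence for the drift and trace terms (with majorant $C_n\|\mathbb{G}(s)\|^2_{L_2(\mathfrak{U},H)}$, integrable on $[0,\tau_n]$ by construction) go through, and one concludes by letting $n\to\infty$ using the a.s.\ finiteness of the path integrals, which gives $\tau_n\uparrow T$ a.s. With this adjustment your argument is the standard proof of the result the paper quotes.
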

\noindent
\begin{proposition}(Stochastic Gronwall Lemma)\label{stoch gronw} \\
Fix $t>0$ and assume that $X,Y,Z,R \; : \; [0,t) \times \Omega \rightarrow \mathbb{R}$ are real valued,  non-negative stochastic processes.  Let $\tau < t$ be a stopping time so that 
\begin{equation}
\mathbb{E} \int_{0}^{\tau} (RX+Z)ds < \infty.
\end{equation}
Assume, moreover that for some fixed constant $k$
\begin{equation}
\int_{0}^{\tau} Rds < k,  \; a.s.
\end{equation}
Suppose that for all stopping times $0\le \tau_a< \tau_b \le \tau$
\begin{equation}
\mathbb{E} \bigg( \sup_{t \in [\tau_a,\tau_b]} X + \int_{\tau_a}^{\tau_b} Yds \bigg) \le c_0 \mathbb{E} \bigg( X(\tau_a) + \int_{\tau_a}^{\tau_b}(RX+Z)ds \bigg),
\end{equation}
where $c_0$ is independent of the choice of $\tau_a,\tau_b.$ Then 
\begin{equation}
\mathbb{E} \bigg( \sup_{t \in [0,\tau]} X + \int_{0}^{\tau} Y ds \bigg) \le c \mathbb{E} \bigg( X(0) + \int_{0}^{\tau} Zds \bigg),
\end{equation}
where $c=c(c_0,t,k).$
\end{proposition}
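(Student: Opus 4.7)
The plan is to partition $[0,\tau]$ into finitely many random subintervals on each of which the integral of $R$ is small enough that the $RX$ term in the hypothesis can be absorbed into the supremum of $X$, and then to iterate. Fix $\varepsilon$ so that $c_0 \varepsilon \le 1/2$ (for instance $\varepsilon = 1/(2c_0)$), and define recursively $\tau_0 := 0$ and
\begin{equation*}
\tau_{j+1} := \inf\Bigl\{t \ge \tau_j : \int_{\tau_j}^{t} R(s)\,ds \ge \varepsilon \Bigr\} \wedge \tau,
\end{equation*}
with the convention $\inf \emptyset = \tau$. Progressive measurability of $R$ ensures each $\tau_{j+1}$ is an $(\mathfrak{F}_t)$-stopping time, and the a.s. bound $\int_0^{\tau} R\,ds < k$ forces $\tau_N = \tau$ almost surely for the deterministic index $N := \lceil 2 c_0 k \rceil + 1$, since each non-terminating slice consumes exactly $\varepsilon$ of the total mass of $R$.

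On each slice $[\tau_j,\tau_{j+1}]$ I would then invoke the standing hypothesis with $\tau_a = \tau_j$, $\tau_b = \tau_{j+1}$, and absorb the $RX$ integral via
\begin{equation*}
\int_{\tau_j}^{\tau_{j+1}} R X\,ds \le \Bigl(\sup_{s \in [\tau_j,\tau_{j+1}]} X(s)\Bigr) \int_{\tau_j}^{\tau_{j+1}} R\,ds \le \varepsilon \sup_{s \in [\tau_j,\tau_{j+1}]} X(s).
\end{equation*}
Substituting into the assumed inequality and using $c_0 \varepsilon = 1/2$, one can move half of $\mathbb{E}\sup_{[\tau_j,\tau_{j+1}]} X$ to the left-hand side to obtain
\begin{equation*}
\mathbb{E}\Bigl[\sup_{s \in [\tau_j,\tau_{j+1}]} X + \int_{\tau_j}^{\tau_{j+1}} Y\,ds\Bigr] \le 2 c_0 \, \mathbb{E} X(\tau_j) + 2 c_0 \, \mathbb{E}\int_{\tau_j}^{\tau_{j+1}} Z\,ds.
\end{equation*}
Combined with the pointwise estimate $X(\tau_{j+1}) \le \sup_{[\tau_j,\tau_{j+1}]} X$, a discrete induction on $j$ yields $\mathbb{E}[X(\tau_j)] \le (2c_0)^j \mathbb{E}[X(0)] + (2c_0)^j \mathbb{E}\int_0^{\tau} Z\,ds$.

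Summing the slice estimates over $j = 0,\dots,N-1$, bounding $\sup_{[0,\tau]} X \le \sum_{j} \sup_{[\tau_j,\tau_{j+1}]} X$ and telescoping $\int_0^{\tau} Y\,ds$, gives the desired inequality with a final constant $c$ of the order $(2c_0)^{N+1}$, depending only on $c_0$, $k$, and (through $\tau \le t$) on $t$. The only delicate step, which is conceptual rather than computational, is the first one: one must make sure the partition $\{\tau_j\}$ genuinely terminates in a \emph{deterministic} number of steps uniformly in $\omega$. This is exactly where the pathwise bound $\int_0^\tau R\,ds < k$ a.s. is decisive, while the integrability assumption $\mathbb{E}\int_0^\tau (RX + Z)\,ds < \infty$ is what guarantees that every expectation manipulated along the way is finite and the absorption argument is legitimate.
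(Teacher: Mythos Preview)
The paper does not actually prove this proposition: it is listed in the Appendix as an auxiliary tool and stated without proof, as are the other results collected there (Burkholder--Davis--Gundy, Kolmogorov, It\^o's formula, Prokhorov, Skorokhod, etc.). So there is no ``paper's own proof'' to compare against.

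That said, your argument is the standard one and is correct. The slicing via the stopping times $\tau_{j+1}=\inf\{t\ge\tau_j:\int_{\tau_j}^t R\,ds\ge\varepsilon\}\wedge\tau$, the absorption of $c_0\varepsilon\,\mathbb{E}\sup X$ with $\varepsilon=1/(2c_0)$, and the discrete iteration are exactly the proof given in Glatt-Holtz and Ziane, \emph{Adv.\ Differential Equations} 14 (2009), Lemma~5.3, which the present paper cites as \cite{Glatt-Holtz}. Two small points worth tightening: (i) the bound $\mathbb{E}[X(\tau_j)]\le(2c_0)^j\mathbb{E}[X(0)]+(2c_0)^j\mathbb{E}\int_0^\tau Z\,ds$ tacitly assumes $2c_0\ge 1$; this is harmless since the hypothesis is monotone in $c_0$, but it is cleaner to note it explicitly. (ii) The hypothesis is stated for $\tau_a<\tau_b$ strictly, whereas your partition may produce $\tau_j=\tau_{j+1}=\tau$ for the last indices; those slices are vacuous and contribute nothing, so no issue arises. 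Otherwise the argument is complete.
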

Next we state some useful compactness results in the stochastic setting
\begin{theorem} (Prokhorov's theorem) \label{Prok} \\
Let $X$ be a Polish space and $\mathcal{M}$ a collection of probability measures on $X$.  Then $\mathcal{M}$ is tight if and only if it is relatively weakly compact
\end{theorem}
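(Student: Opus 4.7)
The plan is to prove the two directions of the equivalence separately, relying on the Polish structure of $X$ (in particular the existence of a complete, separable metric) to extract subsequential limits and to transfer properties between compact subsets and the full space.

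For the forward direction (tight $\Rightarrow$ relatively weakly compact), I would first use tightness to produce an increasing sequence of compact sets $K_n \subset X$ with $\mu(K_n) \geq 1 - 1/n$ for every $\mu \in \mathcal{M}$. Given any sequence $(\mu_k) \subset \mathcal{M}$, each restriction $\mu_k|_{K_n}$ is a positive linear functional on $C(K_n)$ of norm at most one. Since $K_n$ is a compact metric space, $C(K_n)$ is separable and the closed unit ball of its dual is weak-$*$ sequentially compact. A Cantor diagonal extraction across $n$ therefore yields a single subsequence $(\mu_{k_j})$ and a Radon measure $\nu_n$ on each $K_n$ to which $\mu_{k_j}|_{K_n}$ converges in the weak-$*$ sense. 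The measures $\nu_n$ are consistent on overlaps, and tightness ensures $\nu_n(K_n) \to 1$; gluing them (and extending by zero outside $\bigcup_n K_n$) produces a probability measure $\mu$ on $X$. Finally, given any bounded continuous $f \in C_b(X)$ and $\varepsilon > 0$, choose $n$ so large that $\sup_k \mu_k(X \setminus K_n) < \varepsilon / (2\|f\|_\infty)$ and similarly for $\mu$; then the difference $\int f\,d\mu_{k_j} - \int f\,d\mu$ can be split into contributions inside and outside $K_n$, controlled by weak-$*$ convergence on $K_n$ and by tightness off $K_n$.

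For the reverse direction (relatively weakly compact $\Rightarrow$ tight), I would argue by contradiction. Since $X$ is Polish, Ulam's theorem provides that each individual probability measure is tight, so failure of uniform tightness would mean: there exists $\varepsilon > 0$ such that for every compact $K \subset X$ some $\mu_K \in \mathcal{M}$ satisfies $\mu_K(K) < 1 - \varepsilon$. Fix a dense sequence $(x_i) \subset X$ and for each $m \geq 1$ consider the open sets $U_{m,N} = \bigcup_{i \leq N} B(x_i, 1/m)$. For each $m$, failure of tightness combined with Ulam's theorem applied to the finitely many previously extracted limits yields a measure $\mu^{(m)} \in \mathcal{M}$ for which $\mu^{(m)}(U_{m,N_m}) < 1-\varepsilon$ for a sufficiently large $N_m$. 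By relative weak compactness, pass to a weak limit $\mu^*$ of the sequence $(\mu^{(m)})$; using the Portmanteau theorem (liminf on open sets) against $U_{m,N_m}$ and playing the indices off each other, one contradicts the tightness of the single measure $\mu^*$.

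The principal obstacle I expect is the gluing step in the forward direction: one must ensure that the candidate limit $\mu$ constructed piecewise on compacts is genuinely a probability measure on all of $X$ and that weak-$*$ convergence on each $K_n$ upgrades to weak convergence against arbitrary $f \in C_b(X)$. The delicate point is the passage from compactly-supported test functions (for which weak-$*$ convergence on $K_n$ is automatic) to general bounded continuous test functions on $X$; a Urysohn cutoff $\chi_n \in C_c(X)$ with $\chi_n \equiv 1$ on $K_n$ and $0 \le \chi_n \le 1$, together with the uniform tail estimate $\mu_k(X \setminus K_n) \le 1/n$ supplied by tightness, is exactly what allows this upgrade and closes the argument.
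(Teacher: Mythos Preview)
The paper does not prove Prokhorov's theorem: it is merely stated in the appendix as a standard tool, with no proof or sketch provided. There is therefore no ``paper's own proof'' to compare your proposal against.

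On its own merits, your forward direction (tight $\Rightarrow$ relatively weakly compact) follows the classical route and is essentially correct; the gluing step you flag as delicate is indeed the main technical point, and your plan to handle it via Urysohn cutoffs and the uniform tail bound is the standard remedy. Your reverse direction is in the right spirit but the contradiction is left too vague (``playing the indices off each other''). To make it precise: fix $m$ and suppose that for every $N$ there is $\mu_N \in \mathcal{M}$ with $\mu_N(U_{m,N}) \le 1 - \delta$, where $U_{m,N} = \bigcup_{i \le N} B(x_i, 1/m)$. Pass to a weakly convergent subsequence $\mu_{N_j} \to \mu^*$; testing against the constant function $1$ shows $\mu^*$ is a probability measure. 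For each fixed $N_0$, Portmanteau on the open set $U_{m,N_0}$ gives $\mu^*(U_{m,N_0}) \le \liminf_j \mu_{N_j}(U_{m,N_0}) \le 1-\delta$ (using $U_{m,N_0} \subset U_{m,N_j}$ for $N_j \ge N_0$). Letting $N_0 \to \infty$ and using $\bigcup_{N_0} U_{m,N_0} = X$ then yields $\mu^*(X) \le 1 - \delta$, the desired contradiction. Once this is done for every $m$, the intersection $K = \bigcap_m \overline{U_{m,N_m}}$ is totally bounded and closed in a complete metric, hence compact, and captures mass uniformly.
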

\begin{theorem} (Skorokhod's theorem) \label{Skor} \\
Let $X$ be a Polish space and let $\mu_n, \; n \in \mathbb{N}_{0},$ be probability measures on $X$ such that $\mu_n$ converges weakly to $\mu_0.$ Then on some probability space there exists $X$-valued random variables $U_n, \; n \in \mathbb{N}_0,$ such that the law of $U_n$ is $\mu_n, \; n \in \mathbb{N}_0,$ and $U_n(\omega) \rightarrow U_0(\omega)$ in $X$ a.s.
\end{theorem}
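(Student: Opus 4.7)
The plan is to prove Skorokhod's representation theorem by an explicit construction of the $X$-valued random variables on a single probability space (which may be taken to be $([0,1],\mathcal{B}([0,1]),\lambda)$ with Lebesgue measure), using a nested sequence of Borel partitions of $X$ whose mesh tends to zero and whose cells have $\mu_0$-null boundaries. The guiding idea is that, on such partitions, weak convergence upgrades (by the Portmanteau theorem) to convergence of the measures of every cell, and this numerical convergence can be transferred pointwise to the quantile-like step functions built from the partitions.

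First I would reduce to the case where all $\mu_n$ and $\mu_0$ are supported on a common separable Borel subset of $X$; since $X$ is Polish this is automatic. Next, for each level $k\ge 1$, I would construct a finite Borel partition $\{A_i^{(k)}\}_{i=1}^{N_k}$ of (a set of full $\mu_0$-measure in) $X$ whose cells have diameter less than $2^{-k}$ and satisfy $\mu_0(\partial A_i^{(k)})=0$. This uses separability of $X$: cover by balls $B(x_j,r)$ for countably many $x_j$, and choose the radius $r\in(2^{-k-1},2^{-k})$ avoiding the countable set of "bad" radii for which the boundary sphere carries positive $\mu_0$-mass. Arrange $\{A_i^{(k+1)}\}$ to refine $\{A_i^{(k)}\}$. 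By the Portmanteau characterization of weak convergence, the continuity condition $\mu_0(\partial A_i^{(k)})=0$ gives $\mu_n(A_i^{(k)})\to\mu_0(A_i^{(k)})$ as $n\to\infty$ for every $i,k$. Picking a representative $x_i^{(k)}\in A_i^{(k)}$, I would then define step functions $U_n^{(k)},U_0^{(k)}:[0,1]\to X$ taking the value $x_i^{(k)}$ on a sub-interval of length $\mu_n(A_i^{(k)})$ (respectively $\mu_0(A_i^{(k)})$), arranged consistently with the refinement structure so that the sub-intervals at level $k+1$ nest inside those at level $k$. By construction $U_n^{(k)}$ is a faithful discretization of $\mu_n$ to the partition of level $k$.

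Finally I would pass to the limits in $k$ and $n$. For each fixed $n$, consecutive approximations $U_n^{(k)}$ and $U_n^{(k+1)}$ agree up to an error of at most $2^{-k}$ in the metric of $X$ (they lie in the same cell of the coarser partition), so $(U_n^{(k)}(\omega))_k$ is Cauchy for every $\omega$; by completeness of $X$ the pointwise limit $U_n(\omega):=\lim_k U_n^{(k)}(\omega)$ exists and has law $\mu_n$. For the almost-sure convergence $U_n\to U_0$, I would observe that at fixed level $k$ the convergence $\mu_n(A_i^{(k)})\to\mu_0(A_i^{(k)})$ lets one match the partition sub-intervals of $U_n^{(k)}$ and $U_0^{(k)}$ up to a subset of $[0,1]$ of Lebesgue measure at most $\varepsilon_{n,k}\to 0$; on the complement of that set $d(U_n^{(k)},U_0^{(k)})=0$, hence $d(U_n,U_0)\le 2^{-k+1}$. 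A diagonal/Borel--Cantelli argument along a subsequence $n_k\to\infty$ chosen so that $\sum_k\varepsilon_{n_k,k}<\infty$ then yields $U_{n_k}\to U_0$ almost surely, and a standard trick (reindex or sub-subsequences) extends this to the full sequence.

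The main obstacle is the consistency of the construction across the three simultaneous limits: one must choose the sub-intervals representing the cells of $\{A_i^{(k)}\}$ so that (i) they nest correctly when $k$ increases, so that $U_n^{(k)}$ is Cauchy in $k$; (ii) the marginal law of $U_n^{(k)}$ is exactly the discrete projection of $\mu_n$, so that no mass is lost in the limit and $U_n$ has law $\mu_n$; and (iii) the arrangement is coherent enough in $n$ that the sub-intervals representing the same cell for $U_n^{(k)}$ and $U_0^{(k)}$ can be matched on a set of Lebesgue measure tending to one. Carrying out this bookkeeping, in particular obtaining (iii) while preserving (i)--(ii), is the delicate combinatorial heart of the argument; it is what forces one to work on $[0,1]$ (with its natural linear order) rather than on an abstract probability space.
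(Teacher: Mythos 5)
The paper does not actually prove this statement: Theorem \ref{Skor} is recalled in the Appendix as a classical result (Skorokhod's representation theorem) and is used as a black box in the compactness argument of Section 4, so there is no proof in the paper to measure yours against. On its own terms, your construction is the standard one (partitions into $\mu_0$-continuity sets of small diameter, Portmanteau, hierarchical interval allocation on $[0,1]$), but the concluding step contains a genuine gap. Almost sure convergence is not a metrizable (indeed not a topological) mode of convergence, so the ``standard trick (reindex or sub-subsequences)'' you invoke does not exist: knowing that every subsequence of $(U_n)$ admits a further subsequence converging a.s.\ to $U_0$ only yields convergence in probability, whereas the theorem asserts $U_n\to U_0$ a.s.\ along the full sequence. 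Your Borel--Cantelli argument along $n_k$ with $\sum_k\varepsilon_{n_k,k}<\infty$ therefore proves strictly less than the statement, and since within a block $n_k\le n<n_{k+1}$ the disagreement sets depend on $n$ and are only $O(2^{-k})$ each, Borel--Cantelli over all $n$ is not available either.

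The fix is available inside your own construction and is in fact simpler. Order the cells once and for all, independently of $n$, and let the sub-interval attached to $A_i^{(k)}$ under $\mu_n$ be $[s_n^{(k)}(i-1),s_n^{(k)}(i))$ with $s_n^{(k)}(i)=\sum_{j\le i}\mu_n(A_j^{(k)})$. Portmanteau gives $s_n^{(k)}(i)\to s_0^{(k)}(i)$ for each fixed $k,i$, so every $\omega$ outside the countable set of endpoints $\{s_0^{(k)}(i)\}_{k,i}$ lies, for each $k$ and all $n\ge N(\omega,k)$, in the same level-$k$ interval for $\mu_n$ as for $\mu_0$; hence $U_n^{(k)}(\omega)=U_0^{(k)}(\omega)$ and $d(U_n(\omega),U_0(\omega))\le 2^{-k+2}$ for those $n$, where $d$ is a complete metric on $X$. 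Letting $k\to\infty$ gives $U_n(\omega)\to U_0(\omega)$ for a.e.\ $\omega$ with no subsequence extraction. (Alternatively one can follow the Billingsley/Ethier--Kurtz device: allocate intervals of length $(1-2^{-k})\mu_0(A_i^{(k)})$, independent of $n$, and realize the conditional law $\mu_n(\cdot\mid A_i^{(k)})$ on them for $n$ in the $k$-th block, so the exceptional set depends only on $k$ and Borel--Cantelli applies.) Two smaller points: a \emph{finite} family of sets of diameter $<2^{-k}$ cannot in general cover a set of full $\mu_0$-measure, so either keep countably many cells (then $\sum_i|\mu_n(A_i^{(k)})-\mu_0(A_i^{(k)})|\to0$ still holds by a Scheff\'e-type argument) or take finitely many cells of total mass $>1-2^{-k}$ plus a remainder cell treated separately; and the identification of the law of $U_n$ deserves a line, e.g.\ the law of $U_n^{(k)}$ is $\sum_i\mu_n(A_i^{(k)})\delta_{x_i^{(k)}}$, which converges weakly to $\mu_n$ as the mesh tends to zero, while $U_n^{(k)}\to U_n$ everywhere.
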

\begin{theorem} \label{Gyong}
Let $X$ be a Polish space equipped with the Borel $\sigma$-algebra. A sequence of $X$-valued random variables $(U_n)_{n \in \mathbb{N}}$ converges in probability if and only if for every sequence of joint laws of $(U_{n_k},U_{m_k})_{k \in \mathbb{N}}$ there exists a further subsequence which converges weakly to a probability measure $\mu$ such that $$\mu((x,y) \in X \times X; \; x=y)=1.$$
\end{theorem}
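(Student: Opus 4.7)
The plan is to prove the two directions separately. For the easy \emph{only if} direction, suppose $U_n\to U$ in probability in $X$. Then $(U_n,U_m)\to (U,U)$ in probability in the product Polish space $X\times X$, which implies weak convergence of the joint laws of $(U_{n_k},U_{m_k})$ (along any subsequence) to the law $\mu$ of $(U,U)$; since $(U,U)$ takes values on the diagonal, $\mu(\{(x,y):x=y\})=1$, so the condition holds with no need to extract a further subsequence.

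For the substantive \emph{if} direction, I would argue by contradiction. Since $X$ is Polish (hence complete and separable), convergence in probability is equivalent to being Cauchy in probability, so it suffices to show that for every $\varepsilon>0$, $\mathbb{P}(d_X(U_n,U_m)\ge\varepsilon)\to 0$ as $n,m\to\infty$. Suppose instead that there exist $\varepsilon,\delta>0$ and sequences $n_k,m_k\to\infty$ such that
\[
\mathbb{P}\bigl(d_X(U_{n_k},U_{m_k})\ge\varepsilon\bigr)\ge\delta\qquad\text{for all }k.
\]
Apply the hypothesis to the joint laws $\nu_k:=\mathfrak{L}[(U_{n_k},U_{m_k})]$ on $X\times X$: there exists a further subsequence (still denoted $\nu_k$) that converges weakly to some probability measure $\mu$ on $X\times X$ with $\mu(\{(x,y):x=y\})=1$.

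The key step is then to use Portmanteau's theorem on the closed set $F:=\{(x,y)\in X\times X:d_X(x,y)\ge\varepsilon\}$. Since $F$ is disjoint from the diagonal, $\mu(F)=0$; weak convergence yields $\limsup_k\nu_k(F)\le\mu(F)=0$. But by construction $\nu_k(F)=\mathbb{P}(d_X(U_{n_k},U_{m_k})\ge\varepsilon)\ge\delta>0$ for all $k$, a contradiction. Hence $(U_n)$ is Cauchy in probability, and therefore converges in probability to some $X$-valued random variable $U$, which completes the argument.

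The only point requiring care, and arguably the main conceptual step, is the use of Portmanteau's theorem on a closed set rather than an open set: one must work with $\{d_X\ge\varepsilon\}$ rather than $\{d_X>\varepsilon\}$ to pick up the $\limsup$ inequality that produces the contradiction. Beyond this, everything is a standard diagonal/subsequence argument made possible by the fact that $X\times X$ is itself Polish, so weak limits of probability measures are well-defined and Prokhorov/Portmanteau apply without any additional regularity hypothesis.
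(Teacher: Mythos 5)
The paper does not prove Theorem \ref{Gyong}: it is stated in the Appendix as a known result (the Gy\"ongy--Krylov lemma) with a reference to Section~2.10 of the monograph of Breit, Feireisl and Hofmanov\'a \cite{Feir}, so there is no in-paper argument to compare against. Your proof is correct and is in fact the standard argument: the ``only if'' direction is immediate since convergence in probability in $X$ gives convergence in probability in $X\times X$ and hence weak convergence of the joint laws to a measure concentrated on the diagonal, while the ``if'' direction is proved by negating Cauchyness in probability, extracting (by the hypothesis) a subsequence of the joint laws converging weakly to a diagonal-supported $\mu$, and then using the closed-set half of Portmanteau on $F=\{(x,y):d_X(x,y)\ge\varepsilon\}$ to force $\limsup_k\nu_k(F)\le\mu(F)=0$, contradicting $\nu_k(F)\ge\delta$. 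Two small remarks: (i) you invoke Prokhorov in your closing sentence, but it is not actually used in your argument --- the hypothesis already hands you the weakly convergent subsequence, so only Portmanteau and completeness of $X$ are needed; (ii) the last step, ``Cauchy in probability $\Rightarrow$ converges in probability,'' deserves a one-line justification (extract an a.s.\ convergent subsequence via Borel--Cantelli from a fast subsequence of the Cauchy sequence, then feed the a.s.\ limit back into the Cauchy estimate), which relies precisely on $X$ being complete.
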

\noindent
The next proposition concerns standard Moser type commutator estimates that are useful in order to estimates high order derivatives
\begin{proposition} \label{Moser}
\begin{itemize}
\item[(1)]For $u,v \in H^s \cap L^\infty (\mathbb{T}^N)$ and $\beta \in \mathbb{N}^N_0, \; |\beta| \le s,$ we have 
\begin{equation}
\| \partial^\beta_x (uv) \|_{L^2_x} \le c_s( \| u \|_{L^\infty_x} \| \nabla^s_x v \|_{L^2_x}+ \| v \|_{L^\infty_x} \| \nabla^s_x u \|_{L^2_x}).
\end{equation}
\item[(2)] For $u \in H^s(\mathbb{T}^N), \; \nabla_x u \in L^\infty(\mathbb{T}^N), \; v \in H^{s-1} \cap L^\infty (\mathbb{T}^N) $ and $\beta \in \mathbb{N}^N_0, \; |\beta| \le s,$ we have 
\begin{equation}
\| \partial^\beta_x (uv)-u \partial^\beta_x v \|_{L^2_x} \le c_s( \| \nabla_x u \|_{L^\infty_x} \| \nabla^{s-1}_x v \|_{L^2_x}+ \| v \|_{L^\infty_x} \| \nabla^s_x u \|_{L^2_x}).
\end{equation}
\item[(3)] Let $u \in H^s \cap C(\mathbb{T}^3)$ and let $F$ be an $s$-times continuously differentiable function in an open neighbourhood of the compact set $G= \text{range}[u].$ Then we have,  for all $\beta \in \mathbb{N}^N_0, \; |\beta| \le s,$
\begin{equation} 
\| \partial^\beta_x F(u) \|_{L^2_x} \le c_s \| \partial_u F \|_{C^{s-1}(G)} \| u \|^{| \beta|-1}_{L^\infty_x} \| \partial^\beta_x u \|_{L^2_x}.
\end{equation}
\end{itemize}
\end{proposition}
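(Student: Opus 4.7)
The plan is to treat (1)--(3) as classical Moser--Kato--Ponce inequalities on the torus and to prove them by a common three-step scheme: expand $\partial^\beta_x$ via the appropriate combinatorial identity (Leibniz for products, Faà di Bruno for compositions), bound each resulting term in $L^2$ by Hölder with carefully matched exponents, and close each factor by Gagliardo--Nirenberg interpolation between $L^\infty$ and $H^{|\beta|}(\mathbb{T}^N)$.

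For (1), I will expand $\partial^\beta_x(uv) = \sum_{\alpha\le\beta}\binom{\beta}{\alpha}\partial^\alpha_x u\,\partial^{\beta-\alpha}_x v$ and, for each $\alpha$, apply Hölder with exponents $p_\alpha,q_\alpha$ satisfying $1/p_\alpha+1/q_\alpha = 1/2$ and $1/p_\alpha = |\alpha|/(2n)$, where $n=|\beta|$. The Gagliardo--Nirenberg inequalities on $\mathbb{T}^N$ then give
\[
\|\partial^\alpha_x u\|_{L^{p_\alpha}}\lesssim \|u\|_{L^\infty}^{1-|\alpha|/n}\|\nabla^n_x u\|_{L^2}^{|\alpha|/n},\qquad \|\partial^{\beta-\alpha}_x v\|_{L^{q_\alpha}}\lesssim \|v\|_{L^\infty}^{|\alpha|/n}\|\nabla^n_x v\|_{L^2}^{1-|\alpha|/n},
\]
and Young's inequality with conjugate exponents $(n/|\alpha|,\,n/(n-|\alpha|))$ collapses the resulting product into $\|u\|_{L^\infty}\|\nabla^n_x v\|_{L^2}+\|v\|_{L^\infty}\|\nabla^n_x u\|_{L^2}$. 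Summing over the finitely many multi-indices $\alpha\le\beta$ yields (1).

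For (2), the identity $\partial^\beta_x(uv)-u\,\partial^\beta_x v = \sum_{0<\alpha\le\beta}\binom{\beta}{\alpha}\partial^\alpha_x u\,\partial^{\beta-\alpha}_x v$ excludes the term $\alpha=0$, so every surviving multi-index satisfies $|\alpha|\ge 1$ and $|\beta-\alpha|\le n-1$. I will rewrite $\partial^\alpha_x u=\partial^{\alpha-e_i}_x(\partial_i u)$ for some unit multi-index $e_i\le\alpha$ and apply the same Hölder + Gagliardo--Nirenberg argument as in (1), but now interpolating the factor in $u$ between $\|\nabla_x u\|_{L^\infty}$ and $\|\nabla^n_x u\|_{L^2}$ and the factor in $v$ between $\|v\|_{L^\infty}$ and $\|\nabla^{n-1}_x v\|_{L^2}$. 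The total number of derivatives distributed on $uv$ is unchanged, so the interpolation exponents still add up to the correct power of $1/2$ after Hölder.

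For (3), I will use Faà di Bruno's formula, which expresses $\partial^\beta_x F(u)$ as a finite sum of terms of the form $c_\gamma F^{(k)}(u)\prod_{j=1}^{k}\partial^{\gamma_j}_x u$, with $1\le k\le n$, $\gamma_1+\cdots+\gamma_k=\beta$ and $|\gamma_j|\ge 1$. Since $u$ is continuous and $\mathbb{T}^N$ is compact, $G=\mathrm{range}[u]$ is compact and $\|F^{(k)}(u)\|_{L^\infty}\le \|\partial_u F\|_{C^{s-1}(G)}$ for $1\le k\le s$. I will then bound each product $\prod_j \partial^{\gamma_j}_x u$ in $L^2$ via Hölder with exponents $p_j$ determined by $1/p_j=|\gamma_j|/(2n)$, so that $\sum_j 1/p_j=1/2$ thanks to $\sum_j |\gamma_j|=n$, and interpolate each factor by Gagliardo--Nirenberg to obtain $\prod_j\|\partial^{\gamma_j}_x u\|_{L^{p_j}}\lesssim \|u\|_{L^\infty}^{k-1}\|\nabla^n_x u\|_{L^2}$. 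Using $k-1\le n-1=|\beta|-1$ and absorbing combinatorial factors into $c_s$ gives the claim. The main obstacle is precisely this bookkeeping: one must verify that, for every admissible partition $\gamma_1+\cdots+\gamma_k=\beta$ produced by Faà di Bruno, the Hölder exponents $p_j$ are admissible and the Gagliardo--Nirenberg interpolation exponents $|\gamma_j|/n$ sum to $1$, so that after multiplication the only surviving $H^n$ factor is $\|\nabla^n_x u\|_{L^2}$ to the first power. Once this combinatorial step is set up for (3), statements (1) and (2) follow by the same scheme applied to the simpler binomial sum.
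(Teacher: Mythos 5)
The paper does not prove this proposition at all: it is stated in the appendix as a classical Moser-type estimate, imported from the literature (it is the standard commutator/composition lemma used e.g.\ in the monograph \cite{Feir}), so there is no argument of the authors to compare yours against. Your scheme is the standard textbook proof of exactly these inequalities: Leibniz (resp.\ Fa\`a di Bruno) expansion, H\"older with exponents $2|\beta|/|\alpha|$ matched to the number of derivatives falling on each factor, Gagliardo--Nirenberg interpolation between $L^\infty$ and the top-order $L^2$ seminorm, and Young's inequality to symmetrize. The exponent bookkeeping you describe is correct: in (1) the interpolation weights $|\alpha|/n$ and $1-|\alpha|/n$ pair up so that Young yields the two-term right-hand side; in (2) peeling off one derivative from $u$ and running the same argument at order $n-1$ is the right move; in (3) the weights $|\gamma_j|/n$ sum to $1$, so each Fa\`a di Bruno term is bounded by $\|u\|_{L^\infty}^{k-1}\|\nabla^n_x u\|_{L^2}$. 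Two small caveats: on $\mathbb{T}^N$ the homogeneous Gagliardo--Nirenberg inequalities you invoke should be applied to $u-\bar u$ (this costs only a factor $2$ in $\|u\|_{L^\infty}$ since the left-hand sides involve at least one derivative), and your argument proves the estimates with $n=|\beta|$ derivatives on the right, so for $|\beta|<s$ one extra interpolation (or reading the right-hand side as the inhomogeneous $H^s$-type norm) is needed to land on the literal statement.

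The only step that does not deliver the statement as written is the last line of (3): from $\|u\|_{L^\infty}^{k-1}\|\nabla^n_x u\|_{L^2}$ with $k-1\le|\beta|-1$ you conclude the bound with $\|u\|_{L^\infty}^{|\beta|-1}$, but $\|u\|_{L^\infty}^{k-1}\le\|u\|_{L^\infty}^{|\beta|-1}$ only holds if $\|u\|_{L^\infty}\ge 1$. This is not really a defect of your method but of the transcription of the lemma: as literally stated, (3) fails for small $u$ (take $F(u)=u$, $|\beta|\ge 2$ and $\|u\|_{L^\infty}$ small), and the standard correct formulation carries $\max\{1,\|u\|_{L^\infty}\}^{|\beta|-1}$ or $(1+\|u\|_{L^\infty})^{|\beta|-1}$, which is exactly what your computation produces and is all that is used in the paper (where $N=1$ and the relevant norms are anyway of order one). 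With that reading, your proof is complete in outline; the remaining work is only the routine verification of the H\"older/interpolation admissibility you already indicate.
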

\noindent
The following Lemma states a useful functional inequality for function with $H^2$ space regularity on the $d$-dimensional torus, we refer to \cite{Alazard} Proposition $2.8$ for a detailed discussion. We point out that the constant $\dfrac{9}{16}$ is the optimal one.
\begin{lemma}\label{Functional ineq }
For any $d \ge 1 $ and any positive function $f$ in $H^2(\mathbb{T}^d),$
\begin{equation}
\int_{\mathbb{T}^d} | \nabla f^\frac{1}{2}|^4  dx \le \dfrac{9}{16} \int_{\mathbb{T}^d} (\Delta f)^2 dx
\end{equation}
\end{lemma}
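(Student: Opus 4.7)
Set $g := \sqrt{f} > 0$, so that $f = g^2$, $|\nabla \sqrt{f}|^4 = |\nabla g|^4$, and
\[
\Delta f = 2|\nabla g|^2 + 2g\Delta g.
\]
Squaring and integrating over $\mathbb{T}^d$ produces
\[
\int (\Delta f)^2\,dx = 4\int |\nabla g|^4\,dx + 8\int g|\nabla g|^2\Delta g\,dx + 4\int g^2 (\Delta g)^2\,dx.
\]
The plan is to integrate by parts the cross term so as to re-express the right-hand side as a non-negative combination of $\int |\nabla g|^4$ and $\int g^2 (\Delta g)^2$, and then to apply a Cauchy-Schwarz bound of the form $\int |\nabla g|^4 \le 9\int g^2 (\Delta g)^2$ to close the estimate with the sharp coefficient $9/16$.

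In the one-dimensional case, which is the one actually needed in Proposition \ref{BD prop}, I would first establish the IBP identity
\[
\int_{\mathbb{T}}(g')^4\,dx = -3\int_{\mathbb{T}} g(g')^2 g''\,dx,
\]
obtained by integrating once by parts with $(g')^3$ as antiderivative factor and exploiting periodicity. Inserting this into the expansion of $(\Delta f)^2$ yields the exact identity
\[
\int_{\mathbb{T}}(\Delta f)^2\,dx = \tfrac{4}{3}\int (g')^4\,dx + 4\int g^2(g'')^2\,dx,
\]
so the claim $\int(g')^4 \le \tfrac{9}{16}\int (\Delta f)^2$ becomes equivalent to the weighted bound $\int (g')^4 \le 9\int g^2(g'')^2$. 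The latter follows at once from the same IBP identity and Cauchy-Schwarz:
\[
\int (g')^4 = -3\int g(g')^2 g'' \le 3\Bigl(\int g^2(g'')^2\Bigr)^{1/2}\Bigl(\int (g')^4\Bigr)^{1/2},
\]
so dividing by $\bigl(\int (g')^4\bigr)^{1/2}$ and squaring gives exactly the constant $9$, which in turn produces the optimal factor $9/16$ in the original inequality.

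For $d\ge 2$ the scheme is identical, but the integration by parts of the cross term $\int g|\nabla g|^2\Delta g$ now yields, besides $-\int |\nabla g|^4$, the additional tensorial contribution
\[
-2\int g\,(\nabla g)^{\top}D^2 g\,\nabla g\,dx,
\]
which involves the full Hessian rather than only $\Delta g$. This is the main obstacle: converting Hessian-type quantities back into $\Delta g$-type quantities without degrading the sharp constant. On the flat torus one can leverage the unweighted identity $\int |D^2 g|^2 = \int (\Delta g)^2$ coming from periodicity, combined with the pointwise bound $|(\nabla g)^\top D^2 g\,\nabla g|\le |D^2 g|\,|\nabla g|^2$, and then absorb the resulting Hessian integral into the right-hand side by a careful symmetrization. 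The optimality of $9/16$ in every dimension $d$ is attained through concentration profiles saturating the Cauchy-Schwarz step, as carried out in Proposition 2.8 of \cite{Alazard}.
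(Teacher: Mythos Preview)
The paper does not give its own proof of this lemma; it is stated in the Appendix with a reference to \cite{Alazard}, Proposition~2.8, for the argument and the sharpness of the constant. So there is no paper proof to compare against beyond that citation.

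Your one-dimensional argument is correct and complete. The integration-by-parts identity $\int (g')^4 = -3\int g(g')^2 g''$ is valid on $\mathbb{T}$, the substitution into the expansion of $\int(\Delta f)^2$ indeed yields the exact relation $\int(\Delta f)^2 = \tfrac{4}{3}\int(g')^4 + 4\int g^2(g'')^2$, and the Cauchy--Schwarz step closes with the sharp constant $9$, hence $9/16$. Since the paper works exclusively on $\mathbb{T}$ and applies the lemma only with $f=\rho^{\alpha/2}$ in the BD entropy computation, this one-dimensional proof already covers everything the paper actually needs.

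For $d\ge 2$ your sketch correctly identifies the obstruction (the Hessian term arising from the IBP of the cross integral) but does not carry the argument to completion; the phrase ``absorb the resulting Hessian integral into the right-hand side by a careful symmetrization'' is not a proof. This is not a genuine gap so much as an honest deferral: you end up citing the same source the paper cites. If you want a self-contained multi-dimensional proof, the missing step is to show $\int |\nabla g|^4 \le 9\int g^2(\Delta g)^2$ directly, which in \cite{Alazard} is done via the identity $\int|\nabla g|^4 = -\int g\,\Delta g\,|\nabla g|^2 - 2\int g\,(\nabla g)^\top D^2 g\,\nabla g$ combined with the Bochner-type relation $\int g^2|D^2g|^2 = \int g^2(\Delta g)^2 + \text{(lower order)}$ specific to the periodic setting; the constant $9$ emerges from optimizing a quadratic in the ratio of the two integrals.
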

\noindent
\addcontentsline{toc}{chapter}{Bibliography}

\end{document}